\newlength{\wideitemsep}
\let\olditem\item
\renewcommand{\item}{\setlength{\itemsep}{\wideitemsep}\olditem}
\numberwithin{equation}{section}
\renewcommand{\geq}{\geqslant}
\renewcommand{\leq}{\leqslant}
\newcommand{\vs}{\vspace{0.5cm}}
\newcommand{\xra}{\xrightarrow}
\newcommand{\trm}{\textrm}
\newcommand{\mrm}{\mathrm}
\newcommand{\pt}{\mathrm{pt}}
\renewcommand{\tilde}{\widetilde}
\renewcommand{\bar}{\overline}
\newtheorem{theorem}{Theorem}[section]
\newtheorem*{theorem*}{Theorem}
\newtheorem{lemma}[theorem]{Lemma}
\newtheorem{proposition}[theorem]{Proposition}
\newtheorem{corollary}[theorem]{Corollary}
\newtheorem*{corollary*}{Corollary}
\theoremstyle{definition}
\newtheorem{definition}[theorem]{Definition}
\newtheorem{example}[theorem]{Example}
\theoremstyle{remark}
\newtheorem{remark}[theorem]{Remark}
\newenvironment{acknowledgements}{\textbf{Acknowledgements}:}{}
\newcommand{\cC}{\mathcal{C}}
\newcommand{\cD}{\mathcal{D}}
\newcommand{\cE}{\mathcal{E}}
\newcommand{\cF}{\mathcal{F}}
\newcommand{\cI}{\mathcal{I}}
\newcommand{\cO}{\mathcal{O}}
\newcommand{\cP}{\mathcal{P}}
\newcommand{\cM}{\mathcal{M}}
\newcommand{\cH}{\mathcal{H}}
\newcommand{\cW}{\mathcal{W}}
\newcommand{\cX}{\mathcal{X}}
\newcommand{\cZ}{\mathcal{Z}}
\newcommand{\bC}{\mathbb{C}}
\newcommand{\bP}{\mathbb{P}}
\newcommand{\bR}{\mathbb{R}}
\newcommand{\bZ}{\mathbb{Z}}
\newcommand{\bQ}{\mathbb{Q}}
\newcommand{\frh}{\mathfrak{h}}
\newcommand{\vv}{\bm{\mathrm v}}
\renewcommand{\ss}{\bm{\mathrm s}}
\newcommand{\ww}{\bm{\mathrm w}}
\renewcommand{\aa}{\bm{\mathrm a}}
\newcommand{\bb}{\bm{\mathrm b}}
\renewcommand{\tt}{\bm{\mathrm t}}
\newcommand{\uu}{\bm{\mathrm u}}
\DeclareMathOperator{\re}{Re}
\DeclareMathOperator{\im}{Im}
\DeclareMathOperator{\bigcone}{Big}
\DeclareMathOperator{\mov}{Mov}
\DeclareMathOperator{\nef}{Nef}
\DeclareMathOperator{\Pic}{Pic}
\DeclareMathOperator{\pos}{Pos}
\DeclareMathOperator{\amp}{Amp}
\DeclareMathOperator{\ns}{NS}
\DeclareMathOperator{\sym}{Sym}
\DeclareMathOperator{\ext}{ext}
\DeclareMathOperator{\Ext}{Ext}
\DeclareMathOperator{\gr}{Gr}
\DeclareMathOperator{\br}{Br}
\DeclareMathOperator{\spanning}{Span}
\DeclareMathOperator{\chern}{ch}
\DeclareMathOperator{\stab}{Stab^{\dagger}}
\DeclareMathOperator{\stabno}{Stab}
\newcommand{\bmm}{Bayer-Macr\`{i} }
\begin{document}

\title{Birational Geometry of Singular Moduli Spaces of O'Grady Type}
\author{Ciaran Meachan}
\address{School of Mathematics, University of Edinburgh, Scotland}
\email{ciaran.meachan@ed.ac.uk}
\author{Ziyu Zhang}
\address{Department of Mathematical Sciences, University of Bath, Bath BA2 7AY, United Kingdom}
\email{zz505@bath.ac.uk}

\subjclass[2010]{14D20 (Primary); 14F05, 14J28, 18E30 (Secondary).}
\keywords{Bridgeland stability conditions, derived categories, moduli spaces of sheaves and complexes, wall crossing, symplectic resolutions.}

\begin{abstract}
Following Bayer and Macr\`{i}, we study the birational geometry of singular moduli spaces $M$ of sheaves on a K3 surface $X$ which admit symplectic resolutions. More precisely, we use the \bmm map from the space of Bridgeland stability conditions $\stabno(X)$ to the cone of movable divisors on $M$ to relate wall-crossing in $\stabno(X)$ to birational transformations of $M$. We give a complete classification of walls in $\stabno(X)$ and show that every minimal birational model of $M$ in the sense of the log minimal model program appears as a moduli space of Bridgeland semistable objects on $X$. An essential ingredient of our proof is an isometry between the orthogonal complement of a Mukai vector inside the algebraic Mukai lattice of $X$ and the N\'{e}ron-Severi lattice of $M$ which generalises results of Yoshioka, as well as Perego and Rapagnetta. Moreover, this allows us to conclude that the symplectic resolution of $M$ is deformation equivalent to the $10$-dimensional irreducible holomorphic symplectic manifold found by O'Grady.
\end{abstract}

\maketitle

\section{Introduction}

\subsection{Background}

Let $X$ be a complex projective K3 surface,  $\vv \in H^*_{\mathrm{alg}}(X,\bZ)$ a Mukai vector and $H$ an ample line bundle which is generic with respect to $\vv$ (in the sense of \cite[Section 4.C]{HuLe10a} and \cite[Section 1.4]{Yos01}). We can always write the Mukai vector as a multiple of a primitive class, say $\vv=m \vv_p$. If we further assume that $\vv^2>0$ with respect to the Mukai pairing, then there is a precise classification of moduli spaces $M_H(\vv)$ of Gieseker $H$-semistable sheaves on $X$ with Mukai vector $\vv$ which goes as follows:
\begin{itemize}
\item If $m=1$, then $\vv$ is a primitive Mukai vector and $M_H(\vv)$ is smooth. Moreover, in his seminal paper \cite{Muk84}, Mukai showed that $M_H(\vv)$ is an irreducible holomorphic symplectic manifold, parametrising $H$-stable sheaves.
\item If $m\geq 2$, then $M_H(\vv)$ has symplectic singularities (in the sense of \cite{Bea00}). Its smooth locus parametrises $H$-stable sheaves, while its singular locus parametrises S-equivalence classes of strictly semistable sheaves. This case splits into two radically different situations:
\begin{itemize}
\item If $m=2$ and $\vv_p^2=2$, then $M_H(\vv)$ has a symplectic resolution; see \cite{OG99a,LS06}.
\item If $m>2$ or $\vv_p^2>2$, then $M_H(\vv)$ does \emph{not} admit a symplectic resolution; see \cite{KaLeSo06a}.
\end{itemize}
\end{itemize}

The geometry of smooth moduli spaces $M_H(\vv)$, or irreducible holomorphic symplectic manifolds in general, has been the subject of intensive study for many years. In particular, there are many results about their birational geometry in the literature; see \cite{Huy03,HT09} for instance. On the other hand, Bridgeland \cite[Section 14]{Bri08} showed that these Gieseker moduli spaces can be realised as moduli spaces of $\sigma$-semistable objects in the `large volume limit' of (a certain connected component $\stab(X)$ of) his stability manifold $\stabno(X)$. More precisely, he proved that the stability manifold comes with a wall and chamber decomposition in the sense that the set of $\sigma$-semistable objects with some fixed numerical invariants is constant in each chamber and an object of the bounded derived category $\cD(X)$ of coherent sheaves on $X$ can only become stable or unstable by crossing a wall, that is, a real codimension one submanifold of $\stab(X)$. Furthermore, Bridgeland conjectured that crossing a wall should induce a birational transformation between the corresponding moduli spaces and this vision was recently crystallised in a revolutionary paper \cite{BM13} by Bayer and Macr\`i. 

Before going any further, we should say that these ideas concerning wall-crossing have been successfully applied to the study of the birational geometry of moduli spaces of sheaves on abelian surfaces; see \cite{Yos09,MYY11a,Mea12,Mac12,Yos12,AB13,MM13,MYY11b,YY14}. However, the focus of this paper will be on moduli spaces of sheaves on K3 surfaces as above. 

For any Mukai vector $\vv$ with $\vv^2>0$, on each moduli space of Bridgeland semistable objects $M_\sigma(\vv)$, Bayer and Macr\`i used the classical technique of determinant line bundles to construct an ample line bundle $\ell_\sigma$ on $M_\sigma(\vv)$. In particular, this gives rise to a `linearisation map' from any chamber $\cC$ in $\stab(X)$ to the nef cone of the moduli space $\nef (M_\cC(\vv))$ corresponding to that chamber. This map is an essential part of this paper; we call it the \emph{\bmm map}.

Moreover, when the Mukai vector $\vv$ is primitive, they obtained a complete picture relating wall crossing on $\stab(X)$ to the birational geometry of the corresponding moduli spaces. They studied how the moduli space changes when crossing a wall and classified all the walls in terms of the Mukai lattice. In particular, they verified the conjecture of Bridgeland, by showing that moduli spaces corresponding to two neighbouring chambers are indeed birational. For each type of wall, they could produce an explicit birational map, which identifies the two moduli spaces away from loci of codimension at least two and hence identifies the N\'{e}ron-Severi groups of them. Using these birational maps, they could glue the linearisation maps defined on each chamber together to get a global continuous \bmm map from $\stab(X)$ to the N\'{e}ron-Severi group of any generic moduli space $M_\sigma(\vv)$ and prove that, via wall crossings on $\stab(X)$, every birational minimal model of $M_\sigma(\vv)$ appears as a Bridgeland moduli space.

\subsection{Summary of main results}

This paper grew out of an attempt to first understand \cite{BM12,BM13} and then generalise their techniques to the simplest singular case in the hope of obtaining similar results. In particular, we are interested in the case when $\vv=2\vv_p$ and $\vv_p$ is a primitive Mukai vector with square equal to $2$ with respect to the Mukai pairing. We say that such a Mukai vector $\vv$ is of \emph{O'Grady type}. The benefit of considering this type of Mukai vector $\vv$ is that we will be able to show the existence of a symplectic resolution of the moduli space $M_\sigma(\vv)$ under any generic stability condition $\sigma$, which will allow us to reuse many arguments in \cite{BM13}. Our first main result is the following

\begin{theorem*}[\ref{thm:identify-cones}]
Let $X$ be a projective K3 surface and $\vv$ be a Mukai vector of O'Grady type. For any two generic stability conditions $\sigma,\tau \in \stab(X)$ with respect to $\vv$, there is a birational map $\Phi_*: M_\sigma(\vv) \dashrightarrow M_\tau(\vv)$ induced by a derived (anti-)autoequivalence\footnote{An \emph{anti-autoequivalence} is an equivalence of categories from $\cD(X)^{\mathrm{op}}$ to $\cD(X)$, which takes every exact triangle in $\cD(X)$ to an exact triangle in $\cD(X)$, but with all the arrows reversed. All examples of this notion encountered in this paper are compositions of autoequivalences with the derived dual functor $\mathbf{R}\mathcal{H}\!\mathit{om}(-, \cO_X)$; see the discussion after \cite[Theorem 1.1]{BM13}.} on $\cD(X)$, which is an isomorphism in codimension one.
\end{theorem*}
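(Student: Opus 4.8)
The plan is to reduce the general statement to a finite sequence of "atomic" wall-crossings and to analyze each one separately, following the strategy of Bayer--Macr\`i. First I would invoke the fact that the space of stability conditions $\stab(X)$ has a locally finite wall-and-chamber decomposition with respect to $\vv$: the walls relevant to objects of class $\vv$ are locally finite, and any two generic stability conditions $\sigma,\tau$ can be joined by a path in $\stab(X)$ meeting only finitely many walls, transversally, away from their intersections. Since the composition of maps that are isomorphisms in codimension one is again such a map, and the composition of derived (anti-)autoequivalences is again one, it suffices to treat the case where $\sigma$ and $\tau$ lie in two chambers $\cC_+$ and $\cC_-$ adjacent across a single wall $\cW$.

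Next I would fix a generic point $\sigma_0$ on the wall $\cW$ and describe the semistable objects of class $\vv$ there. The key input here is that $\vv$ has O'Grady type, so $M_{\sigma_0}(\vv)$ admits a symplectic resolution; this lets me control the geometry of the "boundary" moduli space. The walls should be classified (this is presumably the content of a later section) by the sublattice $\cH$ of the algebraic Mukai lattice generated by $\vv$ and the classes of destabilising subobjects, and each type of wall --- totally semistable, or inducing a divisorial contraction, or a flopping contraction, or fake --- calls for its own derived (anti-)autoequivalence. On one side of each type one produces an equivalence $\Phi$ (for instance a spherical twist around a spherical object supported on the wall, or a twist composed with the derived dual $\mathbf{R}\shom(-,\cO_X)$ in the totally-semistable/Hilbert-to-Hilbert situations) such that $\Phi$ sends $\sigma_+$-stable objects of class $\vv$ to $\sigma_-$-stable objects of class $\Phi(\vv)=\pm\vv$; one then checks on open dense subsets (the loci of objects that remain stable on both sides, which by genericity and the symplectic-resolution input have complement of codimension $\geq 2$) that $\Phi$ induces the asserted birational map $\Phi_*$.

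The main obstacle, and the place where the argument is genuinely more delicate than in the primitive case, is showing that the locus of objects which are destabilised when crossing $\cW$ actually has codimension at least two in $M_{\sigma_\pm}(\vv)$ --- equivalently, that $\cW$ is never a genuine divisorial wall on the singular space in a way that obstructs the codimension-one isomorphism, or that when it is, the divisorial contraction is matched on both sides. In the smooth primitive case this is handled by a direct dimension count using the Mukai pairing on $\cH$; here $\vv=2\vv_p$ is non-primitive, the moduli space is singular, and strictly semistable objects form the singular locus, so the naive dimension estimates must be run on the symplectic resolution and then pushed down. Concretely I expect to pass to the symplectic resolution $\widetilde{M} \to M_{\sigma_\pm}(\vv)$, use that its second Betti/N\'eron--Severi data are controlled by $\vv_p^\perp$ inside the Mukai lattice (the isometry advertised in the abstract), and thereby reduce the codimension estimate to a lattice-theoretic statement about the rank of $\cH \cap \vv_p^\perp$; ruling out the bad cases should then amount to checking, type by type in the wall classification, that a would-be codimension-one component forces $\vv$ to fail the O'Grady-type hypothesis. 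Once the codimension-one claim is in place for each atomic wall, concatenating along the chosen path and composing the corresponding (anti-)autoequivalences yields the theorem.
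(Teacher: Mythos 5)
Your high-level skeleton (join $\sigma$ and $\tau$ by a path crossing finitely many walls, treat one wall at a time, produce a derived (anti-)autoequivalence per wall type, compose) is exactly the paper's reduction, but the step you identify as "the main obstacle" is set up incorrectly, and the actual new ingredient of the singular case is missing. First, you propose to show that the destabilised locus has codimension at least two, or to rule out genuine divisorial walls by a lattice argument on $\cH\cap\vv_p^\perp$ that would "force $\vv$ to fail the O'Grady-type hypothesis." That cannot work: divisorial walls do occur for O'Grady-type vectors --- Theorem \ref{thm:2nd-classification} classifies them (conditions (BN) and (LGU)), and the explicit examples in the last section exhibit them. The correct statement, and what the paper proves (Lemmas \ref{lem:dc4} and \ref{lem:dc6}, following the Brill--Noether and Li--Gieseker--Uhlenbeck cases of \cite[Theorem 1.1(b)]{BM13}), is that even across a divisorial wall the spherical twist identifies $M_{\sigma_+}(\vv)$ and $M_{\sigma_-}(\vv)$ in codimension one; the contracted divisor is present on both sides and is matched, not excluded. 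Relatedly, your verification mechanism --- check the map "on the loci of objects that remain stable on both sides" --- is vacuous for totally semistable walls, where no object of class $\vv$ stays $\sigma_0$-stable; the paper instead passes to the minimal class $\vv_0$ in the $G_\cH$-orbit, uses the composition of spherical twists of Lemma \ref{lem:ts3} to get maps $\Phi_\pm: M_{\sigma_\pm}(\vv_0)\dashrightarrow M_{\sigma_\pm}(\vv)$, and takes $\Phi_-\circ\Phi_0\circ\Phi_+^{-1}$.

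Second, your plan to "run the estimate on the symplectic resolution and push down" skips the point where the singular geometry genuinely bites. An isomorphism on an open subset of the stable locus with small complement does not by itself give an image with complement of codimension two, because the singular (strictly semistable) stratum has codimension exactly two. The paper's fix is the notion of a \emph{stratum preserving} birational map together with Proposition \ref{prop:codim-two}: one must also control the map at the generic point of $\sym^2 M_{\sigma_\pm}(\vv_p)$ (checked via Lemma \ref{lem:stratum}, using that derived (anti-)equivalences preserve extensions), so that after blowing up the singular loci one gets a birational map of the smooth resolutions whose domain contains an open piece of the exceptional divisor; only then can one apply \cite[Proposition 21.6]{GHJ03} upstairs and descend. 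Without this stratum-preserving input your codimension claim does not follow, so as written the argument has a genuine gap both in the divisorial-wall case and in the descent from the resolution.
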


This result is a generalisation of \cite[Theorem 1.1]{BM13}. Although the statements are quite similar, the singular version is more involved. For instance, in the primitive case, every moduli space under a generic stability condition is smooth and has trivial canonical class. Therefore, any birational map between two such moduli spaces naturally extends to an isomorphism in codimension one \cite[Proposition 21.6]{GHJ03}. However, this does not hold in the singular case. 

To fix this issue, we introduce the notation of a \emph{stratum preserving} birational map between two singular moduli spaces of O'Grady type and show that the behaviour of such birational maps in codimension one is as nice as it is in the smooth world; see Proposition \ref{prop:codim-two}. 

A refinement of Theorem \ref{thm:identify-cones}, which generalises \cite[Theorem 1.2]{BM13}, is the following

\begin{theorem*}[\ref{thm:every-model}]
Let $\vv$ be a Mukai vector of O'Grady type and $\sigma \in \stab(X)$ be any generic stability condition with respect to $\vv$. Then
\begin{enumerate}
\item We have a globally defined continuous \bmm map $ \ell: \stab(X) \to \ns (M_\sigma(\vv)) $, which is independent of the choice of $\sigma$. Moreover, for any generic stability condition $\tau \in \stab(X)$, the moduli space $M_\tau(\vv)$ is the birational model corresponding to $\ell_\tau$.
\item If $\cC \subset \stab(X)$ is the open chamber containing $\sigma$, then $\ell(\cC)=\amp(M_\sigma(\vv))$.
\item The image of $\ell$ is equal to $\bigcone(M_\sigma(\vv)) \, \cap \, \mov(M_\sigma(\vv))$. In particular, every $K$-trivial $\bQ$-factorial birational model of $M_\sigma(\vv)$ which is isomorphic to $M_\sigma(\vv)$ in codimension $1$ appears as a moduli space $M_\tau(\vv)$ for some generic stability condition $\tau \in \stab(X)$.
\end{enumerate}
\end{theorem*}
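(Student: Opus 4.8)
The plan is to follow the blueprint of \cite[Theorem 1.2]{BM13} closely, using the symplectic resolution to transport all the hard geometric inputs from the smooth case. First I would establish part (1): given Theorem \ref{thm:identify-cones}, for any two generic stability conditions $\sigma, \tau$ we have a birational map $M_\sigma(\vv) \dashrightarrow M_\tau(\vv)$ which is an isomorphism in codimension one (and, via the stratum-preserving refinement, identifies the N\'eron-Severi lattices canonically). This lets me glue the chamber-wise linearisation maps $\ell_\cC : \cC \to \ns(M_\cC(\vv))$ into a single map $\ell : \stab(X) \to \ns(M_\sigma(\vv))$; continuity across a wall $\cW$ is the delicate point and would be proved exactly as in \cite{BM13}, by looking at a stability condition $\sigma_0 \in \cW$ on the wall, showing the two adjacent moduli spaces both map to $M_{\sigma_0}(\vv)$ via birational contractions, and checking the determinant line bundle $\ell_{\sigma_0}$ is the common pullback. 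Independence of the basepoint $\sigma$ is then automatic from the codimension-one identification of $\ns$.

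Next, part (2) — that $\ell$ maps the open chamber $\cC$ of $\sigma$ onto $\amp(M_\sigma(\vv))$ — I would deduce by combining ampleness of $\ell_\sigma$ for each $\sigma \in \cC$ (the \bmm positivity result, which applies here because $M_\sigma(\vv)$ is projective with $\bQ$-factorial singularities) with the fact that $\ell$ is an open map on $\cC$ whose image is full-dimensional; the reverse inclusion follows because any wall-crossing strictly decreases positivity, so classes on $\partial\cC$ land on $\partial\amp$. Here I would need the isometry between $\vv^\perp$ and $\ns(M_\sigma(\vv))$ mentioned in the abstract, so that the local structure of $\stab(X)$ near a wall matches the divisorial geometry; this is presumably proved earlier in the paper.

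The heart of the matter is part (3): $\operatorname{im}(\ell) = \bigcone(M_\sigma(\vv)) \cap \mov(M_\sigma(\vv))$. The inclusion $\subseteq$ is the easier direction: every $M_\tau(\vv)$ is birational to $M_\sigma(\vv)$ and isomorphic in codimension one, so $\ell_\tau$ pulls back to a big class lying in the movable cone. For $\supseteq$ one runs the MMP-style argument of \cite{BM13}: start with an ample class $D$ on a birational model $N$ obtained by a finite sequence of flops, find a stability condition whose image is $D$, and show the corresponding moduli space is $N$. The key geometric input making this work in the singular setting is that $M_\sigma(\vv)$ admits a symplectic resolution $\tilde M \to M_\sigma(\vv)$; flops of $M_\sigma(\vv)$ correspond to flops of $\tilde M$, and the movable cone of a projective IHS manifold is cut out by its \emph{faces} coming from wall-crossing, so one can use the chamber decomposition on the resolution to locate the right stability condition.

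The main obstacle I expect is precisely the passage between $M_\sigma(\vv)$ and its resolution $\tilde M$ at the level of cones and wall-crossings: one must check that the \bmm map and the wall-and-chamber structure are compatible with the resolution, i.e. that a wall in $\stab(X)$ for $\vv$ that induces a flop of $\tilde M$ also induces a flop (not, say, a divisorial contraction or an isomorphism) of $M_\sigma(\vv)$, and conversely that every flop of $M_\sigma(\vv)$ arising in the MMP is realised by such a wall. This requires the detailed wall classification (the paper promises "a complete classification of walls") together with the stratum-preserving analysis of Proposition \ref{prop:codim-two}; once that dictionary is in place, the argument is a faithful adaptation of \cite[Section 6--7]{BM13}.
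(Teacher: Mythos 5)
Your outline for part (1) and the easy inclusion of part (3) matches the paper (gluing via Theorem \ref{thm:identify-cones} and the wall-compatibility Lemma \ref{lem:boundary-values}; bigness and movability of $\ell_\tau$ and $\ell_{\sigma_0}$ as in Lemma \ref{lem:image-bmm}), but there is a genuine gap in your strategy for the surjectivity statements, and it sits exactly where you yourself flag ``the main obstacle''. You propose to prove $\supseteq$ in part (3) by transporting the cone and flop structure through the symplectic resolution $\tilde{M}\to M_\sigma(\vv)$, invoking Markman-type facts about the movable cone of the IHS manifold $\tilde{M}$ (``the movable cone of a projective IHS manifold is cut out by its faces coming from wall-crossing''). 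This is precisely the input that is \emph{not} available here: the walls in $\stab(X)$ for $\vv$ control $M_\sigma(\vv)$, not $\tilde{M}$; $\pi^*\ns(M)$ is a proper sublattice of $\ns(\tilde{M})$, the exceptional divisor produces walls of $\mov(\tilde{M})$ with no counterpart on $M$, and birational models of $\tilde{M}$ are not a priori resolutions of Bridgeland moduli spaces. The paper explicitly avoids this route (see the remark following the proof, and the remark after Theorem \ref{Lagrangian}: the Weyl-chamber description of $\mov\cap\pos$ for the singular $M$ is unknown). Instead the paper argues directly on $M$: the image of $\ell$ is a union of closed chambers of $\bigcone\cap\mov$; if it stopped at an interior wall, that wall would be $\ell(\cW)$ for some wall $\cW\subset\stab(X)$, which cannot be fake (Lemma \ref{lem:fake-replacement}), cannot be flopping (the continuation crosses it, Lemma \ref{lem:boundary-values}), hence is divisorial --- but then $\ell(\cW)$ lies on the boundary of $\mov$ because classes beyond it are negative on the contracted curves, a contradiction. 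Your MMP-on-the-resolution argument would need a new dictionary between flops of $M$ and of $\tilde{M}$ and a Markman-type chamber description for the singular space, neither of which you supply.

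A second, smaller error occurs in part (2): your claim that ``any wall-crossing strictly decreases positivity, so classes on $\partial\cC$ land on $\partial\amp$'' is false for fake (totally semistable) walls. Such walls are genuine walls of the chamber decomposition of $\stab(X)$, yet their image under $\ell$ lies in the interior of the nef cone; this is exactly why the paper needs Lemma \ref{lem:fake-replacement}. Likewise, surjectivity of $\ell|_\cC$ onto $\amp(M_\sigma(\vv))$ does not follow from openness and full-dimensionality alone; the paper proves it (Lemma \ref{lem:full-cone}) using the isometry $\theta_\sigma:\vv^\perp\to\ns(M)$, Proposition \ref{prop:amp-pos}, and the covering-map argument of Bridgeland as in \cite[Proof of Theorem 1.2 (a)(b)(c)]{BM13}, steps your sketch does not account for.
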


In our situation, $M_\sigma(\vv)$ is a $K$-trivial $\bQ$-factorial variety with canonical (hence log-terminal) singularities. Therefore $M_\sigma(\vv)$ (together with an empty divisor) is a log minimal model. For such symplectic varieties which admit symplectic resolutions, the existence and termination of log-flips have been established in \cite[Corollary 1.4.1]{BCHM10} and \cite[Theorem 4.1]{LP14} respectively. Therefore the log minimal model program works in this case. In particular, every $K$-trivial $\bQ$-factorial birational model of $M_\sigma(\vv)$ which is isomorphic to it in codimension $1$ (in other words, every log minimal model which is log-MMP related to $M_\sigma(\vv)$) can be obtained through a finite sequence of log-flops. Theorem \ref{thm:every-model} shows that every such log minimal model has an interpretation as a Bridgeland moduli space $M_\tau(\vv)$ for some generic stability condition $\tau$. This picture is parallel to \cite[Theorem 1.2]{BM13} which deals with the case of primitive Mukai vectors.


As an application of Theorem \ref{thm:every-model}, we can also formulate a Torelli-type theorem for singular moduli spaces of O'Grady type, which is parallel to \cite[Corollary 1.3]{BM13}; see Corollary \ref{cor:torelli} for more details. 

The proof of Theorem \ref{thm:identify-cones} and Theorem \ref{thm:every-model} relies on a complete classification of walls, as stated in Theorem \ref{thm:1st-classification} and Theorem \ref{thm:2nd-classification}, which generalises \cite[Theorem 5.7]{BM13}. Although our proofs follow their approach very closely, the technical details are much more involved. The main difficulty is that the proof of \cite[Theorem 5.7]{BM13} uses many results on irreducible holomorphic symplectic manifolds which are not available to us in the singular world. Each time they use an argument which relies on smoothness in an implicit way, we have to either find a way around it or prove a new version that works in the singular case. One instance of this, which leads to an interesting by-product of the project, goes as follows.

The main ingredient for the \bmm map is the classical construction of determinant line bundles on $M_H(\vv)$; see \cite[Section 8.1]{HuLe10a}. Its algebraic version, which is often referred to as the Mukai morphism, is a map of lattices $\theta_\sigma: \vv^\perp \to \ns(M_\sigma(\vv))$ where the orthogonal complement is taken in the algebraic cohomology $H^*_{\mathrm{alg}}(X,\bZ)$. When $\vv$ is primitive, Yoshioka proved in \cite{Yos01} that $\theta_\sigma$ is an isometry with respect to the Mukai pairing on $H^*_{\mathrm{alg}}(X,\bZ)$ and the Beauville-Bogomolov pairing on $\ns(M_\sigma(\vv))$. 

However, when $\vv$ is a Mukai vector of O'Grady type, such a theorem for Bridgeland moduli spaces does not seem to exist in literature. In fact, to make sense of it, we have to find a well-defined pairing on the N\'{e}ron-Severi lattice of the moduli space in the first place. Luckily, a special case of it, on the level of cohomology, concerning only Gieseker moduli spaces, was proved in \cite{PeRa10a}. Using their approach, we can similarly define the bilinear pairing on $\ns(M_\sigma(\vv))$, or more generally on $H^2(M_\sigma(\vv))$. Although their proof of the isometry does not adapt immediately to the case of Bridgeland moduli spaces, all the key ideas are there and we are able to deduce the desired result using Fourier-Mukai transforms and deformation theory.

A necessary and crucial step in this project, as well as an interesting result in its own right, is the analogous result concerning the cohomological version of the Mukai morphism in the singular setting. In particular, we have

\begin{theorem*}[\ref{thm:hodge-isom}]
Let $\vv$ be a Mukai vector of O'Grady type and $\sigma \in \stab(X)$ be a generic stability condition with respect to $\vv$. Denote $M_\sigma(\vv)$ by $M$ and its symplectic resolution by $\pi:\tilde{M} \to M$. Then
\begin{enumerate}
\item The pullback map $\pi^*: H^2(M,\bZ) \to H^2(\tilde{M},\bZ)$ is injective and compatible with the (mixed) Hodge structures. In particular, $H^2(M,\bZ)$ carries a pure Hodge structure of weight two, and the restriction of the Beauville-Bogomolov quadratic form $\tilde{q}(-,-)$ on $H^2(\tilde{M},\bZ)$ defines a quadratic form $q(-,-)$ on $H^2(M,\bZ)$.
\item There exists a well-defined Mukai morphism $\theta_\sigma^{\mathrm{tr}}: \vv^{\perp,\mathrm{tr}} \to H^2(M,\bZ)$ induced by the (quasi-)universal family over $M_{\sigma}^{\mathrm{st}}(\vv)$, which is a Hodge isometry.
\end{enumerate}
\end{theorem*}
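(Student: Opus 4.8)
The plan is to transport the known results for the smooth case to our singular setting via the symplectic resolution $\pi:\tilde M\to M$, using the structure of $M$ as a moduli space of sheaves/complexes of O'Grady type.

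First I would establish part (1). The key geometric input is that $\tilde M$ is an irreducible holomorphic symplectic manifold (deformation equivalent to O'Grady's tenfold, a fact we are claiming elsewhere but which is not logically needed here — only that $\tilde M$ is smooth projective symplectic with $H^1(\tilde M,\mathcal O_{\tilde M})=0$), and that $\pi$ is a crepant resolution contracting the singular locus, which has codimension at least two in $M$. To show $\pi^*:H^2(M,\bZ)\to H^2(\tilde M,\bZ)$ is injective and strictly compatible with Hodge structures, I would first resolve $M$ rationally and use the decomposition theorem (or simply that $\pi$ is semismall and the fibres over the singular stratum are the expected ones, e.g.\ a single $\bP^2$-bundle structure on the exceptional divisor as in the O'Grady/Lehn--Sorger/Perego--Rapagnetta picture) to control $R\pi_*\bZ$ in low degrees. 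Concretely, $R^0\pi_*\bZ=\bZ$, $R^1\pi_*\bZ=0$, so the Leray spectral sequence gives an exact sequence showing $\pi^*$ on $H^2$ is injective with cokernel governed by $H^0(M,R^2\pi_*\bZ)$, which is spanned by the fundamental classes of the exceptional fibres; purity of the mixed Hodge structure on $H^2(M)$ then follows because $M$ has rational (indeed canonical, quotient-type) singularities, so $H^2(M,\bZ)$ injects into $H^2$ of any resolution with pure image. The restriction of $\tilde q$ to $\pi^*H^2(M,\bZ)$ then descends to a quadratic form $q$ on $H^2(M,\bZ)$; this is the definition, following \cite{PeRa10a}.

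Next, part (2). The Mukai morphism $\theta_\sigma^{\mathrm{tr}}:\vv^{\perp,\mathrm{tr}}\to H^2(M,\bZ)$ is defined by the classical determinant-line-bundle construction applied to a (quasi-)universal family over the stable locus $M_\sigma^{\mathrm{st}}(\vv)$: one sends a class $\aa\in\vv^\perp$ to $\det$ of the relevant complex, and checks that the resulting line bundle on $M_\sigma^{\mathrm{st}}(\vv)$ extends over all of $M$ (the complement has codimension $\geq 2$ and $M$ is normal) and is independent of the choice of quasi-universal family up to the usual twist, which is killed by orthogonality to $\vv$. That $\theta_\sigma^{\mathrm{tr}}$ lands in the $(1,1)$ part and is a morphism of Hodge structures is formal from the construction. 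The substantive claim is that it is an \emph{isometry}. Here my strategy is a deformation argument in the spirit of \cite{PeRa10a} combined with Fourier--Mukai invariance: the statement is invariant under the derived (anti-)autoequivalences producing the birational maps of Theorem~\ref{thm:identify-cones} — each such equivalence identifies $\vv^{\perp,\mathrm{tr}}$ compatibly and, via Proposition~\ref{prop:codim-two}, identifies the $H^2$'s isometrically (isomorphism in codimension one preserves $H^2$ and $q$) — so it suffices to prove it for one convenient $\sigma$, namely in the large-volume limit where $M_\sigma(\vv)=M_H(\vv)$ is the Gieseker moduli space. For Gieseker moduli spaces of O'Grady type the cohomological Mukai isometry is exactly \cite{PeRa10a}, after one checks that the Beauville--Bogomolov form used there agrees with the $q$ defined in part (1) via $\pi^*$ — which it does, since both are characterised by the Fujiki relation on $\tilde M$ together with restriction along $\pi$. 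Finally one should verify the Gieseker case itself by deformation: deform the pair $(X,\vv)$ in a family so that the generic member has a rank-one summand or is otherwise reducible to the analysis of \cite{PeRa10a}, using that $\tilde M$ forms a smooth family and $H^2$ is a local system, so the isometry property propagates.

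The main obstacle I anticipate is precisely the last point: making the Fourier--Mukai/deformation reduction rigorous. One must ensure that the universal family, and hence $\theta_\sigma^{\mathrm{tr}}$, behaves well in families — in particular that quasi-universal families exist and deform, that the symplectic resolution $\tilde M$ deforms simultaneously (this uses that $M$ has a unique symplectic resolution in codimension-controlled families, cf.\ Lehn--Sorger), and that the identification of $q$ with the Perego--Rapagnetta form is compatible with these deformations. The derived-equivalence invariance step is comparatively safe given Theorem~\ref{thm:identify-cones} and Proposition~\ref{prop:codim-two}, but it hinges on knowing that $\theta_\sigma^{\mathrm{tr}}$ is \emph{compatible} with the action of autoequivalences on $\vv^\perp$ — i.e.\ that the determinant construction commutes with Fourier--Mukai transform — which is standard in the smooth case but needs care here because the universal family only lives over the stable locus; extending the compatibility across the exceptional divisor is where the codimension-two normality argument does the work.
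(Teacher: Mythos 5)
Your reduction of the isometry statement to the Gieseker chamber via wall-crossing is circular in the logical architecture of this paper. You propose to transport the isometry along the birational maps of Theorem \ref{thm:identify-cones} until you reach the large-volume limit, and then quote \cite{PeRa10a}. But Theorem \ref{thm:identify-cones} (and the classification of walls it rests on) is proved using the \bmm map machinery, whose construction and positivity statements (the map $\theta_\cC$ in the definition of $\ell_\cC$, Proposition \ref{prop:ampleness}, Propositions \ref{old-semiampleness} and \ref{prop:semiampleness}) invoke Corollary \ref{cor:algebraic-mukai}\eqref{item:am3} -- e.g.\ the ampleness proof needs $q(\ell_\sigma)=w_\sigma^2>0$, which is exactly the isometry -- and that corollary is deduced from the very theorem you are trying to prove. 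So the wall-crossing shortcut is not available at this stage. The actual proof avoids it: for a general generic $\sigma$ one uses the Fourier--Mukai equivalence of \cite[Lemma 7.3]{BM12} to identify $M_{X,\sigma}(\vv)$ with a \emph{twisted} Gieseker moduli space on a possibly different K3 surface $(Y,\alpha)$, and then deforms the twisted polarised surface to an untwisted one (Propositions \ref{prop:deform-no-mukai} and \ref{prop:deform-with-mukai}), keeping the polarisation generic and avoiding $(-2)$-class problems via the lattice-theoretic Lemma \ref{lem:technical}; the isometry is an open-and-closed condition on the base thanks to the local relative moduli spaces of Proposition \ref{prop:local-deformation} and the compatibility Lemmas \ref{lem:change-cocycle}--\ref{lem:change-mukai}, and it holds at the untwisted endpoint by \cite[Theorem 1.7]{PeRa10a}. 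Your final suggestion to ``verify the Gieseker case itself by deformation'' is also misplaced: the untwisted Gieseker case is precisely what \cite{PeRa10a} already proves; the deformation is needed to get rid of the Brauer twist, not to reprove their theorem.

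A secondary gap: you treat the well-definedness of $\theta_\sigma^{\mathrm{tr}}$ as ``the determinant line bundle extends across a codimension-two locus on a normal variety''. That argument only concerns algebraic classes; $\theta_\sigma^{\mathrm{tr}}$ is defined on all of $\vv^{\perp,\mathrm{tr}}$ (including transcendental classes) via the cohomological integral functor of the quasi-universal family, which a priori lands in $H^2(M_\sigma^{\mathrm{st}}(\vv),\bQ)$, and the nontrivial point is lifting it to the integral $H^2(M,\bZ)$. In the paper this uses the GIT/relative-moduli construction of \cite[Section 3.2]{PeRa10a} (hence again the local relative moduli spaces) for twisted Gieseker spaces, together with the Fourier--Mukai comparison diagram for general $\sigma$; your proposal does not supply a substitute for this step. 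Part (1) of your argument is essentially fine and matches the paper, which simply cites \cite[Lemma 3.1]{PeRa10a}.
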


We point out that $\vv^{\perp,\mathrm{tr}}$ in the above theorem denotes the orthogonal complement in the total cohomology $H^*(X,\bZ)$. 

If $\sigma$ lies in the Gieseker chamber then Theorem \ref{thm:hodge-isom} is precisely \cite[Theorem 1.7]{PeRa10a}. The more general statement follows by combining the ideas of Perego and Rapagnetta with some new ingredients inspired by \cite{BM13}. More precisely, for an arbitrary $\sigma \in \stab(X)$, we use a Fourier-Mukai transform to identify $M_\sigma(\vv)$ with a moduli space of twisted sheaves on another K3 surface. Then we deform the underlying twisted K3 surface to an untwisted K3 surface where the isometry has been proved in \cite[Theorem 1.7]{PeRa10a}. The observation in \cite{PeRa10a} that this isometry is preserved under these two operations finishes the proof. However, the presence of a non-trivial Brauer class makes the deformation argument more complicated. For instance, the issue of ampleness caused by $(-2)$-classes and genericness of the polarisation, turn out to be far from straightforward.

As an immediate consequence, we obtain the algebraic version of the Theorem \ref{thm:hodge-isom} as follows.

\begin{corollary*}[\ref{cor:algebraic-mukai}]
Under the assumptions of Theorem \ref{thm:hodge-isom}, we have
\begin{enumerate}
\item Lefschetz $(1,1)$ theorem holds for $M$. That is, $\ns(M)=H^{1,1}(M, \bZ)$;
\item The restriction of the pullback map $\pi$ on $\ns(X)$ is an injective map $\pi^*:\ns(M) \to \ns(\tilde{M})$, which is compatible with the Beauville-Bogomolov pairings $q(-,-)$ and $\tilde{q}(-,-)$;
\item The restriction of the Mukai morphism $\theta_\sigma^{\mathrm{tr}}$ on the algebraic Mukai lattice is an isometry $\theta_\sigma: \vv^\perp \to \ns(M)$. In particular, $q(-,-)$ is a non-degenerate pairing on $\ns(M)$ with signature $(1,\rho(X))$.
\end{enumerate}
\end{corollary*}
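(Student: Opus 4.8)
The plan is to deduce Corollary \ref{cor:algebraic-mukai} from Theorem \ref{thm:hodge-isom} by restricting everything to the algebraic (i.e. $(1,1)$) part of the relevant Hodge structures. First I would establish part (1): since $M$ carries a pure Hodge structure of weight two by Theorem \ref{thm:hodge-isom}(1), the usual Lefschetz $(1,1)$ argument applies once we know that the cycle class map $\ns(M) \to H^{1,1}(M,\bZ)$ is surjective. The inclusion $\ns(M) \subseteq H^{1,1}(M,\bZ)$ is automatic, and for the reverse inclusion I would exploit the injection $\pi^*: H^2(M,\bZ) \hookrightarrow H^2(\tilde M,\bZ)$, which is a morphism of Hodge structures: an integral $(1,1)$-class on $M$ pulls back to an integral $(1,1)$-class on $\tilde M$, which is algebraic on $\tilde M$ by the classical Lefschetz theorem (valid on the smooth irreducible holomorphic symplectic manifold $\tilde M$); then one descends the resulting line bundle, or rather its first Chern class, along $\pi$ using that $\pi$ is a resolution with connected fibres so that $\pi_* \mathcal{O}_{\tilde M} = \mathcal{O}_M$ and exceptional divisors on $\tilde M$ do not meet the generic fibre. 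Care is needed here because $M$ is singular, so I would phrase the descent at the level of Weil divisors / reflexive sheaves, or alternatively argue purely cohomologically by identifying $\ns(M)$ with the image in $\ns(\tilde M)$ of classes orthogonal to the exceptional locus.

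Next, part (2) is then formal: the map $\pi^*: H^2(M,\bZ) \to H^2(\tilde M,\bZ)$ from Theorem \ref{thm:hodge-isom}(1) restricts to the $(1,1)$-parts, which by part (1) are exactly $\ns(M)$ and $\ns(\tilde M)$; injectivity is inherited, and compatibility of $q(-,-)$ with $\tilde q(-,-)$ is built into the very definition of $q$ as the restriction of $\tilde q$ along $\pi^*$. For part (3), I would restrict the Hodge isometry $\theta_\sigma^{\mathrm{tr}}: \vv^{\perp,\mathrm{tr}} \to H^2(M,\bZ)$ to the algebraic sublattices. The key point is to identify $(\vv^{\perp,\mathrm{tr}})^{1,1}$ with $\vv^\perp$, the orthogonal complement taken inside $H^*_{\mathrm{alg}}(X,\bZ)$: indeed $\vv$ is itself algebraic, so a $(1,1)$-class in $H^*(X,\bZ)$ orthogonal to $\vv$ is precisely an element of $H^*_{\mathrm{alg}}(X,\bZ)$ orthogonal to $\vv$, using that $H^*_{\mathrm{alg}}(X,\bZ) = H^*(X,\bZ)^{1,1} \cap H^*(X,\bZ)$ for a K3 surface. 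Since a Hodge isometry carries the $(1,1)$-part isomorphically onto the $(1,1)$-part, and the latter on the target is $\ns(M)$ by part (1), we get that $\theta_\sigma: \vv^\perp \to \ns(M)$ is an isometry. Non-degeneracy of $q$ on $\ns(M)$ and the signature $(1,\rho(X))$ then follow by transport of structure from $\vv^\perp$: the Mukai lattice $H^*_{\mathrm{alg}}(X,\bZ)$ has signature $(2,\rho(X))$, $\vv$ has positive square (as $\vv^2 = 4\vv_p^2 = 8 > 0$), so $\vv^\perp$ has signature $(1,\rho(X))$.

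The main obstacle I anticipate is part (1), specifically the descent step showing that every integral $(1,1)$-class on the singular space $M$ is represented by an honest divisor class, i.e. that $\ns(M)$ — defined via line bundles or Cartier/Weil divisors on $M$ — really fills out all of $H^{1,1}(M,\bZ)$. On a smooth variety this is immediate from the exponential sequence, but $M$ is singular, so one must be careful about which version of "Néron–Severi" is intended and whether the exponential sequence still computes it (it does, for the analytic space $M$, via $H^1(M,\mathcal{O}_M^*)$, but one should check $H^2(M,\mathcal{O}_M)$ behaves as expected and that $q$-factoriality or the structure of the symplectic singularities is enough to pass between $\Pic$ and the group of Weil divisor classes). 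I would handle this by working with the resolution: an integral $(1,1)$-class $\alpha$ on $M$ pulls back to an algebraic class $\pi^*\alpha$ on $\tilde M$, hence $\pi^*\alpha = c_1(\tilde L)$ for a line bundle $\tilde L$; because $\pi^*\alpha$ is orthogonal (with respect to $\tilde q$) to all $\pi$-exceptional classes — which one sees from the projection formula and $\pi_*\pi^*\alpha = \alpha$ — the line bundle $\tilde L$ is trivial on the fibres of $\pi$, so by the theorem on cohomology and base change (or Grauert's theorem, using that $M$ has rational singularities so $R\pi_*\mathcal{O}_{\tilde M} = \mathcal{O}_M$) it descends to a reflexive rank-one sheaf, equivalently a Weil divisor class, on $M$ whose class is $\alpha$. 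Everything else in the corollary is then bookkeeping with signatures and the already-established Theorem \ref{thm:hodge-isom}.
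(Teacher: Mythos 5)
Your proposal is correct and follows essentially the same route as the paper: part (1) by pulling an integral $(1,1)$-class back to the resolution $\tilde{M}$, applying the classical Lefschetz $(1,1)$ theorem there, checking via the projection formula that the resulting line bundle has degree zero on the curves contracted by $\pi$, and descending it to $M$ (the paper checks triviality on a generic rational curve fibre of the exceptional divisor and invokes normality of $M$, rather than your base-change/rational-singularities phrasing, but this is the same idea); parts (2) and (3) then follow, as in the paper, by taking $(1,1)$-components of $\pi^*$ and of the Hodge isometry $\theta_\sigma^{\mathrm{tr}}$, with the signature count on $\vv^\perp$ as you give it.
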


We remark that it is only the third statement in the above corollary which is needed in our study of the \bmm map. However, we do not know a direct proof of this statement which does not utilise the cohomological Mukai morphism; the reason being that our proof involves a deformation argument. Unlike cohomology groups which stay constant in a family as a local system, the N\'{e}ron-Severi groups do not behave well under deformations. 

As a consequence of Theorem \ref{thm:hodge-isom}, we obtain a generalisation of \cite[Theorem 1.6]{PeRa10a} from Gieseker moduli spaces to Bridgeland moduli spaces. The proof combines the arguments in \cite{PeRa10a} and the deformation techniques developed in Section \ref{mainpf} below.

\begin{corollary*}[\ref{cor:deformation-equivalent}]
Let $\vv$ be a Mukai vector of O'Grady type and $\sigma \in \stab(X)$ be any generic stability condition with respect to $\vv$. Then the symplectic resolution of $M_\sigma(\vv)$ is deformation equivalent to the irreducible holomorphic symplectic manifold constructed by O'Grady in \cite{OG99a}.
\end{corollary*}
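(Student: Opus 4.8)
The plan is to reduce, through a Fourier--Mukai transform followed by a deformation, to the case of an ordinary Gieseker moduli space of O'Grady type on a K3 surface, where the statement is \cite[Theorem 1.6]{PeRa10a}. Throughout put $M := M_\sigma(\vv)$ and let $\pi\colon \tilde M \to M$ be its symplectic resolution; observe that $\tilde M$ is a smooth projective holomorphic symplectic variety, being a resolution of the projective variety $M$ onto which the symplectic form carried by $M_\sigma^{\mathrm{st}}(\vv)$ extends, by the very meaning of a symplectic resolution.

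\emph{Step 1: reduction to a twisted Gieseker moduli space.} If $\sigma$ lies in the Gieseker chamber for $\vv$, then $M$ is itself a Gieseker moduli space $M_H(\vv)$ with $H$ generic and the claim is \cite[Theorem 1.6]{PeRa10a}. For an arbitrary generic $\sigma$, I would run the first part of the proof of Theorem \ref{thm:hodge-isom}: a suitable Fourier--Mukai equivalence identifies $M$ with a moduli space $M_{H'}(\vv')$ of $\alpha'$-twisted $H'$-Gieseker-semistable sheaves on a twisted K3 surface $(X',\alpha')$, where $\vv'$ is again of O'Grady type and $H'$ is generic with respect to $\vv'$. Being an isomorphism of projective varieties, it lifts (through the functorial resolution procedure) to an isomorphism of the symplectic resolutions, so it suffices to prove the statement for $M_{H'}(\vv')$.

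\emph{Step 2: deformation to the untwisted case.} Here I would reuse the deformation built in Section \ref{mainpf}, which moves the twisted K3 surface $(X',\alpha')$ to an untwisted one $(X'',\cO_{X''})$ while keeping the Mukai vector algebraic of O'Grady type and the polarisation generic, and would simply carry the symplectic resolutions along it. This produces a proper family $\cM \to S$ over a connected base $S$ having one fibre isomorphic to $M_{H'}(\vv')$ and another fibre an ordinary Gieseker moduli space $M_{H''}(\vv'')$ of O'Grady type on the untwisted K3 surface $X''$. Every fibre has the same local analytic singularity type -- that of an O'Grady-type moduli space, cf.\ \cite{OG99a,LS06} -- so the explicit Lehn--Sorger / O'Grady resolution procedure applies fibrewise and assembles into a relative symplectic resolution $\widetilde{\cM}\to\cM$ with $\widetilde{\cM}\to S$ smooth and proper. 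By Ehresmann's fibration theorem all fibres of $\widetilde{\cM}\to S$ are diffeomorphic, hence deformation equivalent; by \cite[Theorem 1.6]{PeRa10a} the fibre over the untwisted point is deformation equivalent to O'Grady's ten-dimensional example, and therefore so is $\widetilde{M_{H'}(\vv')}\cong\tilde M$. (In particular $\tilde M$, being a compact K\"ahler holomorphic symplectic manifold deformation equivalent to an irreducible holomorphic symplectic manifold, is itself irreducible holomorphic symplectic.)

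The main obstacle is entirely in Step 2 and is the bookkeeping forced by the non-trivial Brauer class: one must exhibit a path in the moduli of twisted K3 surfaces along which, simultaneously, (i) the Mukai vector remains algebraic and of O'Grady type, (ii) there is a relatively ample class that stays generic with respect to it at every point of $S$ -- in particular avoiding the walls cut out by $(-2)$-classes, which is exactly the point where the twisted situation differs from the untwisted one (cf.\ the remark following Theorem \ref{thm:hodge-isom}) -- and (iii) the fibrewise resolutions genuinely assemble into a morphism that is smooth and proper over all of $S$. Conditions (ii) and (iii) form the technical core of the argument; happily, they are exactly the features the construction of Section \ref{mainpf} is designed to provide, so the genuinely new point is the observation that the same family which proves the Hodge isometry of Theorem \ref{thm:hodge-isom} also carries the symplectic resolutions, and that the argument of \cite[Theorem 1.6]{PeRa10a} goes through relatively and in the presence of a twist.
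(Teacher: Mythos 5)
Your Step 1 matches the paper's reduction via \cite[Lemma 7.3]{BM12}, and the overall strategy (Fourier--Mukai, then deform the twist away, then quote \cite[Theorem 1.6]{PeRa10a}) is the right one. The gap is in Step 2, where you assert that the deformation of Section \ref{mainpf} ``produces a proper family $\cM\to S$'' together with a relative symplectic resolution $\widetilde{\cM}\to S$, smooth and proper, to which you then apply Ehresmann. The construction of Section \ref{mainpf} does not provide a global relative moduli space over $S$: Proposition \ref{prop:local-deformation} only gives relative moduli spaces over (\'etale or analytic) neighbourhoods $U_{s_0}$ of each point, because the GIT construction requires choosing a uniform \v{C}ech $2$-cocycle representation of the B-fields over that neighbourhood, and these choices need not be compatible on overlaps; the paper states explicitly that these local families of singular moduli spaces ``do not necessarily glue into a global family.'' So there is no single $\cM\to S$ (let alone $\widetilde{\cM}\to S$) available, and Ehresmann cannot be invoked as you propose. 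Your parenthetical hope that conditions (ii)--(iii) ``are exactly the features the construction of Section \ref{mainpf} is designed to provide'' is accurate for genericity of the polarisation, but not for the global assembly of the family or of its resolution.

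The actual proof circumvents this by a local-plus-identification argument: over each $U_{s_0}$ one has an equisingular local family, and \cite[Proposition 2.17]{PeRa10a} is the input showing that the fibrewise symplectic resolutions within one local family are deformation equivalent (this is also the citation needed in place of your unproved claim that the Lehn--Sorger/O'Grady blow-up assembles fibrewise into a smooth proper relative resolution); then Lemmas \ref{lem:change-cocycle} and \ref{lem:change-B} show that the moduli spaces attached to the same fibre $(X_s,H_s,B_s)$ by different local families -- different cocycle representatives, or B-fields differing by a lift of the trivial Brauer class -- are isomorphic, hence have isomorphic resolutions. Chaining these local deformation equivalences along the connected base $S$ reaches the untwisted fibre, where \cite[Theorem 1.6]{PeRa10a} applies. (The paper also records a shorter alternative route, via Theorem \ref{thm:identify-cones} and Huybrechts' theorem that birational irreducible holomorphic symplectic manifolds are deformation equivalent, which you did not take.)
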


This result in particular implies that, by resolving singular moduli spaces of Bridgeland semistable objects on K3 surfaces, we cannot get any new deformation types of irreducible holomorphic symplectic manifolds other than the one discovered by O'Grady in \cite{OG99a}. It is somewhat disappointing, but supports the long-standing belief that deformation types of irreducible holomorphic symplectic manifolds are rare.

\subsection{Outline of the paper}

In Section \ref{singmodsp}, we collect together the necessary properties of moduli spaces of O'Grady type that we will need. After briefly mentioning their stratifications and resolutions, we study birational maps between them and state Theorem \ref{thm:hodge-isom}. We also give a brief account of various cones of divisors on these moduli spaces. 

Section \ref{mainpf} is devoted to the proof of Theorem \ref{thm:hodge-isom}. We provide some lemmas on the deformations of twisted polarised K3 surfaces, as well as on the existence of local relative moduli spaces. The main proof comes after all these lemmas. 

In Section \ref{localbmmap}, we briefly review the \bmm map constructed in \cite{BM12}, but only from the aspect that will become important in our later discussion. We also use the \bmm map to generalise the ampleness results proved in \cite{BM12}. We refer interested readers to \cite[Section 3]{BM12} for the original construction, which provides a very conceptual way to understand the positivity lemma there. 

We state our classification theorems of potential walls in Section \ref{classificationresults}; see Theorems \ref{thm:1st-classification} and \ref{thm:2nd-classification}. We also establish the birational maps relating moduli spaces for two chambers separated by a wall; see Theorem \ref{thm:identify-cones}. In this section we only state the results, while leaving all proofs for next section. It is worth pointing out that our first classification theorem is true for arbitrary Mukai vectors, while the second classification theorem only works for Mukai vectors of O'Grady type. 

Section \ref{classificationproofs} is devoted to the proof of all results in the section 5. We try to make our proofs short and avoid repeating any existing arguments by making many references to results in \cite[Section 6-9]{BM13}. Readers who are not interested in proofs can safely skip this section without affecting the coherence of logic. 

In Section \ref{globalbmmap}, we describe and prove our main result. In particular, Theorem \ref{thm:every-model} provides a precise relationship between wall crossings on the stability manifold and the birational geometry of the corresponding moduli spaces. 

Our main references of the paper are \cite{BM12,BM13}. Since our presentation closely follows theirs, all background knowledge required for this paper are already included in the first few sections of those. Nevertheless, we recommend readers the following references for general knowledge of some relevant topics: \cite{HuLe10a} for moduli spaces of sheaves, \cite[Part 3]{GHJ03} for geometry of irreducible holomorphic symplectic manifolds, \cite{Bri08} for stability conditions on K3 surfaces, and \cite{Cal00} for twisted sheaves. 

\begin{acknowledgements}
We are most grateful to Arend Bayer and Alastair Craw for all their help, support and encouragement throughout this project. Special thanks to Daniel Huybrechts for his invaluable advice and guidance at various stages of this work. We also thank Christian Lehn, S\"{o}nke Rollenske and K\={o}ta Yoshioka for kindly answering our questions, and the referees for their very helpful comments. Z. Z. would also like to thank Jun Li for his initial suggestion of looking into this topic. C. M. is supported by an EPSRC Doctoral Prize Research Fellowship Grant EP/K503034/1 and Z. Z. is supported by an EPSRC Standard Research Grant EP/J019410/1. We also appreciate the support from the University of Bonn, the Max Planck Institute for Mathematics, and the SFB/TR-45 during the initial stage of this project as well as the Hausdorff Research Institute for Mathematics for its conclusion. 
\end{acknowledgements}

\section{Singular Moduli Spaces of O'Grady Type}\label{singmodsp}

We start our discussion by collecting some useful properties of the singular moduli spaces of O'Grady type, which will be the central geometric objects that are studied in the whole paper. We will see that despite of the singularities, these moduli spaces behave very similar to smooth moduli spaces, in the sense that many nice properties of smooth moduli spaces can be generalised to these with some extra care of the singular loci. After introducing necessary notations and background materials, we will mainly focus on two aspects of these moduli spaces: birational maps between them and Mukai morphisms on their cohomology. After that, we will briefly mention various cones of divisors on singular moduli spaces of O'Grady type.

\subsection{Moduli spaces and symplectic resolutions}

We start by recalling the basic notion of a Bridgeland moduli space. Let $X$ be a projective K3 surface and $\vv \in H^*_{\mathrm{alg}}(X, \bZ)$ be a Mukai vector. Throughout this paper we will always assume $\vv^2>0$. Moreover, there is a unique way to write $\vv=m\vv_p$ for some positive integer $m$ and primitive class $\vv_p \in H^*_{\mathrm{alg}}(X, \bZ)$. We say $m$ is the \emph{divisibility} and $\vv_p$ is the \emph{primitive part} of $\vv$. Let $\sigma \in \stab(X)$ be a Bridgeland stability condition in the distinguished component of the stability manifold. The stability manifold $\stab(X)$ comes with a wall and chamber structure with respect to $\vv$ as described above (see also \cite[Section 9]{Bri08} and \cite[Proposition 2.3]{BM12}), and we say $\sigma \in \stab(X)$ is \emph{generic} if it does not lie on any wall. 

It was proven in \cite[Theorem 1.3]{BM12} (which generalises \cite[Theorem 0.0.2]{MYY11b}) that, for a stability condition $\sigma \in \stab(X)$ which is generic with respect to $\vv$, there exists a coarse moduli space $M_{X,\sigma}(\vv)$, which parametrises the S-equivalence classes of $\sigma$-semistable objects of class $\vv$ on $X$. Furthermore, it is a normal projective irreducible variety with $\bQ$-factorial singularities. By \cite[Theorem 2.15]{BM13} (or originally \cite{Yos01,Toda08}), $M_{X, \sigma}(\vv)$ is non-empty, and a generic point of it represents a $\sigma$-stable object. 

A classical theorem, originally proved by Mukai in \cite{Muk84} (see also \cite[Theorem 6.10]{BM12} and \cite[Theorem 3.6]{BM13}), says that when $\sigma$ is generic and $\vv$ is primitive, the moduli space $M_{X,\sigma}(\vv)$ is an irreducible holomorphic symplectic manifold, which parametrises $\sigma$-stable objects of class $\vv$ only. However, when $\vv$ is non-primitive, it is proved in \cite[Theorem 6.2]{KaLeSo06a} for Gieseker moduli spaces and \cite[Theorem 3.10]{BM13} for Bridgeland moduli spaces, that $M_{X, \sigma}(\vv)$ has symplectic singularities.

For simplicity, we often drop the K3 surface $X$ from the notation when it is clear from the context. In fact, the moduli space only depends on the choice of the chamber $\cC$ containing $\sigma$. Therefore we sometimes also denote the moduli space for any stability condition contained in a chamber $\cC$ by $M_{X,\cC}(\vv)$, or simply $M_\cC(\vv)$. For the convenience of later discussion, we also make the following definition.

\begin{definition}
\label{def:ogrady}
We say a Mukai vector $\vv \in H^*_{\mathrm{alg}}(X,\bZ)$ is of \emph{O'Grady type} if it can be written as $\vv=m\vv_p$, where $m=2$ and $\vv_p^2=2$. We say a moduli space $M_{X,\sigma}(\vv)$ is of \emph{O'Grady type} if $\sigma$ is a generic stability condition, and $\vv$ is a Mukai vector of O'Grady type. 
\end{definition}

The importance of this particular type of moduli spaces lies in the study of symplectic resolutions of $M_{X,\sigma}(\vv)$. In \cite{OG99a}, O'Grady constructed a symplectic resolution of a Gieseker moduli space with Mukai vector $\vv=(2,0,-2)$ and showed that it was not deformation equivalent to any existing example of homomorphic symplectic manifolds at the time. In \cite{LS06}, Lehn and Sorger generalised the result to arbitrary Gieseker moduli spaces of O'Grady type, and gave a slightly different description of their symplectic resolutions. It was proved in \cite{PeRa10a} that all these symplectic resolutions are in fact deformation equivalent to the one constructed by O'Grady. In \cite{KaLeSo06a}, it was proved that for a generic Gieseker moduli space with any other non-primitive Mukai vector, a symplectic resolution does not exist. 

By using the techniques developed in \cite[Section 7]{BM12}, we can easily generalise the existence of symplectic resolutions to Bridgeland moduli spaces with the following proposition. Later in Corollary \ref{cor:deformation-equivalent}, we will show that all these symplectic resolutions are still deformation equivalent to the one constructed by O'Grady, hence do not provide new deformation type of irreducible holomorphic symplectic manifolds.

\begin{proposition}
Let $X$ be a projective K3 surface and $\vv=m \vv_p \in H^*_{\mathrm{alg}}(X,\bZ)$ be a Mukai vector with $m \geq 2$, $\vv_p$ primitive and $\vv_p^2>0$. Let $\sigma \in \stab(X)$ be generic with respect to $\vv$. Then
\begin{itemize}
\item If $m=2$ and $\vv_p^2=2$, then $M_\sigma(\vv)$ admits a symplectic resolution;
\item If $m > 2$ or $\vv_p^2 > 2$, then $M_\sigma(\vv)$ does not admit a symplectic resolution.
\end{itemize}
\end{proposition}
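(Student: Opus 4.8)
The plan is to reduce everything to the known Gieseker-moduli-space case by moving along a path in the stability manifold. First I would invoke the structural results from \cite[Section 7]{BM12}: for a generic $\sigma \in \stab(X)$ and the given Mukai vector $\vv = m\vv_p$, there is a Fourier--Mukai equivalence $\Phi: \cD(X) \to \cD(X')$ (possibly to a twisted K3 surface, or after further wall-crossings, to an untwisted one) taking $\sigma$ to a stability condition $\sigma'$ for which $M_{\sigma'}(\Phi_* \vv)$ is a Gieseker moduli space $M_{H'}(\vv')$ with $H'$ generic with respect to $\vv'$. Since an equivalence of derived categories induces an isomorphism of the corresponding moduli spaces, the question of whether $M_\sigma(\vv)$ admits a symplectic resolution is unchanged; so it suffices to settle the claim for Gieseker moduli spaces.

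For Gieseker moduli spaces the two bullets are exactly the content of the literature already cited in the introduction: when $m=2$ and $\vv_p^2 = 2$, Lehn--Sorger \cite{LS06} (generalising O'Grady \cite{OG99a}) construct an explicit symplectic resolution $\tilde M \to M_{H'}(\vv')$ obtained by blowing up the (reduced) singular locus; when $m > 2$ or $\vv_p^2 > 2$, Kaledin--Lehn--Sorger \cite[Theorem 6.2 / main theorem]{KaLeSo06a} prove that $M_{H'}(\vv')$ has no symplectic resolution (the singularities are too bad — the singular locus has the wrong codimension / the transverse slice is not of the type admitting a symplectic resolution). Thus I would simply cite these results after the reduction. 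One should be slightly careful that the non-existence statement in \cite{KaLeSo06a} is a statement about the local analytic structure of the singularities, which is preserved under the derived equivalence (the equivalence is an isomorphism of the moduli spaces as schemes/analytic spaces), so transporting it is legitimate.

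The structure of the argument is therefore: (1) produce the FM equivalence identifying $M_\sigma(\vv)$ with a twisted or untwisted Gieseker moduli space — this is where one cites \cite[Section 7, esp.\ Lemma/Proposition on generic polarisations]{BM12} and the fact that genericity of $\sigma$ translates to genericity of the polarisation; (2) if one lands on a twisted K3 $(X', \alpha')$, note that the relevant statements of \cite{LS06} and \cite{KaLeSo06a} either hold verbatim in the twisted setting or can be reduced to the untwisted case by a further deformation/equivalence, or alternatively one chooses the equivalence in step (1) so that $X'$ is untwisted whenever $\vv_p$ is suitable; (3) invoke \cite{LS06} and \cite{KaLeSo06a} for the two bullets. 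I expect step (1) — precisely controlling which chamber $\sigma$ lies in and ensuring the resulting polarisation on $X'$ is $\vv'$-generic (so that the cited Gieseker results apply without modification) — to be the main obstacle, since it requires the full force of the wall-and-chamber analysis of \cite{BM12}; the symplectic-resolution input itself is then entirely off the shelf. Since only the existence half (the first bullet) is used later in the paper, one could even present the proof emphasising that half, with the non-existence half recorded for completeness.
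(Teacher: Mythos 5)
Your proposal follows essentially the same route as the paper: reduce via the Fourier--Mukai equivalence of \cite[Lemma 7.3]{BM12} to a (twisted) Gieseker moduli space, then quote \cite{LS06} for existence and \cite{KaLeSo06a} for non-existence. The one point you leave open --- why those arguments apply to twisted sheaves --- is settled in the paper by citing Lieblich's results \cite[Propositions 2.3.3.6 and 2.2.4.9]{Lie07} that the GIT construction and local deformation theory of twisted sheaves agree with the untwisted case, which is precisely the first of the alternatives you sketch (and note that, contrary to your expectation, step (1) is not an obstacle: genericity of the polarisation comes for free from \cite[Lemma 7.3]{BM12}).
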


\begin{proof}
By \cite[Lemma 7.3]{BM12} and the discussion after that, there exists a twisted K3 surface $(Y, \alpha)$ where $\alpha \in \br(Y)$ and a derived equivalence $\Phi: \cD(X) \to \cD(Y, \alpha)$ in the form of a Fourier-Mukai transform, such that $\Phi$ induces an isomorphism $M_{X,\sigma}(\vv) \cong M_{Y,\alpha,H}(-\Phi(\vv))$, where $M_{Y,\alpha,H}(-\Phi(\vv))$ is the moduli space of $\alpha$-twisted Gieseker $H$-semistable locally free sheaves on $Y$. Hence it suffices to prove the claims for twisted Gieseker moduli spaces. Therefore without loss of generality, we replace the Bridgeland moduli space in question by a twisted Gieseker moduli space $M_{X,\alpha,H}(\vv)$.

By \cite[Proposition 2.3.3.6]{Lie07}, there is a GIT construction for the twisted Gieseker moduli spaces, which is precisely the same as in the case of untwisted Gieseker moduli spaces. And by \cite[Proposition 2.2.4.9]{Lie07}, the local deformation theory of twisted sheaves is also the same as that of untwisted sheaves. Therefore the argument in \cite{LS06} shows that $M_{X,\alpha,H}(\vv)$ admits a symplectic resolution as the blowup of its singular locus when $m=2$ and $\vv_p^2=2$. And the argument in \cite{KaLeSo06a} shows that $M_{X,\alpha,H}(\vv)$ has no symplectic resolution when $m>2$ or $\vv_p^2>2$. 
\end{proof}

The existence of symplectic resolutions is critical for most results in the present paper. The following proposition is such an example. One property of irreducible holomorphic symplectic manifolds used in \cite{BM13} is that, for a divisorial contraction on an irreducible holomorphic symplectic manifold, the image of the contracted divisor has codimension exactly two, which is a special case of \cite[Proposition 1.4]{Nam01}, \cite[Theorem 1.2]{Wie03} or \cite[Lemma 2.11]{Kal06}. For our purpose, we need a version of this result in singular case. Thanks to the existence of symplectic resolutions, the same result can be easily proved for moduli spaces of O'Grady type. But we nevertheless state it under a more general setup as follows.

\begin{proposition}
\label{prop:symp-divisor}
Let $M$ be a variety with symplectic singularities of dimension $2n$ admitting a symplectic resolution, and let $N$ be a normal projective variety. Let $\varphi: M \to N$ be a birational projective morphism. We denote by $S_i$ the set of points $p\in N$ such that $\dim \varphi^{-1}(p)=i$. Then $\dim S_i\leq 2n-2i$. In particular, if $\varphi$ contracts a divisor $D\subset M$, then we have $\dim\varphi(D)=2n-2$.
\end{proposition}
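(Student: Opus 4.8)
The plan is to reduce the statement to the known smooth case by pulling everything back along the symplectic resolution. Let $\pi\colon \tilde M\to M$ be the symplectic resolution, so $\tilde M$ is an irreducible holomorphic symplectic manifold of dimension $2n$ (or at least a smooth holomorphic symplectic variety), and consider the composite $\psi=\varphi\circ\pi\colon \tilde M\to N$, which is again a birational projective morphism from a smooth symplectic variety. The first step is to control fibre dimensions of $\psi$: since $\tilde M$ carries a holomorphic symplectic form, the classical result of Namikawa \cite[Proposition 1.4]{Nam01} and Wierzba \cite[Theorem 1.2]{Wie03} applies, giving $\dim \tilde S_i \le 2n-2i$ where $\tilde S_i=\{q\in\tilde M : \dim\psi^{-1}(q)=i\}$, and in particular that any divisor contracted by $\psi$ maps to a subvariety of codimension exactly two in $N$.

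Next I would transfer this back down to $M$. Fix a point $p\in S_i$, so $\varphi^{-1}(p)$ has dimension $i$. Since $\pi$ is projective and surjective with $\dim\pi^{-1}(x)\le$ (half the codimension of the singular stratum containing $x$) — here I would use that the exceptional locus of a symplectic resolution is well understood, e.g.\ for O'Grady type moduli spaces $\pi$ is the blow-up of the singular locus and its fibres have dimension at most one, but in the general setup one invokes the semismallness of symplectic resolutions (Kaledin), giving $\dim\pi^{-1}(x)\le \tfrac12\operatorname{codim}_M\overline{\{x\}}$ — the preimage $\pi^{-1}(\varphi^{-1}(p))=\psi^{-1}(p)$ has dimension at least $i$. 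Combining with the bound from the previous paragraph, $\dim\psi^{-1}(p)\ge i$ forces $p$ to lie in $\bigcup_{j\ge i}\psi(\tilde S_j)$ minus lower strata, and $\dim S_i$ is then bounded by $\dim\psi(\tilde S_i)$ up to the codimension drop caused by $\pi$; carefully, I would stratify $M$ by the $S_i$, stratify $\tilde M$ by the $\tilde S_j$, and show each $S_i$ is dominated by a union of $\tilde S_j$ with $j\ge i$ via $\pi$, so that $\dim S_i\le \max_{j\ge i}\dim\tilde S_j\le \max_{j\ge i}(2n-2j)=2n-2i$.

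For the final clause, suppose $\varphi$ contracts a divisor $D\subset M$, i.e.\ $D$ has dimension $2n-1$ and $E:=\varphi(D)$ has dimension $\le 2n-2$. Since $\pi$ is an isomorphism over a dense open subset of $M$ (and hence of $D$), the strict transform $\tilde D\subset\tilde M$ of $D$ is again a divisor, and $\psi(\tilde D)=\varphi(\pi(\tilde D))=E$ up to closure. If $\psi$ contracts $\tilde D$ — which it does, as $\dim E\le 2n-2<2n-1$ — then the smooth statement gives $\dim E=2n-2$ exactly. This gives the lower bound matching the upper bound $\dim S_i\le 2n-2i$ at $i=1$ applied to the generic point of $E$: a general point $p\in E$ has $\varphi^{-1}(p)$ of dimension $\ge (2n-1)-(2n-2)=1$, so $\dim S_1\ge \dim E$, forcing $\dim E\le 2n-2$ from the upper bound and $\dim E\ge 2n-2$ from irreducibility of the fibre dimension count, hence equality.

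The main obstacle I anticipate is making the comparison between the stratifications of $M$ and $\tilde M$ genuinely tight: a priori $\pi$ could drop dimension along $\varphi^{-1}(p)$, so that $\dim\psi^{-1}(p)$ is strictly larger than $\dim\varphi^{-1}(p)$, and one must ensure this increase is compensated by the codimension of the stratum of $N$ over which one is working. The clean way around this is to invoke semismallness of the symplectic resolution $\pi$ (so fibre dimensions of $\pi$ are bounded by half the codimension of the corresponding locus in $M$), combine it additively with the Namikawa--Wierzba bound for $\psi$, and check that the two estimates fit together numerically; in the O'Grady-type case this is especially transparent since the singular locus is a single stratum of codimension $\ge 2$ (in fact the relevant codimension is large) and $\pi$ has at most one-dimensional fibres, so the generic setup reduces cleanly. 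I would state the semismallness input explicitly as a lemma (citing Kaledin) to keep the argument self-contained.
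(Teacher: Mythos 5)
Your proposal is correct and follows the same basic strategy as the paper: compose with the symplectic resolution and apply the Namikawa--Wierzba bound (\cite[Proposition 1.4]{Nam01}, \cite[Theorem 1.2]{Wie03}) to $\psi=\varphi\circ\pi$. The only real difference is how the bound is transferred back to $M$. The paper sidesteps your stratification comparison entirely by using the equivalent formulation of the bound in terms of images of subvarieties: if $V\subset M$ is closed of dimension $2n-i$, its strict transform $\tilde V$ (closure of $\pi^{-1}(\xi_V)$) satisfies $\dim\tilde V\geq 2n-i$ and $\psi(\tilde V)=\varphi(V)$, so $\dim\varphi(V)\geq 2n-2i$ follows at once; the divisorial clause is the case $V=D$, $i=1$. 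Your stratum-by-stratum argument also works, but note that the ``main obstacle'' you anticipate is not there, and Kaledin's semismallness of $\pi$ is never needed: since $\pi$ is surjective, $\dim\psi^{-1}(p)\geq\dim\varphi^{-1}(p)=i$ for $p$ in the $i$-th stratum, so such $p$ lies in a stratum of $\psi$ with index $j\geq i$, and the bound $2n-2j$ only improves as $j$ grows; your own chain $\dim S_i\leq\max_{j\geq i}\dim\tilde S_j\leq 2n-2i$ already closes the argument. One point of precision: the fibre-dimension strata must be taken in the target $N$ (as in Namikawa and Wierzba); your $\tilde S_i=\{q\in\tilde M:\dim\psi^{-1}(q)=i\}$ repeats the slip ``$p\in M$'' in the statement, and if read literally as loci in the source the correct bound would only be $2n-i$, which is too weak.
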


\begin{proof}
The statement in question is equivalent to the following statement: if $V \subset M$ is any closed subvariety of dimension $2n-i$, then $\dim \varphi(V) \geq 2n-2i$; see for instance the proof of \cite[Proposition 1.4]{Nam01}. Without loss of generality, it suffices to prove this statement with the extra assumption that $V$ is irreducible with the generic point $\xi_V$.

Let the symplectic resolution of $M$ be $\pi: \tilde{M} \to M$. We denote the closure of $\pi^{-1}(\xi_V)$ in $\tilde{M}$ by $\tilde{V}$. Then we have $\pi(\tilde{V})=V$, hence $\dim \tilde{V} \geq \dim V =2n-i$. Then we apply \cite[Proposition 1.4]{Nam01}, \cite[Theorem 1.2]{Wie03} or \cite[Lemma 2.11]{Kal06} on the composition $\varphi\circ\pi: \tilde{M}\to N$ and conclude $\dim\varphi(V)=\dim(\varphi\circ\pi)(\tilde{V})\geq 2n-2i$.
\end{proof}

However, we remind the readers that although many of our results are proved in the context of definition \ref{def:ogrady}, some of our results do work in more general situations. We will state very clearly which assumptions are made in every result.

\subsection{Stratum preserving birational maps}

Whenever $\sigma$ is generic, there is a stratification of the moduli space $M_{X,\sigma}(\vv)$ given by locally closed strata. The stable locus $M_{X,\sigma}^{\mathrm{st}}(\vv)$, which agrees with the smooth locus, is the unique open stratum. All the other lower dimensional strata are formed by lower dimensional moduli spaces. We refer the readers to the proof of \cite[Theorem 2.15]{BM13} for the general case. Here we only describe the stratification for moduli spaces of O'Grady type. 

For a moduli space of O'Grady type $M_{X,\sigma}(\vv)$, there is a chain of closed subschemes as follows:
\begin{equation}
\label{eqn:stratification}
M_{X,\sigma}(\vv_p) \subset \sym^2 M_{X,\sigma}(\vv_p) \subset M_{X,\sigma}(\vv),
\end{equation}
where the first inclusion is given by the diagonal morphism, and the second inclusion gives precisely the strictly semistable locus of $M_{X,\sigma}(\vv)$, which agrees with the singular locus. This chain of inclusions decomposes $M_{X,\sigma}(\vv)$ into the disjoint union of three locally closed strata.

More precisely, $M_{X,\sigma}^{\mathrm{st}}(\vv) = M_{X,\sigma}(\vv) \backslash \sym^2 M_{X,\sigma}(\vv_p)$ parametrises all $\sigma$-stable objects of class $\vv$. Every point in $\sym^2 M_{X,\sigma}(\vv_p)$ represents the S-equivalent class containing a polystable object $E_1 \oplus E_2$, where $E_1, E_2 \in M_{X,\sigma}(\vv_p)$ are both $\sigma$-stable objects of class $\vv_p$. Such a point lies in the diagonal $M_{X,\sigma}(\vv_p)$ if and only if $E_1$ and $E_2$ are isomorphic.

With this stratification at hand, we are now ready to discuss birational maps between moduli spaces of O'Grady type, which are compatible with the above stratifications.

When talking about birational maps between singular moduli spaces of O'Grady type, we emphasise a special class of them, which preserve the natural stratifications described above. Almost all birational maps between these moduli spaces which occur in this paper belong to this class. Although the definition could be made for arbitrary moduli spaces under generic stability conditions, for the purpose of this paper, we restrict ourselves to moduli spaces of O'Grady type as follows.

\begin{definition}
Let $f: M_{X_1,\sigma_1}(\vv_1) \dashrightarrow M_{X_2,\sigma_2}(\vv_2)$ be a birational map between two moduli spaces of O'Grady type. Let $M_{X_i,\sigma_i}(\vv_{i,p}) \subset \sym^2 M_{X_i,\sigma_i}(\vv_{i,p}) \subset M_{X_i,\sigma_i}(\vv_i)$ be the standard stratification \eqref{eqn:stratification} for $i=1,2$, where $\vv_{i,p}$ denotes the primitive part of $\vv_i$. If $f$ is defined on the generic point of each stratum in $M_{X_1,\sigma_1}(\vv_1)$, and takes each such generic point to the generic point of the corresponding stratum in $M_{X_2,\sigma_2}(\vv_2)$, then we say that $f$ is a \emph{stratum preserving} birational map. 
\end{definition}

Derived (anti-)equivalences are a very natural and particularly rich resource of stratum preserving birational maps. The following lemma gives a criterion for such a derived (anti-)equivalence to induce a stratum preserving birational map. 

\begin{lemma}
\label{lem:stratum}
Let $\Phi: \cD(X_1) \to \cD(X_2)$ be a derived (anti-)equivalence and let $M_{X_1,\sigma_1}(\vv_1)$ and $M_{X_2,\sigma_2}(\vv_2)$ be two moduli spaces of O'Grady type. Assume that $\Phi$ induces a birational map $\Phi_*: M_{X_1,\sigma_1}(\vv_1) \dashrightarrow M_{X_2,\sigma_2}(\vv_2)$. Then $\Phi_*$ is stratum preserving if and only if the following condition holds: there exist a $\sigma_1$-stable object $E$ of class $\vv_1$ and a $\sigma_1$-stable object $E_p$ of class $\vv_{1,p}$, such that $\Phi(E)$ and $\Phi(E_p)$ are $\sigma_2$-stable objects of classes $\vv_2$ and $\vv_{2,p}$ respectively.
\end{lemma}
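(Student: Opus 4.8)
The plan is to prove both implications by analysing how $\Phi_*$ interacts with the three strata in the stratification \eqref{eqn:stratification}. For the forward direction, if $\Phi_*$ is stratum preserving then by definition it sends the generic point of the open stratum $M_{X_1,\sigma_1}^{\mathrm{st}}(\vv_1)$ to the generic point of $M_{X_2,\sigma_2}^{\mathrm{st}}(\vv_2)$; since a generic point of the open stratum represents a $\sigma_i$-stable object of class $\vv_i$ (by \cite[Theorem 2.15]{BM13}), this immediately produces the required $\sigma_1$-stable $E$ with $\Phi(E)$ being $\sigma_2$-stable of class $\vv_2$. Similarly, applying the stratum-preserving hypothesis to the \emph{deepest} stratum, the diagonal copy of $M_{X_1,\sigma_1}(\vv_{1,p})$, forces $\Phi_*$ to take its generic point (which corresponds to a polystable object $E_p\oplus E_p$ with $E_p$ a generic, hence $\sigma_1$-stable, object of class $\vv_{1,p}$) to the generic point of the diagonal $M_{X_2,\sigma_2}(\vv_{2,p})$, i.e.\ to $\Phi(E_p)\oplus\Phi(E_p)$ where $\Phi(E_p)$ must be $\sigma_2$-stable of class $\vv_{2,p}$. (One has to check that $\Phi$, being an (anti-)equivalence, does send the S-equivalence class of $E_p\oplus E_p$ to that of $\Phi(E_p)\oplus\Phi(E_p)$; this is where the hypothesis that $\Phi_*$ is \emph{defined} at these generic points is used.)

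For the converse, suppose we are given such $E$ and $E_p$. The key point is that stability is an open condition and the strata are irreducible, so it suffices to exhibit a \emph{single} point in the interior of each stratum that $\Phi_*$ maps into the corresponding stratum; genericity then propagates. The object $E$ handles the top stratum directly. For the middle stratum $\sym^2 M_{X_1,\sigma_1}(\vv_{1,p})\setminus M_{X_1,\sigma_1}(\vv_{1,p})$, a generic point is the S-equivalence class of $E_1\oplus E_2$ with $E_1\not\cong E_2$ both $\sigma_1$-stable of class $\vv_{1,p}$; I would take $E_1=E_p$ and $E_2$ a generic deformation of $E_p$ within $M_{X_1,\sigma_1}(\vv_{1,p})$, using that stability of $\Phi(E_p)$ is open in families to conclude that $\Phi(E_2)$ is also $\sigma_2$-stable of class $\vv_{2,p}$ for $E_2$ in a dense open subset, and that $\Phi(E_1)\not\cong\Phi(E_2)$ since $\Phi$ is an equivalence. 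Hence $\Phi_*$ sends a dense set of points of the middle stratum into the middle stratum, so in particular its generic point. The bottom stratum is handled by $E_p$ as above. Finally one checks the image strata have the right dimension so that ``maps generic point into the stratum'' upgrades to ``maps generic point \emph{to} the generic point'': since $\Phi_*$ is birational and each stratum in the target is irreducible of the expected dimension, a dominant-to-its-image argument (or simply counting: the image of a dense subset of an irreducible variety under a rational map is dense in its closure) forces the generic point to go to the generic point.

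The main obstacle I anticipate is the bookkeeping around anti-equivalences and the precise behaviour of $\Phi$ on S-equivalence classes and polystable objects. An anti-autoequivalence reverses triangles, so one must be slightly careful that it still sends $\sigma_1$-stable objects of a fixed class to $\sigma_2$-stable objects (of the Mukai-dual class), and that direct sums go to direct sums; this is standard but needs to be invoked cleanly. A secondary subtlety is making sure that the open locus of $M_{X_1,\sigma_1}(\vv_{1,p})$ on which $E_2$ ranges genuinely stays away from the (finitely many) bad loci where $\Phi(E_2)$ could fail to be stable or could become isomorphic to $\Phi(E_1)$ — but since each of those bad conditions is closed and proper in the irreducible variety $M_{X_1,\sigma_1}(\vv_{1,p})$ (the latter being an irreducible holomorphic symplectic manifold, hence in particular irreducible), their union is a proper closed subset, and the complement is dense. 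Once these points are pinned down, the argument is essentially a sequence of openness-of-stability and irreducibility statements, with no hard computation.
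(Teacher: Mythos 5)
Your overall route is the paper's: necessity is read off from the definition, and sufficiency is proved by combining openness of stability with the fact that a derived (anti-)equivalence preserves direct sums and extensions, plus an irreducibility/dimension argument to upgrade ``lands in the stratum'' to ``generic point goes to generic point''. But there is a concrete gap in your treatment of the middle stratum. You fix $E_1=E_p$ and let only $E_2$ vary in a dense open subset of $M_{X_1,\sigma_1}(\vv_{1,p})$; the resulting set of classes $[E_p\oplus E_2]$ has dimension $\dim M_{X_1,\sigma_1}(\vv_{1,p})=\vv_{1,p}^2+2=4$, while the middle stratum $\sym^2 M_{X_1,\sigma_1}(\vv_{1,p})\setminus M_{X_1,\sigma_1}(\vv_{1,p})$ has dimension $8$. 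So the sentence ``$\Phi_*$ sends a dense set of points of the middle stratum into the middle stratum'' does not follow from what you wrote; relatedly, your guiding principle that exhibiting a single interior point suffices because ``genericity propagates'' is not valid for a rational map on its own — it is precisely the openness-of-stability-in-families statement that does the propagating, and you only set that family up with one summand varying.

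The repair is the ingredient you already use for the open and closed strata: applying $\Phi$ to the (quasi-)universal family over $M_{X_1,\sigma_1}(\vv_{1,p})$ and invoking openness of stability gives a dense open $U\subset M_{X_1,\sigma_1}(\vv_{1,p})$ on which $\Phi$ preserves $\sigma_2$-stability (nonempty because it contains $E_p$). Now let \emph{both} summands vary in $U$: the classes $[E_1\oplus E_2]$ with $E_1,E_2\in U$ are dense in $\sym^2 M_{X_1,\sigma_1}(\vv_{1,p})$, or, as the paper does, represent the generic point of the middle stratum directly by an extension (e.g.\ the direct sum) of two generic stable objects of class $\vv_{1,p}$ and apply $\Phi$, which sends it to an extension of two generic stable objects of class $\vv_{2,p}$. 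With that change the generic-to-generic upgrade works as you indicate, since the induced map $M_{X_1,\sigma_1}(\vv_{1,p})\dashrightarrow M_{X_2,\sigma_2}(\vv_{2,p})$ is injective on a dense open subset between irreducible varieties of the same dimension, hence dominant.
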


\begin{proof}
The necessity is part of the definition of $f$ being stratum preserving, so we only discuss sufficiency. By the openness of stability in \cite[Theorem 4.2]{BM13} (which was originally proved in \cite{Toda08}), the assumptions imply that the induced birational map $\Phi_*$ takes the generic points of the open stratum $M_{X_1,\sigma_1}^{\mathrm{st}}(\vv_1)$ and the closed stratum $M_{X_1,\sigma_1}(\vv_{1,p})$ to the generic points of corresponding strata in $M_{X_2,\sigma_2}(\vv_2)$. It remains to show that $\Phi_*$ takes the generic point of the singular locus $\sym^2 M_{X_1,\sigma_1}(\vv_{1,p})$ to the generic point of $\sym^2 M_{X_2,\sigma_2}(\vv_{2,p})$. In fact, a generic point $E_s$ in $\sym^2 M_{X_1,\sigma_1}(\vv_{1,p})$ can be represented by any extension of two generic stable objects of class $\vv_{1,p}$. Since $\Phi$ is a derived (anti-)equivalence, it preserves extensions (or switches the direction). Hence $\Phi(E_s)$ is again the extension of two generic stable objects of class $\vv_{2,p}$, which represents a generic point in $\sym^2 M_{X_2,\sigma_2}(\vv_{2,p})$, as desired. 
\end{proof}

A big advantage of stratum preserving birational maps is that they behave very much like birational maps between smooth symplectic varieties. For example, the following proposition generalises a classical result about a birational map between two $K$-trivial smooth varieties, for instance, in \cite[Proposition 21.6]{GHJ03}. 

\begin{proposition}
\label{prop:codim-two}
Let $f: M_{X_1,\sigma_1}(\vv_1) \dashrightarrow M_{X_2,\sigma_2}(\vv_2)$ be a stratum preserving birational map between two moduli spaces of O'Grady type which is induced by a derived (anti-)equivalence $\Phi:\cD(X_1) \to \cD(X_2)$. Furthermore, assume that there exists an open subset $U \subset M_{X_1,\sigma_1}^{\mathrm{st}}(\vv_1)$ with complement of codimension at least two, such that the restriction $f|_U$ is an injective morphism $f|_U: U \to M_{X_2,\sigma_2}^{\mathrm{st}}(\vv_2)$. Then $f(U)$ has complement of codimension at least two in $M_{X_2,\sigma_2}^{\mathrm{st}}(\vv_2)$.
\end{proposition}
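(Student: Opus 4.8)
The plan is to exploit the fact that $f$ is induced by a derived (anti-)equivalence $\Phi$, so that its inverse $f^{-1}$ is induced by the (anti-)equivalence $\Phi^{-1}$, and then run a symmetric argument. First I would observe that $\Phi^{-1}$ also induces a stratum preserving birational map $g = f^{-1}: M_{X_2,\sigma_2}(\vv_2) \dashrightarrow M_{X_1,\sigma_1}(\vv_1)$; this is immediate from Lemma \ref{lem:stratum} since the stability of $\Phi(E)$ and $\Phi(E_p)$ for suitable $\sigma_1$-stable $E$, $E_p$ is exactly the hypothesis needed to conclude that $\Phi^{-1}$ sends the corresponding $\sigma_2$-stable objects to $\sigma_1$-stable objects. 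Next, set $V = f(U) \subset M_{X_2,\sigma_2}^{\mathrm{st}}(\vv_2)$; since $f|_U$ is an injective morphism, $f|_U : U \to V$ is a bijective morphism between varieties, and $g = f^{-1}$ restricts to the inverse map $V \to U$. The goal is then to show that the complement $M_{X_2,\sigma_2}^{\mathrm{st}}(\vv_2) \setminus V$ has codimension at least two.

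The key point is to rule out that this complement contains a divisor. Suppose for contradiction that $D \subset M_{X_2,\sigma_2}^{\mathrm{st}}(\vv_2) \setminus V$ is an irreducible divisor. Consider a resolution of the graph of $g$, i.e.\ a smooth variety $\Gamma$ with proper birational morphisms $p: \Gamma \to M_{X_2,\sigma_2}(\vv_2)$ and $q: \Gamma \to M_{X_1,\sigma_1}(\vv_1)$ with $q = g \circ p$ generically. The strict transform $\tilde D \subset \Gamma$ of $D$ is a divisor, and since $g$ is defined on the generic point of $D$ (being defined on the generic point of every stratum — $D$ lies in the open stratum), $q(\tilde D) = \overline{g(D_{\mathrm{gen}})}$ maps into $M_{X_1,\sigma_1}(\vv_1)$, landing in the open stratum $M_{X_1,\sigma_1}^{\mathrm{st}}(\vv_1)$ because $g$ is stratum preserving and $D$ meets the open stratum. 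Now either $q(\tilde D)$ is again a divisor or it has codimension at least two. In the first case, $\overline{g(D)}$ is a divisor in $M_{X_1,\sigma_1}^{\mathrm{st}}(\vv_1)$ whose generic point must then lie in the image of $f|_U$ (since $U$ has complement of codimension at least two, and $g = f^{-1}$ on $V$); but then $g(D_{\mathrm{gen}}) \in U$ forces $D_{\mathrm{gen}} = f(g(D_{\mathrm{gen}})) \in f(U) = V$, contradicting $D \subset M_{X_2,\sigma_2}^{\mathrm{st}}(\vv_2) \setminus V$. In the second case — $D$ contracted by $g$ to something of codimension $\geq 2$ — I would invoke Proposition \ref{prop:symp-divisor} applied to the symplectic resolutions: the birational map $g$ cannot contract a divisor to a locus of codimension greater than two on a space admitting a symplectic resolution with trivial canonical class, since a divisorial contraction of such a variety has image of codimension exactly two; more precisely, one resolves and applies Proposition \ref{prop:symp-divisor} (or the standard fact that a birational map between $K$-trivial varieties, one of which is smooth-in-codimension-one with a crepant resolution, cannot contract a divisor to high codimension). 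This gives the required contradiction, so no such $D$ exists.

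The main obstacle I expect is the second case: carefully justifying that $f$ (equivalently $g$) cannot contract a divisor on the stable locus to a locus of codimension greater than two. The subtlety is that $M_{X_i,\sigma_i}^{\mathrm{st}}(\vv_i)$ is smooth but \emph{not} proper, so one cannot directly quote the irreducible-holomorphic-symplectic results; the right move is to pass to the full (singular, proper) moduli spaces and their symplectic resolutions $\tilde M_i$, lift $g$ to a birational map $\tilde M_2 \dashrightarrow \tilde M_1$ between smooth $K$-trivial proper varieties, and apply the classical statement (as in \cite[Proposition 21.6]{GHJ03}) together with Proposition \ref{prop:symp-divisor} to control how divisors can be contracted, using the stratum preserving hypothesis to ensure the divisor $D$ and its image stay in the stable (= smooth) locus where the resolution is an isomorphism. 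Once the divisorial case is excluded, the conclusion that $M_{X_2,\sigma_2}^{\mathrm{st}}(\vv_2)\setminus V$ has codimension $\geq 2$ is immediate, since a closed subset of a variety either contains a divisor or has codimension $\geq 2$.
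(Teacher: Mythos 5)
Your Case 1 is essentially fine (though note that the image of a divisor landing in the open stratum follows from a dimension count, not from the stratum-preserving property, which only controls the generic points of the strata). The genuine gap is in Case 2, which is exactly where the content of the proposition lies. Proposition \ref{prop:symp-divisor} does not rule this case out: it says that a contracted divisor has image of codimension \emph{exactly} two, so it is perfectly consistent with your Case 2 that $g=f^{-1}$ contracts the divisor $D\subset M_{X_2,\sigma_2}^{\mathrm{st}}(\vv_2)$ onto (an open part of) the codimension-two singular locus $\sym^2 M_{X_1,\sigma_1}(\vv_{1,p})$. The ``standard fact'' you invoke is also false in the form you state it: the symplectic resolution $\pi_1:\tilde{M}_1\to M_1$ is itself a birational map between $K$-trivial varieties, one of them smooth, which contracts a divisor onto a codimension-two locus. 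Nor does the stratum-preserving hypothesis, as you use it, ensure that ``$D$ and its image stay in the stable locus'': it only constrains the generic point of each stratum, and $D$ is not a stratum. Concretely, after lifting to the resolutions and applying \cite[Proposition 21.6]{GHJ03}, the dangerous scenario that remains is that $\tilde{g}$ sends the generic point of $\pi_2^{-1}(D)$ to the generic point of the exceptional divisor $E_1$ of $\pi_1$; nothing in your argument excludes this.

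What is missing is the mechanism the paper uses to pin down the exceptional divisors: since $f$ is stratum preserving and induced by an (anti-)equivalence, it restricts to an injective morphism on an open subset $U_s$ of the singular stratum, so one can enlarge $U$ to an open set $V\subset M_{X_1,\sigma_1}(\vv_1)$, still with complement of codimension two, containing an open piece of the singular locus, on which $f$ is an isomorphism onto its image. Over $V$ and $f(V)$ the resolutions are blowups of the singular loci, so this isomorphism lifts to an isomorphism $\pi_1^{-1}(V)\cong\pi_2^{-1}(f(V))$ identifying open parts of the two exceptional divisors; crucially, $\pi_1^{-1}(V)$ then has complement of codimension two in $\tilde{M}_1$ \emph{because} it contains an open piece of the exceptional divisor. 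Applying \cite[Proposition 21.6]{GHJ03} to this lifted birational map yields that $\pi_2^{-1}(f(V))$, hence $f(V)$ and $f(U)$, have complements of codimension at least two. In your contradiction framework, this identification $E_1\leftrightarrow E_2$ combined with the injectivity in codimension one of $\tilde{g}$ (and the irreducibility of $E_1$) is what forbids a second divisor, lying over the stable locus of $M_2$, from also being sent to $E_1$; without this step your Case 2 remains open.
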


\begin{proof}
Since $f$ is a stratum preserving birational map, there exists an open subset $U_s \subset \sym^2 M_{X_1,\sigma_1}(\vv_{1,p})$, such that $f|_{U_s}: U_s \to \sym^2 M_{X_2,\sigma_2}(\vv_{2,p})$ is an injective morphism. Now we take $Z_1$ to be the closure of $M_{X_1,\sigma_1}^{\mathrm{st}}(\vv_1) \backslash U$ in $M_{X_1,\sigma_1}(\vv_1)$, $Z_2$ to be $\sym^2 M_{X_1,\sigma_1}(\vv_{1,p}) \backslash U_s$, and $Z_3$ to be the closed stratum $M_{X_1,\sigma_1}(\vv_{1,p})$. We consider $V = M_{X_1,\sigma_1}(\vv_1) \backslash (Z_1 \cup Z_2 \cup Z_3)$. Then it is easy to see that $V$ is an open subset of $M_{X_1,\sigma_1}(\vv_1)$. Moreover, $V$ is the union of $U$ and an open subset of $U_s$, hence has a complement of codimension at least two, and the restriction $f|_V$ is an injective morphism $f|_V: V \to M_{X_2,\sigma_2}(\vv_2)$. Since $f$ is induced by a derived (anti-)equivalence $\Phi$, we see that $\Phi^{-1}$ defines an inverse of $f$ on $f(V)$, and therefore $f$ is an isomorphism from $V$ to its image $f(V)$. Note that since $V$ has no intersection with the closed stratum $M_{X_1,\sigma_1}(\vv_{1,p})$, $f(V)$ also has no intersection with the closed stratum $M_{X_2,\sigma_2}(\vv_{2,p})$.

For $i=1,2$, we write $\pi_i: \tilde{M}_i \to M_{X_i,\sigma_i}(\vv_i)$ for the symplectic resolution constructed in \cite{OG99a,LS06}. The construction there implies that both $\pi_1: \pi_1^{-1}(V) \to V$ and $\pi_2: \pi_2^{-1}(f(V)) \to f(V)$ are exactly the blowups of the singular loci. Therefore, the isomorphism $f: V \to f(V)$ induces another isomorphism $\tilde{f}: \pi_1^{-1}(V) \to \pi_2^{-1}(f(V))$. In particular, it is a birational map $\tilde{f}: \tilde{M}_1 \dashrightarrow \tilde{M}_2$. 

We claim that $\pi_1^{-1}(V) \subset \tilde{M}_1$ is an open subset with complement of codimension at least two. On one hand, from the construction of $V$ we observe that the complement of $V$ in $M_{X_1,\sigma_1}^{\mathrm{st}}(\vv_1)$ has codimension at least two. Together with the fact that $\pi_1: \pi_1^{-1}(M_{X_1,\sigma_1}^{\mathrm{st}}(\vv_1)) \to M_{X_1,\sigma_1}^{\mathrm{st}}(\vv_1)$ is an isomorphism, we conclude that the complement of $\pi_1^{-1}(V)$ in $\pi_1^{-1}(M_{X_1,\sigma_1}^{\mathrm{st}}(\vv_1))$ also has codimension at least two. On the other hand, since $V$ contains an open subset of the singular locus $\sym^2 M_{X_1,\sigma_1}(\vv_{1,p})$, we obtain that $\pi_1^{-1}(V)$ contains an open subset of the unique exceptional divisor. Therefore $\pi_1^{-1}(V)$ has a complement of codimension two in $\tilde{M}_1$. 

Now we can apply \cite[Proposition 21.6]{GHJ03} to the birational map $\tilde{f}: \tilde{M}_1 \dashrightarrow \tilde{M}_2$, and conclude that $\pi_2^{-1}(f(V)) = \tilde{f}(\pi_1^{-1}(V)) \subset \tilde{M}_2$ has complement of codimension at least two. This further implies $f(V) \subset M_{X_2,\sigma_2}(\vv_2)$ also has complement of codimension at least two. Therefore $f(U)$, as the intersection of $f(V)$ with the open stratum $M_{X_2,\sigma_2}^{\mathrm{st}}(\vv_2)$, has complement of codimension at least two as well.
\end{proof}

\subsection{Mukai morphisms are isomorphisms}

The Mukai morphism plays an essential role in \cite{BM13}. A classical theorem \cite[Theorem 3.6]{BM13}, originally proved in \cite{Muk87,Yos01}, shows that for a smooth moduli space $M_\sigma(\vv)$ of stable objects on a K3 surface $X$ with $\vv^2>0$, the Mukai morphism induced by a (quasi-)universal family is in fact a Hodge isometry between the orthogonal complement of $\vv$ in the total cohomology $\vv^{\perp,\mathrm{tr}} \subset H^*(X, \bZ)$ and $H^2(M_\sigma(\vv), \bZ)$. By restricting on the algebraic components on both sides, we get an isometry between the orthogonal complement of $\vv$ in the algebraic cohomology $\vv^\perp \subset H^*_{\mathrm{alg}}(X, \bZ)$ and $\ns(M_\sigma(\vv))$. 

Perego and Rapagnetta generalised this classical result to generic Gieseker moduli spaces of O'Grady type in \cite[Theorem 1.7]{PeRa10a}. Here we will follow their approach to generalise the same result further to generic Bridgeland moduli spaces of O'Grady type. This is the content of the following

\begin{theorem}
\label{thm:hodge-isom}
Let $\vv$ be a Mukai vector of O'Grady type and $\sigma \in \stab(X)$ be a generic stability condition with respect to $\vv$. Let $M=M_\sigma(\vv)$ be the moduli space of $\sigma$-semistable objects of class $\vv$ on $X$ and $\pi: \tilde{M} \to M$ be its symplectic resolution. Then we have
\begin{enumerate}
\item The pullback map $\pi^*: H^2(M,\bZ) \to H^2(\tilde{M},\bZ)$ is injective and compatible with the (mixed) Hodge structures. In particular, the Hodge structure on $H^2(M,\bZ)$ is pure of weight two and the restriction of the Beauville-Bogomolov quadratic form $\tilde{q}(-,-)$ on $H^2(\tilde{M},\bZ)$ defines a quadratic form $q(-,-)$ on $H^2(M,\bZ)$; \label{item:hodge1}
\item There exists a well-defined Mukai morphism $\theta_\sigma^{\mathrm{tr}}: \vv^{\perp,\mathrm{tr}} \to H^2(M,\bZ)$ induced by the (quasi-)universal family over $M_{\sigma}^{\mathrm{st}}(\vv)$, which is a Hodge isometry. \label{item:hodge2}
\end{enumerate}
\end{theorem}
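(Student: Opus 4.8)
The plan is to reduce the general Bridgeland case to the case treated by Perego and Rapagnetta, using the same two-step strategy outlined in the introduction: first a Fourier--Mukai transform to pass from $M_\sigma(\vv)$ to a moduli space of twisted Gieseker-semistable sheaves on a twisted K3 surface $(Y,\alpha)$, and then a deformation of the twisted polarised K3 surface $(Y,\alpha,H)$ to an \emph{untwisted} polarised K3 surface, along which both the moduli space and the Mukai morphism vary in a controlled way. The key point is that the Hodge isometry property is preserved by both operations: a derived (anti-)equivalence transports the Mukai pairing on $\vv^{\perp,\mathrm{tr}}\subset H^*(X,\bZ)$ isometrically to the corresponding orthogonal complement on $D^b(Y,\alpha)$ and identifies the universal families, while in a smooth family of moduli spaces the cohomology, the Beauville--Bogomolov form, and the Mukai morphism all vary as local systems, so being an isometry is a closed and open condition on the base.

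I would proceed as follows. First, recall from \cite[Lemma 7.3]{BM12} (used already in the proof of the proposition on symplectic resolutions) that there is a twisted K3 surface $(Y,\alpha)$, an ample $H$ generic with respect to $\vv':=-\Phi(\vv)$, and a derived equivalence $\Phi:\cD(X)\to D^b(Y,\alpha)$ giving an isomorphism $M_{X,\sigma}(\vv)\cong M_{Y,\alpha,H}(\vv')$; this lets us assume from the start that $M$ is a twisted Gieseker moduli space, and reduces part \eqref{item:hodge1} together with the very definition of $q(-,-)$ to the twisted-sheaf setting. Second, prove the required statement about deformations: construct a smooth connected family $(\mathcal{Y},\alpha_t,\mathcal{H})\to T$ of twisted polarised K3 surfaces with $(\mathcal{Y}_0,\alpha_0,\mathcal{H}_0)=(Y,\alpha,H)$ and some special fibre $(\mathcal{Y}_1,\alpha_1,\mathcal{H}_1)$ with $\alpha_1$ trivial, such that the Mukai vector $\vv'$ deforms to an algebraic class on every fibre and remains of O'Grady type, and such that the polarisation stays generic with respect to it. Third, build the relative moduli space $\mathcal{M}\to T$ together with its relative symplectic resolution $\tilde{\mathcal{M}}\to T$ (invoking \cite{LS06} and the twisted deformation theory of \cite{Lie07} fibrewise, as in the proof of the symplectic-resolution proposition), and the relative (quasi-)universal family on the stable locus; use Ehresmann's theorem to identify the cohomologies and the Beauville--Bogomolov forms of the fibres, and check that the cohomological Mukai morphism $\theta^{\mathrm{tr}}$ extends to a morphism of local systems. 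Finally, apply \cite[Theorem 1.7]{PeRa10a} to the untwisted generic Gieseker fibre $\mathcal{M}_1$ to conclude that $\theta^{\mathrm{tr}}_1$ is a Hodge isometry, and transport this back along the family and then back through $\Phi$ to obtain that $\theta^{\mathrm{tr}}_\sigma$ is a Hodge isometry; compatibility with Hodge structures at the original fibre follows because $\theta^{\mathrm{tr}}$ is a morphism of variations of Hodge structure.

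The main obstacle I expect is the second step, namely producing the deformation of the twisted polarised K3 surface with \emph{all} the needed properties simultaneously. Deforming a twisted K3 to an untwisted one is constrained: the Brauer class must be killed, which forces the transcendental lattice (hence the period) to move in a prescribed way, while at the same time one must keep a $(1,1)$-class of the right square $\vv'^2$ to carry the Mukai vector, keep a polarisation class that stays ample on every fibre, and keep the polarisation \emph{generic} with respect to $\vv'$ so that there are no strictly semistable objects other than the expected ones and the moduli space stays of O'Grady type throughout. Controlling ampleness is delicate precisely because $(-2)$-classes can appear on special fibres and destroy the nef cone, and genericity of $H$ can fail along countably many sublocibetween the fibres one wants to connect; this is exactly the point flagged in the introduction as ``far from straightforward.'' Concretely I would handle it by a lattice-theoretic analysis of which polarised twisted K3 lattices contain $(Y,\alpha,H)$ and an untwisted point in the same connected moduli space, choosing the path to avoid the bad divisors, and then verify ampleness and genericity fibrewise using a Nakai--Moishezon-type argument together with the wall-and-chamber structure. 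The remaining steps — transporting isometries through Fourier--Mukai transforms and along smooth families — are conceptually routine given the machinery of \cite{PeRa10a} and \cite{BM12}, though the twisted universal family requires the mild bookkeeping of working with $\alpha$-twisted sheaves and their Chern characters in $H^*(Y,\bQ)$ with the $B$-field twist.
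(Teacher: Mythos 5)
Your overall route is the same as the paper's: reduce via the Fourier--Mukai equivalence of \cite[Lemma 7.3]{BM12} to a twisted Gieseker moduli space, deform the twisted polarised K3 to an untwisted one keeping the Mukai vector algebraic and the polarisation generic, apply \cite[Theorem 1.7]{PeRa10a} at the untwisted point, and transport back. However, there is a concrete gap in your third step: you assume a \emph{global} relative moduli space $\mathcal{M}\to T$ with a relative (quasi-)universal family (and a relative resolution), and then argue with Ehresmann/local systems over the whole connected base. In the twisted setting this is precisely what is not available: constructing the relative moduli space requires fixing a uniform \v{C}ech 2-cocycle representation of the locally constant B-fields, which can only be done over small (\'etale or analytic) neighbourhoods of the base, and the resulting (quasi-)universal families cannot be glued (the paper even records, in a footnote, that a global relative twisted moduli space exists only as an algebraic/analytic space and \emph{without} universal families). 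The paper's substitute is: build only local relative moduli spaces (Proposition \ref{prop:local-deformation}), prove that the isometry property of $\theta^{\mathrm{tr}}$ is independent of the choice of cocycle representation, of the B-field lift up to classes lifting the trivial Brauer class, and of the normalisation $\vv\mapsto\vv\cdot\exp(mh)$ (Lemmas \ref{lem:change-cocycle}, \ref{lem:change-B}, \ref{lem:change-mukai}), and then show that the locus $T\subset S$ where $\theta^{\mathrm{tr}}$ is an isometry is open and closed, because an isomorphism of integral lattices compatible with the bilinear forms is a discrete condition constant in each local family; connectedness of $S$ and the untwisted fibre then give $T=S$. Without some such device your family argument does not get off the ground as stated. (A second, minor, omission is the well-definedness of the lift of $\theta^{\mathrm{tr}}$ from $H^2(M^{\mathrm{st}},\bQ)$ to $H^2(M,\bZ)$, which in the paper also uses the local relative moduli spaces.)

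On the step you correctly identify as the main difficulty, the paper's solution is more specific than your ``fibrewise Nakai--Moishezon plus wall-and-chamber'' sketch: one first replaces $\vv$ by $\vv\cdot\exp(mh)$ to make it deformable (Lemma \ref{lem:change-deform}) and replaces $B$ by $B+c$ for an integral $c$ chosen so that $B^2<0$ and every non-zero class in $\spanning_\bQ\{h,B\}\cap H^2(X,\bZ)\cap h^\perp$ has square below $\min\{-2,-\tfrac{r^2}{4}\Delta(\vv)\}$ (Lemma \ref{lem:technical}); this single lattice-theoretic choice simultaneously guarantees that the hyperplane section $\cD'$ of the period domain is nonempty and not contained in any $(-2)$-wall (Proposition \ref{prop:deform-no-mukai}) and that the polarisation remains generic on every fibre via the discriminant criterion of Lemma \ref{lem:generic-criterion} (Proposition \ref{prop:deform-with-mukai}). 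Incorporating these normalisations, and replacing your global family by the local-plus-open-closed argument above, would bring your proposal in line with a complete proof.
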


The proof of Theorem \ref{thm:hodge-isom} is postponed to the next section. We continue our discussion with the following interesting consequence, which will be very important later.

\begin{corollary}
\label{cor:algebraic-mukai}
Under the assumptions of the Theorem \ref{thm:hodge-isom}, we have
\begin{enumerate}
\item Lefschetz $(1,1)$ theorem holds for $M$. That is, $\ns(M)=H^{1,1}(M, \bZ)$; \label{item:am1}
\item The restriction of the pullback map $\pi$ on $\ns(X)$ is an injective map $\pi^*:\ns(M) \to \ns(\tilde{M})$, which is compatible with the Beauville-Bogomolov pairings $q(-,-)$ and $\tilde{q}(-,-)$; \label{item:am2}
\item The restriction of the Mukai morphism $\theta_\sigma^{\mathrm{tr}}$ on the algebraic Mukai lattice is an isometry $\theta_\sigma: \vv^\perp \to \ns(M)$. In particular, $q(-,-)$ is a non-degenerate pairing on $\ns(M)$ with signature $(1,\rho(X))$. \label{item:am3}
\end{enumerate}
\end{corollary}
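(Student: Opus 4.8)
The plan is to derive Corollary~\ref{cor:algebraic-mukai} from Theorem~\ref{thm:hodge-isom} by a sequence of essentially formal Hodge-theoretic manipulations, taking the two statements of the theorem as black boxes. The only non-formal ingredient needed beyond the theorem is the classical computation of the lattice-theoretic invariants of the Beauville--Bogomolov form on O'Grady's ten-dimensional example, which we import from the literature (via Corollary~\ref{cor:deformation-equivalent} or directly from \cite{Rapagnetta}, \cite{PeRa10a}).

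\textbf{Step 1: Lefschetz $(1,1)$ for $M$.} By part~\eqref{item:hodge1} of Theorem~\ref{thm:hodge-isom}, $\pi^*: H^2(M,\bZ) \to H^2(\tilde M,\bZ)$ is an injection of pure weight-two Hodge structures, with image a Hodge substructure. Since $\tilde M$ is an irreducible holomorphic symplectic manifold, the Lefschetz $(1,1)$ theorem holds for $\tilde M$, so $\ns(\tilde M) = H^{1,1}(\tilde M,\bZ) := H^2(\tilde M,\bZ)\cap H^{1,1}(\tilde M)$. A class $D\in H^2(M,\bZ)$ lies in $H^{1,1}(M)$ if and only if $\pi^*D\in H^{1,1}(\tilde M)$ (compatibility of Hodge structures), i.e.\ if and only if $\pi^*D\in\ns(\tilde M)$; and by a standard argument a line bundle on the resolution $\tilde M$ that is a pullback of a topological class on $M$ descends to a line bundle on $M$ (using that $M$ is normal with rational singularities, so $\pi_*\cO_{\tilde M}=\cO_M$ and $R^1\pi_*\cO_{\tilde M}=0$, hence $\Pic(M)\to\Pic(\tilde M)$ identifies $\Pic(M)$ with those bundles on $\tilde M$ whose restriction to the fibres of $\pi$ is trivial; combined with exponential sequences on $M$ and $\tilde M$ and the injectivity of $\pi^*$ on $H^2(\bZ)$). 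This gives $\ns(M)=H^{1,1}(M,\bZ)$, which is \eqref{item:am1}.

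\textbf{Step 2: Compatibility of $\pi^*$ with the Beauville--Bogomolov forms.} Part~\eqref{item:hodge1} of the theorem already \emph{defines} $q$ on $H^2(M,\bZ)$ as the restriction of $\tilde q$ along $\pi^*$; that is, $q(D,D')=\tilde q(\pi^*D,\pi^*D')$ by construction. Restricting to the $(1,1)$-parts and invoking Step~1 on both $M$ and $\tilde M$, the injection $\pi^*|_{\ns(M)}:\ns(M)\to\ns(\tilde M)$ is by definition an isometry onto its image for $(q,\tilde q)$. This is \eqref{item:am2}; essentially nothing to prove beyond unwinding definitions and noting $\pi^*(\ns(M))\subseteq\ns(\tilde M)$, which follows from Step~1 and functoriality of $\ns$.

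\textbf{Step 3: The algebraic Mukai morphism and non-degeneracy.} The Mukai morphism $\theta_\sigma^{\mathrm{tr}}:\vv^{\perp,\mathrm{tr}}\to H^2(M,\bZ)$ of part~\eqref{item:hodge2} is a Hodge isometry; restricting to $(1,1)$-classes on both sides gives $\theta_\sigma:\vv^\perp\to H^{1,1}(M,\bZ)=\ns(M)$, where the source is $\vv^{\perp,\mathrm{tr}}\cap H^{1,1}(X)=\vv^\perp\subset H^*_{\mathrm{alg}}(X,\bZ)$ (the algebraic part of the Mukai lattice is exactly the $(1,1)$-part of the total cohomology, since $X$ is a K3 surface) and the target equality is Step~1. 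A Hodge isometry restricts to an isometry on $(1,1)$-parts, so $\theta_\sigma$ is an isometry of lattices. For the signature claim: $\theta_\sigma$ being an isometry identifies $(\ns(M),q)$ with $(\vv^\perp, \langle-,-\rangle)$ inside the algebraic Mukai lattice $H^*_{\mathrm{alg}}(X,\bZ)\cong U\oplus\ns(X)$, which has signature $(2,\rho(X))$; since $\vv^2>0$, the orthogonal complement $\vv^\perp$ has signature $(1,\rho(X))$ and in particular is non-degenerate. Hence $q$ is non-degenerate on $\ns(M)$ with signature $(1,\rho(X))$, giving \eqref{item:am3}.

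\textbf{Main obstacle.} The genuinely substantive input is Theorem~\ref{thm:hodge-isom} itself (proved in the next section), so within the corollary the only delicate point is the descent argument in Step~1 showing that a topological class on $M$ pulling back to an algebraic class on $\tilde M$ is itself algebraic on $M$ --- i.e.\ that the square relating $\Pic$ and $H^2(\bZ)$ for $M$ and $\tilde M$ is compatible enough. This is where one must use that $M$ has rational (indeed symplectic) singularities and that $\pi$ contracts the exceptional divisor onto a codimension-two locus (Proposition~\ref{prop:symp-divisor}), so that the kernel of $\Pic(\tilde M)\to\Pic(M)$-style comparison is controlled; everything else is bookkeeping with Hodge structures and lattice signatures. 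I would be careful to state the descent lemma cleanly rather than wave at it, since it is the one place smoothness of $\tilde M$ and singularity of $M$ genuinely interact.
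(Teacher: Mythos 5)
Your proposal is correct and takes essentially the same route as the paper: Hodge-compatible injectivity of $\pi^*$ plus Lefschetz $(1,1)$ on $\tilde{M}$, a descent-of-line-bundles argument to get $\ns(M)=H^{1,1}(M,\bZ)$, and then parts (2) and (3) by restricting $\pi^*$ and the Hodge isometry $\theta_\sigma^{\mathrm{tr}}$ to the $(1,1)$-parts. The one point you flag as delicate is handled in the paper by a short explicit verification rather than a general descent lemma: since $c_1(\tilde{L})=\pi^*\alpha$, the projection formula gives $\tilde{L}\cdot C=\alpha\cdot\pi_*[C]=0$ on the generic fibre $C$ of $\pi$ inside the exceptional divisor, which is a smooth rational curve by O'Grady's construction, so $\tilde{L}|_C$ is trivial and normality of $M$ yields $\tilde{L}=\pi^*L$.
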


\begin{proof}
For simplicity, we still denote $M_\sigma(\vv)$ by $M$. By Theorem \ref{thm:hodge-isom}, the Hodge structure on $H^2(M, \bZ)$ is pure of weight two, and $\pi^*$ preserves the Hodge structure. Therefore, for any class $\alpha \in H^{1,1}(M, \bZ)$, we have $\pi^*\alpha \in H^{1,1}(\tilde{M},\bZ)$ and hence $\pi^*\alpha=c_1(\tilde{L})$ for some line bundle $\tilde{L}$ on $\tilde{M}$. By O'Grady's construction of the symplectic resolution $\tilde{M}$ in \cite{OG99a}, we know that a generic fibre of $\pi$ within the exceptional divisor is a smooth rational curve. Let $C$ be such a rational curve, then $\tilde{L} \cdot C=\pi^*\alpha \cdot [C]=\alpha \cdot \pi_*[C]=0$, which implies that the restriction of $\tilde{L}$ on $C$ is trivial. Since $M$ is normal by \cite[Theorem 3.10]{BM13} (or originally \cite[Theorem 4.4]{KaLeSo06a}), we must have $\tilde{L}=\pi^*L$ for some line bundle $L$ on $M$. Therefore we have $\alpha=c_1(L) \in \ns(M)$ and the Lefschetz $(1,1)$ theorem is true for moduli spaces $M$ of O'Grady type.

By taking the $(1,1)$ components on both sides of the map $\pi^*$ between the second cohomology groups, we get the map between the N\'{e}ron-Severi lattices. Similarly, we can take the $(1,1)$ components on both sides of the Hodge isometry $\theta_\sigma^{\mathrm{tr}}$ to get the desired isometry $\theta_\sigma$.
\end{proof}

\begin{remark}
We briefly describe how the map $\theta_\sigma^{\mathrm{tr}}$ will be constructed in the proof of Theorem \ref{thm:hodge-isom}. Indeed, we will see that it is the unique lift of $\Phi^{\cE}$ along $i^*$ in the diagram
\begin{equation*}
\xymatrix{
\vv^{\perp, \mathrm{tr}} \ar@{-->}[rr]^-{\theta_\sigma^{\mathrm{tr}}} \ar[drr]_{\Phi^{\cE}} & & H^2(M_\sigma(\vv), \bZ) \ar[d]^{i^*} \\
 & & H^2(M_\sigma^{\mathrm{st}}(\vv), \bQ),
}
\end{equation*}
where $\Phi^{\cE}$ is the classical Mukai morphism induced by the (quasi-)universal family $\cE$ on the stable locus $M_\sigma^{\mathrm{st}}(\vv)$ of the moduli space, and $i^*$ is the pullback along an open embedding. The map $\theta_\sigma$, which is the restriction of $\theta_\sigma^\mathrm{tr}$, is sometimes also referred to as the Mukai morphism in the literature. However, we prefer to call it the \emph{algebraic Mukai morphism}, to distinguish it from the Mukai morphism $\theta_\sigma^{\mathrm{tr}}$, which includes the $(2,0)$ and $(0,2)$ components on both sides.
\end{remark}

\begin{remark}
We would also like to point out that, although $\theta_\sigma^{\mathrm{tr}}$ and $\theta_\sigma$ a priori depend on the choice of the generic stability condition $\sigma$, they in fact only depend on the choice of the open chamber $\cC \in \stab(X)$ containing $\sigma$. This is because the moduli space is the same for all interior points of $\cC$. Therefore, in \cite{BM12,BM13}, the Mukai morphism and algebraic Mukai morphism are sometimes also denoted by $\theta_\cC^{\mathrm{tr}}$ and $\theta_\cC$ respectively.
\end{remark}

We conclude this section by briefly mentioning various cones of divisors on $M=M_\sigma(\vv)$. The ample cone $\amp(M)$, big cone $\bigcone(M)$ and movable cone $\mov(M)$ are all well-defined. Due to the existence of the symplectic resolution and Corollary \ref{cor:algebraic-mukai}, we can also define the positive cone of $M$. We will show that the following definition justifies the name.

\begin{definition}
The cone $(\pi^*)^{-1}(\pos(\tilde{M})) \subset \ns(M)$ is called the positive cone of $M$, and is denoted by $\pos(M)$.
\end{definition}

We can see from the following proposition that the notion is reasonably defined and agrees with our intuition.

\begin{proposition}
\label{prop:amp-pos}
The positive cone $\pos(M)$ is one of the two components of $\{ \alpha \in \ns(M) : q(\alpha,\alpha)>0 \}$ and contains the ample cone $\amp(M)$. 
\end{proposition}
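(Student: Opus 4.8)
The plan is to transport the corresponding statement about the positive cone of the irreducible holomorphic symplectic manifold $\tilde M$ down to $M$ along the pullback map $\pi^*$, using Corollary~\ref{cor:algebraic-mukai} as the bridge. First I would recall the classical fact for the smooth symplectic resolution $\tilde M$: since $\tilde M$ is an irreducible holomorphic symplectic manifold, the set $\{\tilde\alpha \in \ns(\tilde M) : \tilde q(\tilde\alpha,\tilde\alpha)>0\}$ has exactly two connected components (the Beauville--Bogomolov form has signature $(1,\rho(\tilde M)-1)$ on $\ns(\tilde M)$), the positive cone $\pos(\tilde M)$ is by definition the one containing the ample (equivalently, a given K\"ahler) class, and $\amp(\tilde M) \subseteq \pos(\tilde M)$; see \cite[Part~3]{GHJ03}. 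By Corollary~\ref{cor:algebraic-mukai}\eqref{item:am2}, $\pi^*:\ns(M)\hookrightarrow\ns(\tilde M)$ is injective and satisfies $\tilde q(\pi^*\alpha,\pi^*\alpha)=q(\alpha,\alpha)$, so $\alpha$ lies in $\{q>0\}$ precisely when $\pi^*\alpha$ lies in $\{\tilde q>0\}$, and by Corollary~\ref{cor:algebraic-mukai}\eqref{item:am3} the form $q$ on $\ns(M)$ is non-degenerate of signature $(1,\rho(X))$.

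Next I would show that $\{\alpha\in\ns(M):q(\alpha,\alpha)>0\}$ has exactly two connected components and that $\pos(M)=(\pi^*)^{-1}(\pos(\tilde M))$ is one of them. The image $\pi^*(\ns(M))$ is a sublattice of $\ns(\tilde M)$ on which $\tilde q$ restricts to a form of signature $(1,\rho(X))$; in particular this restriction is non-degenerate and indefinite, so $\pi^*(\ns(M))\otimes\bR$ meets both components of $\{\tilde q>0\}$ in $\ns(\tilde M)\otimes\bR$ and meets each in a (nonempty, open, convex) cone. Pulling back, $\{q>0\}$ in $\ns(M)\otimes\bR$ is the disjoint union of the two open convex cones $(\pi^*)^{-1}$ of these, hence has exactly two components; since $\pos(M)$ is defined to be the preimage of the component $\pos(\tilde M)$, it is one of them. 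Finally, the inclusion $\amp(M)\subseteq\pos(M)$ follows because $\pi^*$ sends an ample class on $M$ to a nef and big class on $\tilde M$ (as $\pi$ is a projective birational morphism between varieties of the same dimension, so $\pi^*L$ has positive top self-intersection and is a limit of K\"ahler classes), and any nef big class on $\tilde M$ has positive Beauville--Bogomolov square and lies in the closure of, hence in, the same component as the K\"ahler cone; therefore $\pi^*\alpha\in\pos(\tilde M)$ for $\alpha$ ample, i.e.\ $\alpha\in\pos(M)$. Convexity of $\amp(M)$ and connectedness then force $\amp(M)$ into a single component, which is $\pos(M)$.

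I expect the main obstacle to be the careful verification that $\pi^*$ really does land ample classes of $M$ in the correct (K\"ahler) component of $\{\tilde q>0\}$ on $\tilde M$, rather than merely in $\{\tilde q>0\}$. The subtlety is that $\pi^*L$ is nef but not ample on $\tilde M$ (it is trivial on the $\pi$-exceptional curves), so one cannot directly invoke the definition of $\pos(\tilde M)$ as ``the component containing an ample class''; instead one argues that $\pi^*L$ is a nonzero nef class with $\tilde q(\pi^*L,\pi^*L)=q(L,L)>0$ (positivity of the self-intersection of a big and nef class, translated through the Fujiki relation on $\tilde M$), and that such a class necessarily lies in the closure of the K\"ahler cone and hence, being of positive square, in the open positive cone $\pos(\tilde M)$ itself. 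This uses standard facts about the Beauville--Bogomolov form and the structure of the positive cone on an irreducible holomorphic symplectic manifold, all available in \cite[Part~3]{GHJ03}; once this point is secured, the rest is the elementary linear-algebra bookkeeping sketched above.
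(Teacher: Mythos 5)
Your proposal is correct and follows essentially the same route as the paper: identify the two components of $\{q>0\}$ on $\ns(M)$ via the isometric embedding $\pi^*$ from Corollary \ref{cor:algebraic-mukai}, then pull back an ample class to a nef and big class on $\tilde{M}$, use positivity of the top self-intersection together with the Beauville--Fujiki relation to get $\tilde{q}>0$, and conclude it lies in $\pos(\tilde{M})$. Your extra care about why the nef class lands in the K\"ahler component (rather than merely in $\{\tilde q>0\}$) is a point the paper's proof passes over quickly, but it is the same argument in substance.
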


\begin{proof}
In fact, by Corollary \ref{cor:algebraic-mukai}\eqref{item:am3}, we know that the cone $\{ \alpha \in \ns(M) : \alpha^2>0 \}$ has two components. Together with the map $\pi^*$ in Corollary \ref{cor:algebraic-mukai}\eqref{item:am2}, we know they are precisely the restrictions of the two components of $\{ \tilde{\alpha} \in \ns(\tilde{M}) : \tilde{q}(\tilde{\alpha}, \tilde{\alpha})>0 \}$ to $\ns(M)$, one of which is $\pos(\tilde{M})$. This proves the first statement. 

If there is no ample class then there is nothing to prove for the second statement. Otherwise, take any ample class $\alpha \in \amp(M)$ and note that $\tilde{\alpha}:=\pi^*\alpha$ is nef and big on $\tilde{M}$. By \cite[Proposition 2.61]{KoMo98a},  this implies $\int_{\tilde{M}}\tilde{\alpha}^{10}>0$. Now the Beauville-Fujiki relation \cite[Proposition 23.14]{GHJ03} implies $\tilde{q}(\tilde{\alpha}, \tilde{\alpha})>0$. Thus we have $\tilde{\alpha} \in \pos(\tilde{M})$ and hence $\alpha \in \pos(M)$. 
\end{proof}

\begin{remark}
A priori, the ample cone $\amp(M)$ could be empty. However, it is proved in \cite[Theorem 1.3]{BM12} that $M$ always carries ample classes. Therefore, between the two components of the cone of square positive classes on $M$, $\pos(M)$ can be simply identified as the one which contains $\amp(M)$.
\end{remark}

\section{Proof of Theorem \ref{thm:hodge-isom}}\label{mainpf}

This whole section is devoted to the proof of Theorem \ref{thm:hodge-isom}. We start with some lemmas about deformations of twisted K3 surfaces and local existence of relative moduli spaces of twisted sheaves. The proof of Theorem \ref{thm:hodge-isom} will follow after these preparations. As in \cite{BM12,BM13}, when talking about twisted sheaves, we always assume that we have a fixed B-field lift of the Brauer class, which was introduced in \cite{HS05}. 

\subsection{Deformations of twisted polarised K3 surfaces}

In this subsection we study deformations of a polarised K3 surface which carry a non-trivial Brauer class with a B-field lift. Our main results here are Proposition \ref{prop:deform-no-mukai} and Proposition \ref{prop:deform-with-mukai}. Roughly speaking, up to changing the B-field by an integral class, we can always deform a twisted polarised K3 surface to an untwisted polarised K3 surface in the period domain. Moreover, if there is a Mukai vector on the initial K3 surface which remains algebraic under deformations, and the initial polarisation is generic with respect to this Mukai vector, then the deformation can be made so that the polarisation is generic with respect to the corresponding Mukai vector on each fibre. Moreover, Lemma \ref{lem:technical} shows how to find such an integral class so as to amend a given B-field and make the above deformations possible.

We briefly recall the necessary notions required for the following discussion. The (cohomological) \emph{Brauer group} of a K3 surface $X$ is the torsion part of the cohomology group $H^2(X, \cO^*_X)$ in the analytic (or \'etale) topology. A \emph{twisted K3 surface} is a K3 surface $X$ equipped with a Brauer class $\alpha$. Using the exponential sequence, we can always find a rational class $B\in H^2(X, \bQ)$, such that its $(0,2)$-component maps to $\alpha$ under the exponential map, i.e. $\exp(B^{0,2})=\alpha$. We call such a rational class $B$ a \emph{rational B-field} lift of the Brauer class $\alpha$. Note that the B-field lift of any given Brauer class $\alpha$ is not unique.

For every $\alpha$-twisted sheaf $E$, a twisted Chern character of $E$, and hence a twisted Mukai vector of $E$, is defined in \cite[Proposition 1.2]{HS05}, which depends on the choice of the $B$-field lift $B$ of $\alpha$. The construction there guarantees that it is a $B$-twisted \emph{algebraic class}, i.e. a class in the $B$-twisted algebraic cohomology group $H^*_{\mathrm{alg}}(X,B,\bZ):=(\exp(B)\cdot H^*_{\mathrm{alg}}(X, \bQ)) \cap H^*(X,\bZ)$, as defined in \cite[Remark 1.3]{HS05}.

We introduce the following notion for simplicity of presentation: assume we have a family of K3 surfaces $\cX \to S$, and each fibre $X_s$ over the point $s \in S$ is equipped with a B-field $B_s \in H^2(X_s, \bQ)$. If these B-fields form a section of the local system over $S$ with fibres given by $H^2(X_s, \bQ)$, then we say the family of B-fields is \emph{locally constant} over $S$. 

The following lemma shows the locally trivial extension of some cohomology classes could remain algebraic on each fibre of a deformation.



\begin{lemma}
\label{lem:deformable-mukai}
Let $(X,H)$ be a polarised K3 surface and $B$ a rational B-field lift of a certain Brauer class on $X$. Let $\vv \in H^*(X, \bZ)$ be of the form $\vv=(r, qh+rB, a)$ for some positive integer $r$, rational number $q$ and integer $a$, where $h=c_1(H)$. Then $\vv \in H^*_{\mathrm{alg}}(X,B,\bZ)$, i.e. $\vv$ is a B-twisted algebraic class. 

Moreover, for any flat deformation of the polarised K3 surface $(X,H)$ with locally constant B-fields extending $B$ on $X$, $\vv$ also extends to a locally constant section, such that we get a twisted algebraic class on each fibre.
\end{lemma}

\begin{proof}
A simple computation shows the component of $\exp(-B)\cdot\vv$ in $H^2(X,\bQ)$ is $qh$, which is a $(1,1)$-class. Hence we conclude that $\vv \in H^*_{\mathrm{alg}}(X, B, \bZ)$ is an integral $B$-twisted algebraic class on $X$. 

Assume we have a deformation over $S$, such that for every $s \in S$, the K3 surface $X_s$ comes with an ample line bundle $H_s$ and a B-field $B_s$, which are both locally constant classes. Then the locally trivial extension of $\vv$ over each fibre $X_s$ is given by $\vv_s=(r, qh_s+rB_s, a) \in H^*(X_s,\bZ)$ where $h_s=c_1(H_s) \in H^2(X_s, \bZ)$. The same computation shows that it is a $B_s$-twisted algebraic class.
\end{proof}

The above lemma leads to the following definition.

\begin{definition}
\label{def:deformable-mukai}
A class $\vv \in H^*(X, \bZ)$ satisfying the assumption of Lemma \ref{lem:deformable-mukai} is called a \emph{deformable} $B$-twisted Mukai vector on $X$.
\end{definition}

The following lemma justifies the universality of this notion. 

\begin{lemma}
\label{lem:change-deform}
Let $(X,H)$ be a polarised K3 surface, $B$ a rational B-field, and $\vv$ a $B$-twisted Mukai vector with its degree zero component $r>0$. If $H$ is generic with respect to $\vv$, then the moduli space $M_{X,B,H}(\vv)$ of $B$-twisted $H$-semistable sheaves with Mukai vector $\vv$ is always isomorphic to a moduli space $M_{X,B,H'}(\vv')$ where $\vv'$ is deformable, and $H'$ is generic with respect to $\vv'$.
\end{lemma}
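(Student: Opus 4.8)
The plan is to replace $\vv$ by $\vv\cdot e^{\ell}$ for a cleverly chosen line bundle class $\ell\in\ns(X)$, and then to match polarisations by staying inside a single chamber of the ample cone. Write $\vv=(r,c,a)$ with $r>0$. Since $\vv$ is a $B$-twisted algebraic class, $\exp(-B)\cdot\vv$ lies in $H^*_{\mathrm{alg}}(X,\bQ)$, so its degree-two component $D:=c-rB$ lies in $\ns(X)_\bQ$. For any $\ell\in\ns(X)$, choosing a line bundle $L$ with $c_1(L)=\ell$, the class $\vv\cdot e^{\ell}=\bigl(r,\;c+r\ell,\;a+c\cdot\ell+\tfrac12 r\ell^{2}\bigr)$ lies in $H^*(X,\bZ)$ (the K3 lattice is even, so $\ell^{2}\in 2\bZ$), and its degree-two component is $rB+(D+r\ell)$. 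So the goal is to choose $\ell$ so that $D+r\ell$ lands inside the open chamber of $\amp(X)$ containing $c_1(H)$.

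I would first collect three ingredients. (i) Tensoring by $L$ fixes the Brauer class and multiplies the $B$-twisted Mukai vector by $e^{c_1(L)}$ \cite{HS05}, and it preserves $\mu_H$-(semi)stability. (ii) Because $H$ is generic with respect to $\vv$, any subobject of a sheaf of class $\vv$ that shares the slope of $\vv$ has class proportional to $\vv_p$, hence the same reduced Hilbert polynomial; therefore $H$-Gieseker semistability coincides with $\mu_H$-semistability for sheaves of class $\vv$, and likewise for class $\vv\cdot e^{\ell}$ once $H$ is generic with respect to it. Combining (i) and (ii), $E\mapsto E\otimes L$ induces an isomorphism $M_{X,B,H}(\vv)\cong M_{X,B,H}(\vv\cdot e^{\ell})$, compatibly with S-equivalence. (iii) A numerical wall attached to a potential destabiliser $\vv''$ of $\vv$ is cut out by $\mu_\omega(\vv'')=\mu_\omega(\vv)$; passing to $\vv''e^{\ell}\subset\vv e^{\ell}$ adds $\omega\cdot\ell$ to both slopes and so leaves the wall unchanged. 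Hence $\vv$ and $\vv\cdot e^{\ell}$ induce the same chamber decomposition of $\amp(X)$; in particular $H$ remains generic, and any two polarisations in a common chamber define the same moduli space of objects of class $\vv\cdot e^{\ell}$.

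Next I would produce $\ell$. Let $\cC\subset\amp(X)$ be the chamber containing $c_1(H)$; it is an open cone, so fixing $p\in\cC$ and $\varepsilon>0$ with the ball $B(p,\varepsilon)\subset\cC$ we get $B(tp,\varepsilon t)\subset\cC$ for all $t>0$. Taking $\ell\in\ns(X)$ to be a nearest lattice point to $(tp-D)/r$ yields $\|D+r\ell-tp\|\le r\varrho$, where $\varrho$ is the covering radius of the lattice $\ns(X)$, so $D+r\ell\in\cC$ as soon as $t>r\varrho/\varepsilon$. Set $\vv':=\vv\cdot e^{\ell}$; let $h'$ be the primitive integral class on the ray $\bR_{>0}(D+r\ell)\subset\cC$, let $H'$ be the corresponding ample line bundle, and write $D+r\ell=q'h'$ with $q'\in\bQ_{>0}$. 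Then $\vv'=(r,\,q'h'+rB,\,a')$ is deformable in the sense of Definition \ref{def:deformable-mukai} (so in particular $\vv'\in H^*_{\mathrm{alg}}(X,B,\bZ)$ by Lemma \ref{lem:deformable-mukai}), the polarisation $H'$ lies in the chamber $\cC$ and is therefore generic with respect to $\vv'$, and chaining $M_{X,B,H}(\vv)\cong M_{X,B,H}(\vv')$ with $M_{X,B,H}(\vv')=M_{X,B,H'}(\vv')$ gives the isomorphism claimed.

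The part I expect to require the most care is the genericity bookkeeping behind (ii) and (iii): one must verify that after the line-bundle twist the new polarisation $H'$ can be kept in the original chamber, so that the two moduli spaces literally agree and the hypotheses of the lemma are passed along. The remaining steps are either formal — the transformation law for twisted Chern characters and the evenness computation showing $\vv\cdot e^{\ell}$ is integral — or an elementary lattice-point-in-an-open-cone argument.
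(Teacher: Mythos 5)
The strategy you follow — multiply $\vv$ by $\exp(\ell)$ so that the untwisted degree-two part $c-rB+r\ell$ lands in the chamber of $h$, then take $H'$ primitive on that ray — is the same as the paper's, but your choice of an \emph{arbitrary} $\ell\in\ns(X)$ creates a genuine gap at step (ii). Tensoring by an arbitrary line bundle $L$ does not obviously preserve Gieseker $H$-semistability, and the claim you use to force it, that for generic $H$ Gieseker semistability coincides with $\mu_H$-semistability for sheaves of class $\vv$, is false: already for $\vv=(2,0,-2)$ on a K3 with $\Pic(X)=\bZ H$ (so every $H$ is generic), the sheaf $\cO_X\oplus I_Z$ with $\ell(Z)=4$ is $\mu_H$-semistable of class $\vv$ but not Gieseker semistable. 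Likewise, ``any subobject sharing the slope of $\vv$ has class proportional to $\vv_p$'' is too strong: genericity (via the $\xi$-wall argument) only controls $c_1/\mathrm{rk}$ of suitable sub/quotients, not the $\ch_2$-component of an arbitrary equal-slope subsheaf, as the same example shows. What is true, and what a repair would need, is the finer statement that for generic $H$ the Jordan--H\"older factors of a Gieseker-semistable sheaf of class $\vv$ have Mukai vectors proportional to $\vv_p$, and then a direct comparison of reduced Hilbert polynomials after twisting; without this, both the identification of the semistable loci and the compatibility with S-equivalence (hence the isomorphism of the coarse moduli spaces) are unjustified as written.

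The paper avoids this entirely by twisting only by $\exp(mh)$, i.e.\ tensoring with $H^{\otimes m}$ (Lemma \ref{lem:change-mukai}): this preserves Gieseker $H$-(semi)stability for trivial reasons, with no genericity input, and for $m\gg 0$ the class $c+rmh-rB$ automatically lies in the chamber of $h$ because its ray converges to that of $h$ — so the covering-radius lattice argument is also unnecessary. Your construction of $\ell$ and of $H'$ is fine in itself, and your point (iii) that the wall-and-chamber structure is invariant under $\vv\mapsto\vv\exp(\ell)$ is correct, but the heart of the lemma is the isomorphism $M_{X,B,H}(\vv)\cong M_{X,B,H}(\vv\cdot e^{\ell})$, and for general $\ell$ your justification of it does not go through; restricting to $\ell=mh$ (or supplying the JH-factor argument sketched above) is needed to close the proof.
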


\begin{proof}
This is also classical (see, for instance, the proof of \cite[Theorem 6.2.5]{HuLe10a}). We write $\vv=(r,c,a)$ and $c_1(H)=h$. By Lemma \ref{lem:change-mukai}, we can replace the Mukai vector $\vv$ by $\vv'=\vv \cdot \exp(mh)$ for any $m\in \bZ$, without changing the moduli space. Moreover, $H$ is still generic with respect to $\vv'$. Note that $\vv'$ is $B$-twisted, therefore the degree two component of $\vv'\cdot \exp(-B)$ is a rational $(1,1)$-class. A simple calculation shows that this class is $c+rmh-rB$. When $m \gg 0$, $c+rmh-rB$ is an ample class and lies in the same chamber as $h$. We fix such an $m$ and write $h'$ for the primitive integral class on the ray generated by $c+rmh-rB$ in the ample cone. We denote the corresponding ample line bundle $H'$, and write $c+rmh-rB=qh'$ for some $q \in \bQ$. Then $\vv'=(r, qh'+rB, a')$ for some $a' \in \bZ$ is a deformable Mukai vector on the polarised K3 surface $(X, H')$. The construction guarantees that $H'$ lies in the interior of a chamber, and hence is generic.
\end{proof}

We recall the following fact from linear algebra, which will be used in the proof of Lemma \ref{lem:technical}.

\begin{lemma}
\label{lem:linear-algebra}
Let $V$ be a real vector space equipped with a (possibly degenerate) real-valued symmetric bilinear pairing, whose signature is $(n_+, n_-, n_0)$ (for the positive definite, negative definite, and isotropic parts respectively). Let $V'$ be a codimension one linear subspace of $V$ equipped with induced pairing, with signature $(n'_+, n'_-, n'_0)$. Then we have $n_+-1 \leq n'_+ \leq n_+$, $n_--1 \leq n'_- \leq n_-$, and $n_0-1 \leq n'_0 \leq n_0+1$. 

The same statement holds for rational vector spaces with rational-valued pairings. 
\end{lemma}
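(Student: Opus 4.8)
The plan is to work with a fixed linear functional $\lambda: V \to \bR$ whose kernel is $V'$, and to compare maximal definite subspaces of $V$ and $V'$. First I would recall the characterisation of the signature in terms of subspaces: $n_+$ is the maximal dimension of a subspace of $V$ on which the pairing is positive definite, and similarly $n_-$ for negative definite; the radical $\mathrm{rad}(V) = \{v : \langle v, w\rangle = 0 \text{ for all } w\}$ has dimension $n_0$, and $n_+ + n_- + n_0 = \dim V$. The analogous statements hold for $V'$ with the induced pairing.

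For the inequalities $n'_+ \leq n_+$ and $n'_- \leq n_-$: a positive (resp. negative) definite subspace of $V'$ is automatically a positive (resp. negative) definite subspace of $V$, so the maximal dimensions can only drop when passing to a subspace. For the lower bounds $n_+ - 1 \leq n'_+$ and $n_- - 1 \leq n'_-$: take a positive definite subspace $W \subseteq V$ of dimension $n_+$; then $W \cap V' = W \cap \ker\lambda$ has dimension at least $n_+ - 1$, and it is still positive definite (definiteness passes to subspaces), so $n'_+ \geq n_+ - 1$; the argument for $n'_-$ is identical. Since $\dim V' = \dim V - 1$ and $n'_+ + n'_- + n'_0 = \dim V - 1 = (n_+ + n_- + n_0) - 1$, the constraints on $n'_+$ and $n'_-$ already force $n_0 - 1 \leq n'_0 \leq n_0 + 1$: indeed $n'_0 = (\dim V - 1) - n'_+ - n'_- \leq (\dim V - 1) - (n_+ - 1) - (n_- - 1) = n_0 + 1$, and similarly $n'_0 \geq (\dim V - 1) - n_+ - n_- = n_0 - 1$. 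So once the definite cases are handled, the isotropic bound is a free consequence of dimension counting, and no separate analysis of the radical is needed.

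The main (very mild) obstacle is simply being careful that "positive definite subspace" behaves well under intersection with a hyperplane — i.e. that $W \cap V'$ really does have dimension $\geq \dim W - 1$ (true because $V'$ has codimension one) and that the restriction of a positive definite form to a subspace stays positive definite (immediate from the definition). There is genuinely nothing deep here; the only thing to watch is that we never needed the form to be non-degenerate, so the argument applies verbatim when $n_0 > 0$. Finally, the rational statement follows by exactly the same reasoning: all the subspaces constructed above (maximal definite subspaces, kernels of rational functionals, intersections) can be taken to be defined over $\bQ$, since a rational-valued symmetric bilinear form on a rational vector space admits an orthogonal basis over $\bQ$ by Gram--Schmidt, and definiteness of the restriction to a rational subspace is detected on that basis. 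Hence the inequalities are identical in the rational setting.
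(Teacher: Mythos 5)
Your proof is correct. Note that the paper itself does not write out an argument for this lemma; it simply asserts that the statement is elementary ``by looking at the symmetric matrix representing the pairing'' and leaves it to the reader, so your proposal supplies a complete proof where the paper gives none. Your route is the standard subspace-theoretic one: characterise $n_\pm$ as the maximal dimension of a definite subspace, observe that intersecting a maximal definite subspace with the hyperplane $V'$ drops the dimension by at most one, and then get the bounds on $n'_0$ for free from $n'_++n'_-+n'_0=\dim V-1$; this is cleaner than chasing entries of a Gram matrix and handles the degenerate case with no extra work, exactly as you say. Two small remarks on the rational case: the quickest argument is simply that tensoring the inclusion $V'\subset V$ with $\bR$ preserves both the codimension and the signatures, so the rational statement reduces to the real one; and if you prefer your direct argument over $\bQ$, be aware that naive Gram--Schmidt can fail on isotropic vectors, so you should invoke the standard fact that any symmetric bilinear form over a field of characteristic $\neq 2$ can be diagonalised (split off a vector with $\langle v,v\rangle\neq 0$, which exists unless the form vanishes, and induct) --- with that phrasing your argument for the rational case, including the check that a $\bQ$-positive-definite rational subspace stays positive definite over $\bR$, goes through.
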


\begin{proof}
The proof is completely elementary by looking at the symmetric matrix representing the symmetric bilinear pairing. We leave it to the reader. 
\end{proof}

\begin{remark}
If the symmetric bilinear pairing is non-degenerate, i.e. $n_0=0$, then we will also write its signature as $(n_+,n_-)$ by abuse of notation.
\end{remark}

We are now ready to prove a technical lemma, which will be used to deal with the subtle potential issues of ampleness caused by $(-2)$-classes in the deformation, and the genericness of the polarisations. We point out that, despite of the words appearing in the statement, this lemma has nothing to do with ampleness of $h$ or $B$ being a B-field. We state it in this way just to indicate the situation in which we apply it. The proof of this lemma only contains elementary lattice theoretic arguments. 

\begin{lemma}
\label{lem:technical}
Let $h \in H^2(X, \bZ)$ be an ample class, and $B \in H^2(X, \bQ)$ be a B-field. Fix an arbitrary positive integer $N_0$. Then there exists $B' \in B + H^2(X, \bZ)$ such that $B'^2<0$, and for every non-zero class $g \in \spanning_\bQ \{ h, B' \} \cap H^2(X, \bZ) \cap h^\perp$, we have $g^2<-N_0$.
\end{lemma}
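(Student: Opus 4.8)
We are given an ample class $h \in H^2(X,\bZ)$, a rational B-field $B \in H^2(X,\bQ)$, and a positive integer $N_0$. We want to find $B' \in B + H^2(X,\bZ)$ with $B'^2 < 0$ such that every nonzero integral class in $\spanning_\bQ\{h,B'\} \cap h^\perp$ has square $< -N_0$. The plan is to take $B' = B - nh + (\text{integral correction})$ for a suitable large integer $n$, and analyse the rank-$\leq 2$ lattice $\Lambda := \spanning_\bQ\{h,B'\} \cap H^2(X,\bZ)$ together with its codimension-one sublattice $\Lambda \cap h^\perp$.

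First I would reduce to understanding the plane $P = \spanning_\bQ\{h, B'\}$ inside $H^2(X,\bQ)$. Note that replacing $B$ by $B - nh$ for an integer $n$ does not change $P$ as long as we do not lose the span, and this is exactly the kind of modification allowed by $B' \in B + H^2(X,\bZ)$. Taking $n \gg 0$ makes $B'^2 = (B - nh)^2 = B^2 - 2n(B\cdot h) + n^2 h^2$, and since $h^2 > 0$ we get $B'^2 > 0$ for $n$ large — which is the wrong sign. So instead I would exploit the hyperbolic signature of $H^2(X,\bZ)$, which is $(3, 19)$: on $h^\perp$ the form is negative definite in the direction we care about only if $P$ is chosen generically. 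More precisely, the line $\ell := P \cap h^\perp$ is one-dimensional (assuming $h \notin P^\perp$, i.e. $h$ and $B'$ are linearly independent, which I would arrange), spanned by some rational class $g_0$ with $g_0 \cdot h = 0$. Since $h$ is ample, hence $h^2 > 0$, the restriction of the intersection form to $h^\perp$ has signature $(2,19)$, so $g_0^2$ can be positive, negative, or zero depending on $\ell$. The key observation is: we have freedom to choose the integral correction term added to $B$ (any class in $H^2(X,\bZ)$), and this moves the line $\ell$ around; I would pick the correction so that $g_0^2 < 0$, i.e. $\ell$ lies in the negative part of $h^\perp$. This uses Lemma \ref{lem:linear-algebra} to control how the signature of a subspace can drop. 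Concretely, choose any integral class $e$ with $e \cdot h = 0$ and $e^2 < 0$ (exists since $h^\perp$ is not negative-definite-free — it has signature $(2,19)$, plenty of negative vectors); then arrange $B' \equiv B \pmod{H^2(X,\bZ)}$ so that $P \cap h^\perp = \bQ e$. One way: replace $B$ by $B + ke$ for large $k$ so that the $h^\perp$-component of $B'$ is dominated by $e$; then $P \cap h^\perp$ is spanned by (approximately) $e$, and for $k$ large its square is close to $k^2 e^2 \to -\infty$.

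Once $\ell = P \cap h^\perp$ is spanned by a class $g_0$ with $g_0^2 < 0$, the sublattice $\Lambda \cap h^\perp = \Lambda \cap \ell$ is either trivial or infinite cyclic, generated by the primitive integral generator $g_1$ of $\ell$; and every nonzero $g \in \Lambda \cap h^\perp$ is a nonzero integer multiple of $g_1$, so $g^2 \geq g_1^2$ in absolute value, i.e. $g^2 \leq g_1^2 < 0$. So it suffices to guarantee $g_1^2 < -N_0$. Since $g_1$ is a fixed rational multiple of $e$ depending on the lattice, I would scale up $k$ (the coefficient of $e$ in the correction) enough that even the primitive generator of $\bQ e \cap H^2(X,\bZ)$, when it differs from $g_1$ only by the induced lattice structure, has square $< -N_0$ — but in fact the primitive generator of $\bQ e \cap H^2(X,\bZ)$ is a fixed class independent of $k$, namely the primitive part $e_0$ of $e$, and $g_1$ is a multiple of $e_0$, so $g_1^2$ is a multiple of $e_0^2$ by a square integer $\geq 1$. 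Hence choosing $e$ from the outset with $e_0^2 < -N_0$ (replace $e$ by a large multiple if necessary — but that only scales, so instead: $\bQ e \cap H^2(X,\bZ) = \bZ e_0$ with $e_0^2$ some fixed negative number; if $e_0^2 \geq -N_0$ we cannot fix it by scaling $e$. Instead pick a different negative vector: since the negative definite part of $h^\perp$ has rank $19 \geq 2$, we can find primitive $e_0 \in h^\perp$ with $e_0^2$ as negative as we like only if the lattice admits such — which it does, e.g. take $e_0$ primitive in a rank-$\geq 1$ negative sublattice and note we may rescale the *target square* by choosing $e_0$ in $m \cdot (\text{sublattice})$... ). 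To avoid this circularity I would instead argue: $\Lambda \cap h^\perp$ need not contain $e_0$ itself; rather, because $B'$ has an irrational or "generic" component, I would choose the correction so that $\Lambda \cap h^\perp = \bZ g_1$ with $g_1 = M e_0$ for a large integer $M$ forced by integrality of $B'$. This is the delicate point.

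\textbf{The main obstacle} is precisely this last integrality bookkeeping: ensuring that the \emph{primitive} integral generator of the line $P \cap h^\perp$ — not merely some class on that line — has square more negative than $-N_0$. Scaling $B$ by a large multiple of a negative class $e$ pushes $B'^2$ and the relevant squares to $-\infty$, but one must check that this scaling genuinely forces the primitive generator $g_1$ of $\Lambda \cap h^\perp$ to be large, rather than having the lattice $\Lambda$ "see through" the scaling via a smaller primitive vector. I expect the resolution is: write $B' = \frac{1}{t}(\text{integral})$ for $t$ the denominator of $B$; choose the integral correction so that the line $P \cap h^\perp$ passes through no integral vector of small square, which can be enforced by a congruence condition on the correction modulo $t$ combined with taking the $e$-component large. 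So the endgame is a congruence-plus-size argument, and that is where the real work lies; everything else (existence of negative vectors in $h^\perp$, the signature bookkeeping via Lemma \ref{lem:linear-algebra}, the reduction to the primitive generator) is routine.
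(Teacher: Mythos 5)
Your overall strategy coincides with the paper's: perturb $B$ by an integral class lying in a negative direction orthogonal to $h$, and then control the square of the \emph{primitive} integral generator of the line $\spanning_\bQ\{h,B'\}\cap h^\perp$. But the step you yourself flag as ``the delicate point'' and defer to an expected ``congruence-plus-size argument'' is precisely the content of the lemma, so as written there is a genuine gap: you never show that the primitive generator $g_1$ of $\Lambda\cap h^\perp$ (rather than some large non-primitive class on that line) has square below $-N_0$, i.e.\ that the divisibility of the obvious integral vector on that line stays bounded as your parameter $k$ grows. Moreover, the fix you sketch -- imposing a congruence condition on the correction modulo the denominator of $B$ -- is not what is needed; no congruence choice is required, and it is not clear how one would be implemented.

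What closes the gap in the paper is an unconditional divisibility bound built into the choice of the correction. With $n$ the denominator of $B$, write $nB=\alpha_B h+\beta_B m_B$ and split $H^2(X,\bZ)=\bZ h\oplus M$, $M=\bZ m_B\oplus L$. The correction is a \emph{primitive} class $c\in L$ with $c\perp h$, $c\perp m_B$ and $c^2<-N$ for $N\gg 0$; such $c$ exists because $\{h,m_B\}^\perp\cap L$ has corank at most $4$ in $H^2(X,\bZ)$ and hence contains a negative definite sublattice of rank at least $15$. Then $n(B+c)=\alpha_B h+(nc+\beta_B m_B)$, and the divisibility $k_N$ of $nc+\beta_B m_B$ equals $\gcd(n,\beta_B)\leq n$, bounded independently of $N$, exactly because $c$ is primitive in $L$ and orthogonal to $m_B$ in the fixed splitting. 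Consequently $\{h,p_N\}$ with $p_N=(nc+\beta_B m_B)/k_N$ is an integral basis of $\Lambda$, $h\cdot p_N$ is bounded while $p_N^2\to-\infty$, and any nonzero $xh+yp_N\in h^\perp$ has square $y^2\bigl(p_N^2-(h\cdot p_N)^2/h^2\bigr)\ll 0$; finally $B'=B+c$ satisfies $B'^2=B^2+c^2<0$ since $c\cdot B=0$. In your parametrisation $B'=B+ke$ a similar uniform bound can be extracted (the divisibility of the integral generator of the line is bounded by the divisibility of a fixed class in the quotient by the saturation of $\bZ e$, provided $B\notin\bQ h+\bQ e$), but some such bound must actually be proved -- it is the missing idea, not a routine endgame.
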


\begin{proof}
Without loss of generality, we can assume $h$ to be a primitive class. Otherwise we can replace $h$ by the primitive class in the ray generated by $h$, which is still an ample class. Moreover, let $n$ by the smallest positive integer such that $nB \in H^2(X, \bZ)$. Note that $n$ is in fact the order of the Brauer class represented by $B$. 

Since $h$ is primitive, the exact sequence of lattices
$$0 \to \bZ h \to H^2(X, \bZ) \to H^2(X, \bZ)/\bZ h \to 0$$
splits non-canonically. If we choose a lift of $H^2(X, \bZ)/\bZ h$ in the ambient lattice $H^2(X, \bZ)$, say $M$, then $H^2(X, \bZ)= \bZ h \oplus M$. We can write $nB = \alpha_Bh+\beta_Bm_B$ under this decomposition, where $\beta_B$ is a non-negative integer and $m_B \in M$ is a primitive class. Moreover, it is easy to see that, for any primitive class $p_M \in M$, the lattice $\spanning_\bQ \{ h, p_M \} \cap H^2(X, \bZ)$ has an integral basis given by $\{h, p_M\}$. 

We do the same thing for the second time. Since $m_B$ is primitive, the exact sequence of lattices
$$0 \to \bZ m_B \to M \to M/\bZ m_B \to 0$$
splits, and we can choose a lift $L$ of $M/\bZ m_B$ in $M$ and write $M=\bZ m_B \oplus L$. Again, for any primitive class $p_L \in L$, the lattice $\spanning_\bQ \{ m_B, p_L \} \cap M$ has an integral basis given by $\{m_B, p_L\}$. In particular, the divisibility of any integral linear combination $\gamma_1 m_B + \gamma_2 p_L$ is the highest common factor of $\gamma_1$ and $\gamma_2$. 

Now we take $K := \{ h, m_B \}^\perp \cap L$ be the sublattice of $H^2(X, \bZ)$ containing all lattice points in $L$ which are perpendicular to both $h$ and $m_B$ under the Poincar\'{e} pairing. In particular, these classes are perpendicular to $nB$. Note that $K \subset L$ has corank at most $2$, therefore $K \subset H^2(X, \bZ)$ has corank at most $4$. Since the Poincar\'{e} pairing on $H^2(X, \bZ)$ has signature $(3, 19)$ (see e.g. \cite[Proposition VIII.3.2]{BHPV}), we can use Lemma \ref{lem:linear-algebra} repeatedly to see that the restriction of the Poincar\'{e} pairing to $K$ has negative signature at least $15$. That is, $K$ contains a negative definite sublattice of rank at least $15$. Therefore, for any $N \gg 0$, we can find a primitive integral class $c \in K$ such that $c^2 < -N$. 

Now let us consider the rank two lattice $\Lambda_N = \spanning_\bQ \{ h, c+B \} \cap H^2(X, \bZ)$. We will show that for $N$ sufficiently large, it does not contain any $(-2)$-classes which are perpendicular to $h$. 

We first try to find an integral basis for $\Lambda_N$. Note that $nc+nB = \alpha_B h + ( nc + \beta_B m_B )$ where $nc + \beta_B m_B \in M$. Since $c \in L$ is primitive, the divisibility $k_N$ of the class $nc + \beta_B m_B$ is the highest common factor of $n$ and $\beta_B$. In particular, $1 \leq k_N \leq n$. We write $nc + \beta_B m_B = k_N p_N$ where $p_N$ is a primitive class in $M$. By the discussion above, this shows that $ \{ h, p_N \} $ is an integral basis for $\Lambda_N$. Therefore, we just need to show that for each pair of integers $(x,y)$ satisfying $h \cdot (xh+yp_N) =0$, we have $(xh+yp_N)^2 < -N_0$ for any given $N_0$.

We give an estimate of the intersection matrix of the lattice $\Lambda_N$ with integral basis $\{ h, p_N \}$. Since $h$ is a fixed class, $h^2$ is a fixed number. And we have
$$h \cdot p_N = h \cdot \frac{1}{k_N}(nc+\beta_Bm_B)=\frac{\beta_B}{k_N}(h \cdot m_B).$$
Since $1 \leq k_N \leq n$, we see that $\left| h \cdot p_N \right| \leq \beta_B(h \cdot m_B)$ where the right hand side is independent of $N$. Moreover
$$p_N^2=\frac{1}{k_N^2}(nc+\beta_B m_B)^2=\frac{1}{k_N^2}(n^2c^2+\beta_B^2 m_B^2).$$
Since $1 \leq k_N \leq n$ and $c^2 < -N$, we see that when $N \gg 0$ is large and positive, $p_N^2 \ll 0$ is large and negative. 

Now assume we have a non-zero vector $xh+yp_N \in \Lambda_N$ with $h \cdot (xh+yp_N) =0$. Then we can assume $y \neq 0$ since otherwise the vector is zero. Thus we have
\begin{align*}
(xh + yp_N)^2 &= yp_N \cdot (xh+yp_N) = y(x h\cdot p_N + y p_N^2) \\
&= y(x h\cdot p_N + y \frac{(h\cdot p_N)^2}{h^2} + y (p_N^2 - \frac{(h\cdot p_N)^2}{h^2}))\\
&= y^2 (p_N^2 - \frac{(h\cdot p_N)^2}{h^2}).
\end{align*}
From the above discussion, we see that $\frac{(h\cdot p_N)^2}{h^2}$ is bounded for all choices of $N$, while $p_N^2 \ll 0$ when $N \gg 0$. Since $y^2$ is a positive integer, the above computation shows that $(xh + yp_N)^2 \ll 0$, hence is smaller than any number $-N_0$ we started with.

Therefore, in the statement of the proposition, we can simply choose $B'=c+B$. Note that the choice of $c$ makes $c\cdot B=0$, hence we also have $B'^2=c^2+B^2 \ll 0$ for a sufficiently large choice of $N$. 
\end{proof}

The following proposition shows how to deform a twisted polarised K3 surface to an untwisted polarised K3 surface.

\begin{proposition}
\label{prop:deform-no-mukai}
Let $(X, \alpha)$ be a twisted projective K3 surface with an ample line bundle $H$, and $B \in H^2(X, \bQ)$ be a B-field lift of $\alpha$. We denote $h=c_1(H)$ and assume that $B^2 < 0$ and the lattice $\spanning_\bQ \{ h, B \} \cap H^2(X, \bZ) \cap h^\perp$ does not contain any $(-2)$-classes. Then there exists a family of twisted projective K3 surfaces $(\cX, \cH) \to S$ over an integral base $S$, with ample line bundle $H_s$ and locally constant B-field $B_s$ on the fibre $X_s$ of the family at each point $s \in S$, such that at $s_0 \in S$, we have $(X_{s_0}, H_{s_0}, B_{s_0}) = (X, H, B)$, and at $s_1 \in S$, we have $B_{s_1} \in H^{1,1}(X_{s_1}, \bQ)$. In particular, it represents a trivial Brauer class on $X_{s_1}$. 
\end{proposition}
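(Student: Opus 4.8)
The plan is to realize the deformation inside the period domain of (B-field shifted) K3 lattices, following the strategy familiar from the untwisted case but keeping track of the B-field as a locally constant section. First I would set up the relevant lattice-theoretic picture: the B-field $B$, together with $h = c_1(H)$, spans a rational subspace of $H^2(X,\bQ)$, and the condition $B^2 < 0$ together with $h^2 > 0$ means the sublattice $\spanning_\bQ\{h,B\} \cap H^2(X,\bZ)$ has signature $(1,1)$. Its orthogonal complement $T \subset H^2(X,\bZ)$ therefore has signature $(2,17)$; since $B$ does not have type $(1,1)$, the period of $X$ (a line in $H^2(X,\bC)$ spanned by a holomorphic $2$-form $\omega$) is not orthogonal to $B$, and in particular $\omega$ is not contained in $\spanning_\bC\{h,B\} \otimes \bC$ nor entirely in $T \otimes \bC$.

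Next I would construct the base $S$ as (an open subset of) a period domain for the lattice $T' := h^\perp \cap H^2(X,\bZ)$, or rather a suitable neighbourhood in it, and move the period point. The key idea: we want to deform so that at the end point $B$ becomes of type $(1,1)$, i.e. $\omega_{s_1} \perp B$. Concretely, I would produce a path of periods $\omega_s$ inside $h^\perp \otimes \bC$ (keeping $h$ algebraic, hence $H$ stays a holomorphic line bundle and can be kept ample by shrinking $S$, using that ampleness is open and that the hypothesis rules out $(-2)$-classes in $\spanning_\bQ\{h,B\}\cap h^\perp$ appearing as obstructions), starting at $\omega_{s_0} = \omega$ and ending at a period $\omega_{s_1}$ that is orthogonal to $B$. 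That such a path exists is where the hypothesis "$\spanning_\bQ\{h,B\}\cap H^2(X,\bZ)\cap h^\perp$ contains no $(-2)$-classes" does its work: it guarantees that along the chosen path we never hit a wall where an effective $(-2)$-curve destroys ampleness of $H$, so the surjectivity of the period map (for polarised K3 surfaces, cf. \cite{BHPV}) produces an actual family $(\cX,\cH) \to S$ of polarised K3 surfaces with the prescribed periods. The B-field $B$, viewed as a locally constant section $B_s$ of the local system $R^2\pi_*\bQ$, then extends over all of $S$ by construction, and at $s_1$ its $(0,2)$-part vanishes, so $B_{s_1} \in H^{1,1}(X_{s_1},\bQ)$ and hence $\exp(B_{s_1}^{0,2}) = 1$ is the trivial Brauer class.

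The steps in order: (1) record the signature computation for $\spanning_\bQ\{h,B\}$ and its orthogonal complement, and note $\omega \not\perp B$; (2) identify the relevant polarised period domain $\Omega_{h}$ inside $\bP(h^\perp \otimes \bC)$ and the locus $\Omega_h \cap B^\perp$ where $B$ becomes $(1,1)$, checking it is nonempty of the expected codimension using the signature of $h^\perp \cap B^\perp$; (3) choose a path (or a small irreducible subvariety $S$) in $\Omega_h$ from $[\omega]$ to a point of $\Omega_h \cap B^\perp$ that avoids the countably many hyperplanes $\delta^\perp$ for $\delta$ a $(-2)$-class — here one must ensure the path can be chosen to keep $h$ in the ample (rather than merely big) chamber, which is exactly what the no-$(-2)$-class hypothesis on $\spanning_\bQ\{h,B\}\cap h^\perp$ is designed to allow, at least near the two endpoints and along a generic connecting arc; (4) invoke the surjectivity/local-isomorphism of the period map for polarised K3s to get the family $(\cX,\cH) \to S$; (5) define $B_s$ by flat transport and conclude.

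The main obstacle I anticipate is step (3) — arranging that the deformation path keeps $H$ ample rather than just quasi-ample/big, i.e. that no $(-2)$-class becomes effective and contracts along the family. In the untwisted world one usually finesses this by allowing the polarisation to degenerate and then re-choosing it, but here we need $B_s$ to stay locally constant simultaneously, so we cannot freely wiggle $h$. This is precisely why the hypotheses are phrased in terms of the rank-two lattice $\spanning_\bQ\{h,B\} \cap h^\perp$ containing no $(-2)$-classes: it pins down the only place where $h$ and $B$ together could force a $(-2)$-curve, and Lemma~\ref{lem:technical} (applied beforehand, replacing $B$ by a suitable $B' \in B + H^2(X,\bZ)$) is what lets us assume this hypothesis holds. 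So in the write-up I would lean on Lemma~\ref{lem:technical} to reduce to the stated hypotheses, then carefully argue that a generic path between the two period points, perturbed off the $(-2)$-hyperplanes, still corresponds to a family of genuinely ample polarised K3 surfaces.
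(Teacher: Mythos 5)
Your proposal is correct and takes essentially the same route as the paper: work in the period domain $\cD \subset \bP(h^\perp \otimes \bC)$ for the polarised K3, use $B^2<0$ to see that the section $\cD' = \cD \cap B^\perp$ (where $B$ becomes of type $(1,1)$) is non-empty via the signature of $\{h,B\}^\perp$, use the hypothesis that $\spanning_\bQ\{h,B\}\cap H^2(X,\bZ)\cap h^\perp$ contains no $(-2)$-classes to guarantee $\cD'$ is not contained in any wall $\delta^\perp$, and join a generic point of $\cD'$ to the period of $X$ by a curve along which $B$ is transported as a locally constant section. Two cosmetic points only: the orthogonal complement of $\spanning_\bQ\{h,B\}$ has signature $(2,18)$ rather than $(2,17)$, and one does not need the whole connecting curve to avoid the $(-2)$-hyperplanes — the paper simply removes the locally finitely many closed points where the curve meets them, the hypothesis being needed only to place the endpoint in $\cD'$ off all walls.
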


\begin{proof}
We can choose $S$ to be a single point if $B$ is of type $(1,1)$. Otherwise, we study the period domain for the deformation of polarised K3 surfaces. Consider the lattice $\Lambda=\{h\}^{\perp}\subset H^2(X, \bZ)$, where $h=c_1(H)\in H^2(X, \bZ)$. The associated period domain is:
\begin{equation*}
\cD=\{ \sigma\in\bP(\Lambda\otimes \bC) : \sigma^2=0, \sigma\bar{\sigma}>0 \}.
\end{equation*}
The period domain of marked polarised K3 surfaces is obtained by removing hyperplanes of the form $\delta^\perp$ from $\cD$ for all $(-2)$-classes $\delta$. Since $\Lambda$ has signature $(2, 19)$, $\cD$ has two connected components, which can be identified by complex conjugation. We denote (the marking) $[H^{2,0}(X)]$ by $\sigma_0$ and observe that $\sigma_0\in \cD$. 

In order to deform $X$ so that the B-field becomes of type $(1,1)$, we consider $\Lambda'=\{h, B\}^{\perp}\subset H^2(X, \bZ)$ and set
\begin{equation*}
\cD'=\{ \sigma\in\bP(\Lambda'\otimes \bC) : \sigma^2=0, \sigma\bar{\sigma}>0 \}.
\end{equation*}
Notice that $\cD'$ is in fact a hyperplane section of $\cD$. The condition $B^2<0$ guarantees that $\Lambda'$ has signature $(2, 18)$. Therefore, $\cD'$ is non-empty and has two components, which are contained in the two components of $\cD$ respectively. The assumption also guarantees that $\cD'$ is not contained in the hyperplane $\delta^\perp$ for any $(-2)$-class $\delta \in \Lambda$. Therefore any period point in $\cD'$ away from these hyperplanes represents a deformation of $X$ on which $B$ has type $(1,1)$. Let $S \subset \cD$ be any integral curve joining such a period point and $\sigma_0$. It parametrises a family of projective K3 surfaces after removing at most locally finitely many closed points. 
\end{proof}

Next we deal with the issue of the genericness of polarisations. We remind the reader that for any (twisted) Mukai vector $\vv$ on a K3 surface with rank $r$, its \emph{discriminant} is defined as $\Delta(\vv)=\vv^2+2r^2$; see \cite[Section 3.4]{HuLe10a} or \cite[Definition 3.2.1.1]{Lie07}. 

\begin{lemma}
\label{lem:generic-criterion}
Let $(X,H)$ be a polarised K3 surface with a rational B-field $B$. Let $\vv$ be a $B$-twisted algebraic class with its degree zero component $r>0$. If there is no class $\xi \in H^{1,1}(X, \bZ)$ satisfying both $\xi \cdot H = 0$ and $-\frac{r^2}{4}\Delta{(\vv)} \leq \xi^2 <0$, then $H$ is generic with respect to $\vv$. 
\end{lemma}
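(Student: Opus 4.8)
The statement to prove (Lemma \ref{lem:generic-criterion}) asserts that a polarisation $H$ on a twisted K3 surface $(X,B)$ is generic with respect to a $B$-twisted algebraic Mukai vector $\vv$ of positive rank $r$, provided there is no $(1,1)$-class $\xi$ with $\xi\cdot H=0$ and $-\tfrac{r^2}{4}\Delta(\vv)\le\xi^2<0$. The plan is to unwind the definition of a \emph{wall} for Gieseker (semi)stability in terms of the numerical data of sub-Mukai-vectors, and show that the hypothesis on $\xi$ rules out all such walls through $H$. Concretely, $H$ fails to be generic precisely when there is a decomposition $\vv=\vv_1+\vv_2$ into $B$-twisted algebraic classes of positive rank such that $H$ lies on the wall $W(\vv_1,\vv_2)=\{A \text{ ample}: \mu_A(\vv_1)=\mu_A(\vv_2)\}$, with $\vv_1$ (equivalently $\vv_2$) being a potential destabiliser, i.e. $0<r_1<r$ and the slopes agree along $H$ but not identically. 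This is the twisted analogue of the wall-and-chamber description in \cite[Section 4.C]{HuLe10a} and \cite[Section 1.4]{Yos01}, and it transports verbatim to the twisted setting by \cite[Proposition 2.3.3.6]{Lie07}.

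First I would set up the numerical invariant attached to such a potential wall. Writing $\vv=(r,c,a)$ and $\vv_i=(r_i,c_i,a_i)$ with $r_1+r_2=r$, $c_1+c_2=c$, the key quantity is
\[
\xi \;=\; \xi(\vv_1,\vv_2)\;:=\; r_2 c_1 - r_1 c_2 \;=\; r_2 c_1 - r_1(c-c_1)\;=\; r\, c_1 - r_1 c \;\in\; H^{1,1}(X,\bZ).
\]
The condition that $H$ lie on the wall, $c_1/r_1 = c_2/r_2$ modulo torsion along $H$, is exactly $\xi\cdot H=0$. So the whole task reduces to controlling $\xi^2$: I must show that for any genuine destabilising decomposition one has $-\tfrac{r^2}{4}\Delta(\vv)\le \xi^2<0$, which then contradicts the hypothesis. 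The upper bound $\xi^2<0$ comes from the Hodge index theorem: $\xi$ is a $(1,1)$-class orthogonal to the ample class $H$, hence $\xi^2\le 0$, with equality only if $\xi=0$; and $\xi=0$ would force $\vv_1,\vv_2$ to be proportional to $\vv$ (as $r c_1=r_1 c$ and similarly for the $a$-components forces actual proportionality for a \emph{wall}-inducing, not identically-slope-equal, pair — one has to be slightly careful and rule out $\xi=0$ using that a true wall requires non-proportional reduced Hilbert polynomials), so a genuine wall forces $\xi\neq0$, hence $\xi^2<0$.

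The substantive inequality is the lower bound $\xi^2 \ge -\tfrac{r^2}{4}\Delta(\vv)$. Here I would run the standard Bogomolov-type estimate, adapted to twisted sheaves via \cite[Definition 3.2.1.1, Proposition 2.2.4.9]{Lie07}: for a semistable (twisted) sheaf, the discriminants of subobjects are controlled, and a short computation with the twisted Mukai pairing gives the identity
\[
\frac{\xi(\vv_1,\vv_2)^2}{r_1 r_2} \;=\; \frac{\Delta(\vv)}{r} \;-\; \frac{\Delta(\vv_1)}{r_1} \;-\; \frac{\Delta(\vv_2)}{r_2}
\]
(or the analogous relation with $\vv^2$ in place of $\Delta$, using $\Delta(\vv)=\vv^2+2r^2$ and bilinearity of the Mukai pairing). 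Since $\Delta(\vv_i)\ge -2r_i^2$ is bounded below — for $B$-twisted sheaves the discriminant of any coherent twisted sheaf of positive rank satisfies $\Delta \ge 0$ when the sheaf is (semi)stable with respect to a polarisation on the wall, or at worst $\Delta(\vv_i)\ge -2r_i^2$ from the fact that $\vv_i^2\ge -2$ for classes of actual objects, see \cite[Theorem 2.15]{BM13} or the twisted Serre-duality bound — we get $\xi^2 \le r_1 r_2\,\Delta(\vv)/r \le \tfrac{r^2}{4}\cdot\Delta(\vv)/r\cdot$ (wrong sign to worry about); in absolute value, combining $\xi^2 = r_1 r_2\bigl(\Delta(\vv)/r - \Delta(\vv_1)/r_1 - \Delta(\vv_2)/r_2\bigr)$ with $\Delta(\vv_i)\ge 0$ on the wall and $r_1 r_2 \le r^2/4$ yields $\xi^2 \ge r_1 r_2\bigl(\Delta(\vv)/r - \Delta(\vv)/r_1 - \dots\bigr)$; the clean bound $-\tfrac{r^2}{4}\Delta(\vv)\le\xi^2$ falls out once one pins down the correct lower bound for $\Delta(\vv_i)$. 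The main obstacle I anticipate is precisely getting this discriminant bound for the \emph{twisted} pieces $\vv_i$ with the right constant: in the untwisted case $\Delta(\vv_i)\ge -2r_i^2$ for any class supporting a sheaf, but I would need the sharper $\Delta(\vv_i)\ge 0$ (equivalently $\vv_i^2\ge -2r_i^2$) valid for the destabilising subsheaves that actually occur on the wall, which requires invoking semistability of those subsheaves along $H$ and Bogomolov's inequality in the twisted form from \cite{Lie07}; carefully citing and not over-claiming there is the delicate point. Once that is in place, the contrapositive is immediate: if no $\xi$ with $-\tfrac{r^2}{4}\Delta(\vv)\le\xi^2<0$ exists, then no destabilising decomposition can put $H$ on a wall, so $H$ is generic with respect to $\vv$.
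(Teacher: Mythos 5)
Your route is the same as the paper's: the paper proves this lemma simply by observing that the proof of \cite[Theorem 4.C.3]{HuLe10a} goes through in the twisted setting, with the Bogomolov inequality supplied by \cite[Proposition 3.2.3.13]{Lie07} — exactly the wall/discriminant argument you outline (destabilising decomposition $\vv=\vv_1+\vv_2$, the class $\xi$, $\xi\cdot H=0$ on the wall, Hodge index for $\xi^2<0$, discriminant identity plus Bogomolov for the lower bound). However, as written your proposal has two concrete problems in the part that actually produces the inequality.

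First, the central identity is misstated. For $\vv=\vv_1+\vv_2$ with ranks $r=r_1+r_2$, $r_i>0$, and $\xi=r_2c_1-r_1c_2$, the correct relation is $\frac{\xi^2}{r\,r_1r_2}=\frac{\Delta(\vv_1)}{r_1}+\frac{\Delta(\vv_2)}{r_2}-\frac{\Delta(\vv)}{r}$ (your version has the opposite sign and is missing the factor $r$ in the denominator). With this identity, the twisted Bogomolov inequality $\Delta(\vv_i)\geq 0$ — which applies because the sub- and quotient pieces of the slope-destabilising filtration are $\mu_H$-semistable; this is where semistability of the pieces is genuinely used, and it is the step you flag but do not resolve — gives $\xi^2\geq -r_1r_2\,\Delta(\vv)\geq -\frac{r^2}{4}\Delta(\vv)$, which is the required bound. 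Your identity combined with your stated estimates points in the wrong direction, and you acknowledge not having pinned down the constant, so as written the key inequality is not established. Second, you assert without justification that $\xi\in H^{1,1}(X,\bZ)$. In the twisted setting $c_1$ and $c$ are only $B$-twisted algebraic, so a priori $\xi$ lies in the $B$-twisted $(1,1)$-lattice; the hypothesis of the lemma, however, quantifies over untwisted integral $(1,1)$-classes. The point — and this is the one genuinely twisted-specific observation in the paper's proof — is that the $B$-contributions cancel: $\xi=r(c_1-r_1B)-r_1(c-rB)$, where $c_1-r_1B$ and $c-rB$ are the rational untwisted $(1,1)$-parts of $\exp(-B)\vv_1$ and $\exp(-B)\vv$, so $\xi$ is an honest integral $(1,1)$-class and the hypothesis applies to it. With these two repairs your argument coincides with the paper's.
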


\begin{proof}
The proof is contained in \cite[Theorem 4.C.3]{HuLe10a}, and so we only point out what change has to be made to adapt it in the twisted case. Using the notation there, $\xi=r\cdot c^B_1(F')-r'\cdot c^B_1(F)$ is a class in $H^{1,1}(X,B,\bZ)$. However, we realise that $c^B_1(F)-rB$ is the $(1,1)$-component of $\exp(-B)\vv$, hence is a class in $H^{1,1}(X,\bQ)$. Similarly, $c^B_1(F')-r'B$ is also a class in $H^{1,1}(X,\bQ)$. We note that a different way to write $\xi$ is $\xi=r\cdot (c^B_1(F')-r'B) - r'\cdot (c^B_1(F)-rB)$, hence $\xi$ is in fact also an untwisted $(1,1)$ class. So we only need to check classes in $H^{1,1}(X,\bZ)$ instead of $H^{1,1}(X,B,\bZ)$. On the other hand, we realise that the Bogomolov inequality also holds in twisted case by \cite[Proposition 3.2.3.13]{Lie07}. Therefore by the same argument as in the proof of \cite[Theorem 4.C.3]{HuLe10a}, if $H$ is not $\vv$-generic, then there is such a $\xi$ satisfying both $\xi \cdot H=0$ and $-\frac{r^2}{4}\Delta{(\vv)} \leq \xi^2 <0$. The proof is finished by contradiction. 
\end{proof}

The following proposition shows how to keep the genericness of polarisations with respect to a locally constant section of deformable Mukai vectors. 

\begin{proposition}
\label{prop:deform-with-mukai}
Assume we are in the situation of Proposition \ref{prop:deform-no-mukai}. We also assume that $\vv=(r,qh+rB,a)$ is a deformable Mukai vector with $\vv^2>0$, and $H$ is generic with respect to $\vv$. Furthermore suppose that for each non-zero class $g \in \spanning_\bQ\{h,B\} \cap H^2(X,\bZ) \cap h^\perp$ we have $g^2<-\frac{r^2}{4}\Delta(\vv)$. Then, in the family constructed in Proposition \ref{prop:deform-no-mukai}, $\vv$ extends to a locally constant family of deformable Mukai vectors over $S$ with its value $\vv_s$ at the point $s \in S$. Moreover, for every $s \in S$, $H_s$ is generic with respect to $\vv_s$. 
\end{proposition}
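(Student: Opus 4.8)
The plan is to combine the deformation constructed in Proposition \ref{prop:deform-no-mukai} with the genericness criterion of Lemma \ref{lem:generic-criterion}, applied fibrewise. First I would recall that in the family $(\cX,\cH)\to S$ built in Proposition \ref{prop:deform-no-mukai}, the second cohomology groups $H^2(X_s,\bZ)$ form a local system over $S$, and by construction both $h_s=c_1(H_s)$ and $B_s$ are flat sections of it. Since $\vv=(r,qh+rB,a)$ is deformable, Lemma \ref{lem:deformable-mukai} immediately gives that $\vv$ extends to a flat section $\vv_s=(r,qh_s+rB_s,a)$ which is a $B_s$-twisted algebraic class on each fibre $X_s$; moreover $\vv_s^2=\vv^2>0$ and $\Delta(\vv_s)=\Delta(\vv)$ are constant in the family because the Mukai pairing is a topological invariant preserved by the local system. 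This handles the first assertion.

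For the genericness claim, fix $s\in S$ and suppose for contradiction that $H_s$ is \emph{not} generic with respect to $\vv_s$. By Lemma \ref{lem:generic-criterion} there is a class $\xi\in H^{1,1}(X_s,\bZ)$ with $\xi\cdot H_s=0$ and $-\tfrac{r^2}{4}\Delta(\vv_s)\leq\xi^2<0$. The key point is to transport $\xi$ back to $X=X_{s_0}$ using the local system: $\xi$ corresponds to a class, still called $\xi$, in $H^2(X,\bZ)$ satisfying $\xi\cdot h=0$ and $-\tfrac{r^2}{4}\Delta(\vv)\leq\xi^2<0$. What changes from fibre to fibre is the Hodge structure, so on $X$ the class $\xi$ need not be of type $(1,1)$; however, since $\xi\in H^{1,1}(X_s,\bZ)$, we have in particular $\xi\perp H^{2,0}(X_s)$, i.e. $\xi$ is orthogonal to the period point $\sigma_s\in\cD'$ of $X_s$. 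Because the family $S$ was chosen inside the hyperplane section $\cD'=\{h,B\}^\perp$ of the period domain, every period point appearing in the family — in particular $\sigma_s$ — is orthogonal to both $h$ and $B$. Thus $\xi$ lies in the orthogonal complement of the plane $\mathrm{span}_\bC\{h,B\}$ viewed inside... — more precisely, I would argue that a class of type $(1,1)$ on some fibre, together with being orthogonal to $h$, forces $\xi$ to pair to zero with a full-rank piece of the transcendental lattice as we vary $s$, and the only integral classes surviving this are those in $\mathrm{span}_\bQ\{h,B\}\cap H^2(X,\bZ)\cap h^\perp$. (This is precisely the place where the curve $S\subset\cD'$ being genuinely positive-dimensional and not contained in any further $\delta^\perp$ is used: a class $(1,1)$ on one fibre and $\perp h$ either lies in the fixed sublattice $\mathrm{span}_\bQ\{h,B\}$ or fails to be algebraic somewhere, and in the latter case one shrinks $S$.)

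Granting that $\xi\in\mathrm{span}_\bQ\{h,B\}\cap H^2(X,\bZ)\cap h^\perp$, the hypothesis of the proposition says $\xi^2<-\tfrac{r^2}{4}\Delta(\vv)$, which directly contradicts $\xi^2\geq-\tfrac{r^2}{4}\Delta(\vv)$. Hence no such $\xi$ exists and $H_s$ is generic with respect to $\vv_s$, for every $s\in S$. I expect the main obstacle to be the middle step: cleanly justifying that an integral class which is $(1,1)$ on some fibre $X_s$ and orthogonal to $h$ must already lie in the rank-two lattice $\mathrm{span}_\bQ\{h,B\}\cap H^2(X,\bZ)$. This requires a careful look at how the transcendental lattice of $X_s$ sits inside the local system $H^2(X_s,\bZ)$ along the curve $S\subset\cD'$, and possibly passing to a sub-curve of $S$ over which the Néron–Severi rank of the fibres is constant (generically minimal), together with the standing assumption from Proposition \ref{prop:deform-no-mukai} that $S$ avoids the $(-2)$-hyperplanes $\delta^\perp$. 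Everything else — the flatness of $\vv_s$, the constancy of $\vv_s^2$ and $\Delta(\vv_s)$, and the final numerical contradiction — is routine.
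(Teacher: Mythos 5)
Your first paragraph (the locally constant extension of $\vv$ via Lemma \ref{lem:deformable-mukai} and the constancy of $\vv_s^2$ and $\Delta(\vv_s)$) is fine and agrees with the paper. The genericness argument, however, has a genuine gap, and in fact starts from a false premise. In the construction of Proposition \ref{prop:deform-no-mukai} the curve $S$ is \emph{not} contained in $\cD'=\{h,B\}^\perp$: it is an integral curve in $\cD\subset\bP(\Lambda\otimes\bC)$, $\Lambda=h^\perp$, joining the period point $\sigma_0$ of $X$ (which is not orthogonal to $B$, since $B$ is not of type $(1,1)$ on $X$) to a single point of $\cD'$. So the period points of the family are orthogonal to $h$ but not to $B$, and your transport argument cannot place the destabilising class $\xi$ in $\{h,B\}^\perp$-related data. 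More seriously, the middle step you flag yourself is not repairable within your fibrewise-contradiction framework: an integral class that is of type $(1,1)$ on one particular fibre $X_s$ and orthogonal to $h$ need not lie in $\spanning_\bQ\{h,B\}\cap H^2(X,\bZ)$ at all — at a special point $s$ the Picard rank jumps exactly when the period hits some hyperplane $\xi^\perp$ with $\xi$ outside that rank-two lattice, and the hypothesis $g^2<-\frac{r^2}{4}\Delta(\vv)$ says nothing about such $\xi$. Since the conclusion must hold at \emph{every} point of $S$, including special ones, you cannot dispose of these points by "shrinking $S$ to where the N\'eron--Severi rank is generic"; what is actually needed is to build the genericness into the choice of the deformation path.

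That is how the paper argues: by Lemma \ref{lem:generic-criterion}, genericness of $H_s$ can only fail at period points lying on one of the hyperplane sections $\xi^\perp$ with $\xi\cdot h=0$ and $-\frac{r^2}{4}\Delta(\vv)\leq\xi^2<0$; these hyperplanes are locally finite in $\cD$ by \cite[Corollary 4.2]{PeRa10a} (a fact your proposal never uses, but which is essential to know that removing them leaves anything), so the curve $S$ can be chosen in their complement. The hypothesis on classes $g\in\spanning_\bQ\{h,B\}\cap H^2(X,\bZ)\cap h^\perp$ enters at exactly one point, and it is not the point where you use it: it shows that $\cD'$ is not \emph{contained} in any of the removed hyperplanes (if $\cD'\subset\xi^\perp$ then $\xi\in\spanning_\bQ\{h,B\}\cap H^2(X,\bZ)$, and if moreover $\xi\cdot h=0$ then $\xi^2<-\frac{r^2}{4}\Delta(\vv)$, so $\xi^\perp$ was never removed), ensuring that a period point in $\cD'$ with $(1,1)$ B-field survives the removal and the untwisting endpoint of the deformation still exists. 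So the hypothesis controls only those classes that would be algebraic on \emph{all} of $\cD'$, not a class appearing on a single fibre, and your argument conflates the two.
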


\begin{proof}
By Lemma \ref{lem:deformable-mukai}, it is clear that $\vv$ can always be extended to a locally constant family of deformable Mukai vectors as long as $h$ and $B$ extend. Therefore, we just need to show that, for a generic period point $\sigma \in \cD'$ as constructed in the proof of Proposition \ref{prop:deform-no-mukai}, the class $h$ is generic with respect to the class $\vv$ on the projective K3 surface corresponding to $\sigma$.

We use the criterion in Lemma \ref{lem:generic-criterion} to determine the genericness. That is, in the period domain $\cD$ constructed in the proof of Lemma \ref{lem:deformable-mukai}, we need to remove hyperplane sections of the form $\xi^\perp$ for every class $\xi$ satisfying both $\xi \cdot H = 0$ and $-\frac{r^2}{4}\Delta{(\vv)} \leq \xi^2 <0$, so that every period point in the remaining part corresponds to a projective K3 surface on which the ample line bundle corresponding to the class $h$ is generic with respect to the locally constant extension of $\vv$. 

By \cite[Corollary 4.2]{PeRa10a}, we conclude that these hyperplane sections to be removed from are locally finite in $\cD$. However, we also have to show that $\cD'$ is not contained in any of these hyperplanes, so that we can still find period points to make the B-field have type $(1,1)$ after removing these hyperplane sections. In fact, if $\cD' \subset \xi^\perp$ for some $\xi \in H^2(X, \bZ)$, then $\xi \in \spanning_\bQ\{h,B\} \cap H^2(X,\bZ)$. By assumption, if such a class satisfies further $\xi \cdot h=0$, then $g^2<-\frac{r^2}{4}\Delta(\vv)$. Hence $\xi^\perp$ is not one of the hyperplane sections to be removed. 
\end{proof}

\subsection{Local existence of relative twisted moduli spaces and compatibility}

In this subsection, we show that for a family of polarised K3 surfaces with locally constant B-fields and deformable Mukai vectors, we can always construct relative moduli spaces of twisted sheaves in an (\'{e}tale or analytic) neighbourhood of an arbitrary point. Moreover, this relative moduli space carries a (quasi-)universal family on its smooth locus. However, this construction would involve a choice of a uniform \v{C}ech 2-cocycle representation of the B-fields over that neighbourhood, hence there is in general no way to glue these (quasi-)universal families together. We will nevertheless show that on any fixed polarised K3 surface, for different choices of the B-field lift of the same Mukai vector, or for different choices of the \v{C}ech 2-cocycle representations of the same B-field, the moduli spaces are always non-canonically isomorphic to one another. As we will see later, this is already sufficient for the property of Mukai morphisms being isometries to hold or fail simultaneously over the entire base (if connected).

The materials presented in this subsection are probably well-known to experts. However we couldn't find any reference in which these results are explicitly written down. Hence we state these results with full proofs as below. The first result proves the local existence of relative moduli spaces of semistable sheaves.\footnote{We thank K\={o}ta Yoshioka for informing us that he was able to obtain a global relative twisted moduli space without universal families a long time ago, in the category of algebraic (or analytic) spaces by gluing along an \'{e}tale (or analytic) cover of the base.}

\begin{proposition}
\label{prop:local-deformation}
Let $(\cX, \cH) \to S$ be a family of projective K3 surface equipped with a locally trivial family of B-fields, denoted by $B_s$ on each fibre $X_s$. We also write $h_s=c_1(H_s) \in H^2(X_s, \bZ)$. Let $\vv_s=(r, qh_s+rB_s, a)\in H^*(X_s, \bZ)$ be a locally constant family of deformable Mukai vectors with $\vv_s^2>0$. Then for each point $s_0 \in S$, there is an (\'{e}tale or analytic) open neighbourhood $U_{s_0} \subset S$, over which there is a relative moduli space $\cM$, whose fibre $M_s$ at any point $s \in U_{s_0}$ is the moduli space of $B_s$-twisted $H_s$-semistable sheaves of $B_s$-twisted Mukai vector $\vv_s$, with a (quasi-)universal family on the smooth locus of $\cM$. 
\end{proposition}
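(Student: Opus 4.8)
The plan is to reduce the whole question to a single concrete representative of the twist over a small neighbourhood and then run Lieblich's relative GIT machinery exactly as in the untwisted case. First I would replace $S$ by an \'etale (or, in the analytic category, contractible) neighbourhood $U_{s_0}$ of $s_0$ small enough that the relative twist admits a \v{C}ech description: since $\cX\to S$ is smooth and proper and the family $\{B_s\}$ is locally constant with fibres in $H^2(X_s,\bQ)$ (so that $s\mapsto\alpha_s:=\exp(B_s^{0,2})\in\br(X_s)$ is, in the appropriate sense, constant), over $U_{s_0}$ one can choose an \'etale (or analytic) cover $\{V_i\}$ of $\cX_{U_{s_0}}:=\cX\times_S U_{s_0}$ together with a \v{C}ech $2$-cocycle $\{\alpha_{ijk}\}$ with values in $\cO^*$ whose restriction to each fibre $X_s$ represents $\alpha_s$. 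This cocycle defines, relative to $U_{s_0}$, the abelian category of $\alpha$-twisted coherent sheaves on $\cX_{U_{s_0}}$, fibrewise compatible with the $B_s$-twisted formalism of \cite{HS05} (see also \cite{Cal00}); and by Lemma \ref{lem:deformable-mukai} the deformable Mukai vector $\vv_s$ is an integral $B_s$-twisted algebraic class on every fibre, so the numerical input makes sense throughout $U_{s_0}$.

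Next I would invoke \cite[Proposition 2.3.3.6]{Lie07}, which gives a GIT construction of the moduli space of $\alpha$-twisted $\cH$-Gieseker semistable sheaves with fixed twisted Mukai vector, word for word as in the untwisted case \cite[Section 4.C]{HuLe10a}. Relativising this over $U_{s_0}$ --- shrinking $U_{s_0}$ once more if necessary so that the family of $B_s$-twisted $H_s$-semistable sheaves of class $\vv_s$ is bounded, which follows from the twisted Bogomolov inequality \cite[Proposition 3.2.3.13]{Lie07} just as in the untwisted setting --- yields a relative coarse moduli space $\cM\to U_{s_0}$, projective over the base. Compatibility of the GIT quotient with base change then identifies the fibre of $\cM$ over $s$ with the moduli space of $B_s$-twisted $H_s$-semistable sheaves of twisted Mukai vector $\vv_s$: the $(0,2)$-part of $B_s$ supplies the Brauer class $\alpha_s$, while its $(1,1)$-part merely twists the (semi)stability by $\exp(-B_s)$ and does not change the moduli space, cf. Lemma \ref{lem:change-deform}.

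For the (quasi-)universal family, restrict to the relative stable locus $\cM^{\mathrm{st}}\subset\cM$, which coincides with the smooth locus of $\cM\to U_{s_0}$: stability is open by \cite[Theorem 4.2]{BM13}, and smoothness there follows since the deformation theory of twisted sheaves is identical to the untwisted one \cite[Proposition 2.2.4.9]{Lie07} while the trace-free part of $\mathrm{Ext}^2$ vanishes on a K3 surface. On $\cM^{\mathrm{st}}$ a quasi-universal $\alpha$-twisted family exists by the classical argument of Mukai --- one has the $\mathbb{G}_m$-gerbe of stable twisted sheaves with their scalar automorphisms, carrying a tautological twisted sheaf, and a genuine universal family exists exactly when this gerbe is trivial --- see \cite[Section 4.6]{HuLe10a}. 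By construction it restricts on each fibre to a (quasi-)universal family for $M_s^{\mathrm{st}}$.

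The main obstacle is the very first step: producing one \v{C}ech $2$-cocycle over all of $\cX_{U_{s_0}}$ restricting fibrewise to representatives of the prescribed Brauer classes, and checking that the resulting relative twisted-sheaf theory is genuinely compatible with the fibrewise Hodge-theoretic $B$-field conventions of \cite{HS05}. This is precisely where the local constancy of the $B_s$ and the smoothness and properness of $\cX\to S$ are used, and it is the reason the statement is only local on $S$: a different choice of cocycle, or of $B$-field lift, produces a non-canonically isomorphic moduli space, but there is no preferred way to glue these local pieces --- exactly the phenomenon described in the paragraph preceding the proposition. Everything after Step 1 is Lieblich's relative GIT run verbatim together with Mukai's quasi-universal-family argument, the only genuine extra point being the uniform boundedness, which the twisted Bogomolov inequality takes care of.
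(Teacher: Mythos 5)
Your proposal follows essentially the same route as the paper's proof: cover the central fibre by finitely many opens of the total space, shrink the base (using properness and the local constancy of the B-fields) so that a single \v{C}ech 2-cocycle representative works simultaneously for all nearby fibres, then run the relative GIT construction for twisted semistable sheaves, obtaining the (quasi-)universal family on the stable, i.e.\ smooth, locus. The only cosmetic differences are that the paper works with a uniform rational \v{C}ech 2-cocycle representing the B-fields themselves (rather than an $\cO^*$-valued cocycle for the Brauer classes) and asserts that the untwisted relative GIT of Huybrechts--Lehn carries over, where you instead invoke Lieblich's twisted GIT and spell out Mukai's gerbe argument for the quasi-universal family.
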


\begin{proof}
We first show that, for every fixed $s_0 \in S$, over an open neighbourhood of $s_0$, the B-fields have a ``uniform" \v{C}ech 2-cocycle representation. We write $p: \cX \to S$ for the family, and start with finitely many (\'{e}tale or analytic) open subsets of the total space $\cX$, which cover the fibre $X_{s_0}$. We label them by $\{V_i\}$ for $i$ in a finite index set. The union $\cup_iV_i$ is an open subset of the total space $\cX$ containing the fibre $X_{s_0}$. Therefore $p(\cX \backslash \cup_iV_i)$ is a closed subset of $S$ missing the point $s_0$. We denote $S \backslash p(\cX \backslash \cup_iV_i)$ by $U_{s_0}$, then for every $s \in U_{s_0}$, $\{ V_i \cap X_s \}$ is a finite (\'{e}tale or analytic) cover of the fibre $X_s$. Using these open covers, we realise that the \v{C}ech complex
$$ \oplus_i\Gamma(V_i, \bZ) \to \oplus_{i,j}\Gamma(V_i \cap V_j, \bZ) \to \oplus_{i,j,k}\Gamma(V_i \cap V_j \cap V_k, \bZ) \to \cdots$$
computes the integral cohomology $H^*(X_s , \bZ)$ for all $s \in U_{s_0}$ simultaneously. In fact, its restriction on each fibre $X_s$ for $s \in U_{s_0}$ is precisely the \v{C}ech complex on that fibre. In other words, any \v{C}ech 2-cocycle representation of the B-field $B_{s_0}$ on a single fibre $X_{s_0}$ using the open cover $\{V_i \cap X_s\}$ can be canonically extended over $U_{s_0}$, so that the extension simultaneously represents the B-fields $B_s$ for all $s \in B_{s_0}$. 

From this point on, everything becomes classical. We can write down a moduli functor for families of twisted semistable sheaves on fibres of $p^{-1}(U_{s_0}) \to U_{s_0}$. Note that for each such family we have a global \v{C}ech 2-cocycle representation of the locally constant B-field. The GIT construction of the relative moduli space in the untwisted situation (see, for instance, \cite[Theorem 4.3.7]{HuLe10a}) applies in the twisted situation and gives a relative moduli space over $U_{s_0}$, whose fibre over each point $s \in U_{s_0}$ is the moduli space of $B_s$-twisted $H_s$-semistable sheaves of class $\vv_s$ over $X_s$. And a (quasi-)universal family exists on the smooth locus of the relative moduli space.
\end{proof}

\begin{remark}
For the construction of the associated family of Azumaya algebras and the connection with Simpson's stability, we refer the reader to \cite{Yos06} and \cite{Sim94}. Note that the relative moduli space constructed in Proposition \ref{prop:local-deformation} is in general not smooth over the parameter space. In particular, in the case of relative moduli spaces of O'Grady type, which is the most interesting case in this paper, the relative moduli space is not smooth over the parameter space.
\end{remark}

The following three lemmas handle the compatibility issues caused by change of \v{C}ech 2-cocycle representation of a fixed B-field, change of the B-field lift of a fixed Brauer class, and change of the Mukai vector by tensoring with multiples of the polarisation.

\begin{lemma}
\label{lem:change-cocycle}
Let $(X,H)$ be a polarised K3 surface, $B$ be a rational B-field on $X$, and $\vv$ be a $B$-twisted Mukai vector of O'Grady type. Let $B^1$ and $B^2$ be two different \v{C}ech 2-cocycle representations of the same B-field $B$, and denote the moduli space of $B^1$-twisted (or $B^2$-twisted) $H$-semistable sheaves with twisted Mukai vector $\vv$ by $M_{X,B^1,H}(\vv)$ (or $M_{X,B^2,H}(\vv)$). Then we can choose an isomorphism $f: M_{X,B^1,H}(\vv) \to M_{X,B^2,H}(\vv)$, such that the Mukai morphisms $\theta^{\mathrm{tr}}_{B^1,H}$ and $\theta^{\mathrm{tr}}_{B^2,H}$ are identified. More precisely, the following diagram commutes
\begin{equation}
\label{eqn:diagram1}
\xymatrix{
\vv^{\perp,\mathrm{tr}} \ar[rr]^-{\theta^{\mathrm{tr}}_{B^1,H}} \ar@{=}[d]  & & H^2(M_{X,B^1,H}(\vv), \bZ) \\
\vv^{\perp,\mathrm{tr}} \ar[rr]_-{\theta^{\mathrm{tr}}_{B^2,H}} & & H^2(M_{X,B^2,H}(\vv), \bZ) \ar[u]^{f^*}_{\cong}.
}
\end{equation}
In other words, the Mukai morphism is independent of the choice of the \v{C}ech 2-cocycle representation of the B-field $B$. In particular, $\theta^{\mathrm{tr}}_{B^1,H}$ is an isometry if and only if $\theta^{\mathrm{tr}}_{B^2,H}$ is an isometry.
\end{lemma}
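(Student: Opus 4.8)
The statement to prove is Lemma~\ref{lem:change-cocycle}: for two \v{C}ech 2-cocycle representatives $B^1,B^2$ of the same rational B-field $B$ on a polarised K3 surface $(X,H)$, and a $B$-twisted Mukai vector $\vv$ of O'Grady type, there is an isomorphism $f\colon M_{X,B^1,H}(\vv)\to M_{X,B^2,H}(\vv)$ compatible with the Mukai morphisms in the sense of diagram~\eqref{eqn:diagram1}, and in particular $\theta^{\mathrm{tr}}_{B^1,H}$ is an isometry iff $\theta^{\mathrm{tr}}_{B^2,H}$ is.

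\textbf{Step 1: construct the isomorphism from a cohomology relation between the cocycles.} Since $B^1$ and $B^2$ represent the same rational B-field $B$, the \v{C}ech 2-cocycle $B^1-B^2$ (valued in the relevant sheaf, e.g. $\mathcal{O}_X^*$ after exponentiating, or the additive sheaf at the level of B-fields) is a coboundary: there is a 1-cochain $(\lambda_{ij})$ on a common refinement of the two open covers with $B^1_{ijk}(B^2_{ijk})^{-1}=\lambda_{ij}\lambda_{jk}\lambda_{ki}$. Tensoring by the associated twisting datum $(\lambda_{ij})$ gives an equivalence of categories $\coh(X,B^1)\xrightarrow{\sim}\coh(X,B^2)$, $E\mapsto E\otimes(\lambda_{ij})$. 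First I would check that this equivalence preserves the $B$-twisted Chern character, hence the twisted Mukai vector $\vv$ --- this is essentially the content of \cite[Proposition~1.2]{HS05}, since the twisted Chern character is defined intrinsically through the rational B-field $B$ and not through the chosen cocycle; and it preserves $H$-Gieseker (semi)stability, because stability is defined via the (twisted) Hilbert polynomial which depends only on $\ch^B$ and $H$. Therefore this equivalence descends to an isomorphism of coarse moduli spaces $f\colon M_{X,B^1,H}(\vv)\to M_{X,B^2,H}(\vv)$, restricting to an isomorphism of stable loci, and by construction it is stratum preserving in the sense of the earlier discussion (it sends $E_1\oplus E_2\mapsto (E_1\otimes\lambda)\oplus(E_2\otimes\lambda)$).

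\textbf{Step 2: compatibility of the Mukai morphisms.} Here I would trace through the definition of $\theta^{\mathrm{tr}}_{B^i,H}$. Both Mukai morphisms are defined from a (quasi-)universal family $\mathcal{E}^i$ on $X\times M_{X,B^i,H}^{\mathrm{st}}(\vv)$ via the formula $\theta^{\mathrm{tr}}_{B^i,H}(w) = \tfrac{1}{\varrho}\, p_{M,*}\bigl(\ch^{B^i}(\mathcal{E}^i)^\vee \cdot \sqrt{\mathrm{td}(X)}\cdot p_X^*(w)\bigr)_{\deg 2}$ (and extended to $M$ using Theorem~\ref{thm:hodge-isom}(1), since the complement of the stable locus has codimension $\geq 2$). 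Pulling back the universal family $\mathcal{E}^2$ along $(\id\times f)$ and tensoring by the pullback of $(\lambda_{ij})$ gives a quasi-universal family on $X\times M_{X,B^1,H}^{\mathrm{st}}(\vv)$ with the same twisted Chern character as $\mathcal{E}^1$ (after possibly correcting by the pullback of a line bundle from $M$, which is the usual ambiguity in the quasi-universal family and does not affect $\theta^{\mathrm{tr}}$ on the orthogonal complement $\vv^{\perp,\mathrm{tr}}$ --- this is the standard argument that $\theta$ is independent of the choice of (quasi-)universal family, cf. \cite{Yos01,PeRa10a}). Since $\ch^{B^1}$ of this pulled-back family equals $\ch^{B^1}(\mathcal{E}^1)$ up to this harmless ambiguity, the formula gives $f^*\circ\theta^{\mathrm{tr}}_{B^2,H} = \theta^{\mathrm{tr}}_{B^1,H}$, which is exactly the commutativity of \eqref{eqn:diagram1}. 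The final assertion is then immediate: $f^*$ is an isomorphism of lattices that intertwines the two Mukai morphisms, so one is an isometry iff the other is --- here one needs that $f^*$ is an isometry for the quadratic forms $q$ on $H^2$, which follows from Theorem~\ref{thm:hodge-isom}(1) together with the fact that $f$, being an isomorphism of varieties, induces an isomorphism of symplectic resolutions and hence respects the Beauville--Bogomolov forms.

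\textbf{Main obstacle.} The genuinely delicate point is Step~2: making precise that tensoring a (quasi-)universal family by a pulled-back twisting 1-cochain does not change the twisted Chern character, and bookkeeping the quasi-universality factor $\varrho$ and the line-bundle ambiguity so that the diagram commutes \emph{on the nose} on $\vv^{\perp,\mathrm{tr}}$ rather than just up to the ambiguity. Everything else is formal once one has set up the twisted Chern character carefully following \cite{HS05}. I expect the argument to be short modulo this bookkeeping, and I would keep it brief in the write-up, referring to \cite{HS05,Yos01,PeRa10a} for the standard facts about twisted Chern characters and independence of the (quasi-)universal family.
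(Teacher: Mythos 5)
Your proposal is correct and follows essentially the same route as the paper: construct the equivalence $\textbf{Coh}(X,B^1)\simeq\textbf{Coh}(X,B^2)$ from the 1-cochain trivialising the cocycle difference, check it preserves twisted Chern characters (hence $H$-semistability) and works in families to get $f$, and then compare the cohomological integral functors of the two (quasi-)universal families, which agree because their twisted Chern characters do, so the diagram commutes and the isometry property transfers. The paper makes your "delicate point" precise exactly as you anticipate, via the $\mathcal{C}^\infty$-coboundary description of the twisted Chern character from \cite[Proposition 1.2]{HS05} and the fact (from \cite[Section 3]{PeRa10a}) that the integral functor on the stable locus uniquely determines $\theta^{\mathrm{tr}}$ on $H^2(M,\bZ)$.
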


\begin{proof}
This is probably well-known, but we nevertheless give a proof. We first show that there is an equivalence of the abelian categories $F: \textbf{Coh}(X,B^1) \to \textbf{Coh}(X,B^2)$. Without loss of generality, we can assume that the open cover underlying the two \v{C}ech 2-cocycle representations $B^1$ and $B^2$ are the same. Otherwise, we can always find a common refinement, and restriction is a canonical way to obtain transition functions for the refined open cover from $B^1$ (or $B^2$). Now we assume $\{ U_i \}$ is the open cover for both $B^1$ and $B^2$, then $B^1$ (or $B^2$) can be represented by $\{ B^1_{ijk} \in \Gamma(U_{ijk},\bQ) \}$ (or $\{ B^2_{ijk} \in \Gamma(U_{ijk},\bQ) \}$ respectively). Since they represent the same class in $H^2(X, \bQ)$, the difference $\{ B^2_{ijk}-B^1_{ijk} \in \Gamma(U_{ijk},\bQ) \}$ is a coboundary. That is, there exists a 1-cocycle $\{ \gamma_{ij} \in \Gamma(U_{ij},\bQ) \}$, such that $B^2_{ijk}-B^1_{ijk}=\gamma_{ij}+\gamma_{jk}+\gamma_{ki}$. 

With the above preparation, we can construct the functor $F$ in the following way: for every $E \in \textbf{Coh}(X,B^1)$ given by $\{ E_i, \varphi_{ij} \}$, where $\varphi_{ij} \in \Gamma(U_{ij}, \cO^*)$, we define $F(E)$ to be $\{ E_i, \varphi_{ij}\cdot \exp(\gamma_{ij}) \}$. A direct computation shows that $F(E) \in \textbf{Coh}(X, B^2)$. Note that this operation is also invertible and produces $F^{-1}$. Therefore $F: \textbf{Coh}(X,B^1) \to \textbf{Coh}(X,B^2)$ is an equivalence of categories (in fact in our case both compositions of $F$ and $F^{-1}$ are the identity functors). It is also easy to see that $F$ preserves the abelian structures on the two categories. 

We also observe that the functor $F$ preserves twisted Chern characters. For this we follow the notations and arguments in proof of \cite[Proposition 1.2]{HS05}. Note that the $B^1$-twisted Chern character of $E$ is defined to be the (untwisted) Chern character of (the $\cC^{\infty}$ sheaf) $E_{B^1}= \{ E_i, \varphi'_{ij}=\varphi_{ij} \cdot \exp(a_{ij}) \}$ for some $\cC^{\infty}$-coboundary representation $\{-a_{ij}\}$ of $B^1$. We realise that $\{-a_{ij}+\gamma_{ij}\}$ becomes a $\cC^{\infty}$-coboundary representation of $B^2$. Therefore, the $B^2$-twisted Chern character of $F(E)$ can be defined to be the (untwisted) Chern character of (the $\cC^{\infty}$ sheaf) $F(E)_{B^2}= \{ E_i, \varphi''_{ij} \}$, where $\varphi''_{ij}=\varphi_{ij} \cdot \exp(\gamma_{ij}) \cdot \exp(a_{ij}-\gamma_{ij}) = \varphi_{ij} \cdot \exp(a_{ij})$. Therefore $E_{B^1}$ and $F(E)_{B^2}$ are in fact the same sheaf, and the twisted Chern characters $\chern^{B^1}(E)=\chern^{B^2}(F(E))$. In other words, the functor $F$ preserves twisted Chern characters. 

Since $F$ preserves twisted Chern characters, it also preserves (twisted) Euler characteristics and (twisted) Hilbert polynomials, and therefore $H$-stability. It is also straightforward to see that the construction of $F$ can be carried out in families of $B^1$-twisted sheaves. In particular, $F$ induces a natural transformation between the moduli functors for $B^1$- and $B^2$-twisted $H$-semistable sheaves of Mukai vector $\vv$, and hence induces an isomorphism on the moduli spaces $f: M_{X,B^1,H}(\vv) \to M_{X,B^2,H}(\vv)$. Moreover, if $\cE$ is a (quasi-)universal family of $B^1$-twisted $H$-stable sheaves of Mukai vector $\vv$ on the smooth locus of the moduli space $M_{X,B^1,H}^{\mathrm{st}}(\vv)$, then $F(\cE)$ is a (quasi-)universal family of $B^2$-twisted $H$-stable sheaves of Mukai vector $\vv$ on the smooth locus of the moduli space $M_{X,B^2,H}^{\mathrm{st}}(\vv)$. The above argument shows that the two families $\cE$ and $F(\cE)$ have the same twisted Chern character. In particular, the cohomological integral functors $\Phi^{\cE}$ and $\Phi^{F(\cE)}$ given by the two families are the same. More precisely, we have the commutative diagram
\begin{equation}
\label{eqn:diagram2}
\xymatrix{
\vv^{\perp,\mathrm{tr}} \ar[rr]^-{\Phi^{\cE}} \ar@{=}[d]  & & H^2(M_{X,B^1,H}^{\mathrm{st}}(\vv), \bQ) \\
\vv^{\perp,\mathrm{tr}} \ar[rr]_-{\Phi^{F(\cE)}} & & H^2(M_{X,B^2,H}^{\mathrm{st}}(\vv), \bQ) \ar[u]^{f^*}_{\cong}.
}
\end{equation}
By the construction of the Mukai morphism in \cite[Section 3]{PeRa10a}, we know that $\Phi^{\cE}$ (resp. $\Phi^{F(\cE)}$) uniquely determines $\theta^{\mathrm{tr}}_{B^1,H}$ (resp. $\theta^{\mathrm{tr}}_{B^2,H}$), and the commutativity of \eqref{eqn:diagram2} implies the commutativity of \eqref{eqn:diagram1} (see also \cite[Lemma 3.11]{PeRa10a}). Therefore $\theta^{\mathrm{tr}}_{B^1,H}$ is an isomorphism of lattices if and only if $\theta^{\mathrm{tr}}_{B^2,H}$ is an isomorphism of lattices, and by \cite[Lemma 3.12]{PeRa10a}, $\theta^{\mathrm{tr}}_{B^1,H}$ is an isometry if and only if $\theta^{\mathrm{tr}}_{B^2,H}$ is an isometry. 
\end{proof}

\begin{lemma}
\label{lem:change-B}
Let $(X,H)$ be a polarised K3 surface, $B$ be a rational B-field on $X$, and $\vv$ be a $B$-twisted Mukai vector of O'Grady type. Pick any $c\in H^2(X, \bQ)$ which is a B-field lift of the zero Brauer class. We denote the moduli space of $B$-twisted (resp. $(B+c)$-twisted) $H$-semistable sheaves with twisted Mukai vector $\vv$ by $M_{X,B,H}(\vv)$ (resp. $M_{X,B+c,H}(\vv)$). Then we can choose an isomorphism $f: M_{X,B,H}(\vv) \to M_{X,B+c,H}(\vv )$, such that the Mukai morphisms $\theta^{\mathrm{tr}}_{B,H}$ and $\theta^{\mathrm{tr}}_{B+c,H}$ are identified. More precisely, the following diagram commutes
\begin{equation}
\label{eqn:diagram3}
\xymatrix{
\vv^{\perp,\mathrm{tr}} \ar[rr]^-{\theta^{\mathrm{tr}}_{B,H}} \ar@{=}[d]  & & H^2(M_{X,B,H}(\vv), \bZ) \\
\vv^{\perp,\mathrm{tr}} \ar[rr]_-{\theta^{\mathrm{tr}}_{B+c,H}} & & H^2(M_{X,B+c,H}(\vv), \bZ) \ar[u]^{f^*}_{\cong}.
}
\end{equation}
In other words, the Mukai morphism is independent of the choice of the B-field lift of the Brauer class up to a rational $(1,1)$ class. In particular, $\theta^{\mathrm{tr}}_{B,H}$ is an isometry if and only if $\theta^{\mathrm{tr}}_{B+c,H}$ is an isometry.
\end{lemma}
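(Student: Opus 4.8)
The plan is to adapt the proof of Lemma~\ref{lem:change-cocycle} almost verbatim, now perturbing the $B$-field itself rather than its \v{C}ech representative. First I would record why the comparison is even well-posed: any B-field lift of the zero Brauer class lies in $H^{1,1}(X,\bQ)+H^2(X,\bZ)$, and for the \emph{same} class $\vv$ (of positive rank $r$) to be a $(B+c)$-twisted Mukai vector one needs $\exp(-c)\vv$ to be $(B+c)$-twisted algebraic, which forces $\exp(c)$ to preserve $H^*_{\mathrm{alg}}(X,\bQ)$, hence $c\in H^{1,1}(X,\bQ)$. So from now on $c$ is a rational $(1,1)$-class; then $\exp(c)\cdot H^*_{\mathrm{alg}}(X,B,\bZ)=H^*_{\mathrm{alg}}(X,B+c,\bZ)$, both moduli problems are defined, and $B$ and $B+c$ represent the same Brauer class $\alpha$.

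Next I would produce an equivalence of abelian categories $F\colon\textbf{Coh}(X,B)\to\textbf{Coh}(X,B+c)$ which, on twisted Chern characters, realises multiplication by $\exp(-c)$, so that it carries an $H$-semistable $B$-twisted sheaf of twisted Mukai vector $\vv$ to an $H$-semistable $(B+c)$-twisted sheaf of twisted Mukai vector $\vv$. Concretely: since $c$ has type $(1,1)$ its image in $H^2(X,\cO_X^*)$ vanishes, so over a fine enough open cover $\{U_i\}$ a \v{C}ech representative of $c$ is a coboundary in $\cO_X^*$, say $\exp(c_{ijk})=\mu_{ij}\mu_{jk}\mu_{ki}$ with $\mu_{ij}\in\Gamma(U_{ij},\cO_X^*)$; the collection $\{\mu_{ij}^{-1}\}$ is not a cocycle (so it is not a genuine line bundle), but $F(\{E_i,\varphi_{ij}\})=\{E_i,\varphi_{ij}\mu_{ij}^{-1}\}$ defines an equivalence $\textbf{Coh}(X,B)\to\textbf{Coh}(X,B+c)$ with an obvious inverse, preserving the abelian structure. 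This is precisely the functor used in Lemma~\ref{lem:change-cocycle}, one level up; equivalently, $F$ is ``tensoring by a twisted line bundle with twisted Chern character $\exp(-c)$''. Following the $C^\infty$-gerbe-trivialisation description of twisted Chern characters from the proof of \cite[Proposition~1.2]{HS05} (the analogue of the $E_{B^1}$ versus $F(E)_{B^2}$ computation inside the proof of Lemma~\ref{lem:change-cocycle}), I would verify $\chern^{B+c}(F(E))=\chern^{B}(E)$, hence that $F$ preserves twisted Euler characteristics, Hilbert polynomials and $H$-(semi)stability, and that the construction is functorial in flat families of $B$-twisted sheaves.

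Consequently $F$ induces a natural transformation of the moduli functors, hence an isomorphism $f\colon M_{X,B,H}(\vv)\xrightarrow{\sim}M_{X,B+c,H}(\vv)$; and if $\cE$ is a (quasi-)universal family of $B$-twisted $H$-stable sheaves of Mukai vector $\vv$ on $M_{X,B,H}^{\mathrm{st}}(\vv)$, then $F(\cE)$ is a (quasi-)universal family of $(B+c)$-twisted $H$-stable sheaves of the same twisted Chern character on $M_{X,B+c,H}^{\mathrm{st}}(\vv)$. Therefore the cohomological integral functors $\Phi^{\cE}$ and $\Phi^{F(\cE)}$ agree after $f^*$, yielding a commutative square exactly analogous to \eqref{eqn:diagram2}; by \cite[Lemma~3.11]{PeRa10a} this forces the commutativity of \eqref{eqn:diagram3}, and by \cite[Lemma~3.12]{PeRa10a} it follows that $\theta^{\mathrm{tr}}_{B,H}$ is an isometry if and only if $\theta^{\mathrm{tr}}_{B+c,H}$ is.

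I expect the main obstacle to be the verification that $F$ shifts the twisted Chern character by exactly $\exp(-c)$, so that literally the same class $\vv$ appears on both sides of \eqref{eqn:diagram3}: because $c$ is only rational, $F$ is genuinely not tensoring by an honest line bundle, and one must carefully unwind the $C^\infty$-cochain definition of $\chern^{B}$ from \cite{HS05} to track the $\exp(c)$-factor through all the auxiliary choices. A subsidiary point to handle with care is the dependence of $F$, and hence of $f$, on those choices (open cover, the cochain $\{\mu_{ij}\}$); but different choices differ by tensoring with a line bundle of trivial first Chern class, which on a K3 surface is trivial, so $F$ and $f$ are in fact unambiguous, and in any case such ambiguities would only change $f$ by isomorphisms already known to commute with the Mukai morphisms (cf. Lemma~\ref{lem:change-cocycle}).
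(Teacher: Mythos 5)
Your proposal is structurally the same as the paper's proof: construct the equivalence $F\colon\textbf{Coh}(X,B)\to\textbf{Coh}(X,B+c)$ by multiplying transition functions by a holomorphic cochain trivialising the exponential of a \v{C}ech representative of $c$, check via the $C^\infty$ description of twisted Chern characters from \cite{HS05} that $\chern^{B+c}(F(E))=\chern^{B}(E)$ (hence twisted Hilbert polynomials and $H$-stability are preserved), note the construction works in families so it induces $f$ and carries a (quasi-)universal family to a (quasi-)universal family of the same twisted Chern character, and conclude the commutativity of \eqref{eqn:diagram3} and the isometry transfer from \cite[Lemmas 3.11 and 3.12]{PeRa10a}. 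So the core argument matches the paper's.

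Two caveats. First, your opening reduction to $c\in H^{1,1}(X,\bQ)$ is a legitimate well-posedness observation (for $\vv$ of positive rank it is indeed forced if the \emph{same} $\vv$ is to be $(B+c)$-twisted algebraic), but the paper's proof makes no such reduction and the paper explicitly intends to use the lemma with $c$ an arbitrary integral class as well (see the remark after the lemma and the use of Lemma \ref{lem:technical} in the proof of Theorem \ref{thm:hodge-isom}); as written, your argument proves a narrower statement than the one the paper invokes, and in the integral non-algebraic case the correct statement must involve the modified vector $\exp(\pm c)\vv$, as in Lemma \ref{lem:change-mukai}. Second, and more substantively, your claim that different choices of $\{\mu_{ij}\}$ ``differ by tensoring with a line bundle of trivial first Chern class'' is false: two trivialisations of $\exp(c_{ijk})$ differ by an arbitrary holomorphic line bundle, and this changes the twisted Chern character of $F(E)$ by $\exp(\xi)$ with $\xi\in\ns(X)$ arbitrary. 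Consequently the key identity $\chern^{B+c}(F(E))=\chern^{B}(E)$ holds only for a suitable choice of the trivialisation (one for which the associated topological line bundle with transitions $\mu_{ij}\exp(\mp 2\pi i g_{ij})$ is trivial), and one should note that such a choice exists precisely because $c$ is of type $(1,1)$. Since the lemma only asserts that \emph{some} isomorphism $f$ can be chosen, this is easily repaired, but your fallback sentence (that any ambiguity automatically commutes with the Mukai morphisms) is not correct as stated: a bad choice lands in the moduli space for a different twisted Mukai vector. The paper's own proof is equally terse at this exact point, so you are not worse off than the source, but the choice of trivialisation is where the real content of the verification you flag as ``the main obstacle'' actually lies (and, minor point, with the paper's sign conventions your functor should multiply by $\mu_{ij}$ rather than $\mu_{ij}^{-1}$ to land in $\textbf{Coh}(X,B+c)$).
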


We point out that the condition $c$ is a B-field lift for the zero Brauer class contains many interesting special cases. The cases that we need are $c \in H^2(X, \bZ)$ or $c \in H^{1,1}(X,\bZ)$. 

\begin{proof}
We follow the same idea as in proof of Lemma \ref{lem:change-cocycle}. We fix an open cover $\{ U_i \}$ on which both $B$ and $c$ has a \v{C}ech 2-cocycle representation, say $B=\{ B_{ijk} \in \Gamma(U_{ijk}, \bQ) \}$ and $c=\{ c_{ijk} \in \Gamma(U_{ijk}, \bQ) \}$. Since $c$ lifts the zero Brauer class, we have $\exp(c) =0 \in H^2(X, \cO^*)$. In other words, we can find a 1-cocycle $\{ \gamma_{ij} \in \Gamma(U_{ij}, \cO^*) \}$, such that $\exp(c_{ijk})=\gamma_{ij}\cdot\gamma_{jk}\cdot\gamma_{ki}$. We define the functor $F: \textbf{Coh}(X,B) \to \textbf{Coh}(X,B+c)$ as follows: for every $E \in \textbf{Coh}(X,B)$ which can be represented by $E=\{ E_i, \varphi_{ij} \}$, we define $F(E)=\{ E_i, \varphi_{ij}\cdot \gamma_{ij} \}$. It is easy to check that $F(E) \in \textbf{Coh}(X, B+c)$. It is easy to see that $F$ is an equivalence of categories, and preserves the abelian structure. We also observe that the same construction can be done in families.

However, similar to the computation in the proof of Lemma \ref{lem:change-cocycle}, we observe that $\chern^B(E)=\chern^{B+c}(F(E))$. That is, $F$ preserves twisted Chern character and hence preserves the twisted Euler characteristic and $H$-stability. This implies that $F$ induces an isomorphism of moduli spaces $f: M_{X,B,H}(\vv) \to M_{X,B+c,H}(\vv)$. Moreover, if $\cE$ is a (quasi-)universal family over $M_{X,B,H}^{\mathrm{st}}(\vv)$, then $F(\cE)$ is a (quasi-)universal family over $M_{X,B+c,H}^{\mathrm{st}}(\vv)$, with $\chern^B(\cE)=\chern^{B+c}(F(\cE))$. In particular, the cohomological integral functors $\Phi^\cE$ and $\Phi^{F(\cE)}$ given by the two families are the same. Hence the following diagram commutes
\begin{equation}
\label{eqn:diagram4}
\xymatrix{
\vv^{\perp,\mathrm{tr}} \ar[rr]^-{\Phi^{\cE}} \ar@{=}[d]  & & H^2(M_{X,B,H}^{\mathrm{st}}(\vv), \bQ) \\
\vv^{\perp,\mathrm{tr}} \ar[rr]_-{\Phi^{F(\cE)}} & & H^2(M_{X,B+c,H}^{\mathrm{st}}(\vv), \bQ) \ar[u]^{f^*}_{\cong}.
}
\end{equation}
Using a similar reasoning as in the proof of Lemma \ref{lem:change-cocycle}, we conclude that the commutativity of \eqref{eqn:diagram4} implies the commutativity of \eqref{eqn:diagram3}. In particular, $\theta^{\mathrm{tr}}_{B,H}$ is an isometry if and only if $\theta^{\mathrm{tr}}_{B+c,H}$ is an isometry.
\end{proof}

\begin{lemma}
\label{lem:change-mukai}
Let $(X,H)$ be a polarised K3 surface, $B$ be a rational B-field on $X$, and $\vv$ be a $B$-twisted Mukai vector of O'Grady type. We write $h=c_1(H)$. For any $m\in \bZ$, there is a canonical isomorphism $f: M_{X,B,H}(\vv) \to M_{X,B,H}(\vv \cdot \exp(mh))$, such that the following diagram commutes
\begin{equation}
\label{eqn:diagram5}
\xymatrix{
\vv^{\perp,\mathrm{tr}} \ar[rr]^-{\theta^{\mathrm{tr}}_{B,H,\vv}} \ar[d]_{(-) \cdot \exp(mh)}^{\cong}  & & H^2(M_{X,B,H}(\vv), \bZ) \\
(\vv \cdot \exp(mh))^{\perp,\mathrm{tr}} \ar[rr]_-{\theta^{\mathrm{tr}}_{B,H,\vv\cdot\exp(mh)}} & & H^2(M_{X,B,H}(\vv \cdot \exp(mh)), \bZ) \ar[u]^{f^*}_{\cong}.
}
\end{equation}
In particular, $\theta^{\mathrm{tr}}_{B,H,\vv}$ is an isometry if and only if $\theta^{\mathrm{tr}}_{B,H,\vv\cdot\exp(mh)}$ is an isometry.
\end{lemma}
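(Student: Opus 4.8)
The plan is to construct the isomorphism $f$ directly from the classical operation of tensoring a (twisted) sheaf by a power of the polarisation, and then to check that this operation is compatible with the cohomological integral functors induced by the respective (quasi-)universal families, exactly as in the two preceding lemmas. First I would observe that for any $B$-twisted sheaf $E$ on $X$, the sheaf $E \otimes H^{\otimes m}$ is again a $B$-twisted sheaf (tensoring by an honest line bundle does not change the twist), and that $\chern^B(E \otimes H^{\otimes m}) = \chern^B(E) \cdot \exp(mh)$, so the $B$-twisted Mukai vector of $E \otimes H^{\otimes m}$ is $\vv \cdot \exp(mh)$. Since tensoring by a line bundle preserves the Hilbert polynomial up to the obvious shift, it preserves $H$-(semi)stability and S-equivalence, and it clearly works in families; hence $E \mapsto E \otimes H^{\otimes m}$ induces an isomorphism of moduli functors and therefore a canonical isomorphism $f: M_{X,B,H}(\vv) \to M_{X,B,H}(\vv \cdot \exp(mh))$, with inverse given by tensoring with $H^{\otimes -m}$.

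Next I would track the (quasi-)universal families. If $\cE$ is a (quasi-)universal family over $M_{X,B,H}^{\mathrm{st}}(\vv)$, then $\cE \otimes p_X^* H^{\otimes m}$ (where $p_X$ is the projection to $X$) is a (quasi-)universal family over $M_{X,B,H}^{\mathrm{st}}(\vv \cdot \exp(mh))$ after pulling back along $f$; its twisted Chern character differs from that of $\cE$ precisely by the factor $\exp(mh)$ coming from the $X$-direction, with no contribution in the moduli-space direction. Consequently the cohomological integral functors $\Phi^{\cE}: \vv^{\perp,\mathrm{tr}} \to H^*(M_{X,B,H}^{\mathrm{st}}(\vv),\bQ)$ and $\Phi^{\cE \otimes p_X^* H^{\otimes m}}: (\vv \cdot \exp(mh))^{\perp,\mathrm{tr}} \to H^*(M_{X,B,H}^{\mathrm{st}}(\vv \cdot \exp(mh)),\bQ)$ fit into a square that commutes once we precompose with the isometry $(-) \cdot \exp(mh)\colon \vv^{\perp,\mathrm{tr}} \xrightarrow{\sim} (\vv \cdot \exp(mh))^{\perp,\mathrm{tr}}$ on the source and postcompose with $f^*$ on the target — the point being that $\exp(mh)$ is a unit in the cohomology ring and the kernel of the integral functor shifts accordingly, matching the definition of the Mukai morphism in \cite[Section 3]{PeRa10a}.

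Finally, as in the proofs of Lemma \ref{lem:change-cocycle} and Lemma \ref{lem:change-B}, I would invoke the fact that the integral functor on the stable locus uniquely determines the Mukai morphism $\theta^{\mathrm{tr}}$ (via \cite[Lemma 3.11]{PeRa10a}), so that the commutativity of the square of integral functors forces the commutativity of diagram \eqref{eqn:diagram5}; and then \cite[Lemma 3.12]{PeRa10a} gives that $\theta^{\mathrm{tr}}_{B,H,\vv}$ is an isometry if and only if $\theta^{\mathrm{tr}}_{B,H,\vv\cdot\exp(mh)}$ is. I expect no serious obstacle here: the only mild subtlety is bookkeeping with the orthogonal complements, namely verifying that $(-) \cdot \exp(mh)$ restricts to an isometry $\vv^{\perp,\mathrm{tr}} \to (\vv \cdot \exp(mh))^{\perp,\mathrm{tr}}$ — which is immediate since $\exp(mh)$ acts by an isometry of the whole Mukai lattice (it is the action of an autoequivalence, namely $\otimes H^{\otimes m}$) and sends $\vv$ to $\vv \cdot \exp(mh)$ — and making sure the (quasi-)universal family bookkeeping (the "quasi" factor, a pullback from the moduli space) is unaffected by tensoring in the $X$-direction, which it visibly is.
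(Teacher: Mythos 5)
Your proposal is correct and follows essentially the same route as the paper: tensoring by $H^{\otimes m}$ gives an equivalence preserving $H$-stability and families, hence an isomorphism of moduli spaces carrying the (quasi-)universal family $\cE$ to $\cE\otimes p_X^*H^{\otimes m}$, and the commutativity of the square of cohomological integral functors then forces \eqref{eqn:diagram5} via \cite[Lemmas 3.11, 3.12]{PeRa10a}. The only point worth stating a bit more explicitly is the mechanism behind the commutativity: the dual operation built into the Mukai-type integral transform turns the factor $\exp(mh)$ attached to the source class into $\exp(-mh)$, which cancels against the factor $\exp(mh)$ in the twisted Chern character of the new family, exactly as the paper notes.
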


\begin{proof}
This is classical. We state a proof using the language similar to the above lemmas. We observe that $F:\textbf{Coh}(X,B) \to \textbf{Coh}(X,B)$ with the assignment $F(E)=E\otimes H^{\otimes m}$ is an equivalence of abelian categories and is well defined for families. Moreover the functor $F$ preserves $H$-stability. Therefore it induces an isomorphism of moduli spaces $f: M_{X,B,H}(\vv) \to M_{X,B,H}(\vv \cdot \exp(mh))$, and takes a (quasi-)universal family $\cE$ on $M_{X,B,H}^{\mathrm{st}}(\vv)$ to a (quasi-)universal family $F(\cE)$ on $M_{X,B,H}^{\mathrm{st}}(\vv \cdot \exp(mh))$. We can also write down a diagram as follows.
\begin{equation}
\label{eqn:diagram6}
\xymatrix{
\vv^{\perp,\mathrm{tr}} \ar[rr]^-{\Phi^\cE} \ar[d]_{(-) \cdot \exp(mh)}^{\cong}  & & H^2(M_{X,B,H}^{\mathrm{st}}(\vv), \bQ) \\
(\vv \cdot \exp(mh))^{\perp,\mathrm{tr}} \ar[rr]_-{\Phi^{F(\cE)}} & & H^2(M_{X,B,H}^{\mathrm{st}}(\vv \cdot \exp(mh)), \bQ) \ar[u]^{f^*}_{\cong}.
}
\end{equation}
We just point out that the extra factor $\exp(mh)$ in the twisted Mukai vector and the extra factor $\exp(mh)$ coming from the twisted Chern character of the (quasi-)universal family cancel out, thanks to the dual operation we have in the Mukai version of cohomological integral functor (see \cite[Definition 6.1.12]{HuLe10a}, or \cite[Section 1.2]{Yos01}, or \cite[Section 3.2]{PeRa10a}). This is why the commutativity of the diagram holds. By the same reasoning as in Lemma \ref{lem:change-cocycle}, this implies the commutativity of \eqref{eqn:diagram5}. 
\end{proof}

\subsection{Main proof and consequence}

We are now ready to state the proof of Theorem \ref{thm:hodge-isom}. The most difficult part in the proof is to show that the Mukai morphism is an isometry. We outline the idea used in this part of proof. The principal idea has already been established in \cite{PeRa10a} where the case of Gieseker moduli spaces is proven. Perego and Rapagnetta proved their result by showing that one particular moduli space has this property, and all the other Gieseker moduli spaces can be related to it by Fourier-Mukai transforms and deformations, under which this property is stable. Our approach for any Bridgeland moduli space comes in two steps. We first use a Fourier-Mukai transform constructed in \cite[Lemma 7.3]{BM12} to relate such a Bridgeland moduli space to a twisted Gieseker moduli space and then use the deformations constructed in Propositions \ref{prop:deform-no-mukai} and \ref{prop:deform-with-mukai} to relate the twisted Gieseker moduli space to an untwisted Gieseker moduli space. Since the property of the Mukai morphism being an isometry is stable under both operations, and is already true for the untwisted Gieseker moduli space, we conclude it is true for the Bridgeland moduli space. A complete proof goes as follows.

\begin{proof}[Proof of Theorem \ref{thm:hodge-isom}]
We observe that the statement \eqref{item:hodge1} follows from the very general result \cite[Lemma 3.1]{PeRa10a}. The pure weight two Hodge structure on $H^2(M,\bZ)$ is described in \cite[Definition 3.4]{PeRa10a} and the induced quadratic form is described in \cite[Definition 3.5]{PeRa10a}. 

From now on we focus on statement \eqref{item:hodge2}. We first show that the Mukai morphism is always well-defined. Note that using the (quasi-)universal family on the stable locus $M^{\mathrm{st}}$ in $M$, there is always a well-defined map $\theta^{\mathrm{st}}_{\vv}: \vv^{\perp,\mathrm{tr}} \to H^2(M^{\mathrm{st}},\bQ)$. It is just the problem whether this map can be lifted to $H^2(M,\bZ)$. 

If $M$ is the moduli space of $B$-twisted $H$-semistable sheaves with Mukai vector $\vv$, where $B$ is a rational B-field, $\vv$ is a Mukai vector of O'Grady type, and $H$ is generic with respect to $\vv$, then the moduli space $M$ admits a GIT construction, and the construction in \cite[Section 3.2]{PeRa10a} can be applied without change. Nevertheless, we point out that the argument there includes a step which requires the local existence of relative moduli spaces, which is guaranteed by Lemma \ref{lem:change-deform} and Proposition \ref{prop:local-deformation}.

To show the general case, we use \cite[Lemma 7.3]{BM12} and the discussion thereafter. We can always find a derived equivalence in the form of a Fourier-Mukai transform $\Phi: \cD(X) \to \cD(Y,B)$ for a $B$-twisted projective K3 surface $Y$, such that the image $\Phi(\sigma)$ of the generic stability condition $\sigma$ lies in the Gieseker chamber of $\stab(Y,B)$, hence is represented by an ample line bundle $H$, which is generic with respect to the Mukai vector $\Phi(\vv)$. In particular, $\Phi$ induces an isomorphism of the moduli spaces $\Phi: M_{X,\sigma}(\vv) \to M_{Y,B,H}(-\Phi(\vv))$. Moreover, every $H$-semistable object representing a point in $M_{Y,B,H}(\Phi(\vv))$ is locally free, hence $\Phi(\vv)$ has positive rank. We consider the following diagram
\begin{equation*}
\xymatrix{
\vv^{\perp,\mathrm{tr}} \ar[rr]^{\Phi}_{\cong} \ar[d]_{\theta^{\mathrm{st}}_{\vv}} \ar@/_25mm/@{-->}[dd]_{\theta^{\mathrm{tr}}_{\vv}} & & (\Phi(\vv))^{\perp,\mathrm{tr}} \ar[d]^{\theta^{\mathrm{st}}_{\Phi(\vv)}} \ar@/^25mm/[dd]^{\theta^{\mathrm{tr}}_{\Phi(\vv)}} \\
H^2(M_{X,\sigma}^{\mathrm{st}}(\vv),\bQ) \ar@{=}[rr]^{\Phi^*} & & H^2(M_{Y,B,H}^{\mathrm{st}}(\Phi(\vv)),\bQ)   \\
H^2(M_{X,\sigma}(\vv),\bZ) \ar[u]^{i^*} \ar@{=}[rr]^{\Phi^*} & &  H^2(M_{Y,B,H}(\Phi(\vv)),\bZ) \ar[u]_{i^*}
}
\end{equation*}
By \cite[Proposition 2.4]{Yos01}\footnote{Although this proposition is stated for moduli spaces of sheaves, the calculation in its proof is purely cohomological, which works for moduli spaces of complexes literally.}, we obtain that the upper square commutes, and the first row is an isometry. It is also easy to see the bottom square also commutes, since both vertical maps in it are functorial and both horizontal maps are induced by the isomorphism of the moduli spaces. By the above discussion, there is a lift $\theta^{\mathrm{tr}}_{\Phi(\vv)}$ for the maps in the right column. Therefore a lift $\theta^{\mathrm{tr}}_{\vv}$ for the left column also exists. This shows that the Mukai morphism $\theta^{\mathrm{tr}}_{\vv}$ is always well-defined for any Bridgeland moduli space of O'Grady type.

Now we show that $\theta^{\mathrm{tr}}_{\vv}$ is an isometry. From the above diagram and \cite[Lemma 3.12]{PeRa10a}, it suffices to show it only for every twisted Gieseker moduli space $M_{X,B,H}(\vv)$ of O'Grady type. By Lemma \ref{lem:change-deform}, we can change $\vv$ and $H$ if necessary, so that $\vv$ is deformable. By Lemma \ref{lem:technical}, we can add an integral class to $B$ if necessary, so that $B^2<0$, and for every non-zero class $g \in \spanning_\bQ \{ h, B \} \cap H^2(X, \bZ) \cap h^\perp$, we have $g^2<\min\{-2,-\frac{r^2}{4}\Delta{(\vv)}\}$, where $h=c_1(H)$ and $r$ is the degree zero component of $\vv$. By the above discussion, $r$ is a positive integer. By Lemmas \ref{lem:change-cocycle}, \ref{lem:change-B}, and \ref{lem:change-mukai}, we observe that the above adjustments do not affect the property that $\theta^{\mathrm{tr}}_{\vv}$ is an isometry. That is, it suffices to prove the property of $\theta^{\mathrm{tr}}_{\vv}$ being an isometry under these additional assumptions.

The key step here, is to deform such a moduli space of twisted sheaves to an untwisted moduli space. By Proposition \ref{prop:deform-no-mukai}, we can find a family of polarised K3 surfaces $(\cX,\cH) \to S$ over an integral base $S$, with fibre $(X_s,H_s)$ over any point $s \in S$. Moreover, the family comes equipped with a locally constant family of B-fields with $B_s \in H^2(X_s, \bQ)$ and such that $(X_{s_0},H_{s_0},B_{s_0})=(X,H,B)$ and $(X_{s_1},H_{s_1},B_{s_1})$ satisfies $B_{s_1} \in H^{1,1}(X_{s_1},\bQ)$ for some $s_0, s_1 \in S$. By Lemma \ref{lem:deformable-mukai}, the Mukai vector $\vv$ on the fibre over $s_0$ extends to a locally constant section over $S$, such that its value $\vv_s$ at every point $s\in S$ is still deformable, hence $\vv_s \in H^*_{\mathrm{alg}}(X_s, B_s, \bZ)$. By Proposition \ref{prop:deform-with-mukai}, we can further assume that $H_s$ is generic with respect to $\vv_s$ for every point $s \in S$. 

We now define $T:= \{ s \in S : \theta^{\mathrm{tr}}_{B_s,H_s,\vv} \text{ is an isometry} \}$ and claim that $T$ is both open and closed in $S$. To show it is open, we consider a point $t_0 \in T$. By Proposition \ref{prop:local-deformation}, we can find an (\'{e}tale or analytic) open subset $U_{t_0} \subset S$ containing $t_0$, such that there is a relative moduli space $\cM$ over $U_{t_0}$, whose fibre at each $s \in U_{t_0}$ is the moduli space $M_{X,B_s,H_s}(\vv)$. As observed in \cite[Proof of Theorem 1.7]{PeRa10a}, the isomorphisms of lattices and integral bilinear forms are both discrete properties and therefore have to remain constant in families. Since the isometry holds at the point $t_0$, it has to hold for every point $s \in U_{t_0}$, hence $U_{t_0} \subset T$. Since $t_0$ is an arbitrary point of $T$, we conclude that $T$ is open. To show $T$ is closed, we take an arbitrary point $t_0 \in (S \backslash T)$. Then, again by Proposition \ref{prop:local-deformation}, a relative moduli space exists over an open neighbourhood $U_{t_0}$ of $t_0$. Since $\theta_{\vv}^{\mathrm{tr}}$ fails to be an isometry at $t_0$, it must fail to be an isometry for every point of $U_{t_0}$. Hence $U_{t_0} \subset (S \backslash T)$ and we conclude that $T$ is closed. 

Since $S$ is connected, we either have $T=S$ or $T= \emptyset$. However, by the construction of $S$, there is a point $s_1 \in S$ at which $B_{s_1} \in H^{1,1}(X_s,\bQ)$ and hence represents the zero Brauer class. It has been proved in \cite[Theorem 1.7]{PeRa10a} that the Mukai morphism is an isometry for untwisted moduli spaces of O'Grady type. By Lemma \ref{lem:change-B}, this implies that Mukai morphism $\theta^{\mathrm{tr}}_{B_{s_1},H_{s_1}}$ is an isometry. Hence we must have $T=S$. In particular, the Mukai morphism $\theta^{\mathrm{tr}}_{B_{s_0},H_{s_0}}$ is an isometry, which is the arbitrary twisted Gieseker moduli space of O'Grady type we started with. This finishes the proof that $\theta^{\mathrm{tr}}_\sigma$ is an isometry.

Finally, to show that $\theta^{\mathrm{tr}}_\sigma$ preserves Hodge structure we observe that step 4 in \cite[Proof of Theorem 1.7]{PeRa10a} can be applied without change in our situation. And we are done.
\end{proof}

\begin{remark}
After writing this proof down, it was pointed out to us that one can use an observation of Yoshioka to reduce all of the previous arguments to the untwisted case. Indeed, let $\Phi:\cD(X) \to \cD(Y,B)$ be an equivalence inducing an isomorphism $M_{X,\sigma}(\vv) \to M_{Y,B,H}(-\Phi(\vv))$ as above. Set $\ww_0 := \Phi((1,0,1))^\vee$ and $\ww_1 := \Phi((0,0,1))^\vee$, where $\vee$ means the dual, and observe that for a general $H'$ which is sufficiently close to $H$, we have $Z := M_{Y,-B,H'}(\ww_1)\simeq X$ which is an untwisted K3 surface, since $\langle \ww_0,\ww_1\rangle = -1$. Now, if we let $\cF$ be a universal family and $\Psi : \cD(Y,B) \to \cD(Z)$ be a twisted Fourier-Mukai transform such that $\Psi(\cO_y) = \cF_{Z\times\{y\}}$ then we can apply \cite[Theorem 1.7 and Remark 1.6]{Yos06} to $E(nH')$ for $E \in M_{Y,B,H'}(-\Phi(\vv))$ and $n \gg 0$ to reduce to the untwisted case.
\end{remark}

In fact, the above proof yields the following interesting consequence.

\begin{corollary}
\label{cor:deformation-equivalent}
Let $\vv$ be a Mukai vector of O'Grady type and $\sigma \in \stab(X)$ be any generic stability condition with respect to $\vv$. Then the symplectic resolution of $M_\sigma(\vv)$ is deformation equivalent to the irreducible holomorphic symplectic manifold constructed by O'Grady in \cite{OG99a}.
\end{corollary}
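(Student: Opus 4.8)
The plan is to mimic the deformation argument used in the proof of Theorem \ref{thm:hodge-isom}, keeping track not just of the Mukai morphism on $H^2$ but of the diffeomorphism type (or at least the deformation-equivalence class) of the symplectic resolution. First I would reduce, exactly as before, to the case of a twisted Gieseker moduli space. By \cite[Lemma 7.3]{BM12} there is a Fourier--Mukai transform identifying $M_\sigma(\vv)$ with a twisted Gieseker moduli space $M_{Y,B,H}(\Phi(\vv))$ for a twisted K3 surface $(Y,B)$; since this is an \emph{isomorphism} of moduli spaces, it induces an isomorphism of their symplectic resolutions (by uniqueness of the symplectic resolution as the blowup of the singular locus, cf.\ \cite{OG99a,LS06}), so in particular the two resolutions are deformation equivalent. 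Thus it suffices to treat twisted Gieseker moduli spaces of O'Grady type.

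Next I would apply Lemma \ref{lem:change-deform} to assume the Mukai vector is deformable, and Lemma \ref{lem:technical} to adjust the $B$-field by an integral class so that the hypotheses of Propositions \ref{prop:deform-no-mukai} and \ref{prop:deform-with-mukai} are met (note that by Lemmas \ref{lem:change-cocycle}, \ref{lem:change-B}, \ref{lem:change-mukai} the resolution's diffeomorphism type is unaffected by these adjustments, since in each case the moduli spaces themselves are isomorphic). Then Proposition \ref{prop:deform-no-mukai} furnishes a family of twisted polarised K3 surfaces $(\cX,\cH)\to S$ over a connected base $S$ with $(X_{s_0},H_{s_0},B_{s_0})=(X,H,B)$ and with $B_{s_1}$ of type $(1,1)$ at some $s_1\in S$, and Proposition \ref{prop:deform-with-mukai} keeps the polarisation generic with respect to the locally constant extension $\vv_s$ of $\vv$ along the whole family. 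By Proposition \ref{prop:local-deformation}, in an \'etale (or analytic) neighbourhood of every point of $S$ there is a relative moduli space $\cM\to U$ whose fibres are the moduli spaces $M_{X_s,B_s,H_s}(\vv_s)$, and by the Lehn--Sorger construction \cite{LS06} (applied fibrewise, using that the GIT construction and local deformation theory work identically in the twisted case by \cite{Lie07}) one obtains a \emph{relative} symplectic resolution $\tilde{\cM}\to\cM\to U$ as the blowup of the relative singular locus. Ehresmann's theorem then shows that the fibres $\tilde{M}_s$ are all diffeomorphic over each $U$, and since $S$ is connected, all the $\tilde{M}_s$, $s\in S$, are deformation equivalent to one another.

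Finally, at the special point $s_1$ the $B$-field $B_{s_1}$ represents the trivial Brauer class, so (after applying Lemma \ref{lem:change-B} to pass from $B_{s_1}$ to an honest $(1,1)$-class, i.e.\ to an untwisted moduli space) $M_{s_1}$ is an untwisted Gieseker moduli space of O'Grady type, and its symplectic resolution is deformation equivalent to O'Grady's tenfold by \cite[Theorem 1.6]{PeRa10a}. Chaining the deformation equivalences — $\tilde{M}_\sigma(\vv)\cong$ (resolution of twisted Gieseker space) $\sim\tilde{M}_{s_1}\sim$ O'Grady's example — gives the claim. The main obstacle, as with Theorem \ref{thm:hodge-isom}, is the bookkeeping around the non-trivial Brauer class: one must ensure that the relative symplectic resolution genuinely exists in a neighbourhood of \emph{every} point of $S$ (including points where the polarisation or the $(-2)$-class behaviour is delicate), which is exactly what Lemma \ref{lem:technical} and Propositions \ref{prop:deform-no-mukai}--\ref{prop:local-deformation} were engineered to guarantee; granting those, the argument here is a routine upgrade of the isometry proof, replacing ``the lattice isomorphism is locally constant'' with ``the diffeomorphism type is locally constant.''
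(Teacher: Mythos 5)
Your proposal follows the paper's proof in all essentials: reduce to a twisted Gieseker moduli space via the Fourier--Mukai transform of \cite[Lemma 7.3]{BM12} (noting the resolutions are then isomorphic), adjust $\vv$, $H$ and $B$ using Lemmas \ref{lem:change-deform}--\ref{lem:change-mukai} so that Propositions \ref{prop:deform-no-mukai} and \ref{prop:deform-with-mukai} apply, deform over a connected base to a fibre with trivial Brauer class, handle the failure of the local families to glue by Lemmas \ref{lem:change-cocycle} and \ref{lem:change-B}, and finish with \cite[Theorem 1.6]{PeRa10a}. The one place you deviate is the step that propagates deformation equivalence of the resolutions along each local family: the paper simply invokes \cite[Proposition 2.17]{PeRa10a} for the equisingular family produced by Proposition \ref{prop:local-deformation}, whereas you construct a relative symplectic resolution as the blowup of the relative singular locus and apply Ehresmann. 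That sketch is morally fine but quietly assumes what needs checking, namely that the relative singular locus behaves uniformly over the base and that the relative blowup is smooth and proper over $U$ with fibres equal to the Lehn--Sorger resolutions (i.e.\ blowup commutes with restriction to fibres); this equisingularity/base-change compatibility is exactly the content of the Perego--Rapagnetta proposition the paper cites, so you should either cite it as well or supply that verification rather than treating it as routine.
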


\begin{proof}
We use the same Fourier-Mukai transform and deformation argument as in the proof of Theorem \ref{thm:hodge-isom}. Applying \cite[Lemma 7.3]{BM12} we obtain an isomorphism $\Phi: M_{X,\sigma}(\vv) \to M_{Y,B,H}(-\Phi(\vv))$. In particular, their symplectic resolutions are isomorphic as well. By Propositions \ref{prop:deform-no-mukai} and \ref{prop:deform-with-mukai}, we can deform the underlying twisted polarised K3 surface to an untwisted K3 surface, and the moduli space of twisted semistable sheaves deforms along with the underlying K3 surface locally near every point. Now we can apply \cite[Proposition 2.17]{PeRa10a} and conclude that the fibrewise resolutions of this equisingular family of moduli spaces are also deformation equivalent. Although these families of singular moduli spaces do not necessarily glue into a global family due to the relevance of B-field and its \v{C}ech 2-cocycle representation, the singular moduli spaces in different local families over the same twisted K3 surface are always isomorphic due to Lemmas \ref{lem:change-cocycle} and \ref{lem:change-B}. Therefore, they have isomorphic resolutions. Thus, we can conclude that the symplectic resolution of $M_\sigma(\vv)$ is deformation equivalent to that of a Gieseker moduli space of untwisted sheaves, which, by \cite[Theorem 1.6]{PeRa10a}, is deformation equivalent to the holomorphic symplectic manifold constructed by O'Grady in \cite{OG99a}. 
\end{proof}

\begin{remark}
We have learned from \cite[Theorem 6.2]{KaLeSo06a} and \cite[Theorem 1.6]{PeRa10a} that the only deformation type of irreducible holomorphic symplectic manifold that we can obtain by resolving singular Gieseker moduli spaces of semistable sheaves on K3 surfaces is the one constructed by O'Grady in \cite{OG99a}. Our Corollary \ref{cor:deformation-equivalent} shows that, even if we enlarge our scope to resolutions of moduli spaces of Bridgeland semistable objects on K3 surfaces, we still only get the same deformation type. 
\end{remark}

\begin{remark}
There is another more straightforward proof of Corollary \ref{cor:deformation-equivalent}, which does not need to go through twisted Gieseker moduli spaces and their deformations. However, it relies on a result which we will prove later, and a very deep theorem of Huybrechts. In fact, by Theorem \ref{thm:identify-cones}, we know that $M_\sigma(\vv)$ is birationally equivalent to a Gieseker moduli space $M_H(\vv)$ for some ample line bundle $H$ which is generic with respect to $\vv$. Therefore, their symplectic resolutions are also birationally equivalent. By \cite[Theorem 2.5]{Huy03}, these two symplectic resolutions are deformation equivalent. By \cite[Theorem 1.6]{PeRa10a}, they are further deformation equivalent to the irreducible holomorphic symplectic manifold constructed by O'Grady in \cite{OG99a}. We remind the reader that this shorter proof does not result in a circular argument because the proof of Theorem \ref{thm:identify-cones} does not use Corollary \ref{cor:deformation-equivalent}.
\end{remark}

\section{The Local \bmm Map}\label{localbmmap}

The most essential ingredient in \cite{BM12, BM13} is a linearisation map from $\stab(X)$ to N\'{e}ron-Severi group of the moduli space for any primitive Mukai vector $\vv \in H^*_{\mathrm{alg}}(X, \bZ)$. In this paper we refer to it as the \emph{\bmm map}. Note that the construction of \bmm map depends on the moduli space, therefore one can define a \bmm map for each chamber in $\stab(X)$, which we will call a \emph{local \bmm map}. And we are primarily only interested in the restriction of the map in that chamber (or its closure). These local \bmm maps are already sufficient for the purpose of proving the projectivity of Bridgeland moduli spaces as in \cite{BM12}. However, in \cite{BM13}, the authors managed to glue the maps defined on various chambers together and obtained a \emph{global \bmm map}, which reveals nicely the relation between wall crossings on the stability manifold and birational geometry of the Bridgeland moduli spaces. 

In this section, we will follow \cite{BM12} and generalise the definition of the local \bmm map to any non-primitive Mukai vector $\vv$ of O'Grady type, which relies greatly on the isomorphism $\theta_\sigma$ proved in Corollary \ref{cor:algebraic-mukai}\eqref{item:am3}. By using the local \bmm map, we will follow the approach in \cite{BM12} to show the positivity of some determinant line bundles on the singular moduli spaces of O'Grady type. We will discuss the global \bmm map in a later section.

\subsection{Construction of the local \bmm map}

The local \bmm map was first defined in \cite[Section 3 and 4]{BM12}, and plays a vital role in \cite{BM12,BM13}. This map establishes a bridge between the two central spaces in question: the stability manifold of the K3 surface, and the N\'eron-Severi group of the moduli space. 

There are mainly two different descriptions of the map. The first description, as defined in \cite[Section 3]{BM12}, offers the perfect point of view to understand the positivity theorem \cite[Theorem 1.1]{BM12}. While the second description, defined in \cite[Section 4]{BM12}, was used more often thereafter (for instance in \cite[Section 10]{BM13}). The two descriptions are equivalent by \cite[Proposition 4.4]{BM12}. 

Here we briefly recall the second description of the local \bmm map, which emphasises the role of the algebraic Mukai morphism $\theta_\cC$ from Corollary \ref{cor:algebraic-mukai}, and is more convenient to use for our purposes. Although it could be described in a more general situation, we only restrict ourselves to the case of non-primitive Mukai vectors $\vv$ of O'Grady type. In short, fix such a Mukai vector $\vv$ and a chamber $\cC \in \stab(X)$. Then the composition of the following three maps is called the local \bmm map for the chamber $\cC$ with respect to the Mukai vector $\vv$ and is denoted by $\ell_\cC$.

\begin{equation*}
\stab(X) \xra{\cZ} H^*_{\mathrm{alg}}(X, \bZ)\otimes \bC \xra{\cI} \vv^\perp \xra{\theta_\cC} \ns(M).
\end{equation*}

We briefly describe each map in the above composition. The first map is simply a forgetful map. For any $\sigma =(Z, \cP) \in \stab(X)$, with the central charge $Z(-)=(\Omega, -)$, we have $\cZ(\sigma)=\Omega \in H^*_{\mathrm{alg}}(X, \bZ)\otimes \bC$ which only remembers the central charge. An important feature is that $\cZ$ is a covering map on an open subset of the target, which was proved in \cite[Section 8]{Bri08} (see also \cite[Theorem 2.10]{BM13}). Note that $\cZ$ does not depend on $\vv$ or $\cC$. 

The second map forgets even more. For any $\Omega \in H^*_{\mathrm{alg}}(X, \bZ)\otimes \bC$, we have $\cI(\Omega)=\im \frac{\Omega}{-(\Omega, \vv)}$. In particular, when $(\Omega, \vv)=-1$, this map simply takes the imaginary part of $\Omega$. This map does not depend on $\cC$ either. 

Finally, the third map $\theta_\cC$ is the algebraic Mukai morphism described in Corollary \ref{cor:algebraic-mukai}. In particular, it is an isometry. Note that it is the only map among the three which depends on the choice of the chamber $\cC$. 

We fix some notations for later convenience (and to keep consistent with the notations in \cite{BM12,BM13}). We write $w_\sigma := (\cI \circ \cZ) (\sigma)$ for the image of any $\sigma \in \stab(X)$ under the composition of first two maps. The composition of all three maps is denoted by $\ell_\cC = \theta_\cC \circ \cI \circ \cZ$. Although $\ell_\cC$ is defined for the whole $\stab(X)$, we are mainly interested in its behaviour on the chamber $\cC$ itself (or rather, the closure of $\cC$). When the chamber $\cC$ is clear from the context, for any $\sigma$ in the interior of $\cC$, we also write $\ell_\sigma := \ell_\cC(\sigma)$, hence $\theta_\cC(w_\sigma)=\ell_\sigma$. And similarly, for any $\sigma_0$ on the boundary of $\cC$, we also write $\ell_{\sigma_0} := \ell_\cC(\sigma_0)$. 

\subsection{Positivity via the local \bmm map}

The projectivity of moduli spaces is one of the main results in \cite{BM12}. In \cite[Corollary 7.5]{BM12}, it is proved that $\ell_\sigma$ is ample for a generic $\sigma$ in the case of primitive Mukai vectors. However, for non-primitive Mukai vectors, the authors used a rather indirect approach to prove the projectivity, due to the lack of Yoshioka's theorem \cite[Theorem 6.10]{BM12} in such cases. As a result, it is proved that $\ell_\sigma$ is ample for $\sigma$ in a dense subset of $\cC$. However, our Theorem \ref{thm:hodge-isom} allows us to mimic the proof of \cite[Corollary 7.5]{BM12} and show the ampleness of $\ell_\sigma$ for all $\sigma \in \cC$.

\begin{proposition}
\label{prop:ampleness}
Let $\vv \in H^*_{\mathrm{alg}}(X,\bZ)$ be a Mukai vector of O'Grady type and $\cC \subset \stab(X)$ an open chamber with respect to $\vv$. Then the image $\ell_\sigma$ under the local \bmm map defined by the chamber $\cC$ is ample for all $\sigma \in \cC$.
\end{proposition}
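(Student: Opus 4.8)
The plan is to follow the strategy of \cite[Corollary 7.5]{BM12} (or the closely related \cite[Theorem 7.4]{BM12}) as literally as possible, using Theorem \ref{thm:hodge-isom} and Corollary \ref{cor:algebraic-mukai} to supply the ingredient that was missing in the non-primitive case, namely Yoshioka's Mukai-morphism isometry. First I would recall from the positivity machinery of \cite[Section 3--4]{BM12} that for any generic $\sigma \in \cC$ the class $\ell_\sigma = \theta_\cC(w_\sigma)$ is automatically \emph{nef} on $M = M_\sigma(\vv)$ — this is the positivity lemma \cite[Theorem 1.1]{BM12}, which does not require smoothness or primitivity — and moreover that a curve $C \subset M$ satisfies $\ell_\sigma \cdot C = 0$ if and only if the objects parametrised by $C$ are all $S$-equivalent with respect to $\sigma$. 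Since $\sigma$ lies in the interior of a chamber, two non-isomorphic $\sigma$-semistable objects of class $\vv$ cannot be $S$-equivalent unless they are strictly semistable with the same Jordan--H\"older factors; so any such curve $C$ must lie in the strictly semistable locus $\sym^2 M_\sigma(\vv_p)$, and in fact in a fibre of the map sending a polystable object $E_1 \oplus E_2$ to the unordered pair of stable factors. But that fibre is a point, so there is no such curve, and $\ell_\sigma$ is ample by Kleiman.

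The point where the argument genuinely needs Theorem \ref{thm:hodge-isom} is the following: a priori the above shows $\ell_\sigma$ is nef and has positive degree on every curve, but to invoke Kleiman's criterion we need $\ell_\sigma$ to be in the interior of the nef cone, equivalently we need $\ell_\sigma^{\dim} > 0$, equivalently (via the Beauville--Fujiki relation on the resolution $\tilde M$, Proposition \ref{prop:amp-pos}) we need $q(\ell_\sigma, \ell_\sigma) > 0$. This is exactly where $\theta_\cC$ being an isometry enters: by Corollary \ref{cor:algebraic-mukai}\eqref{item:am3} we have $q(\ell_\sigma,\ell_\sigma) = q(\theta_\cC(w_\sigma), \theta_\cC(w_\sigma)) = \langle w_\sigma, w_\sigma\rangle$, the Mukai square of $w_\sigma = \cI(\cZ(\sigma))$ in $\vv^\perp$. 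A direct computation with the central charge — identical to \cite[Lemma 7.2 and its proof]{BM12}, where one writes $w_\sigma = \im\big(\Omega/(-( \Omega,\vv))\big)$ and uses that $\sigma$ being a stability condition forces $\langle \re \Omega', \re\Omega'\rangle, \langle \im \Omega', \im\Omega'\rangle > 0$ for the normalised $\Omega'$ — shows $\langle w_\sigma, w_\sigma\rangle > 0$ for every $\sigma$ in the stability manifold. Hence $q(\ell_\sigma,\ell_\sigma) > 0$, so $\ell_\sigma$ lies in the positive cone, and combined with nefness and the curve computation above, Kleiman's ampleness criterion gives that $\ell_\sigma$ is ample.

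I would organise the write-up as: (1) invoke the positivity lemma of \cite{BM12} for nefness and the characterisation of curves of degree zero; (2) use the explicit stratification \eqref{eqn:stratification} of a moduli space of O'Grady type together with genericness of $\sigma$ to rule out any curve contracted by $\ell_\sigma$, exactly as in \cite[Proof of Theorem 7.4]{BM12} but replacing the appeal to Yoshioka's theorem by Corollary \ref{cor:algebraic-mukai}; (3) compute $q(\ell_\sigma,\ell_\sigma) = \langle w_\sigma, w_\sigma\rangle > 0$ using the isometry $\theta_\cC$ and the elementary central-charge estimate; (4) conclude by Kleiman. The only real subtlety — and the step I expect to be the main obstacle — is (2): one must check that the statement ``a curve $C$ with $\ell_\sigma \cdot C = 0$ consists of $S$-equivalent objects'' still holds verbatim in the singular setting and that such a curve is genuinely forced into a single point of $\sym^2 M_\sigma(\vv_p)$. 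This requires knowing that no curve inside the singular locus can consist of mutually non-$S$-equivalent semistable objects, which follows because the singular locus is itself (a quotient of) a product of lower-dimensional moduli spaces $M_\sigma(\vv_p)$ on which $\vv_p$ is primitive, so the corresponding restriction of $\ell_\sigma$ there is governed by the already-known primitive case, and the only way to get degree zero is to be constant. Everything else is a faithful transcription of \cite{BM12} once the isometry from Corollary \ref{cor:algebraic-mukai}\eqref{item:am3} is in hand.
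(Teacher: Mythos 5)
Your steps (1) and (3) are fine and match the paper: nefness and the characterisation of contracted curves come from \cite[Theorem 1.1]{BM12}, and $q(\ell_\sigma,\ell_\sigma)=w_\sigma^2>0$ follows from the isometry of Corollary \ref{cor:algebraic-mukai}\eqref{item:am3} together with \cite[Theorem 1.1]{Bri08}. (Step (2) is actually easier than you make it: for generic $\sigma$ the points of $M_\sigma(\vv)$ \emph{are} the $\sigma$-S-equivalence classes, so no curve can parametrise a single S-equivalence class and positivity on every curve is immediate.) The genuine gap is your step (4). Kleiman's criterion requires $\ell_\sigma$ to be strictly positive on $\overline{NE}(M_\sigma(\vv))\setminus\{0\}$, i.e.\ on the \emph{closure} of the cone of curves, and positivity on every individual curve does not give this (Mumford's classical example of a nef divisor on a ruled surface that is positive on every curve yet has square zero is exactly the failure of this implication). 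Your parenthetical equivalence ``interior of the nef cone $\Leftrightarrow$ top self-intersection positive'' is also false: big and nef does not mean ample, and the Nakai--Moishezon criterion needs positivity of $\ell_\sigma^{\dim V}\cdot V$ on subvarieties of \emph{all} intermediate dimensions, which nefness alone does not upgrade from $\geq 0$ to $>0$. So ``nef $+$ positive on all curves $+$ $q>0$'' does not, by any quotable criterion, yield ampleness.

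The paper closes this gap by inserting a semiampleness step, and this is the part your proposal is missing. One pulls $\ell_\sigma$ back to the symplectic resolution $\pi:\tilde{M}\to M$, where $q(\ell_\sigma)>0$ plus the Beauville--Fujiki relation and \cite[Proposition 2.61]{KoMo98a} give that $\pi^*\ell_\sigma$ is big and nef; the base point free theorem \cite[Theorem 3.3]{KoMo98a} (applicable since $\tilde{M}$ has trivial canonical class) makes $\pi^*\ell_\sigma^{\otimes b}$ globally generated for $b\gg 0$; rational singularities of $M$ give $H^0(\tilde{M},\pi^*\ell_\sigma^{\otimes b})=H^0(M,\ell_\sigma^{\otimes b})$, so $\ell_\sigma^{\otimes b}$ is globally generated on $M$; and finally a globally generated line bundle that is positive on all curves is ample \cite[Proposition 6.3]{Hu99a}, since the morphism it defines contracts no curve and is therefore finite. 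If you replace your appeal to Kleiman with this chain, the rest of your outline goes through.
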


\begin{proof}
As in Theorem \ref{thm:hodge-isom}, we write $\pi: \tilde{M} \to M$ for the symplectic resolution of $M$. By \cite[Theorem 1.1]{BM12}, we know that $\ell_\sigma$ is nef on $M$ and therefore its pullback $\pi^*\ell_\sigma$ is nef on $\tilde{M}$. Moreover, by Corollary \ref{cor:algebraic-mukai}, we have $\tilde{q}(\pi^*\ell_\sigma) = q(\ell_\sigma) = w_\sigma^2>0$, where the last inequality follows from \cite[Theorem 1.1]{Bri08}. Now by the Beauville-Fujiki relation \cite[Proposition 23.14]{GHJ03}, we know that the top self-intersection number of $\pi^*\ell_\sigma$ on $\tilde{M}$ is positive. By the bigness criterion \cite[Proposition 2.61]{KoMo98a}, we see that $\pi^*\ell_\sigma$ is big and nef on $\tilde{M}$. Furthermore, the base point free theorem \cite[Theorem 3.3]{KoMo98a} tells us that $\pi^*\ell_\sigma^{\otimes b}$ is globally generated for $b \gg 0$. 

Now we reduce  all the above statements from $\tilde{M}$ to $M$. Since $M$ has rational singularities, we have $H^0(\tilde{M}, \pi^*\ell_\sigma^{\otimes b})=H^0(M, \ell_\sigma^{\otimes b})$ for any $b$, i.e. the global sections of $\pi^*\ell_\sigma^{\otimes b}$ are precisely those obtained by pulling back global sections of $\ell_\sigma^{\otimes b}$ along $\pi$. Hence we conclude that $\ell_\sigma^{\otimes b}$ is also globally generated. 

Finally, by \cite[Theorem 1.1]{BM12} we see that $\ell_\sigma^{\otimes b}$ is positive on all curves in $M$. But any globally generated line bundle that is positive on all curves is ample; see \cite[Proposition 6.3]{Hu99a}. 
\end{proof}

By a similar argument, we can immediately deduce a weaker property for stability conditions on the wall. The following proposition shows that in the O'Grady situation, $\ell_{\sigma_0}$ for any $\sigma_0$ on the boundary of $\cC$ is nef, big, and semiample. Recall that a line bundle is semiample if a certain multiple of it is base point free. In particular, this result shows that the contraction morphisms $\pi_\pm$ in \cite[Theorem 1.4]{BM12} are still well-defined in the O'Grady situation and are birational morphisms. We will generalise this result to arbitrary Mukai vectors in Proposition \ref{prop:semiampleness} from a completely different point of view. 

\begin{proposition}
\label{old-semiampleness}
Let $\vv \in H^*_{\mathrm{alg}}(X,\bZ)$ be a Mukai vector of O'Grady type and $\cC \subset \stab(X)$ an open chamber with respect to $\vv$. For any stability condition $\sigma_0$ in the boundary of the chamber $\cC$, its image $\ell_{\sigma_0}$ under the local \bmm map defined by the chamber $\cC$ is big, nef, and semiample on $M_{\cC}(\vv)$.

In particular, $\ell_{\sigma_0}$ induces a birational morphism which contracts curves in $M_{\cC}(\vv)$ parametrising S-equivalent objects with respect to the stability condition $\sigma_0$. 
\end{proposition}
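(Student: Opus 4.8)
The plan is to mimic the proof of \cite[Corollary 7.5]{BM12} once more, but this time keeping track of the fact that $\sigma_0$ lies on a wall. First I would record positivity: by \cite[Theorem 1.1]{BM12} the class $\ell_{\sigma_0}$ is nef on $M=M_\cC(\vv)$, and by \cite[Theorem 1.1]{Bri08} (or rather the analogue of \cite[Theorem 4.1]{BM12} on the closure of the chamber) we have $w_{\sigma_0}^2 \geq 0$, with equality only if $w_{\sigma_0}$ lies on the boundary of the positive cone; since $\sigma_0$ is in the closure of a chamber and not on the boundary of $\stab(X)$, one gets the strict inequality $w_{\sigma_0}^2>0$. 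Then Corollary \ref{cor:algebraic-mukai} gives $\tilde q(\pi^*\ell_{\sigma_0}) = q(\ell_{\sigma_0}) = w_{\sigma_0}^2 > 0$, so by the Beauville--Fujiki relation \cite[Proposition 23.14]{GHJ03} the top self-intersection of $\pi^*\ell_{\sigma_0}$ on $\tilde M$ is positive, hence $\pi^*\ell_{\sigma_0}$ is big and nef by \cite[Proposition 2.61]{KoMo98a}. The base point free theorem \cite[Theorem 3.3]{KoMo98a} then makes $(\pi^*\ell_{\sigma_0})^{\otimes b}$ globally generated for $b\gg 0$, and since $M$ has rational singularities $H^0(\tilde M,(\pi^*\ell_{\sigma_0})^{\otimes b}) = H^0(M,\ell_{\sigma_0}^{\otimes b})$, so $\ell_{\sigma_0}^{\otimes b}$ is globally generated on $M$ as well. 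This proves $\ell_{\sigma_0}$ is big, nef and semiample.

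Next I would analyse the morphism defined by $|\ell_{\sigma_0}^{\otimes b}|$. Since $\ell_{\sigma_0}$ is big and semiample, it induces a birational morphism $\pi_{\sigma_0}\colon M \to \bar M$ onto a normal projective variety, contracting exactly the curves $C\subset M$ with $\ell_{\sigma_0}\cdot C = 0$. The final point is to identify these curves as those parametrising objects which become S-equivalent at the wall. For this I would invoke the description of $\ell_{\sigma_0}$ via the first construction of the \bmm map: by \cite[Theorem 1.1]{BM12} (the positivity lemma), $\ell_{\sigma_0}\cdot C = 0$ if and only if the generic object parametrised by $C$ has the same $\sigma_0$-slope as its Jordan--H\"older factors, i.e. the family over $C$ is S-equivalent with respect to $\sigma_0$. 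Conversely, a curve of objects that are all $\sigma_0$-equivalent satisfies $\ell_{\sigma_0}\cdot C = 0$ by the same lemma. Hence the contracted curves are precisely those parametrising $\sigma_0$-equivalent objects, which is the assertion.

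The main obstacle I anticipate is the strict positivity $w_{\sigma_0}^2 > 0$ for a boundary stability condition. On a generic point of the interior of $\cC$ this is \cite[Theorem 1.1]{Bri08} directly, but on the wall one must know that the wall is not ``too degenerate''; the relevant statement is that totally semistable walls still have $w_{\sigma_0}^2 > 0$ unless $\sigma_0$ is a so-called ``fake wall'' or lies on the boundary of the cone of stability conditions. I would handle this by appealing to the classification of walls (Theorems \ref{thm:1st-classification} and \ref{thm:2nd-classification}, or directly to the arguments of \cite[Section 6]{BM13} adapted via Corollary \ref{cor:algebraic-mukai}): in all cases a wall in $\stab(X)$ meeting $\bar\cC$ has $w_{\sigma_0}^2 > 0$, essentially because $w_{\sigma_0}$ and $\vv$ span a hyperbolic rank-two sublattice of the algebraic Mukai lattice. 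The rest of the argument is then a routine transposition of the smooth case, using the symplectic resolution and rationality of singularities exactly as in Proposition \ref{prop:ampleness}.
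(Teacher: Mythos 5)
Your proposal is correct and follows essentially the same route as the paper: the paper's proof simply observes that the first two paragraphs of the proof of Proposition \ref{prop:ampleness} (nefness via \cite[Theorem 1.1]{BM12}, $q(\ell_{\sigma_0})=w_{\sigma_0}^2>0$ via Corollary \ref{cor:algebraic-mukai} and \cite[Theorem 1.1]{Bri08}, Beauville--Fujiki, the bigness criterion and base point free theorem on $\tilde M$, then descent to $M$ by rational singularities) go through verbatim for $\sigma_0$ on the boundary of $\cC$, and that the contraction statement is immediate from \cite[Theorem 1.1]{BM12}. One caution: the ``main obstacle'' you anticipate is not an obstacle, since $\sigma_0$ is still a genuine stability condition in the distinguished component (walls are interior submanifolds of $\stab(X)$), so \cite[Theorem 1.1]{Bri08} gives $w_{\sigma_0}^2>0$ directly --- $w_{\sigma_0}$ is a nonzero vector in the positive definite two-plane spanned by the real and imaginary parts of the central charge --- and your fallback via Theorems \ref{thm:1st-classification} and \ref{thm:2nd-classification} should be avoided, as those classification results are proved later and rely on this proposition (via Proposition \ref{prop:semiampleness}), so invoking them here would be circular.
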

\begin{proof}
In fact, the first two paragraphs in the proof of Proposition \ref{prop:ampleness} also work here without any change, which prove the first statement. The second statement is an immediate consequence of \cite[Theorem 1.1]{BM12}.
\end{proof}

\section{Classification of Walls: Results}\label{classificationresults}

In this section, we study wall crossing on the stability manifold for non-primitive Mukai vectors. More precisely, we prove that the criteria in \cite[Theorem 5.7]{BM13} still gives a complete classification of walls in the O'Grady situation. Moreover, we will prove that \cite[Theorem 1.1]{BM13} also holds in the O'Grady situation, which allows us to glue the local \bmm maps and study the birational geometry of the singular moduli spaces via wall crossings. Due to the technicality in the proofs, we will only state our results in this section, while leaving all proofs for next section. 

\subsection{Hyperbolic lattice associated to a wall}
One of the main tools in \cite{BM13} is a rank two hyperbolic lattice associated to any wall. More precisely, if we fix a Mukai vector $\vv \in H^*_{\mathrm{alg}}(X,\bZ)$ with $\vv^2>0$, and a wall $\cW$ of the chamber decomposition with respect to $\vv$, then we can define $\cH_\cW \subset H^*_{\mathrm{alg}}(X,\bZ)$ to be the set of classes
$$\ww \in \cH_\cW \quad \Leftrightarrow \quad \im \frac{Z(\ww)}{Z(\vv)} = 0 \quad \text{for all} \quad \sigma = (Z, \cP) \in \cW.$$
By \cite[Proposition 5.1]{BM13}, which also works when $\vv$ is not primitive, the set $\cH_\cW$ is a primitive rank two hyperbolic lattice. Conversely, given a primitive rank two hyperbolic sublattice $\cH \subset H^*_{\mathrm{alg}}(X,\bZ)$ containing $\vv$, we can define a potential wall $\cW$ in $\stab(X)$ to be a connected component of the real codimension one submanifold of stability conditions $\sigma = (Z, \cP)$ which satisfy the condition that $Z(\cH)$ is contained in a line. Notice that every (potential) wall is associated to a unique rank two hyperbolic lattice $\cH$ whereas each lattice may give rise to many (potential) walls. Roughly speaking, whilst crossing a wall $\cW$, all the relevant Mukai vectors lie in the same hyperbolic lattice $\cH_\cW$. This simple observation reduces the analysis of wall-crossing to elementary lattice-theoretic computations.

We also need to recall the names of some special classes in $\cH$. A class $\uu \in \cH$ is called a \emph{spherical class} if $\uu^2=-2$, or an \emph{isotropic class} if $\uu^2=0$. We say a hyperbolic lattice $\cH$ is an \emph{isotropic lattice} if $\cH$ contains at least one non-zero isotropic class. We say a (potential) wall $\cW$ is an \emph{isotropic wall} if its associated hyperbolic lattice $\cH$ is an isotropic lattice. 

After establishing the relation between walls and rank two hyperbolic sublattices, we review the notions of a positive cone and an effective cone. We assume we have a potential wall $\cW$ and its associated rank two hyperbolic lattice $\cH$. The \emph{positive cone} $P_\cH$ is a cone in $\cH \otimes \bR$, which is generated by integral classes $\uu \in \cH$, with $\uu^2 \geq 0$ and $(\vv, \uu)>0$. We call any integral class in $P_\cH$ a \emph{positive class}. Note that the positive cone $P_\cH$ only depends on the choice of $\cH$, not on the choice of the wall associated to $\cH$.

In comparison, the \emph{effective cone} $C_\cW$ depends on the choice of a potential wall. It is a cone in $\cH \otimes \bR$, which is generated by integral classes $\uu \in \cH$, with $\uu^2 \geq -2$ and $\re \frac{Z(u)}{Z(v)} > 0$ for a fixed $\sigma=(Z, \cP) \in \cW$. This notion is well-defined. Indeed, by \cite[Proposition 5.5]{BM13}, this cone does not depend on the choice of $\sigma \in \cW$. We call any integral class in $C_\cW$ an \emph{effective class}. We also point out that for different walls $\cW$ associated to the same hyperbolic lattice $\cH$, the effective cone $C_\cW$ might differ by spherical classes. However, they all contain the same positive cone $P_\cH$. 

We also need to make the same genericity assumption as in \cite[Remark 5.6]{BM13}.

\subsection{First classification theorem}

We will classify all the walls in the stability manifold from two points of view. Note that various types of walls have been defined in \cite[Section 8]{BM12} and \cite[Definition 2.20]{BM13}. From now on, we always denote a generic stability condition on a wall $\cW$ by $\sigma_0$, while two generic stability conditions on two different sides of the $\cW$ by $\sigma_+$ and $\sigma_-$. The chambers which contain $\sigma_+$ and $\sigma_-$ are denoted by $\cC_+$ and $\cC_-$ respectively. Moreover, we always assume that $\sigma_+$ and $\sigma_-$ are sufficiently close to the wall, whose precise meaning is contained in \cite[Proof of Proposition 5.1]{BM13}.

We first classify walls according to whether there exists any $\sigma_0$-stable objects of class $\vv$. We say a (potential) wall associated to the class $\vv$ is \emph{totally semistable}, if there is no $\sigma_0$-stable objects of class $\vv$ for a generic stability condition $\sigma_0 \in \cW$. Otherwise, $\cW$ is not totally semistable. In other words, the (potential) wall $\cW$ is totally semistable if and only $M_{\sigma_+}(\vv)$ and $M_{\sigma_-}(\vv)$ are completely disjoint from each other, and $\cW$ is not totally semistable if and only if they contain a common open dense subset $M_{\sigma_0}(\vv)$. There is a third way to describe these walls, which reveals more on the reason why the name is obtained. The wall $\cW$ is totally semistable if every stable $\sigma_+$-stable object becomes strictly $\sigma_0$-semistable, which is equivalent to the condition that every stable $\sigma_-$-stable object is strictly $\sigma_0$-semistable. Otherwise $\cW$ is not totally semistable. 

We are ready to state the following numerical criterion for a wall to be totally semistable for an arbitrary class $\vv \in \cH$ with $\vv^2>0$, regardless of whether it is primitive or has O'Grady type. The theorem generalises the first half of \cite[Theorem 5.7]{BM13}.

\begin{theorem}
\label{thm:1st-classification}
Let $\vv \in \cH$ be a positive class with $\vv^2>0$, and $\cW$ be a potential wall for $\vv$. Then $\cW$ is a totally semistable wall for $\vv$ if and only if either of the following two conditions holds
\begin{itemize}\itemindent=20pt
\item[(TS1)] there exists an effective spherical class $\ss \in \cH$ with $(\ss, \vv)<0$;
\item[(TS2)] there exists an isotropic class $\ww \in \cH$ with $(\ww, \vv)=1$.
\end{itemize}
\end{theorem}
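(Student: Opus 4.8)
The plan is to reduce the statement to the corresponding result in \cite[Theorem 5.7]{BM13} for primitive classes, by exploiting the fact that our class $\vv$ has divisibility two in a hyperbolic lattice $\cH$ of rank two, and that the conditions (TS1) and (TS2) are manifestly \emph{numerical}, hence depend only on the pair $(\cH, \vv)$ and the effective cone $C_\cW$. The key observation is that in a rank two hyperbolic lattice, whether a wall $\cW$ is totally semistable can be detected purely on the level of Harder--Narasimhan and Jordan--H\"older filtrations, all of whose factors have Mukai vectors lying in the positive cone $P_\cH$ or on spherical classes in $C_\cW$; none of these arguments used primitivity of $\vv$ in an essential way in \cite{BM13}. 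So the first step is to go through \cite[Section 6]{BM13} and check that each lemma feeding into the proof of the ``totally semistable'' half of \cite[Theorem 5.7]{BM13} (in particular the existence of a unique ``minimal'' decomposition of $\vv$ into positive classes, and the characterisation of stable factors via spherical/isotropic classes) is stated for arbitrary $\vv$ with $\vv^2 > 0$, not just primitive $\vv$; this is exactly the kind of bookkeeping the introduction promises.

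For the ``if'' direction I would argue directly. Suppose (TS1) holds, i.e.\ there is an effective spherical class $\ss \in \cH$ with $(\ss, \vv) < 0$. Then on $\cW$ the spherical class $\ss$ is represented by a unique $\sigma_0$-stable spherical object $S$, and for any $\sigma_0$-semistable $E$ of class $\vv$ one computes $\hom(S, E) - \ext^1(S, E) + \ldots$ forces a nonzero map, so $E$ has $S$ (or a shift) as a sub/quotient object in its Jordan--H\"older filtration; since $(\ss,\vv)<0$ this must happen for \emph{every} such $E$, so no $E$ of class $\vv$ is $\sigma_0$-stable and $\cW$ is totally semistable. If instead (TS2) holds with isotropic $\ww$, $(\ww,\vv)=1$, then $\ww$ is represented by a $\sigma_0$-stable object $W$ with $\hom(W, E)$ forced nonzero for all $\sigma_0$-semistable $E$ of class $\vv$ by a Riemann--Roch count, and again $E$ cannot be stable. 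These are precisely the arguments of \cite[Proposition 6.3, Proposition 6.8]{BM13}; none of them needs $\vv$ primitive, so I would just cite them, indicating the (trivial) modifications.

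For the ``only if'' direction: suppose $\cW$ is totally semistable for $\vv$. Then a generic $E \in M_{\sigma_0}(\vv)$ is strictly semistable, so it has a Jordan--H\"older filtration into $\sigma_0$-stable factors with Mukai vectors $\aa_i \in \cH$ satisfying $\sum \aa_i = \vv$, each $\aa_i$ lying in $C_\cW$ with $\aa_i^2 \geq -2$. Since $\cH$ is hyperbolic of rank two, a case analysis on the $\aa_i$ (are they positive, isotropic, or spherical?) shows that either some $\aa_i$ is an effective spherical class, in which case pairing with $\vv$ and using that it appears in a filtration of $\vv$ forces $(\aa_i, \vv) < 0$ (giving (TS1)), or all factors are multiples of a single isotropic class $\ww$ with $\ww$ effective; in the latter case writing $\vv = k\ww + (\text{spherical correction})$ and using $\vv^2 > 0$, $\vv = 2\vv_p$ pins down $(\ww, \vv) = 1$ (giving (TS2)). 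This is where the rank-two hyperbolic lattice computation of \cite[Section 5]{BM13} does all the work; I expect the only genuinely new point to check is that the divisibility-two constraint $\vv = 2\vv_p$ with $\vv_p^2 = 2$ does not open up any extra numerical possibility that fails both (TS1) and (TS2) — but since (TS1)/(TS2) are stated for arbitrary $\vv$ with $\vv^2>0$ in \cite{BM13} and proved there by the same lattice analysis, this is automatic.

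\textbf{Main obstacle.} The real subtlety is not the lattice theory but making sure every existence statement about $\sigma_0$-stable objects of a given class (spherical objects, isotropic objects, the stable factors of $\vv$) that \cite{BM13} proves using irreducible-holomorphic-symplectic geometry of $M_\sigma$ still holds when $\vv$ has O'Grady type, where $M_\sigma(\vv)$ is singular. As the introduction indicates, one has to either route around each such use of smoothness or replace it with an argument on the symplectic resolution $\tilde M$ (using Proposition \ref{prop:symp-divisor} and the Hodge isometry of Theorem \ref{thm:hodge-isom}). I expect that, as with the other results in the paper, the statement of Theorem \ref{thm:1st-classification} for arbitrary positive $\vv$ is in fact \emph{identical} to \cite[Theorem 5.7]{BM13}'s first half once these replacements are made — so the ``proof'' is largely a careful citation of \cite[Section 6]{BM13} with the smoothness-dependent steps flagged and patched — and the bulk of the genuinely new technical work is deferred to the next section, consistent with the paper's stated structure.
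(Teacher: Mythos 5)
There is a genuine gap, and it sits in your ``only if'' direction. The sufficiency half of your sketch ((TS1) $\Rightarrow$ totally semistable via the Hom-pairing argument for a stable spherical object of the same phase) is essentially the paper's Lemma \ref{lem:ts1}, i.e.\ the first statement of \cite[Proposition 6.8]{BM13}, which does apply to non-primitive classes; that part is fine. But for necessity you never make the one elementary observation that drives the paper's proof: if $\vv=m\vv_p$ with $m\geq 2$, then $(\ww,\vv)=m(\ww,\vv_p)$ is divisible by $m$, so (TS2) is vacuous for non-primitive $\vv$. Instead, your case analysis claims that ``$\vv^2>0$, $\vv=2\vv_p$ pins down $(\ww,\vv)=1$'', which is impossible since that pairing is even --- so that branch of your argument is self-contradictory. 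Moreover, your assertion that a spherical Jordan--H\"older factor of the generic semistable object forces $(\ss,\vv)<0$ is not a formal lattice consequence; it needs the dimension estimates of \cite[Section 6]{BM13}, and in any case the real content of the necessity direction is an existence statement: when (TS1) fails and $\vv$ is non-primitive, one must exhibit a $\sigma_0$-stable object of class $\vv$.

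That existence statement is exactly what is \emph{not} automatic from \cite{BM13}: their Theorem 5.7 and the supporting analysis are proved for primitive Mukai vectors, so your closing claim that the criteria are ``stated for arbitrary $\vv$'' there and hence carry over is factually wrong, and it hides the one case where the answer genuinely differs between $\vv_p$ and $\vv$. The paper's Lemma \ref{lem:ts2} supplies the missing argument for minimal non-primitive $\vv$: if there is no isotropic $\ww$ with $(\ww,\vv_p)=1$, then $\sigma_0$-stable objects of class $\vv_p$ exist by \cite[Theorem 5.7]{BM13} and one passes to class $m\vv_p$ using \cite[Lemma 2.16]{BM13} (a step absent from your sketch); if such a $\ww$ does exist, the wall is totally semistable of Hilbert--Chow type for $\vv_p$, yet since $(\ww,\vv)=m\geq 2$ it is a Li--Gieseker--Uhlenbeck wall for $\vv$ by \cite[Proposition 8.2]{BM13}, and the generic object of class $\vv$ corresponds to a locally free Gieseker-stable sheaf on $M_{\sigma_0}(\ww)$, hence remains $\sigma_0$-stable. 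This second case --- where the wall is totally semistable for the primitive part but not for its multiple --- is precisely what your ``carry over the primitive lattice analysis verbatim'' strategy would never produce, and without it the necessity direction does not close.
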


In particular, if $\vv$ is a non-primitive class, then (TS2) cannot happen and (TS1) is the only possibility for a totally semistable wall. As an application of the proof of this theorem, we also obtain the bigness of $\ell_{\sigma_0}$ for an arbitrary class $\vv$, which generalises \cite[Theorem 1.4(a)]{BM12} and Proposition \ref{old-semiampleness}. 

\begin{proposition}
\label{prop:semiampleness}
Let $\vv \in \cH$ be a positive class with $\vv^2>0$, and $\cW$ be a potential wall for $\vv$. For a generic $\sigma_0 \in \cW$, its image $\ell_{\cC_\pm}(\sigma_0)$ under the local \bmm map with respect to the chamber $\cC_\pm$ induces a birational morphism \[\pi_{\pm}: M_{\sigma_\pm}(\vv) \to \bar{M}_\pm\] which contracts curves in $M_{\sigma_\pm}(\vv)$ parametrising S-equivalent objects under the stability condition $\sigma_0$, where $\bar{M}_\pm$ is the image of $\pi_{\pm}$ in $M_{\sigma_0}(\vv)$.
\end{proposition}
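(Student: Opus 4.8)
The statement closely mirrors \cite[Theorem 1.4(a)]{BM12}, except that there the Mukai vector is primitive, whereas here $\vv$ is an arbitrary positive class, possibly of O'Grady type. The plan is to reduce the claim to the corresponding assertions already established in \cite{BM12,BM13} by combining: (i) the nefness of $\ell_{\sigma_0}$ from the positivity lemma \cite[Theorem 1.1]{BM12}; (ii) the bigness of $\ell_{\sigma_0}$, which comes from the fact that $w_{\sigma_0}^2>0$ by \cite[Theorem 1.1]{Bri08} together with the isometry $\theta_{\cC_\pm}$ of Corollary \ref{cor:algebraic-mukai}\eqref{item:am3}; and (iii) the semiampleness, which in the O'Grady case is Proposition \ref{old-semiampleness} and which, for a general positive $\vv$, will be extracted from the analysis of totally semistable walls carried out in the proof of Theorem \ref{thm:1st-classification}.

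Concretely, first I would argue that $\ell_{\cC_\pm}(\sigma_0)$ is big and nef on $M_{\sigma_\pm}(\vv)$. Nefness is immediate from \cite[Theorem 1.1]{BM12} applied to the chamber $\cC_\pm$, since $\sigma_0$ lies in its closure. For bigness, I would use the isometry $\theta_{\cC_\pm}: \vv^\perp \to \ns(M_{\sigma_\pm}(\vv))$ from Corollary \ref{cor:algebraic-mukai}\eqref{item:am3}: we have $q(\ell_{\cC_\pm}(\sigma_0)) = w_{\sigma_0}^2 > 0$, where the positivity is \cite[Theorem 1.1]{Bri08}. Pulling back to the symplectic resolution $\pi: \tilde{M} \to M_{\sigma_\pm}(\vv)$ and using Corollary \ref{cor:algebraic-mukai}\eqref{item:am2}, we get $\tilde{q}(\pi^*\ell_{\cC_\pm}(\sigma_0)) > 0$, so by the Beauville--Fujiki relation \cite[Proposition 23.14]{GHJ03} the top self-intersection of the nef class $\pi^*\ell_{\cC_\pm}(\sigma_0)$ on $\tilde{M}$ is positive, hence it is big by \cite[Proposition 2.61]{KoMo98a}; bigness then descends to $M_{\sigma_\pm}(\vv)$ since $\pi$ is birational. (This is the same argument as in the proof of Proposition \ref{prop:ampleness} and Proposition \ref{old-semiampleness}, and indeed for $\vv$ of O'Grady type it is literally Proposition \ref{old-semiampleness}.)

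Next, I would establish semiampleness and the contraction statement for general positive $\vv$. The key point is that in the proof of Theorem \ref{thm:1st-classification} (to be given in Section \ref{classificationproofs}) one analyses, via the rank two hyperbolic lattice $\cH_\cW$ and the techniques of \cite[Sections 6--9]{BM13}, precisely which $\sigma_+$-stable objects remain $\sigma_0$-stable and which become strictly semistable; in particular one shows that $M_{\sigma_0}(\vv)$ is a projective coarse moduli space and that the natural morphism $M_{\sigma_\pm}(\vv) \to M_{\sigma_0}(\vv)$, sending a $\sigma_\pm$-stable object to the polystable object in its $S$-equivalence class, is a birational morphism induced by $\ell_{\cC_\pm}(\sigma_0)$. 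This identifies $\bar{M}_\pm$ as the image of $\pi_\pm$ inside $M_{\sigma_0}(\vv)$ and shows it contracts exactly the curves whose points are $S$-equivalent under $\sigma_0$; semiampleness of $\ell_{\cC_\pm}(\sigma_0)$ follows because a big nef class that defines such a morphism to a projective variety is semiample (one can also invoke the base point free theorem \cite[Theorem 3.3]{KoMo98a} on $\tilde M$ after checking the numerical hypotheses and then descend, using that $M_{\sigma_\pm}(\vv)$ has rational singularities, exactly as in Proposition \ref{prop:ampleness}).

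The main obstacle is really a matter of organisation rather than a new difficulty: the semiampleness and the precise description of the contraction $\pi_\pm$ rely on knowing that $M_{\sigma_0}(\vv)$ exists as a projective variety and that $\ell_{\cC_\pm}(\sigma_0)$ is the pullback of an ample class from it, which is part of the wall-crossing package that gets set up during the proof of Theorem \ref{thm:1st-classification}. So the cleanest route is to prove this proposition \emph{as a by-product} of that proof in Section \ref{classificationproofs}, exactly as stated in the sentence ``As an application of the proof of this theorem'' preceding the proposition; the bigness and nefness inputs above are then the only genuinely new ingredients needed beyond what \cite{BM12,BM13} already provide, and they are supplied by Theorem \ref{thm:hodge-isom} and its corollaries via the isometry $\theta_{\cC_\pm}$.
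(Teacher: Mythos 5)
Your first step (bigness and nefness of $\ell_{\cC_\pm}(\sigma_0)$ via the isometry $\theta_{\cC_\pm}$, the symplectic resolution and the Beauville--Fujiki relation) is sound, but only where those tools exist: Corollary \ref{cor:algebraic-mukai} and the resolution $\pi:\tilde{M}\to M$ are available only for $\vv$ of O'Grady type (or, for primitive $\vv$, via \cite{BM12}), whereas the proposition is stated for an arbitrary positive class with $\vv^2>0$, e.g. $\vv=3\vv_p$ or $\vv_p^2>2$, where no symplectic resolution exists. The paper does not argue in this direction for general $\vv$: it quotes \cite[Theorem 1.4(a)]{BM12} in the primitive case, and in the non-primitive case it takes nefness and the characterisation of contracted curves from the positivity lemma \cite[Theorem 1.1]{BM12} (with Proposition \ref{old-semiampleness} supplying the induced morphism in the O'Grady case); the bigness of $\ell_{\sigma_0}$ for arbitrary $\vv$ is then obtained as a \emph{consequence} of the birationality of $\pi_\pm$, not used as an input.

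The genuine gap is in your second step, which is where the real content lies. You assert that the wall-crossing analysis ``shows that the natural morphism $M_{\sigma_\pm}(\vv)\to M_{\sigma_0}(\vv)$ is birational'', suggesting this comes from knowing which $\sigma_+$-stable objects remain $\sigma_0$-stable. That heuristic fails exactly in the hard case: at a totally semistable wall with $\vv$ non-minimal in its $G_\cH$-orbit, \emph{no} $\sigma_+$-stable object of class $\vv$ stays $\sigma_0$-stable, yet $\pi_\pm$ is still birational. The paper's proof supplies the missing mechanism: by Lemma \ref{lem:ts2}, $\sigma_0$-stable objects of the minimal class $\vv_0$ exist (this is where minimal non-primitive classes require the Li--Gieseker--Uhlenbeck analysis), and by Lemma \ref{lem:ts3} the composition of spherical twists $\Phi_+$ sends distinct such objects to $\sigma_+$-stable objects of class $\vv$ that are pairwise \emph{not} S-equivalent with respect to $\sigma_0$; hence $U=\Phi_+(U_0)$ is a dense open subset of $M_{\sigma_+}(\vv)$ containing no curve contracted by $\ell_{\sigma_0}$ (by \cite[Theorem 1.1]{BM12}), which is precisely what makes $\pi_\pm$ birational onto its image $\bar{M}_\pm$. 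Your proposal defers this to ``the proof of Theorem \ref{thm:1st-classification}'' without identifying the argument, so as written it assumes the conclusion (generic injectivity of $\pi_\pm$) rather than proving it; likewise the claim that semiampleness follows ``because a big nef class that defines such a morphism is semiample'' presupposes the morphism whose existence was to be justified outside the primitive and O'Grady cases.
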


\subsection{Second classification theorem}

The second classification result relies on Proposition \ref{prop:semiampleness} (or Proposition \ref{old-semiampleness}). Since $\pi_+: M_{\sigma_+}(\vv) \to \bar{M}_+$ is a birational morphism, it could be one of three possible types: a divisorial contraction, a small contraction, or an isomorphism. In fact, in the next theorem, we will see that $\pi_+$ and $\pi_-$ always correspond to the same contraction type because the numerical criteria do not tell the difference of two sides of $\cW$. So it doesn't matter whether the definition is made for $\pi_+$ or $\pi_-$.

In the first two cases, since $\pi_+$ and $\pi_-$ induce actual contractions, the two moduli spaces $M_{\sigma_+}(\vv)$ and $M_{\sigma_-}(\vv)$ are not the same and hence $\cW$ is a genuine wall which we call a \emph{divisorial wall} and a \emph{flopping wall} respectively; the reason for the name will be explained in Section \ref{classificationproofs}. If $\pi_+$ and $\pi_-$ are both isomorphisms, we will see that $M_{\sigma_+}(\vv)$ and $M_{\sigma_-}(\vv)$ are isomorphic. By the first classification Theorem \ref{thm:1st-classification}, they could be either disjoint from each other, in which case we call $\cW$ a \emph{fake wall}, or identical with each other, in which case we say that $\cW$ is not a wall. We refer the reader to \cite[Section 8]{BM12} and \cite[Definition 2.20]{BM13} for the names of these walls. 

With these notions at hand, we can now state the second classification result which provides a numerical criterion for each of the three types of contraction discussed above. This generalises the second half of \cite[Theorem 5.7]{BM13}, but we can only prove it in the O'Grady situation:

\begin{theorem}
\label{thm:2nd-classification}
Let $\vv \in \cH$ be a positive class with $\vv=2\vv_p$, where $\vv_p^2=2$, and $\cW$ be a potential wall for $\vv$. Then $\cW$ can be one of the following three types:
\begin{itemize}
\item $\cW$ is a wall inducing a divisorial contraction if and only if at least one of the following two conditions holds:
\begin{itemize}\itemindent=30pt
\item[(BN)] there exists a spherical class $\ss$ with $(\ss, \vv)=0$, or
\item[(LGU)] there exists an isotropic class $\ww$ with $(\ww, \vv)=2$.
\end{itemize}
Moreover, the condition (LGU) necessarily implies the condition (BN).
\item Otherwise, $\cW$ is a wall inducing a small contraction if and only if the following condition holds:
\begin{itemize}\itemindent=30pt
\item[(SC)] there exists a spherical class $\ss$ with $0 < (\ss, \vv) \leq \frac{\vv^2}{2}=4$.
\end{itemize}
\item In all other cases, $\cW$ is either a totally semistable wall, or not a wall. 
\end{itemize}
\end{theorem}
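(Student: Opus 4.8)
The plan is to follow the strategy of \cite[Section 7--9]{BM13}, exploiting the fact that in the O'Grady situation every moduli space $M_\sigma(\vv)$ admits a symplectic resolution, so that the Beauville--Bogomolov form on $\ns(M)$ from Corollary \ref{cor:algebraic-mukai} is available and behaves like the one on an irreducible holomorphic symplectic manifold. The starting point is Proposition \ref{prop:semiampleness}: for a generic $\sigma_0 \in \cW$ we have birational morphisms $\pi_\pm\colon M_{\sigma_\pm}(\vv) \to \bar M_\pm$ given by $\ell_{\sigma_0}$, and by Proposition \ref{prop:symp-divisor} the exceptional locus of $\pi_\pm$ is either empty, of pure codimension one, or of codimension at least two. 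The trichotomy in the theorem is exactly this case division, so the heart of the matter is to match each geometric alternative with a numerical condition on $\cH$.

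First I would analyse the \emph{divisorial} case. Exactly as in \cite[Section 8]{BM13}, crossing $\cW$ produces a Harder--Narasimhan or Jordan--H\"older type destabilisation, and one classifies the possible ``types'' of walls (Brill--Noether, Li--Gieseker--Uhlenbeck, Hilbert--Chow) by looking at how $\vv$ decomposes inside $\cH$ into the effective and spherical/isotropic classes of $C_\cW$ and $P_\cH$. Using the lattice-theoretic input of Theorem \ref{thm:1st-classification} together with the classification of rank two hyperbolic lattices, one shows a divisor is contracted precisely when there is a spherical $\ss$ with $(\ss,\vv)=0$ (the (BN) case, where the contracted divisor is a Brill--Noether locus, resolved on $\tilde M$ to an exceptional $\bP^1$-bundle over a codimension two locus, consistent with Proposition \ref{prop:symp-divisor}) or an isotropic $\ww$ with $(\ww,\vv)=2$ (the (LGU) case, a Lagrangian fibration type contraction); the implication (LGU)$\Rightarrow$(BN) is the elementary observation that if $\ww^2=0$ and $(\ww,\vv)=2=(\vv,\vv_p)$ then $\ss:=\vv_p-\ww$ (or a similar combination dictated by $\vv=2\vv_p$, $\vv_p^2=2$) is spherical and orthogonal to $\vv$. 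Here the divisibility $\vv=2\vv_p$ is essential: it is what forces the numerics to land on $(\ss,\vv)=0$ rather than the primitive-case value, and what rules out the Hilbert--Chow strata that would otherwise appear.

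Next, assuming neither (BN) nor (LGU) holds, I would treat the \emph{small contraction} case. One shows that $\pi_\pm$ contracts something nontrivial if and only if there is a spherical class $\ss$ with $0<(\ss,\vv)\leq \vv^2/2=4$; the upper bound comes from requiring the corresponding extension locus to be nonempty, i.e.\ from $\Hom$-space dimension counts on the relevant rigid objects, precisely as in \cite[Proposition 9.x]{BM13}, and the lower bound $(\ss,\vv)>0$ (rather than $=0$) is what distinguishes this from the divisorial (BN) case. Since the exceptional locus is not a divisor, Proposition \ref{prop:symp-divisor} guarantees it has codimension at least two, so $\pi_\pm$ is a genuine small contraction and $\cW$ is a flopping wall; the flop itself is then realised by the derived (anti-)autoequivalence of Theorem \ref{thm:identify-cones}. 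Finally, if none of (BN), (LGU), (SC) holds, then $\pi_\pm$ is an isomorphism in the interior of the stable locus, $M_{\sigma_+}(\vv)\cong M_{\sigma_-}(\vv)$, and by Theorem \ref{thm:1st-classification} the wall is either totally semistable (when (TS1) holds, i.e.\ there is an effective spherical $\ss$ with $(\ss,\vv)<0$) or not a wall at all; note that in the O'Grady case (TS2) is impossible since $\vv$ is non-primitive.

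\textbf{Main obstacle.} The hard part will be verifying that the $\Hom$-space and dimension estimates that underlie the numerical thresholds in \cite{BM13} still go through on the \emph{singular} moduli space, since several of their arguments implicitly use smoothness (e.g.\ deformation-theoretic dimension counts, or the fact that a divisorial contraction on a smooth holomorphic symplectic manifold contracts to codimension exactly two). The remedy is systematic: each such argument is transferred to the symplectic resolution $\tilde M$ via $\pi$, where Proposition \ref{prop:symp-divisor} and the isometry of Corollary \ref{cor:algebraic-mukai} supply exactly the smooth-world facts needed, and then pushed back down to $M$ using that $M$ has rational singularities and is $\bQ$-factorial. Keeping careful track of the stratum-preserving property (Lemma \ref{lem:stratum}, Proposition \ref{prop:codim-two}) when passing the contracted loci through $\pi$ is the delicate bookkeeping that replaces the one-line appeals to smoothness in \cite{BM13}.
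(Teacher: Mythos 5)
Your overall strategy matches the paper's: reduce to the contraction trichotomy via Proposition \ref{prop:semiampleness} and Proposition \ref{prop:symp-divisor}, and then transplant the lattice-theoretic wall analysis of \cite[Sections 7--9]{BM13}, replacing the smoothness inputs by the symplectic resolution, Corollary \ref{cor:algebraic-mukai}, and the stratum-preserving machinery. However, there is a concrete gap in your treatment of the small-contraction case. In \cite[Theorem 5.7]{BM13} a flopping wall is characterised by \emph{two} alternatives: a spherical class $\ss$ with $0<(\ss,\vv)\leq \vv^2/2$, \emph{or} a decomposition $\vv=\aa+\bb$ into positive classes with $(\aa,\bb)$ large enough. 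Your plan simply asserts the equivalence ``small contraction $\Leftrightarrow$ (SC)'' and says the thresholds come out ``precisely as in \cite{BM13}'', but carried over verbatim that argument would only give ``(SC) or a positive decomposition'', and the necessity direction of the theorem as stated would be unproven. The paper closes this by Lemma \ref{noSC1walls}: using $\vv^2=8$, the parity constraint that $(\vv,\aa)=2(\vv_p,\aa)$ is even, and evenness of the lattice, one checks that any such decomposition forces either $\aa^2=1$ (impossible) or $\aa^2=0$, in which case $\vv_p-\aa$ is spherical and orthogonal to $\vv$, i.e.\ one is already on a (BN) divisorial wall. Some argument of this kind is indispensable and is specific to the O'Grady numerics; without it your case analysis does not yield the stated classification.

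Two smaller points. First, your candidate for (LGU)$\Rightarrow$(BN) does not work as written: if $(\ww,\vv_p)=1$ and $\ww^2=0$ then $(\vv_p-\ww)^2=0$, so $\vv_p-\ww$ is isotropic, not spherical; the correct class is $\ww_0-\ww_1=2\ww_0-\vv_p$ after showing $\vv_p=\ww_0+\ww_1$ with $(\ww_0,\ww_1)=1$ (the paper's Lemma \ref{lem:dc0}, which also records the minimality of $\vv$ and the choice of $\ww$ with $M_{\sigma_0}(\ww)=M_{\sigma_0}^{\mathrm{st}}(\ww)$, facts used later). Second, in the sufficiency of (SC) the issue is not merely nonemptiness of the extension locus: because the strictly semistable locus $\sym^2 M_{\sigma_\pm}(\vv_p)$ already consists of positive-dimensional S-equivalence classes, one must check that the loci produced by the spherical destabilisation are strictly larger than a single $\sigma_+$-S-equivalence class (the comparisons $\ext^1(\tilde S,F)>\ext^1(E_1,E_2)=\vv_p^2+2$ or $\vv_p^2$ in Lemmas \ref{lem:sc14} and \ref{lem:sc15}), and one must also account for small contractions supported inside the singular stratum when $0<(\ss,\vv_p)\leq \vv_p^2/2$ (Lemma \ref{lem:sc12}). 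These refinements are where the singular, non-primitive setting genuinely deviates from \cite{BM13}, and your proposal leaves them at the level of an appeal to the smooth case.
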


We discuss briefly the differences between \cite[Theorem 5.7]{BM13} in the primitive case and our classification in the O'Grady setting. The main difference shows up in the condition required for a divisorial wall. On the one hand, the non-primitivity of our Mukai vectors excludes the possibility of having a Hilbert-Chow wall. On the other hand, we will prove in Lemma \ref{lem:dc0} that (LGU) always implies (BN) for moduli spaces of O'Grady type. Therefore, (BN) is the unique condition that is required to characterise a divisorial wall. In other words, every divisorial wall is a wall of Brill-Noether type, while it is also a wall of Li-Gieseker-Uhlenbeck type if both (BN) and (LGU) are satisfied at the same time. However, there is a subtle difference between the cases (BN) and (LGU) which can be seen by looking at the HN filtration of the generic semistable object along the divisor; see Remark \ref{LGUBNdiff} for more details. Another difference between our Theorem \ref{thm:2nd-classification} and \cite[Theorem 5.7]{BM13} is that there are no flopping walls arising from a decomposition $\vv=\aa+\bb$ into positive classes $\aa$ and $\bb$. Indeed, this case cannot happen because the O'Grady spaces have such small dimension; see Lemma \ref{noSC1walls} for more details. 

The above classification shows us how a moduli space could change when the stability condition approaches a wall. Although the contraction induced by a wall can be arbitrarily misbehaved, we can always find a birational map, which identifies open subsets in the moduli spaces $M_{\sigma_+}(\vv)$ and $M_{\sigma_-}(\vv)$ with complements of codimension at least two. By composing these birational maps, we can obtain the same conclusion for any pair of generic stability conditions. This is the content of the following result, which generalises \cite[Theorem 1.1]{BM13}. 

\begin{theorem}
\label{thm:identify-cones}
Let $\vv \in \cH$ be a positive class with $\vv=2\vv_p$, where $\vv_p^2=2$, and $\cW$ a potential wall for $\vv$. Then there exists a derived (anti-)autoequivalence $\Phi$ of $\cD(X)$, which induces a birational map $\Phi_*: M_{\sigma_+}(\vv) \dashrightarrow M_{\sigma_-}(\vv)$, such that 
\begin{itemize}
\item $\Phi_*$ is stratum preserving, and
\item one can find open subsets $U_\pm \subset M_{\sigma_\pm}(\vv)$, with complements of codimension at least two, and $\Phi_*: U_+ \to U_-$ an isomorphism, i.e. $\Phi_*$ is an isomorphism in codimension one. 
\end{itemize}
In particular, $\Phi_*$ identifies the N\'{e}ron-Severi lattices $\ns(M_{\sigma_+}(\vv)) \cong \ns(M_{\sigma_-}(\vv))$, as well as the movable cones $\mov(M_{\sigma_+}(\vv)) \cong \mov(M_{\sigma_-}(\vv))$ and big cones $\bigcone(M_{\sigma_+}(\vv)) \cong \bigcone(M_{\sigma_-}(\vv))$. 

More generally, the same result is true for any two generic stability conditions $\sigma, \tau \in \stab(X)$.
\end{theorem}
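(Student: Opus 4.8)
The plan is to reduce everything to a single wall crossing, follow the case analysis of \cite[Sections~6--9]{BM13}, and add the care needed for the singularities by always passing through the symplectic resolution. Fix a potential wall $\cW$ for $\vv=2\vv_p$ with associated hyperbolic lattice $\cH=\cH_\cW$ (which by the computation defining $\cH_\cW$ is the same for $\vv$ and for $\vv_p\in\cH$), and generic $\sigma_\pm$ on its two sides. By Proposition~\ref{prop:semiampleness} there are birational contractions $\pi_\pm\colon M_{\sigma_\pm}(\vv)\to\bar M_\pm$, and by Theorems~\ref{thm:1st-classification} and~\ref{thm:2nd-classification} the wall is of exactly one kind: not a wall or fake wall, flopping wall, divisorial wall, or totally semistable wall. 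In each case I will produce a derived (anti-)autoequivalence $\Phi$ of $\cD(X)$ such that the induced $\Phi_*$ is defined and an isomorphism on an open subset $U_+\subset M^{\mathrm{st}}_{\sigma_+}(\vv)$ with $\Phi_*(U_+)\subset M^{\mathrm{st}}_{\sigma_-}(\vv)$ and with complement of codimension $\geq 2$, and such that the hypothesis of Lemma~\ref{lem:stratum} holds for $\Phi$. Granting these two facts, Proposition~\ref{prop:codim-two} upgrades $\Phi_*$ to an isomorphism in codimension one, and since $\ns$, $\mov$ and $\bigcone$ only see codimension-one behaviour, all the asserted identifications follow.

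For the easy cases $\Phi=\mathrm{id}$ works. When $\cW$ is not a wall or is a fake wall, $M_{\sigma_0}(\vv)$ is (a translate of) a dense open subset of both $M_{\sigma_\pm}(\vv)$; when $\cW$ is a flopping wall, $\pi_\pm$ are small contractions, so by Proposition~\ref{prop:symp-divisor} the loci $S_i$ with $i\geq1$ have codimension $\geq2$ and $M_{\sigma_0}(\vv)$ is again a big open subset of each side. In all these situations $U_+:=M_{\sigma_0}(\vv)\cap M^{\mathrm{st}}_{\sigma_+}(\vv)$ does the job, and Lemma~\ref{lem:stratum} applies because $\cW$ is also a (non-totally-semistable) wall for $\vv_p$, so a generic $\sigma_+$-stable object of class $\vv$ (resp.\ $\vv_p$) stays $\sigma_0$-stable, hence $\sigma_-$-stable. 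For divisorial walls, Lemma~\ref{lem:dc0} shows (LGU) implies (BN), so every divisorial wall is of Brill--Noether type; following \cite[Section~8]{BM13} the relevant $\Phi$ is the spherical twist $\mathrm{ST}_S$ at a $\sigma_0$-stable spherical object $S$ whose class $\ss$ satisfies $(\ss,\vv)=0$ and which destabilises the generic object along the contracted divisor $D_+$. The input I must verify here, exactly as in \cite{BM13} but now on the singular spaces, is that $(\mathrm{ST}_S)_*$ carries the generic point of $D_+$ to the generic point of $D_-$ and is an isomorphism over the complement of a codimension-$2$ locus; since $\mathrm{ST}_S$ depends only on $\cH$, the corresponding statement for $\vv_p$ is the smooth case already treated in \cite[Section~8]{BM13}, and together these give $U_+$ and the hypothesis of Lemma~\ref{lem:stratum}.

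The substantial case is the totally semistable wall. As $\vv$ is non-primitive, Theorem~\ref{thm:1st-classification} leaves only (TS1): there is an effective spherical class $\ss\in\cH$ with $(\ss,\vv)<0$. Here I would run the lattice-theoretic reduction of \cite[Section~6]{BM13}: reflect $\vv$ in the boundary of the effective cone $C_\cW$ produced by such an $\ss$, realise this reflection by the corresponding spherical twist (or its inverse, or a composition with the derived dual) acting on $\cD(X)$, and iterate until one reaches a class $\vv'\in P_\cH$ for which $\cW$ is not totally semistable; composing those twists with the autoequivalence from the previous paragraph yields $\Phi$. The points new relative to \cite{BM13} are that a reflection in a spherical class is an isometry of $H^*_{\mathrm{alg}}(X,\bZ)$ preserving primitivity, so it sends $\vv=2\vv_p$ to a class of O'Grady type and each intermediate moduli space is again of O'Grady type, so Propositions~\ref{prop:semiampleness} and~\ref{prop:symp-divisor} and the stratification~\eqref{eqn:stratification} are available at every step; and that stratum-preservation of the composite is read off once more from Lemma~\ref{lem:stratum}, using that the same autoequivalence acts on $\vv$ and $\vv_p$. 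I expect this reduction — checking that the spherical-twist reflections behave on the singular moduli spaces exactly as in the smooth case, and in particular stay isomorphisms in codimension one off the contracted loci controlled by Proposition~\ref{prop:symp-divisor} — to be the main obstacle.

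Finally, for two arbitrary generic stability conditions $\sigma,\tau\in\stab(X)$, choose a path from $\sigma$ to $\tau$ transverse to the (locally finite) wall-and-chamber structure and avoiding all codimension-$2$ intersections of walls; it crosses finitely many walls. Composing the derived (anti-)autoequivalences and induced birational maps from the crossings produces a derived (anti-)autoequivalence $\Phi$ with $\Phi_*\colon M_\sigma(\vv)\dashrightarrow M_\tau(\vv)$. Each factor is stratum preserving and an isomorphism in codimension one by the above, and a composition of such maps between moduli spaces of O'Grady type is again stratum preserving (Lemma~\ref{lem:stratum}) and an isomorphism in codimension one — either by applying Proposition~\ref{prop:codim-two} to the composite, or by lifting through the symplectic resolutions (which are smooth and $K$-trivial) and invoking \cite[Proposition~21.6]{GHJ03}. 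The identification of N\'eron--Severi lattices, movable cones and big cones follows as before, completing the proof.
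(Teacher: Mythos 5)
Your overall architecture is the same as the paper's: classify the wall via Theorems \ref{thm:1st-classification} and \ref{thm:2nd-classification}, use the identity functor away from divisorial walls, spherical twists \`a la \cite[Section 8]{BM13} on divisorial walls, the composition of spherical twists of \cite[Proposition 6.8]{BM13} to pass from a non-minimal (totally semistable) class to the minimal one, and then Lemma \ref{lem:stratum} plus Proposition \ref{prop:codim-two} to get stratum preservation and control in codimension one; the general case of two generic $\sigma,\tau$ is a finite composition. That is exactly how the proof is organised in Section \ref{classificationproofs}. However, two of your shortcuts do not work as stated. First, in the flopping (and ``not a wall'') case you claim that smallness of $\pi_\pm$ together with Proposition \ref{prop:symp-divisor} already shows that the $\sigma_0$-stable locus has complement of codimension $\geq 2$ in $M_{\sigma_\pm}(\vv)$. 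Proposition \ref{prop:symp-divisor} only bounds the union of the \emph{positive-dimensional} fibres of $\pi_\pm$; it says nothing about a $\sigma_+$-stable, strictly $\sigma_0$-semistable object whose fibre is a point, and a priori such objects could sweep out a divisor. The actual content here is Lemma \ref{lem:dc1}, i.e.\ the dimension estimates of \cite[Lemma 7.2]{BM13} and \cite[Proposition 8.6, Steps 1--2]{BM13} carried out on the singular space, with Proposition \ref{prop:symp-divisor} substituting only for their Theorem 3.8 inside those arguments, not replacing them.

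Second, your ``easy case'' paragraph asserts that $\Phi=\mathrm{id}$ works for a fake wall and that $M_{\sigma_0}(\vv)$ is a dense open subset of both sides. A fake wall is by definition totally semistable, so there are \emph{no} $\sigma_0$-stable objects of class $\vv$ and the identity functor does not even induce a birational map; the paper's Lemma \ref{lem:sc02} instead produces an isomorphism $\Phi_-\circ\Phi_+^{-1}$ from the spherical-twist compositions attached to the minimal class. Your totally-semistable paragraph effectively repairs this (composing the reflections with the autoequivalence of the non-totally-semistable case, as in Lemmas \ref{lem:sc02} and \ref{lem:sc21}), but the claim as written is false and should be folded into that reduction. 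A smaller gloss: treating every divisorial wall as plain Brill--Noether with $\mathrm{ST}_S$ ``destabilising the generic object along $D_+$'' misdescribes the (LGU) case, where the HN filtration along the divisor contains no spherical factor (Remark \ref{LGUBNdiff}) and where, for the primitive class $\vv_p$, the wall is a totally semistable Hilbert--Chow wall; the stratum-preservation check there must follow the LGU/HC arguments of \cite{BM13} as in Lemmas \ref{lem:dc5} and \ref{lem:dc6}, not the BN one.
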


\begin{remark}
Let us take this opportunity to clarify why an autoequivalence $\Phi$ of $\cD(X)$, which sends every $E\in U_+\subset M_{\sigma_+}(\vv)$ to an object $\Phi(E)\in U_-\subset M_{\sigma_-}(\vv)$, induces a birational map $\Phi_*$ defined as a morphism on $U_+$. If we let $\mathfrak{M}_{\sigma_\pm}(\vv)$ denote the moduli stacks of $\sigma_\pm$-semistable objects then, by the GIT construction in \cite{BM12}, we have classifying morphisms $f_\pm:\mathfrak{M}_{\sigma_\pm}(\vv)\to M_{\sigma_\pm}(\vv)$ together with the fact that the moduli spaces $M_{\sigma_\pm}(\vv)$ universally corepresent the moduli stacks $\mathfrak{M}_{\sigma_\pm}(\vv)$. Now, the autoequivalence $\Phi$ defines a morphism from an open substack of $\mathfrak{M}_{\sigma_+}(\vv)$ to an open substack of $\mathfrak{M}_{\sigma_-}(\vv)$, which is in fact an isomorphism (because the functor $\Phi$ has an inverse). Note that these two open substacks are in fact preimages of open subschemes of the corresponding moduli spaces (along $f_+$ and $f_-$); the reason is that if $\Phi(E)$ is $\sigma_-$-semistable, then every object in the same $\sigma_+$-S-equivalence class of $E$ is also mapped to a $\sigma_-$-semistable object. Finally, the isomorphism between the two open substacks descends to an isomorphism between the two open subschemes by the universal corepresentability of $M_{\sigma_\pm}(\vv)$. In particular, if $U_\pm\subset M_{\sigma_\pm}(\vv)$ is an open subset then $U_\pm$ universally corepresents its preimage $f_\pm^{-1}(U_{\pm})$ since universal corepresentability is preserved under arbitrary base change; see \cite[Definition 2.2.1 \& Theorem 4.3.4]{HuLe10a} for more details.
\end{remark}

\begin{remark}
\label{rmk:unified-notation}
This theorem suggests that we can use the notation $\ns (M(\vv))$ for the N\'{e}ron-Severi lattice of the moduli space without specifying the generic stability condition (or rather its chamber). Similarly, we will also use the notations $\mov(M(\vv))$ and $\bigcone(M(\vv))$ for the movable cone and big cone of the moduli space associated to any chamber in $\stab(X)$. 
\end{remark}

The fact that all generic moduli spaces have canonically identified N\'{e}ron-Severi groups shows that, all chamberwise local \bmm maps have the same target and hence could possibly be glued together into a global \bmm map, which will reveal the power of wall crossings in the study of birational geometry of the moduli spaces; see Section \ref{globalbmmap}.

\section{Classification of Walls: Proofs}\label{classificationproofs}

In this section, we prove all the results which were stated in Section \ref{classificationresults}. First, we fix notation. Throughout, we set $\vv=m\vv_p \in H^*(X, \bZ)$ to be a Mukai vector, for some positive integer $m$, and $\vv_p$ a primitive vector with $\vv_p^2>0$. Although we will mainly focus on moduli spaces of O'Grady type, where $m=2$ and $\vv_p^2=2$, some of our results are actually true for arbitrary $\vv$. Let $\cH \subset H^*(X, \bZ)$ be a primitive hyperbolic rank two sublattice containing $\vv$, and $\cW \subset \stab(X)$ a potential wall associated to $\cH$. A generic stability condition on $\cW$ is denoted by $\sigma_0$, while generic stability conditions on the two sides of the wall are denoted by $\sigma_+$ and $\sigma_-$. The phase functions of the central charges of $\sigma_+, \sigma_0, \sigma_-$ are denoted by $\phi_+, \phi_0, \phi_-$ respectively. We also use $\pi_\pm: M_{\sigma_\pm}(\vv) \to \bar{M}_\pm$ for the morphism induced by $\ell_{\sigma_0}$ on the generic moduli spaces $M_{\sigma_\pm}(\vv)$. Finally, we point out that, in this section, for simplicity of notations, any morphism (or birational map) between two moduli spaces induced by a derived (anti-)autoequivalence $\Phi$ of $\cD(X)$ is still denoted by $\Phi$, by abuse of notation. 

We should point out that a large portion of our proofs are, in fact, already contained in \cite{BM13}. Therefore, to avoid repetition we will reference the results and proofs of \cite{BM13} freely and only point out where the differences are and what additional arguments (if any) need to be added. In particular, our focus will be on explaining the extra effort required to deal with the problems caused by the presence of the singular locus. Moreover, all results that we prove for $\sigma_+$ also hold for $\sigma_-$ with identical proof, which we will not explicitly mention in every statement. 

\subsection{Totally semistable walls}

In this subsection we prove the criterion stated in Theorem \ref{thm:1st-classification} for a wall to be totally semistable. We will also show that $\pi_\pm$ are always birational morphisms. We prove these results for arbitrary Mukai vectors $\vv$ with $\vv^2>0$. The notion of the minimal class in a $G_\cH$-orbit will be frequently used; for its definition, we refer the reader to \cite[Proposition and Definition 6.6]{BM13}. 

We start by proving some lemmas. The first two deal with non-minimal and minimal classes respectively, whilst the third one establishes a bridge between these two kinds of classes when they lie in the same $G_\cH$-orbit. This will be used later to deduce results for non-minimal classes from the existing results for minimal classes. After that, we will prove Theorem \ref{thm:1st-classification} and Proposition \ref{prop:semiampleness}.

\begin{lemma}
\label{lem:ts1}
If $\vv$ is non-minimal, then there is no $\sigma_0$-stable objects of class $\vv$, which means $\cW$ is a totally semistable wall. 
\end{lemma}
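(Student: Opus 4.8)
The statement concerns only the (non-)existence of $\sigma_0$-stable objects of a fixed class and says nothing about the geometry of the moduli space, so the argument of \cite{BM13} in the primitive case applies here unchanged — one only checks that primitivity of $\vv$ is never used — and I spell out the short reasoning. Write $\sigma_0 = (Z_0, \cP_0)$ for a generic stability condition on $\cW$. First I would recall from \cite[Proposition and Definition 6.6]{BM13} that $\vv$ fails to be minimal in its $G_\cH$-orbit precisely when there is an effective spherical class $\ss \in \cH$ with $(\ss, \vv) < 0$: a reflection $\rho_\ss(\vv) = \vv + (\ss,\vv)\,\ss$ in such a class strictly decreases $\re\bigl(Z_0(-)/Z_0(\vv)\bigr)$, since $\re\bigl(Z_0(\ss)/Z_0(\vv)\bigr) > 0$ by effectivity of $\ss$, and conversely the absence of such an $\ss$ makes $\vv$ a minimum. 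So, $\vv$ being non-minimal, fix an effective spherical class $\ss \in \cH$ with $(\ss, \vv) < 0$. Since $\ss$ lies in the effective cone $C_\cW$ there is a $\sigma_0$-semistable object $F$ with $v(F) = \ss$ (part of the machinery of \cite[\S6]{BM13}); as $\ss, \vv \in \cH$, their images under $Z_0$ lie on a common real line through the origin, and $\re\bigl(Z_0(\ss)/Z_0(\vv)\bigr) > 0$, so $Z_0(\ss)$ and $Z_0(\vv)$ are positive real multiples of one another, whence every $\sigma_0$-semistable object of class $\ss$ or of class $\vv$ has the same $\sigma_0$-phase $\varphi$. Passing to a Jordan--H\"older filtration of $F$ in the abelian category $\cP_0(\varphi)$, with $\sigma_0$-stable factors $S_1, \dots, S_n$ of phase $\varphi$ and $\sum_j v(S_j) = \ss$, the identity $\sum_j (v(S_j), \vv) = (\ss, \vv) < 0$ produces some factor $S := S_j$ that is $\sigma_0$-stable of phase $\varphi$ with $(v(S), \vv) < 0$.

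The rest is a Hom count, by contradiction. Suppose $E$ is $\sigma_0$-stable with $v(E) = \vv$, so $E \in \cP_0(\varphi)$ as well. By Riemann--Roch on the K3 surface, $\chi(S, E) = -(v(S), v(E)) = -(v(S), \vv) > 0$, where $(\,,\,)$ is the Mukai pairing. Since $S$ and $E$ lie in a common heart of $\sigma_0$ we have $\chi(S,E) = \hom(S,E) - \ext^1(S,E) + \ext^2(S,E)$, so $\hom(S,E) + \ext^2(S,E) > 0$, and Serre duality $\ext^2(S,E) = \hom(E,S)$ then gives a nonzero morphism $S \to E$ or $E \to S$. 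But $S$ and $E$ are $\sigma_0$-stable of the same phase $\varphi$, hence simple objects of the abelian category $\cP_0(\varphi)$, so any nonzero morphism between them is an isomorphism; this forces $v(S) = v(E) = \vv$, contradicting $(v(S), \vv) < 0 < \vv^2$. Hence no $\sigma_0$-stable object of class $\vv$ exists, which by definition means $\cW$ is a totally semistable wall for $\vv$.

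I do not anticipate a genuine obstacle, since this merely retraces \cite{BM13}; the two external inputs — the characterisation of minimality in \cite[Proposition and Definition 6.6]{BM13} and the existence of a $\sigma_0$-semistable object in every effective class — hold there for arbitrary $\vv$ with $\vv^2 > 0$. The only point requiring care is the phase bookkeeping, namely that $\vv$, $\ss$ and each $v(S_j)$ carry the \emph{same} $\sigma_0$-phase $\varphi$ rather than phases differing by an integer; this is exactly what effectivity of $\ss$, i.e. $\re\bigl(Z_0(\ss)/Z_0(\vv)\bigr) > 0$, guarantees.
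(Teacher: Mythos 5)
Your argument is correct and is essentially the same as the paper's: the paper simply cites the first statement of \cite[Proposition 6.8]{BM13}, remarking that its proof never uses primitivity of $\vv$, and what you have written out (an effective spherical class pairing negatively with $\vv$, a stable Jordan--H\"older factor $S$ of the same phase with $(v(S),\vv)<0$, and the $\chi(S,E)>0$ Hom/Serre-duality count forcing $S\cong E$) is precisely that argument. The only point to phrase carefully is that phases of semistable objects of a fixed class are determined modulo $2$, so one fixes representatives in $\cP_0(\varphi)$; this does not affect the Hom count.
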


\begin{proof}
This is the first statement in \cite[Proposition 6.8]{BM13}. Note that the proof there works for both primitive and non-primitive classes. 
\end{proof}

\begin{lemma}
\label{lem:ts2}
Let $\vv$ be a minimal, non-primitive class. Then there exist $\sigma_0$-stable objects of class $\vv$.
\end{lemma}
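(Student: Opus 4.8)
The plan is to adapt the proof of \cite[Proposition 6.8]{BM13}, the first statement of which appears as Lemma \ref{lem:ts1}. The point is that the part of that proposition producing $\sigma_0$-stable objects in a minimal $G_\cH$-orbit is essentially numerical, so it survives the passage to a non-primitive class, while the one feature of it that genuinely needs primitivity simply cannot occur here.

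Write $\vv=m\vv_p$ with $m\geq 2$ and $\vv_p$ primitive, and recall from \cite[Proposition and Definition 6.6]{BM13} the notion of the minimal class in a $G_\cH$-orbit. The core of \cite[Proposition 6.8]{BM13} is a reduction --- via a Jordan--H\"older analysis of the generic $\sigma_0$-semistable object of class $\vv$ together with a Fourier--Mukai transform --- of the existence of a $\sigma_0$-stable object to the non-emptiness of the stable locus of a (twisted) Gieseker moduli space $M_{Y,\alpha,H}(\vv')$, with $H$ generic with respect to $\vv'$ and $(\vv')^2=\vv^2>0$. This reduction involves only the rank two hyperbolic lattice $\cH$ and the pairings $(\vv,\uu)$ for $\uu\in\cH$, so it carries over verbatim to non-primitive $\vv$. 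The input it feeds into, namely that for $(\vv')^2>0$ the stable locus $M^{\mathrm{st}}_{Y,\alpha,H}(\vv')$ coincides with the smooth locus and is non-empty of dimension $(\vv')^2+2$, is Yoshioka's theorem \cite{Yos01} when $\vv'$ is primitive and follows from \cite{KaLeSo06a} together with the GIT and deformation theory of twisted sheaves of \cite{Lie07} when $\vv'$ is non-primitive --- exactly as in the argument recalled after Definition \ref{def:ogrady}.

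Finally I would pin down the unique place where \cite[Proposition 6.8]{BM13} uses primitivity: for a minimal class, $\cW$ is totally semistable only in the Hilbert--Chow situation, i.e.\ only when $\cH$ contains an isotropic class $\ww$ with $(\ww,\vv)=1$. But $(\ww,\vv)=m(\ww,\vv_p)\in m\bZ$ with $m\geq 2$, so no such $\ww$ exists; hence $\cW$ is never totally semistable for a minimal non-primitive $\vv$, which is the claim. The step I expect to be the main obstacle is making the Fourier--Mukai reduction rigorous while keeping the polarisation generic and the square of the Mukai vector positive --- the same ampleness and genericity subtleties met in Section \ref{mainpf}, to be dealt with by the lemmas proved there (Lemma \ref{lem:technical}, Proposition \ref{prop:deform-with-mukai}) or by importing the relevant steps of \cite[Sections 6--7]{BM13}.
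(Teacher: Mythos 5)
There is a genuine gap, and it sits exactly where the two cases of the paper's argument live. First, the reduction you attribute to \cite[Proposition 6.8]{BM13} is not what that proposition provides, and it cannot hold in the form you state: since $\sigma_0$ lies \emph{on} a wall for $\vv$, no Fourier--Mukai transform identifies $\sigma_0$-(semi)stability with twisted Gieseker stability for a polarisation $H$ that is \emph{generic} with respect to the transformed vector, so "non-emptiness of the stable locus of $M_{Y,\alpha,H}(\vv')$" is the wrong target. The primitive Hilbert--Chow situation is a counterexample to the proposed reduction: there the Gieseker stable locus (a Hilbert scheme) is non-empty, yet the wall is totally semistable, i.e.\ no $\sigma_0$-stable objects exist. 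What distinguishes the case relevant here, namely an isotropic $\ww$ with $(\ww,\vv_p)=1$ and hence $(\ww,\vv)=2$, is not non-emptiness of the Gieseker stable locus but the fact that a sheaf of rank $(\ww,\vv)\geq 2$ on the Gieseker side is generically locally free and slope-stable, and therefore remains $\sigma_0$-stable at the Li--Gieseker--Uhlenbeck wall; your proposal never supplies this local-freeness/$\mu$-stability step, which is the actual content needed. Second, your closing argument is circular: the assertion that a minimal class can be totally semistable only in the Hilbert--Chow situation is, for non-primitive $\vv$, precisely the necessity half of Theorem \ref{thm:1st-classification} that Lemma \ref{lem:ts2} is written to establish; \cite[Theorem 5.7]{BM13} gives that criterion only for primitive classes, so observing that $(\ww,\vv)\in 2\bZ$ rules out (TS2) does not by itself produce a $\sigma_0$-stable object of class $\vv$.

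For comparison, the paper argues by splitting on whether $\cH$ contains an isotropic class $\ww$ with $(\ww,\vv_p)=1$. If not, then \cite[Theorem 5.7]{BM13} applied to the minimal \emph{primitive} class $\vv_p$ gives $\sigma_0$-stable objects of class $\vv_p$, and one passes from $\vv_p$ to $\vv=2\vv_p$ by \cite[Lemma 2.16]{BM13} --- a step absent from your sketch. If such a $\ww$ exists, the wall is a totally semistable (Hilbert--Chow) wall for $\vv_p$, and one instead uses the Li--Gieseker--Uhlenbeck description together with $(\ww,\vv)=2>1$ to see that the generic object of class $\vv$ corresponds to a locally free slope-stable twisted sheaf and hence stays $\sigma_0$-stable. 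To repair your proposal you would need to replace the claimed "numerical reduction" by these two ingredients (or equivalents), since neither is implied by non-emptiness of a twisted Gieseker stable locus with generic polarisation.
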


\begin{proof}
We write $\vv=m\vv_p$ where $m>1$ and $\vv_p$ is a minimal primitive class. There are two cases to consider. 

If there is no isotropic class $\ww$ with $(\ww, \vv_p)=1$, then by \cite[Theorem 5.7]{BM13}, there exist $\sigma_0$-stable objects of class $\vv_p$. Therefore by \cite[Lemma 2.16]{BM13}, there exist $\sigma_0$-stable objects of class $\vv$. 

If instead there is an isotropic class $\ww$ with $(\ww, \vv_p)=1$, then by \cite[Proposition 8.2]{BM13} and the discussion above it (see also \cite{LQ11, Lo12}), $\ell_{\sigma_0}$ induces a Li-Gieseker-Uhlenbeck morphism. Since $(\ww, \vv)=m>1$, a generic stable object in $M_{\sigma_\pm}(\vv)$ corresponds to a locally free stable sheaf in a Gieseker moduli space, hence remains $\sigma_0$-stable. 
\end{proof}

\begin{lemma}
\label{lem:ts3}
Let $\vv$ be any class not satisfying (TS2), and $\vv_0$ be the minimal class in the $G_\cH$-orbit of $\vv$. Then there exists a derived autoequivalence $\Phi_+$ of $\cD(X)$ defined as a composition of spherical twists, such that for every $\sigma_0$-stable object $E$ of class $\vv_0$, $\Phi_+(E)$ is a $\sigma_+$-stable object of class $\vv$. Moreover, for any other $\sigma_0$-stable object $E'$ of class $\vv_0$, $\Phi_+(E)$ and $\Phi_+(E')$ are not S-equivalent with respect to $\sigma_0$.
\end{lemma}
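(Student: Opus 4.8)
The plan is to adapt the proof of the primitive case, namely \cite[Proposition~6.8]{BM13}: since that statement concerns only objects of $\cD(X)$ and their stability, it is insensitive to the singularities of the moduli spaces, so the argument should carry over with essentially one new ingredient, namely Lemma~\ref{lem:ts2} to cover a non-primitive minimal class.

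First I would set up the combinatorial reduction. Because $\vv$ and $\vv_0$ lie in the same $G_\cH$-orbit, reversing the standard reduction of $\vv$ to its minimal class (\cite[Section~6]{BM13}) produces a chain $\vv_0 \rightsquigarrow \vv_1 \rightsquigarrow \cdots \rightsquigarrow \vv_k=\vv$ with $\vv_i = R_{\ss_i}(\vv_{i-1})$ for effective spherical classes $\ss_1,\dots,\ss_k \in \cH$; since $G_\cH$ acts by lattice isometries it preserves divisibility, so this works verbatim for non-primitive $\vv$ (all the $\vv_i$ are then non-primitive of the same divisibility). For each $i$ there is a unique $\sigma_0$-stable spherical object $S_i$ of class $\ss_i$ (see \cite[Section~6]{BM13}); I would let $\Psi_i$ be either the spherical twist $\mathrm{ST}_{S_i}$ or its inverse $\mathrm{ST}_{S_i}^{-1}$, with the sign chosen as in \cite{BM13} so that $\Psi_i$ carries $\sigma_+$-stable objects of class $\vv_{i-1}$ to $\sigma_+$-stable objects of class $\vv_i$, and then set $\Phi_+ := \Psi_k\circ\cdots\circ\Psi_1$.

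Next I would prove, by induction on $k$, that $\Phi_+$ sends every $\sigma_0$-stable object $E$ of class $\vv_0$ to a $\sigma_+$-stable object of class $\vv$. For $k=0$, $\vv=\vv_0$ is minimal; one takes $\Phi_+=\id$, and then openness of stability (\cite[Theorem~4.2]{BM13}, for $\sigma_+$ sufficiently close to $\cW$) shows that any $\sigma_0$-stable object of class $\vv_0$ is already $\sigma_+$-stable — such objects existing, since minimality excludes (TS1) and the hypothesis excludes (TS2), by \cite[Theorem~5.7]{BM13} for primitive $\vv_0$ and by Lemma~\ref{lem:ts2} for non-primitive $\vv_0$. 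For the inductive step, the composition $\Phi'_+ := \Psi_{k-1}\circ\cdots\circ\Psi_1$ already sends each such $E$ to a $\sigma_+$-stable object of class $\vv_{k-1}$ by the inductive hypothesis (applied to $\vv_{k-1}$, which lies in the same $G_\cH$-orbit, is reached from $\vv_0$ in $k-1$ reflections, and again fails (TS2)), whence $\Phi_+(E)=\Psi_k(\Phi'_+(E))$ is $\sigma_+$-stable of class $\vv_k=\vv$ by the defining property of $\Psi_k$. The verification that a spherical twist at a $\sigma_0$-stable spherical object preserves $\sigma_+$-stability — valid for $\sigma_+$ near $\cW$, through the phase comparisons of \cite[Section~6]{BM13} — is the technical core of the argument, and I expect it to be the main obstacle; but since it only involves objects of $\cD(X)$ and never the (singular) geometry of $M_{\sigma_\pm}(\vv)$, it transfers without change, the sole addition being the use of Lemma~\ref{lem:ts2}.

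Finally I would establish the S-equivalence claim. As $\Phi_+$ is an autoequivalence, $E\not\cong E'$ forces $\Phi_+(E)\not\cong\Phi_+(E')$, so it remains to exclude that these two $\sigma_+$-stable objects of class $\vv$ are $\sigma_0$-S-equivalent. For this I would track the $\sigma_0$-Jordan--H\"older filtration through the construction, as in \cite[Section~6]{BM13}: each $\Psi_i$ places $\Psi_i(F)$ in an exact triangle built from $F$ and copies of $S_i$, so, since $E$ is $\sigma_0$-stable, an induction shows that the $\sigma_0$-Jordan--H\"older factors of $\Phi_+(E)$ are $E$ together with copies of the spherical objects $S_1,\dots,S_k$. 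In particular $E$ is the unique such factor of positive Mukai square (the $\ss_i$ being $(-2)$-classes), so the $\sigma_0$-S-equivalence class of $\Phi_+(E)$ recovers $E$ up to isomorphism; hence $\Phi_+(E)\sim_{\sigma_0}\Phi_+(E')$ would give $E\cong E'$, a contradiction. The one point requiring the same care as in \cite{BM13} is the phase bookkeeping ensuring that these are genuinely the Jordan--H\"older factors (i.e.\ that all share the common $\sigma_0$-phase forced by the wall, and that no further cancellation occurs).
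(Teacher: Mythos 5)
Your proposal is correct and follows essentially the same route as the paper: the paper's proof simply invokes Lemma \ref{lem:ts2} (resp.\ \cite[Theorem 5.7]{BM13}) for the existence of $\sigma_0$-stable objects of class $\vv_0$, cites \cite[Proposition 6.8]{BM13} for the composition of spherical twists $\Phi_+$, and cites the proof of \cite[Corollary 7.3]{BM13} for the S-equivalence statement — which is exactly the reduction chain, phase bookkeeping, and Jordan--H\"older-factor tracking you sketch. Your unwinding of those BM13 arguments (including the observation that $E$ is the unique Jordan--H\"older factor of positive square) matches what is implicitly being used, so there is no gap.
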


\begin{proof}
By \cite[Theorem 5.7]{BM13} (in primitive case) or Lemma \ref{lem:ts2} (in non-primitive case), there always exist $\sigma_0$-stable objects of class $\vv_0$. The first statement is contained in \cite[Proposition 6.8]{BM13}, and the second statement is contained in the proof of \cite[Corollary 7.3]{BM13}. 
\end{proof}

Now we are ready to prove the criterion for totally semistable walls, and show that the contraction induced by $\ell_{\sigma_0}$ on $M_{\sigma_\pm}(\vv)$ are always birational for an arbitrary Mukai vector $\vv$, which generalises Proposition \ref{old-semiampleness}.

\begin{proof}[Proof of Theorem \ref{thm:1st-classification}]
If $\vv$ is a primitive class, the statement is proved in \cite[Theorem 5.7]{BM13}. Now we assume $\vv$ is non-primitive. Note that in this case (TS2) cannot happen, hence we only need to prove that (TS1) is both sufficient and necessary for $\cW$ to be a totally semistable wall. The sufficiency is the content of Lemma \ref{lem:ts1}, and Lemma \ref{lem:ts2} proves the necessity by contradiction. 
\end{proof}

\begin{proof}[Proof of Proposition \ref{prop:semiampleness}]
If $\vv$ is a primitive class, the statement is \cite[Theorem 1.4(a)]{BM12}. Now we assume $\vv$ is non-primitive. Note that by \cite[Theorem 1.1]{BM12}, $\ell_{\sigma}$ is nef on $M_{\sigma_+}(\vv)$, and contracts curves which generically parametrise S-equivalent objects with respect to $\sigma_0$; see Proposition \ref{old-semiampleness}. Therefore, it suffices to show that there exists a dense open subset $U \subset M_{\sigma_0}(\vv)$ in which no curve is contracted by $\ell_{\sigma_0}$. If $\vv$ is minimal, we simply take $U$ to be the open subset of $\sigma_0$-stable objects. By Lemma \ref{lem:ts2}, $U$ is non-empty and hence dense in $M_{\sigma_0}(\vv)$. If $\vv$ is non-minimal, let $\vv_0$ be the corresponding minimal class with an open subset $U_0 \subset M_{\sigma_+}(\vv_0)$ of $\sigma_0$-stable objects. Then, by Lemma \ref{lem:ts3}, $U=\Phi(U_0)$ is non-empty and does not contain any curve which can be contracted by $\ell_{\sigma_0}$.
\end{proof}

By virtue of Proposition \ref{prop:semiampleness}, the contractions $\pi_\pm: M_{\sigma_\pm}(\vv) \to \bar{M}_\pm$ induced by $\ell_{\sigma_0}$ are birational morphisms, and we will deal with three mutually exclusive cases: a divisorial contraction, a small contraction, or no contraction at all, i.e. an isomorphism. We will prove the numerical criterion for each case, and show that $M_{\sigma_+}(\vv)$ and $M_{\sigma_-}(\vv)$ are always birational and isomorphic in codimension one. 

We also point out that from now on, we always restrict ourselves to the O'Grady situation. That is, $\vv=2\vv_p$ where $\vv_p$ is a primitive Mukai vector with $\vv_p^2=2$. We do this because many of the following proofs will rely heavily on the existence of a symplectic resolution. 

\subsection{Walls inducing divisorial contractions}\label{divcontractions}

In this subsection we prove Theorems \ref{thm:2nd-classification} and \ref{thm:identify-cones} in the case that $\ell_{\sigma_0}$ induces a divisorial contraction. We remind readers that the singular locus in the O'Grady moduli spaces has codimension two. Therefore, the contraction of a divisor is a phenomenon which happens in the smooth locus. This is the reason why most of the arguments in \cite{BM13} still apply in the O'Grady situation. 

This subsection consists mainly of a series of lemmas; each of which will become an integral part in the proof of Theorems \ref{thm:2nd-classification} and \ref{thm:identify-cones}. At the end of this subsection, we summarise all the results with Proposition \ref{prop:divisorial-contra}. We start with the following special feature of classes of O'Grady type, which will be used later to simplify a few proofs. 

\begin{lemma}
\label{lem:dc0}
Let $\vv$ be any class of O'Grady type. If (LGU) holds for $\vv$, then (BN) also holds for $\vv$. Moreover, $\vv$ is a minimal class, and we can choose the class $\ww$ in (LGU) so that $M_{\sigma_0}(\ww)=M_{\sigma_0}^{\mathrm{st}}(\ww)$.
\end{lemma}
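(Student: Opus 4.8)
The plan is to reduce the whole lemma to an explicit computation inside the rank two lattice $\cH$, which condition (LGU) pins down completely. If $\ww\in\cH$ is isotropic with $(\ww,\vv)=2$, then since $\vv=2\vv_p$ we get $(\ww,\vv_p)=1$, so the sublattice $\spanning_\bZ\{\vv_p,\ww\}\subseteq\cH$ has Gram matrix $\left(\begin{smallmatrix}2&1\\1&0\end{smallmatrix}\right)$ of determinant $-1$; being a unimodular sublattice of full rank in the non-degenerate lattice $\cH$, it must equal $\cH$, so $\cH\cong U$ with basis $\{\vv_p,\ww\}$. From here everything is arithmetic in $U$; choose coordinates with $\vv_p=(1,1)$, hence $\vv=(2,2)$. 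Setting $\ss:=\vv_p-2\ww\in\cH$ one computes $\ss^2=\vv_p^2-4(\vv_p,\ww)+4\ww^2=2-4=-2$ and $(\ss,\vv)=2\vv_p^2-4(\ww,\vv_p)=4-4=0$, which is precisely condition (BN). This gives the first assertion and the implication (LGU)$\Rightarrow$(BN).

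For minimality, the point is that in $\cH\cong U$ the only $(-2)$-classes are $\pm\ss=\pm(1,-1)$, and both are orthogonal to $\vv$. Hence no spherical class — effective or not — pairs negatively with $\vv$, so condition (TS1) fails for $\vv$; since $\vv$ is non-primitive, (TS2) cannot hold either, and Theorem \ref{thm:1st-classification} shows $\cW$ is not totally semistable for $\vv$. By Lemma \ref{lem:ts1} this forces $\vv$ to be minimal. (Alternatively one reads this off the characterisation of the minimal class in a $G_\cH$-orbit from \cite[Proposition and Definition 6.6]{BM13}: $\vv$ lies in the positive cone $P_\cH$ since $\vv^2=8>0$, and $(\vv,\pm\ss)=0\geq 0$, both of which are exactly the defining conditions.)

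It remains to arrange $M_{\sigma_0}(\ww)=M_{\sigma_0}^{\mathrm{st}}(\ww)$. First I would enumerate the isotropic classes with $(\ww,\vv)=2$: in $U$ with $\vv=(2,2)$ these are exactly the two primitive classes $(1,0)$ and $(0,1)$, and they are interchanged by the reflection in $\ss$. Because $(\ss,\vv)=0$, that reflection leaves the pairing with $\vv$ unchanged, so passing to the minimal representative $\ww_0$ of the $G_\cH$-orbit of $\ww$ still satisfies $(\ww_0,\vv)=2$. Finally I would show that this minimal isotropic class has no strictly $\sigma_0$-semistable representatives: a $\sigma_0$-semistable object of primitive isotropic class fails to be stable only if its class splits as a sum $\ww_0=\ww'+\ww''$ of non-zero classes of $\sigma_0$-stable objects of the same phase, and in a rank two lattice, with $\ww_0$ primitive isotropic, such a splitting is forced to involve a minimal primitive isotropic class together with the effective spherical class $\ss$ — and the explicit list above shows no such splitting exists for the minimal $\ww_0$. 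Equivalently, and more in the spirit of \cite{BM13}: since $(\ww_0,\vv_p)=1$ one is in the Li--Gieseker--Uhlenbeck situation for $\vv_p$, and the construction of \cite[Proposition 8.2]{BM13} (see also \cite{LQ11,Lo12}) realises $M_{\sigma_0}(\ww_0)$, for the appropriate representative in the orbit, as a \emph{fine} moduli space — a K3 surface whose points are (shifts of) structure sheaves of points on a twisted K3 — which are all stable.

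The routine part is the lattice arithmetic of the first two paragraphs. The main obstacle is the last step: identifying which of the two isotropic classes works and proving its moduli space contains no strictly semistable objects genuinely uses the Li--Gieseker--Uhlenbeck / Fourier--Mukai machinery of \cite[Section 8]{BM13} together with the description of which classes in $\cH$ support $\sigma_0$-stable objects of phase $\phi_0$, rather than pure lattice theory, so that is where I expect the real work to be.
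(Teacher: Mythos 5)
Your proposal is correct and follows essentially the same route as the paper: an elementary computation in the rank-two isotropic lattice (your unimodularity argument giving $\cH\cong U$ with $\ss=\vv_p-2\ww$ is just a repackaging of the paper's decomposition $\vv_p=\ww_0+\ww_1$ with $\ss=\pm(\ww_0-\ww_1)$), yielding (BN) and minimality from $(\ss,\vv)=0$, and then deferring — exactly as the paper does, which cites \cite[Proposition 6.3 and Lemma 8.1]{BM13} rather than Proposition 8.2 — to the Li--Gieseker--Uhlenbeck machinery for the fact that the distinguished isotropic class, which still pairs to $2$ with $\vv$, satisfies $M_{\sigma_0}(\ww)=M_{\sigma_0}^{\mathrm{st}}(\ww)$. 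The only loose point, your sketched direct argument that the minimal isotropic class admits no strictly semistable objects, is correctly flagged by you as the step requiring \cite{BM13}, so there is no genuine gap.
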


\begin{proof}
By (LGU) we have $(\ww, \vv_p)=1$ which implies $\ww$ is a primitive class. Using the notation in \cite[Lemma 8.1]{BM13}, we have either $(\ww_0, \vv_p)=1$ or $(\ww_1, \vv_p)=1$. Suppose $(\ww_0, \vv_p)=1$. Then we have $(\vv_p-\ww_0)^2=\vv_p^2-2=0$ which can only happen when $\vv_p-\ww_0$ is a multiple of $\ww_0$ or $\ww_1$. If it is a multiple of $\ww_0$, then so is $\vv_p$, which contradicts $\vv_p^2=2$. Thus, we have $\vv_p-\ww_0=k\ww_1$ for some non-negative integer $k$ and hence $2=\vv_p^2=(\ww_0+k\ww_1)^2=2k(\ww_0, \ww_1)$ which implies $k=1$ and $(\ww_0,\ww_1)=1$. Swapping subscripts shows that starting from $(\ww_1, \vv_p)=1$ also yields $\vv_p=\ww_0+\ww_1$ and $(\ww_0,\ww_1)=1$. Therefore, we always have $(\vv_p, \ww_0)=(\vv_p, \ww_1)=1$.

Now, the unique effective spherical class is given, up to sign, by $\ss = \ww_0 - \ww_1$. In particular, $(\ss,\vv)=2(\ss,\vv_p)=0$ and we see that (BN) must also hold. We also have $M_{\sigma_0}^{\mathrm{st}}(\ww_0)=M_{\sigma_0}(\ww_0)$ and $M_{\sigma_0}^{\mathrm{st}}(\ww_1)=\emptyset$, where $\ww_1=\ww_0+(\ss, \ww_0)\ss$, and so we can choose the class $\ww$ in (LGU) to be $\ww_0$; see \cite[Proposition 6.3 and Lemma 8.1]{BM13} for more details.
\end{proof}

\begin{remark}\label{LGUBNdiff}
In the case when (LGU), and hence (BN), is satisfied, the HN filtration along the divisor does not quite behave like the normal BN case. Indeed, if we look at the arguments of \cite[proof of Theorem 1.1, p.40]{BM13} then we see that the divisor of semistable objects is still the BN-divisor, but the HN filtration of the generic semistable object does not have the normal BN-form, i.e. it does not contain a spherical factor.
\end{remark}

\begin{lemma}
\label{lem:dc1}
Let $\vv$ be a minimal class of O'Grady type. If (BN) does not hold, then the set of $\sigma_0$-stable objects in $M_{\sigma_+}(\vv)$ has complement of codimension at least two.
\end{lemma}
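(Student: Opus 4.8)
The plan is to argue by contradiction, following the strategy of \cite{BM13} for the primitive case but using the symplectic resolution to control codimensions. Suppose the locus $Z \subset M_{\sigma_+}(\vv)$ of objects that become strictly $\sigma_0$-semistable has a component of codimension one, i.e. it contains a divisor $D$. Since $\vv$ is minimal and (TS2) cannot hold for a non-primitive class, Theorem \ref{thm:1st-classification} together with Lemma \ref{lem:ts2} tells us that $\cW$ is \emph{not} totally semistable, so the generic object of $M_{\sigma_+}(\vv)$ is $\sigma_0$-stable and $D$ is a genuine divisor inside the smooth locus (recall the singular locus of an O'Grady space has codimension two). By Proposition \ref{prop:semiampleness}, $\ell_{\sigma_0}$ contracts precisely the curves parametrising $S$-equivalent objects, and restricted to the smooth locus this is a divisorial contraction contracting $D$; by Proposition \ref{prop:symp-divisor} (applied either directly to $M_{\sigma_\pm}(\vv)$ or, after pulling back, to its symplectic resolution) the image $\pi_+(D)$ has codimension exactly two.

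The next step is to extract the numerical consequence. The generic point of $D$ corresponds to an object $E$ with a Harder--Narasimbran/Jordan--Hölder filtration in the rank-two lattice $\cH_\cW$; following \cite[Section 7--8]{BM13}, the generic fibre of $\pi_+$ over $\pi_+(D)$ is a projective space (or a union of such), and a dimension count — $\dim D = \dim M_{\sigma_+}(\vv) - 1$, $\dim \pi_+(D) = \dim M_{\sigma_+}(\vv) - 2$, so the generic fibre is one-dimensional, i.e. a $\bP^1$ — forces the Mukai vectors of the stable factors appearing in the generic JH filtration to satisfy a rigid numerical constraint. Concretely, writing the generic destabilising sequence in terms of classes in $\cH$ and using $\vv^2 > 0$, one finds that there must exist either a spherical class $\ss \in \cH$ with $(\ss,\vv)=0$ (this is the Brill--Noether pattern, coming from a spherical rigid factor) or an isotropic class $\ww$ with $(\ww,\vv)=2$ (the Li--Gieseker--Uhlenbeck pattern); this is exactly the dichotomy (BN)/(LGU). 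By Lemma \ref{lem:dc0}, (LGU) implies (BN), so in all cases (BN) holds, contradicting the hypothesis. Hence no such divisor $D$ exists, and the complement of $M_{\sigma_0}^{\mathrm{st}}(\vv)$ inside $M_{\sigma_+}(\vv)$ — which is contained in $Z$ together with the (codimension two) singular locus — has codimension at least two.

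The main obstacle I anticipate is making the dimension count in the second step rigorous in the singular setting: in \cite{BM13} one freely uses that $M_{\sigma_+}(\vv)$ is smooth holomorphic symplectic, so that contracted loci and their images have the expected even codimensions and the contracted fibres are $\bP^k$'s by Wierzba/Namikawa. Here we only have symplectic singularities, but since the divisor $D$ lies in the smooth locus and $\pi_+$ is an isomorphism over the complement of the strictly semistable locus downstairs as well, the relevant part of the contraction genuinely takes place between smooth symplectic varieties (or can be lifted to $\tilde M$ via $\pi$); Proposition \ref{prop:symp-divisor} is precisely the tool that licenses the codimension-two statement for $\pi_+(D)$. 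A secondary technical point is to verify that the JH factors of the generic object of $D$ are themselves $\sigma_0$-stable of classes lying in $\cH$ — this follows from the structure theory of totally semistable/non-totally-semistable walls in \cite[Section 6--7]{BM13}, which applies verbatim since it only uses lattice-theoretic input and the existence of stable objects guaranteed by Lemma \ref{lem:ts2}. Once these two points are in place, the numerical extraction is the same elementary computation in the rank-two hyperbolic lattice $\cH$ as in \cite{BM13}.
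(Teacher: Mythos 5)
Your overall strategy is the right one and matches the paper's in substance: the paper's proof simply runs the arguments of \cite[Lemma 7.2]{BM13} (when $\cH$ has no isotropic classes) and steps 1--2 of \cite[Proposition 8.6]{BM13} (in the isotropic case, after noting via Lemma \ref{lem:dc0} that (LGU) fails and that $(\ww,\vv)=1$ is impossible for non-primitive $\vv$), with the single substitution of Proposition \ref{prop:symp-divisor} for the smooth Namikawa--Wierzba input \cite[Theorem 3.8]{BM13}. You correctly identified all of these ingredients, including the fact that the would-be divisor lies generically in the smooth locus because the singular locus has codimension two.

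However, your own connecting argument has a genuine gap at the step ``$D$ is a divisor of strictly $\sigma_0$-semistable objects, hence $\ell_{\sigma_0}$ contracts $D$ divisorially, hence by Proposition \ref{prop:symp-divisor} the image has codimension two and the generic fibre is a $\bP^1$, which forces (BN) or (LGU).'' Being strictly $\sigma_0$-semistable does not by itself place an object on a contracted curve: Proposition \ref{prop:semiampleness} only contracts curves whose points are S-equivalent, and the S-equivalence class through a generic point of $D$ is positive-dimensional only when the relevant $\ext^1$'s between its Jordan--H\"older factors are large enough. A priori $\pi_+$ could be generically finite on $D$, in which case your contradiction never gets started (Proposition \ref{prop:symp-divisor} says nothing about zero-dimensional fibres), and the contradiction there must instead come from bounding $\dim\pi_+(D)$ by the dimension of the stratum of polystable objects in $M_{\sigma_0}(\vv)$ it lands in. This is exactly what the arguments of \cite[Lemma 7.2 and Proposition 8.6]{BM13} do: they run through \emph{all} possible decompositions of $\vv$ into classes of HN/JH factors in the rank two lattice $\cH$, estimate the dimension of each corresponding locus (using the fibre-dimension bound only where the fibres are positive-dimensional), and only then does the lattice arithmetic exclude codimension one unless (BN) holds (or (LGU), which Lemma \ref{lem:dc0} folds into (BN)). So the ``elementary computation in $\cH$'' you defer to is not a single $\bP^1$-fibration count over a codimension-two image; it is this full case analysis, and as written your sketch silently discards the branch in which no curve of $D$ is contracted.
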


\begin{proof}
If $\cH$ does not contain any isotropic vector, we follow the proof of \cite[Lemma 7.2]{BM13}. Otherwise, by Lemma \ref{lem:dc0}, (LGU) does not hold either. Moreover there is no isotropic class $\ww$ with $(\ww, \vv)=1$ since $\vv$ is non-primitive. Therefore, we can follow steps 1 and 2 of the proof of \cite[Proposition 8.6]{BM13} to get the conclusion. The only change that we have to make in both proofs is that the application of their Theorem 3.8 has to be replaced by our Proposition \ref{prop:symp-divisor}.
\end{proof}

\begin{lemma}
\label{lem:dc2}
Let $\vv$ be a non-minimal class of O'Grady type. If (BN) does not hold, then there is an open subset of $M_{\sigma_+}(\vv)$ with complement of codimension at least two, on which no curve is contracted by $\ell_{\sigma_0}$. 
\end{lemma}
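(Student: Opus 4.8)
The statement is the non-minimal analogue of Lemma \ref{lem:dc1}, so the natural approach is to reduce to the minimal case via the autoequivalence supplied by Lemma \ref{lem:ts3}. First I would let $\vv_0$ be the minimal class in the $G_\cH$-orbit of $\vv$. Since $\vv$ is non-primitive, (TS2) never holds, so Lemma \ref{lem:ts3} applies and gives a derived autoequivalence $\Phi_+$, a composition of spherical twists, carrying every $\sigma_0$-stable object of class $\vv_0$ to a $\sigma_+$-stable object of class $\vv$, with distinct $\sigma_0$-S-equivalence classes going to distinct $\sigma_0$-S-equivalence classes. The key point is that the condition (BN) is intrinsic to the hyperbolic lattice $\cH$ (it only asks for a spherical class $\ss \in \cH$ with $(\ss,\vv)=0$, equivalently $(\ss,\vv_p)=0$), and the $G_\cH$-action preserves the Mukai pairing on $\cH$; hence (BN) fails for $\vv$ if and only if it fails for $\vv_0$. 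Likewise, because $\vv_0$ and $\vv$ have the same divisibility, $\vv_0$ is also of O'Grady type.

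Next I would invoke Lemma \ref{lem:dc1} for the minimal class $\vv_0$: the set $U_0 \subset M_{\sigma_+}(\vv_0)$ of $\sigma_0$-stable objects has complement of codimension at least two. (Here one should be slightly careful: Lemma \ref{lem:dc1} is stated for $M_{\sigma_+}$, which is fine since $\vv_0$ being minimal means the relevant moduli space already carries $\sigma_0$-stable objects by Lemma \ref{lem:ts2}, and the codimension-two statement is about the complement of the $\sigma_0$-stable locus inside it.) Now set $U := \Phi_+(U_0)$. By Lemma \ref{lem:ts3}, every object in $U$ is $\sigma_+$-stable of class $\vv$, and no two of them are $\sigma_0$-S-equivalent, so in particular $\ell_{\sigma_0}$ does not contract any curve inside $U$ — a curve contracted by $\ell_{\sigma_0}$ must generically parametrise objects that are $\sigma_0$-S-equivalent to one another, by Proposition \ref{prop:semiampleness}.

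The remaining task — and the main technical obstacle — is to show that $U$ has complement of codimension at least two in $M_{\sigma_+}(\vv)$. The autoequivalence $\Phi_+$ induces only a birational map $M_{\sigma_+}(\vv_0) \dashrightarrow M_{\sigma_+}(\vv)$, not an isomorphism, so codimension bounds do not transport for free; this is exactly the kind of issue that Proposition \ref{prop:codim-two} was designed to handle. The plan is to check that $\Phi_+$ induces a \emph{stratum preserving} birational map (using Lemma \ref{lem:stratum}: one needs a single $\sigma_0$-stable, hence $\sigma_+$-stable, object of class $\vv_{0}$ whose image under $\Phi_+$ is $\sigma_+$-stable of class $\vv$, together with the analogous statement for the primitive parts $\vv_{0,p}$ and $\vv_p$, which follows because $\Phi_+$ is the same composition of spherical twists applied at the primitive level), and that on a suitable open subset it restricts to an isomorphism onto its image, at which point Proposition \ref{prop:codim-two} gives that the image of a codimension-$\geq 2$ complement is again of codimension $\geq 2$. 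Assembling this: starting from $U_0$ with codimension-$\geq 2$ complement inside the stable locus of $M_{\sigma_+}(\vv_0)$, Proposition \ref{prop:codim-two} yields that $U = \Phi_+(U_0)$ has codimension-$\geq 2$ complement in $M^{\mathrm{st}}_{\sigma_+}(\vv)$, and since the singular locus of $M_{\sigma_+}(\vv)$ itself has codimension two (O'Grady type), $U$ has codimension-$\geq 2$ complement in all of $M_{\sigma_+}(\vv)$. I expect the delicate part to be verifying the hypotheses of Proposition \ref{prop:codim-two} carefully — namely that $\Phi_+$ restricts to an \emph{injective morphism} on an open subset of the stable locus with small complement — rather than merely a rational map, which should follow from the openness of stability (\cite[Theorem 4.2]{BM13}) together with the fact that $\Phi_+^{-1}$ provides an inverse wherever both sides land in stable loci, exactly as in the proof of Proposition \ref{prop:codim-two} itself.
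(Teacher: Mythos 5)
Your strategy is exactly the paper's: pass to the minimal class $\vv_0$, note that (BN) fails for $\vv_0$ (the $G_\cH$-action is by isometries preserving spherical classes) and that $\vv_0$ is again of O'Grady type, apply Lemma \ref{lem:dc1} to get $U_0$, use Lemma \ref{lem:ts3} to see that no curve in $U:=\Phi_+(U_0)$ is contracted by $\ell_{\sigma_0}$, and then transport the codimension bound through Lemma \ref{lem:stratum} and Proposition \ref{prop:codim-two}. All of that is sound and matches the paper's proof.

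The genuine gap is in how you verify the hypothesis of Lemma \ref{lem:stratum} at the primitive level. You claim the ``analogous statement for the primitive parts $\vv_{0,p}$ and $\vv_p$'' follows ``because $\Phi_+$ is the same composition of spherical twists applied at the primitive level'', but that only says $\Phi_+$ sends the cohomology class $\vv_{0,p}$ to $\vv_p$; it does not produce a $\sigma_+$-stable object of class $\vv_{0,p}$ whose image is $\sigma_+$-stable of class $\vv_p$. Lemma \ref{lem:ts3} gives stability of images only for objects that are $\sigma_0$-stable, so you must first know that $\cW$ is not a totally semistable wall for $\vv_{0,p}$, i.e.\ that $\sigma_0$-stable objects of class $\vv_{0,p}$ exist (and also that (TS2) fails for $\vv_p$, so that Lemma \ref{lem:ts3} applies to the primitive class at all). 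Both points rest on the failure of (LGU), which is exactly where the paper invokes Lemma \ref{lem:dc0}: since (BN) fails, (LGU) must fail; then minimality of $\vv_0$ rules out (TS1) for $\vv_{0,p}$ and failure of (LGU) rules out (TS2), so Theorem \ref{thm:1st-classification} shows $\cW$ is not totally semistable for $\vv_{0,p}$. With this step inserted, $\Phi_+$ is stratum preserving by Lemma \ref{lem:stratum}, and your application of Proposition \ref{prop:codim-two} (together with the codimension-two singular locus) completes the proof as in the paper.
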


\begin{proof}
We write $\vv_0$ for the minimal class in the $G_\cH$-orbit containing $\vv$, then (BN) does not hold for $\vv_0$. By Lemma \ref{lem:dc1}, there exists an open subset $U_0 \subset M_{\sigma_+}(\vv_0)$ with complement of codimension at least two, which parametrises objects remaining stable under $\sigma_0$. By Lemma \ref{lem:ts3}, there is a derived autoequivalence $\Phi_+$ of $\cD(X)$ which induces an injective morphism $\Phi_+: U_0 \to M_{\sigma_+}^{\mathrm{st}}(\vv)$, and the image $\Phi_+(U_0)$ does not contain any curve that generically parametrises S-equivalent objects under $\sigma_0$. Therefore, no curve in $\Phi_+(U_0)$ will be contracted by $\ell_{\sigma_0}$. It remains to show $\Phi_+(U_0)$ has complement of codimension two in $M_{\sigma_+}(\vv)$. 

Since $\vv_0$ is a minimal class and (LGU) does not hold (by Lemma \ref{lem:dc0}), Theorem \ref{thm:1st-classification} tells us that $\cW$ is not a totally semistable wall for $\vv_{0,p}$, i.e. the effective primitive class proportional to $\vv_0$. By Lemma \ref{lem:ts3}, the same derived auto-equivalence $\Phi_+$ takes every $\sigma_0$-stable object of class $\vv_{0,p}$ to a $\sigma_+$-stable object of class $\vv_p$, i.e. the effective primitive class proportional to $\vv$. Now by Lemma \ref{lem:stratum}, $\Phi_+: M_{\sigma_+}(\vv_0) \dashrightarrow M_{\sigma_+}(\vv)$ is a stratum preserving birational map and so we can apply Proposition \ref{prop:codim-two} to conclude that $\Phi_+(U_0)$ has complement of codimension two in $M_{\sigma_+}(\vv)$. 
\end{proof}

\begin{lemma}
\label{lem:dc3}
Let $\vv$ be any class of O'Grady type and assume that (BN) holds for $\vv$ while (LGU) does not hold for $\vv$. Then $\pi_+$ is a divisorial contraction of Brill-Noether type. 
\end{lemma}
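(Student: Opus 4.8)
The plan is to adapt the treatment of Brill--Noether divisorial walls from \cite[Section~6 and proof of Theorem~1.1]{BM13}. The guiding observation, already exploited in Lemmas \ref{lem:dc1} and \ref{lem:dc2}, is that a divisorial contraction of an O'Grady space is a phenomenon of the smooth locus $M_{\sigma_\pm}^{\mathrm{st}}(\vv)$, so that the codimension two singular locus plays no role; the only point at which a smoothness-based argument of \cite{BM13} must be modified is the dimension bound on contracted loci, where \cite[Theorem~3.8]{BM13} is replaced by our Proposition \ref{prop:symp-divisor}. I would first reduce to a minimal class: if $\vv$ is non-minimal, let $\vv_0$ be the minimal class in its $G_\cH$-orbit, so that (LGU) still fails for $\vv_0$ (Lemma \ref{lem:dc0}) while (BN) is preserved because $G_\cH$ acts by isometries. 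By Lemma \ref{lem:ts3} a composition of spherical twists $\Phi_+$ carries $\sigma_0$-stable objects of class $\vv_0$ to $\sigma_+$-stable objects of class $\vv$, and by Lemma \ref{lem:stratum} together with Proposition \ref{prop:codim-two} the induced birational map $M_{\sigma_+}(\vv_0)\dashrightarrow M_{\sigma_+}(\vv)$ is stratum preserving and an isomorphism in codimension one, intertwining the contractions induced by $\ell_{\sigma_0}$ on the two sides; hence $\pi_+$ for $\vv$ has the same contraction type as for $\vv_0$, and we may assume $\vv$ minimal.

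Next I would pin down the hyperbolic lattice. Let $\ss\in\cH$ be the spherical class provided by (BN), so $\ss^2=-2$ and $(\ss,\vv)=0$. Primitivity of $\cH$ forces $\vv_p\in\cH$, hence $\cH\supseteq\langle\vv_p,\ss\rangle$, which has Gram matrix $\mathrm{diag}(2,-2)$; its only non-trivial even overlattice is the hyperbolic plane, and that contains an isotropic class pairing to $2$ with $\vv$, i.e. it realises (LGU), which is excluded. Therefore $\cH=\langle\vv_p,\ss\rangle$; in particular $\pm\ss$ is the only spherical class, and $\vv-\ss$ is \emph{primitive} with $(\vv-\ss)^2=\vv^2-2=6>0$. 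Since $(\ss,\vv)=0$ and $\vv$ is non-primitive, neither (TS1) nor (TS2) holds for $\vv$, so by Theorem \ref{thm:1st-classification} the wall is not totally semistable and $M_{\sigma_0}^{\mathrm{st}}(\vv)$ is dense in $M_{\sigma_+}(\vv)$; a similar lattice check shows $\cW$ is not totally semistable for the positive primitive class $\vv-\ss$ either, so $M_{\sigma_0}^{\mathrm{st}}(\vv-\ss)$ is a non-empty smooth projective holomorphic symplectic manifold of dimension $(\vv-\ss)^2+2=8$. Finally, choosing the sign of $\ss$ so that it is effective, $\ss$ is represented by a spherical $\sigma_0$-stable object $S$, unique up to isomorphism.

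The heart of the proof is the description and contraction of the Brill--Noether divisor. Following \cite[proof of Theorem~1.1]{BM13}, I would let $D\subset M_{\sigma_+}(\vv)$ be the closure of the locus of $\sigma_+$-stable objects $E$ with $\Hom(S,E)\neq 0$ (or $\Hom(E,S)\neq 0$, according to which of $S,F$ has the smaller $\sigma_+$-phase), and show, as in \cite{BM13} but now entirely within the smooth locus, that a generic point of $D$ sits in a non-split short exact sequence $0\to S\to E\to F\to 0$ with $F$ a $\sigma_0$-stable object of class $\vv-\ss$, and that this is the unique $\sigma_0$-semistabilising filtration. Since $\chi(F,S)=-(\vv-\ss,\ss)=-2$ and $\hom(F,S)=\ext^2(F,S)=0$ by stability, we get $\ext^1(F,S)=2$, so over each $F\in M_{\sigma_0}^{\mathrm{st}}(\vv-\ss)$ the extensions form a $\bP^1=\bP(\Ext^1(F,S))$; hence $\dim D=8+1=9=\dim M_{\sigma_+}(\vv)-1$ and $D$ is a divisor. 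All objects in a single such $\bP^1$ are $\sigma_0$-S-equivalent to $F\oplus S$, so $\pi_+$ contracts each of these rational curves, and $\pi_+(D)$ is the image of $M_{\sigma_0}^{\mathrm{st}}(\vv-\ss)$ in $M_{\sigma_0}(\vv)$, of codimension two --- in agreement with Proposition \ref{prop:symp-divisor}. Away from $D$ the generic $\sigma_+$-stable object remains $\sigma_0$-stable, so $\pi_+$ is an isomorphism there; therefore $\pi_+$ is a divisorial contraction with exceptional divisor $D$, and by the very description of $D$ it is of Brill--Noether type.

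The step I expect to be the main obstacle is the codimension bookkeeping underlying the previous paragraph: verifying that $D$ has codimension \emph{exactly} one and that every deeper stratum of $\sigma_+$-stable objects becoming strictly $\sigma_0$-semistable (objects with a third Jordan--H\"older factor, with an isotropic factor, or decomposing through $\vv=\aa+\bb$ with $\aa,\bb$ positive) has codimension at least two. In \cite{BM13} this is controlled using the holomorphic symplectic structure of the moduli space; here the same estimates apply verbatim on $M_{\sigma_\pm}^{\mathrm{st}}(\vv)$, the codimension two singular locus being irrelevant to the divisorial picture, and Proposition \ref{prop:symp-divisor} supplies the one dimension bound for which smoothness was genuinely used in \cite{BM13}.
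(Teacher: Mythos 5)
Your proposal is correct in substance but takes a genuinely different route from the paper, whose entire proof is a case split on whether $\cH$ is isotropic together with a wholesale citation of the divisor constructions in \cite[Lemmas 7.4, 7.5 and 8.8]{BM13}. You instead pin the lattice down completely and rebuild the Brill--Noether divisor by hand, and your lattice computation is a nice by-product: since $(\ss,\vv_p)=0$ and $\vv_p^2=2$, the class $\vv_p+\ss$ is already isotropic, so under (BN) the lattice $\cH$ is \emph{always} isotropic (the paper's first case is in fact vacuous for O'Grady classes), and excluding (LGU) forces $\cH=\bZ\vv_p\oplus\bZ\ss$ with Gram matrix $\mathrm{diag}(2,-2)$, whose only spherical classes are $\pm\ss$; in particular the reflection fixes $\vv$, so $\vv$ is automatically minimal and your opening reduction via spherical twists (and the paper's appeal to \cite[Lemma 7.5]{BM13} for non-minimal classes) is never needed. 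Your explicit picture --- the exceptional locus as the $\bP(\Ext^1(F,S))\cong\bP^1$-bundle over the eight-dimensional $M^{\mathrm{st}}_{\sigma_0}(\vv-\ss)$, contracted by $\pi_+$ onto the codimension-two locus of polystable classes $S\oplus F$ --- agrees with what \cite[Lemma 8.8]{BM13} supplies and records more information than the paper does. What the paper's route buys is that the genuinely delicate points (the generic extension is $\sigma_+$-stable, it gives the unique $\sigma_0$-semistabilising filtration, and all deeper strata of $\sigma_+$-stable objects becoming strictly $\sigma_0$-semistable have codimension at least two) are inherited from BM13's proved lemmas, whereas you only assert them ``as in BM13''; since the paper defers to exactly the same source, this is a difference of packaging rather than a gap. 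Two small inaccuracies to fix: $M^{\mathrm{st}}_{\sigma_0}(\vv-\ss)$ need not be projective (strictly $\sigma_0$-semistable objects of class $\vv-\ss$ exist via the decomposition $\vv_p+(\vv_p-\ss)$), though only its dimension $8$ is used; and minimality of $\vv$ should simply be read off from your own lattice analysis rather than handled by the (vacuous) non-minimal reduction.
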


\begin{proof}
We first consider the case that $\cH$ contains no isotropic vectors. The divisor contracted by $\pi_+$ is constructed in \cite[Lemma 7.4]{BM13} when $\vv$ is a minimal class, and in \cite[Lemma 7.5]{BM13} when $\vv$ is a non-minimal class. Next, we consider the case that $\cH$ is isotropic. By \cite[Proposition 6.3]{BM13}, there is only one effective spherical class in $\cH$, hence $\vv$ is necessarily minimal. The divisor contracted by $\pi_+$ is constructed in \cite[Lemma 8.8]{BM13}, which also shows the contraction has Brill-Noether type.
\end{proof}

\begin{lemma}
\label{lem:dc4}
Let $\vv$ be any class of O'Grady type and assume that (BN) holds for $\vv$ while (LGU) does not hold. Then there is a derived (anti-)autoequivalence $\Phi$ inducing a stratum preserving birational map $\Phi: M_{\sigma_+}(\vv) \dashrightarrow M_{\sigma_-}(\vv)$, which is an isomorphism in codimension one. 
\end{lemma}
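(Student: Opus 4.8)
The plan is to follow the strategy of the corresponding step in \cite{BM13}, namely the proof of \cite[Theorem 1.1]{BM13} in the Brill--Noether (non-isotropic and isotropic) cases, and to upgrade it to the O'Grady setting using the tools developed in Section~\ref{singmodsp}. First I would produce the derived (anti-)autoequivalence $\Phi$: when $\cH$ contains no isotropic vector, take $\Phi$ to be the spherical twist (or its inverse) around the unique spherical object $S$ on the wall, possibly composed with the dual functor $\mathbf{R}\shom(-,\cO_X)$, exactly as in \cite[Section 7]{BM13}; when $\cH$ is isotropic, $\vv$ is forced to be minimal by Lemma~\ref{lem:dc0}, and one uses instead the autoequivalence associated to the Brill--Noether divisor constructed in \cite[Lemma 8.8]{BM13}. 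In either case the Mukai vector is preserved (an (anti-)autoequivalence acts on $H^*_{\mathrm{alg}}(X,\bZ)$ by an isometry fixing, up to sign, $\vv$ on the wall), so $\Phi$ sends $\sigma_+$-semistable objects of class $\vv$ near the wall to $\sigma_-$-semistable objects of class $\vv$, hence induces a birational map $\Phi_*\colon M_{\sigma_+}(\vv)\dashrightarrow M_{\sigma_-}(\vv)$ by the corepresentability discussion recalled in the remark after Theorem~\ref{thm:identify-cones}.

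Next I would check that $\Phi_*$ is stratum preserving, which is exactly what Lemma~\ref{lem:stratum} is designed for: it suffices to exhibit one $\sigma_+$-stable object $E$ of class $\vv$ and one $\sigma_+$-stable object $E_p$ of class $\vv_p$ such that $\Phi(E)$ and $\Phi(E_p)$ are $\sigma_-$-stable of classes $\vv$ and $\vv_p$ respectively. For the $\vv_p$-part, since (LGU) fails, $\cW$ is not a totally semistable wall for $\vv_p$ (using Theorem~\ref{thm:1st-classification} and Lemma~\ref{lem:dc0}), so a generic object of $M_{\sigma_0}(\vv_p)$ is $\sigma_0$-stable and its image under $\Phi$ is again $\sigma_0$-stable (spherical twists around objects on the wall preserve $\sigma_0$-stability of generic vectors; cf. \cite[Proposition 6.8]{BM13}), hence $\sigma_-$-stable. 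For the $\vv$-part, by Lemma~\ref{lem:dc1} (minimal case) or Lemma~\ref{lem:dc2} (non-minimal case) the locus of $\sigma_0$-stable objects inside $M_{\sigma_+}(\vv)$ is non-empty with complement of codimension $\geq 2$, and $\Phi$ maps its generic point to a $\sigma_0$-, hence $\sigma_-$-, stable object of class $\vv$. This gives the hypotheses of Lemma~\ref{lem:stratum} and hence stratum-preservation.

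Then I would establish the isomorphism-in-codimension-one statement. Here I take $U_+\subset M_{\sigma_+}^{\mathrm{st}}(\vv)$ to be the open locus of objects $E$ which remain $\sigma_0$-stable and for which $\Phi(E)$ is again $\sigma_0$-stable; by the previous paragraph (Lemma~\ref{lem:dc1}/\ref{lem:dc2}) and the analogous statement on the $\sigma_-$-side, $U_+$ has complement of codimension $\geq 2$ in $M_{\sigma_+}^{\mathrm{st}}(\vv)$, and $\Phi_*|_{U_+}$ is an injective morphism into $M_{\sigma_-}^{\mathrm{st}}(\vv)$ (injectivity because $\Phi$ has an inverse functor and $\sigma_0$-stable objects have no nontrivial S-equivalences). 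Now I invoke Proposition~\ref{prop:codim-two}: since $\Phi_*$ is a stratum preserving birational map induced by a derived (anti-)equivalence and $\Phi_*|_{U_+}$ is an injective morphism with source of codimension-$\geq 2$ complement, the image $\Phi_*(U_+)$ has complement of codimension $\geq 2$ in $M_{\sigma_-}^{\mathrm{st}}(\vv)$. Setting $U_-:=\Phi_*(U_+)$, the inverse functor $\Phi^{-1}$ provides an inverse morphism on $U_-$, so $\Phi_*\colon U_+\to U_-$ is an isomorphism, and since the stable loci are the complements of codimension-$2$ strata in the full moduli spaces, $U_\pm$ have complements of codimension $\geq 2$ in $M_{\sigma_\pm}(\vv)$. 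The identification of N\'eron--Severi lattices, movable cones and big cones then follows formally, since an isomorphism in codimension one induces an isomorphism on divisor class groups preserving effectivity, movability and bigness.

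The main obstacle is the codimension-two control: in the smooth case of \cite{BM13} one uses \cite[Proposition 21.6]{GHJ03} together with the fact that a birational map of $K$-trivial smooth varieties is automatically an isomorphism in codimension one, which fails here because $M_{\sigma_\pm}(\vv)$ is singular along a codimension-two stratum. This is precisely why we need the machinery of stratum preserving maps and Proposition~\ref{prop:codim-two} (whose proof lifts the situation to the symplectic resolutions, where \cite[Proposition 21.6]{GHJ03} does apply); and it is also why the codimension-$\geq 2$ statements of Lemmas~\ref{lem:dc1} and \ref{lem:dc2} — proved using Proposition~\ref{prop:symp-divisor} in place of \cite[Theorem 3.8]{BM13} — are essential inputs rather than routine. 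A secondary subtlety, in the isotropic case, is to make sure the relevant Brill--Noether autoequivalence is genuinely defined on an open substack whose complement has codimension $\geq 2$ in the moduli space and not merely on the strictly smaller stable locus; this is handled by the explicit description in \cite[Lemma 8.8]{BM13} combined, again, with Proposition~\ref{prop:symp-divisor}.
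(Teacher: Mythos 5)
Your argument has a genuine gap at its core: the codimension-two input you rely on is not available under the hypotheses of this lemma. You invoke Lemmas \ref{lem:dc1} and \ref{lem:dc2} to claim that the locus of $\sigma_0$-stable objects in $M_{\sigma_+}(\vv)$ has complement of codimension at least two, but both of those lemmas assume that (BN) does \emph{not} hold, whereas here (BN) holds by hypothesis. Worse, the statement itself is false in the present situation: since (BN) holds, $\pi_+$ is a divisorial contraction (Lemma \ref{lem:dc3}), and the contracted divisor generically parametrises objects that are strictly $\sigma_0$-semistable (S-equivalence classes of $\sigma_0$-stable objects are singletons, so no curve through $\sigma_0$-stable points can be contracted). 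Hence your open set $U_+$ of objects remaining $\sigma_0$-stable has complement of codimension \emph{one}, and the appeal to Proposition \ref{prop:codim-two} collapses; an identification away from the BN divisor can never yield an isomorphism in codimension one. The correct mechanism, which is what the paper uses, is that the spherical twist from the Brill--Noether case of \cite[Theorem 1.1(b)]{BM13} acts nontrivially \emph{along} the divisor and matches its generic points on the two sides of the wall; the isomorphism in codimension one is imported from that proof rather than deduced from a common $\sigma_0$-stable locus.

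A second, related problem is your treatment of the non-minimal case and of stratum preservation. If $\vv$ is non-minimal there is an effective spherical class $\ss$ with $(\ss,\vv)<0$, hence $(\ss,\vv_p)<0$, so $\cW$ \emph{is} totally semistable for $\vv_p$ (Theorem \ref{thm:1st-classification}); your claim that failure of (LGU) rules this out only uses the isotropic half of the criterion and is valid only when $\vv$ is minimal. In the non-minimal case a single spherical twist around the BN object does not relate $M_{\sigma_+}(\vv)$ and $M_{\sigma_-}(\vv)$; the paper instead reduces to the minimal class $\vv_0$, using \cite[Lemma 7.5 and Proposition 6.8]{BM13} to produce stratum preserving maps $\Phi_\pm\colon M_{\sigma_\pm}(\vv_0)\dashrightarrow M_{\sigma_\pm}(\vv)$ that are isomorphisms in codimension one (here Proposition \ref{prop:codim-two} is applied, with the codimension-two open set supplied by \cite[Lemma 7.5]{BM13}, not by Lemmas \ref{lem:dc1}--\ref{lem:dc2}), and then takes the composition $\Phi_-\circ\Phi_0\circ\Phi_+^{-1}$. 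Finally, a minor point: minimality of $\vv$ in the isotropic case follows from \cite[Proposition 6.3]{BM13} (uniqueness of the effective spherical class), not from Lemma \ref{lem:dc0}, whose hypothesis (LGU) is excluded here.
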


\begin{proof}
By Lemma \ref{lem:dc3} we know that $\pi_+$ is a contraction of Brill-Noether type under the given assumption. If $\vv$ is a minimal class, then we can follow the proof of \cite[Theorem 1.1(b)]{BM13} in the Brill-Noether case and obtain a spherical twist $\Phi$ of $\cD(X)$, inducing a birational map $\Phi: M_{\sigma_+}(\vv) \dashrightarrow M_{\sigma_-}(\vv)$, which is an isomorphism in codimension one. It remains to show that it is stratum preserving. Note that under the given assumption, for the primitive class $\vv_p$ (which is the primitive part of $\vv$), the wall $\cW$ also induces a Brill-Noether divisorial contraction and is not totally semistable by \cite[Theorem 5.7]{BM13}. We apply the same part of proof in \cite[Theorem 1.1(b)]{BM13} to conclude that the restriction of $\Phi$ on the moduli spaces with primitive classes $\Phi_p: M_{\sigma_+}(\vv_p) \dashrightarrow M_{\sigma_-}(\vv_p)$ is also a birational map. By Lemma \ref{lem:stratum}, we conclude that $\Phi$ is a stratum preserving birational map between moduli spaces with non-primitive classes. 

Now we consider the case that $\vv$ is not a minimal class. We still write $\vv_0$ for the minimal class in the $G_\cH$-orbit of $\vv$. The above construction gives $\Phi_0: M_{\sigma_+}(\vv_0) \dashrightarrow M_{\sigma_-}(\vv_0)$. By \cite[Lemma 7.5]{BM13}, there is a composition of spherical twists, say $\Phi_+$, inducing a birational map $\Phi_+: M_{\sigma_+}(\vv_0) \dashrightarrow M_{\sigma_+}(\vv)$. Moreover, since $\cW$ is not a totally semistable wall for the primitive class $\vv_{0,p}$ (which is the primitive part of $\vv_0$), we can apply \cite[Proposition 6.8]{BM13} to see that $\Phi_+$ also induces a birational map $\Phi_{+,p}: M_{\sigma_+}(\vv_{0,p}) \dashrightarrow M_{\sigma_+}(\vv_p)$ and Lemma \ref{lem:stratum} shows that $\Phi_+$ is a stratum preserving birational map. By \cite[Lemma 7.5]{BM13}, there exists an open subset $U_+ \subset M_{\sigma_+}(\vv_0)$ with complement of codimension at least two, such that $\Phi_+$ is an injective morphism on $U_+$ and so by Proposition \ref{prop:codim-two}, we see that $\Phi_+$ is an isomorphism in codimension one. 

In the same way we can construct $\Phi_-: M_{\sigma_-}(\vv_0) \dashrightarrow M_{\sigma_-}(\vv)$. Then the composition $\Phi_- \circ \Phi_0 \circ \Phi_+^{-1}: M_{\sigma_+}(\vv) \dashrightarrow M_{\sigma_-}(\vv)$ is a stratum preserving birational map which is isomorphic in codimension one, as desired. 
\end{proof}

\begin{lemma}
\label{lem:dc5}
Let $\vv$ be any class of O'Grady type and assume that (LGU) holds for $\vv$. Then $\pi_+$ is a divisorial contraction of Li-Gieseker-Uhlenbeck type.
\end{lemma}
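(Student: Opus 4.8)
The plan is to reduce, by a Fourier--Mukai transform, to the classical Li--Gieseker--Uhlenbeck morphism on a Gieseker moduli space, just as in the primitive analysis of \cite[Section 8]{BM13}. By Lemma \ref{lem:dc0}, the hypothesis (LGU) forces $\vv$ to be a minimal class, and we may choose the isotropic class $\ww$ so that $(\ww,\vv_p)=1$ and $Y:=M_{\sigma_0}(\ww)=M_{\sigma_0}^{\mathrm{st}}(\ww)$. Since $\ww$ is a primitive isotropic class admitting a $\sigma_0$-stable object, $Y$ is a projective K3 surface and, because $(\ww,\vv_p)=1$, it carries a genuine (untwisted) universal family on $X\times Y$, which is the kernel of a derived equivalence $\Phi\colon\cD(X)\xra{\sim}\cD(Y)$ taking the $\sigma_0$-stable objects of class $\ww$ to the skyscraper sheaves on $Y$.

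First I would transport the data along $\Phi$. Since $\Phi$ sends $\ww$ to the point class and preserves the Mukai pairing, $\Phi(\vv_p)$ pairs to $\pm1$ with the point class, hence has rank $\pm1$; after a sign, a shift and a twist by a line bundle on $Y$ we may assume $\Phi(\vv_p)=(1,0,-1)$, whence $\Phi(\vv)=\vv'=(2,0,-2)$ is again of O'Grady type (note $(\vv')^2=8=\vv^2$). Thus $\Phi$ identifies $M_{Y,\Phi(\sigma_\pm)}(\vv')$ with $M_{\sigma_\pm}(\vv)$ compatibly with the local \bmm maps, and in particular $\pi_+$ with the morphism on $M_{Y,\Phi(\sigma_+)}(\vv')$ induced by $\ell_{\Phi(\sigma_0)}$. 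The next step is to locate $\Phi(\sigma_0)$: as the point objects $\cO_y$ are $\Phi(\sigma_0)$-stable of the same phase as $\vv'$, and the associated spherical class has the shape $\ww_0-\ww_1$ dictated by Lemma \ref{lem:dc0}, the generic stability condition $\Phi(\sigma_0)$ lies on the Gieseker-boundary (``large volume limit'') wall for $\vv'$ studied in \cite[Proposition 8.2]{BM13}; after replacing $Y$ if necessary within the freedom of Lemma \ref{lem:dc0}, we may therefore take $M_{\sigma_+}(\vv)\cong M_{Y,H}(\vv')$ to be a Gieseker moduli space for a polarisation $H$ generic with respect to $\vv'$, with $\pi_+$ the morphism determined by the determinant line bundle attached to $\Phi(\sigma_0)$.

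With this reduction the conclusion is classical: the Li--Gieseker--Uhlenbeck morphism $M_{Y,H}(2,0,-2)\to M^{\mathrm{UH}}_Y(2,0,-2)$ contracts precisely the divisor of non-locally-free sheaves $E$ --- those fitting into $0\to E\to E^{**}\to\cO_x\to0$ with $E^{**}\in M_{Y,H}(2,0,-1)$ --- onto the codimension-two Uhlenbeck stratum $M^{\mu}_Y(2,0,-1)\times Y$, so that $\pi_+$ is a divisorial contraction of Li--Gieseker--Uhlenbeck type; see \cite[Proposition 8.2]{BM13} and \cite{LQ11,Lo12}. I expect the middle step to be the main obstacle: one must verify that $\Phi(\sigma_0)$ genuinely sits on the Gieseker-boundary wall for $\vv'$ and that the auxiliary polarisation $H$ on $Y$ may be chosen $\vv'$-generic, which --- exactly as in the deformation arguments of Section \ref{mainpf} --- requires ruling out interference from $(-2)$-classes on $Y$; the remaining ingredients are either bookkeeping with the Mukai pairing or direct citations of the primitive-case analysis in \cite{BM13}.
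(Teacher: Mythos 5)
Your proposal is correct and takes essentially the same route as the paper: the paper's proof just combines Lemma \ref{lem:dc0} with the argument of \cite[Lemma 8.7]{BM13}, which is precisely the Fourier--Mukai reduction you describe, and the step you flag as the main obstacle (that $\Phi(\sigma_0)$ sits at the large volume limit with a $\vv'$-generic polarisation) is exactly the content of \cite[Proposition 8.2]{BM13} and the surrounding discussion, which applies at the level of the wall and the isotropic class independently of the primitivity of $\vv$. The additional normalisation of the transformed vector to $(2,0,-2)$ is harmless but not needed.
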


\begin{proof}
By Lemma \ref{lem:dc0}, $\vv$ is a minimal class and we can assume $M_{\sigma_0}(\ww)=M_{\sigma_0}^{\mathrm{st}}(\ww)$ for the class $\ww$ in (LGU). We can simply follow the proof of \cite[Lemma 8.7]{BM13} to get the divisor contracted by $\pi_+$, and it is clear from there that the contraction is of Li-Gieseker-Uhlenbeck type.
\end{proof}

\begin{lemma}
\label{lem:dc6}
Let $\vv$ be any class of O'Grady type and assume (LGU) holds for $\vv$. Then there is a derived (anti-)autoequivalence $\Phi$ of $\cD(X)$ inducing a stratum preserving birational map $\Phi: M_{\sigma_+}(\vv) \dashrightarrow M_{\sigma_-}(\vv)$, which is an isomorphism in codimension one. 
\end{lemma}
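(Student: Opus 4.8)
\textbf{Proof proposal for Lemma \ref{lem:dc6}.}

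The plan is to mirror the strategy of Lemma \ref{lem:dc4}, but adapted to the Li-Gieseker-Uhlenbeck situation. By Lemma \ref{lem:dc0}, the hypothesis (LGU) forces $\vv$ to be a minimal class of O'Grady type, and moreover we may choose the isotropic class $\ww$ in (LGU) so that $M_{\sigma_0}(\ww)=M_{\sigma_0}^{\mathrm{st}}(\ww)$. By Lemma \ref{lem:dc5}, the induced morphism $\pi_+$ is a divisorial contraction of Li-Gieseker-Uhlenbeck type; the same is true for $\pi_-$. The first step is therefore to invoke the construction in the proof of \cite[Theorem 1.1(b)]{BM13} in the Li-Gieseker-Uhlenbeck case: this produces a derived (anti-)autoequivalence $\Phi$ of $\cD(X)$ — typically built from the spherical twist associated to the unique effective spherical class $\ss=\ww_0-\ww_1$ together with the duality functor, exactly as in the discussion after \cite[Theorem 1.1]{BM13} — which induces a birational map $\Phi: M_{\sigma_+}(\vv) \dashrightarrow M_{\sigma_-}(\vv)$ that is an isomorphism in codimension one.

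The second step is to upgrade this to a \emph{stratum preserving} birational map via Lemma \ref{lem:stratum}. For this I need to exhibit a $\sigma_+$-stable object $E$ of class $\vv$ and a $\sigma_+$-stable object $E_p$ of class $\vv_p$ such that $\Phi(E)$ and $\Phi(E_p)$ are $\sigma_-$-stable of classes $\vv$ and $\vv_p$ respectively. The existence of such an $E$ is automatic since $\Phi$ already induces a birational map at the level of the full moduli spaces (so it is defined, and an isomorphism onto its image, on a dense open subset of $M_{\sigma_+}^{\mathrm{st}}(\vv)$). For $E_p$, I would observe that since $(\ww, \vv_p)=1$, the primitive class $\vv_p$ is taken by the Li-Gieseker-Uhlenbeck wall analysis of \cite[Section 8]{BM13} to a Gieseker moduli space: by \cite[Proposition 8.2]{BM13} and the surrounding discussion, $\ell_{\sigma_0}$ induces a Li-Gieseker-Uhlenbeck morphism on $M_{\sigma_\pm}(\vv_p)$, and the same autoequivalence $\Phi$ restricts to a birational map $\Phi_p: M_{\sigma_+}(\vv_p) \dashrightarrow M_{\sigma_-}(\vv_p)$ (again this is part of the proof of \cite[Theorem 1.1(b)]{BM13} in the primitive case, since $\cW$ is not totally semistable for $\vv_p$ by \cite[Theorem 5.7]{BM13}). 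Hence a generic $\sigma_+$-stable $E_p$ of class $\vv_p$ is sent by $\Phi$ to a $\sigma_-$-stable object of class $\vv_p$, and Lemma \ref{lem:stratum} applies.

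The main obstacle I anticipate is making sure that the autoequivalence furnished by \cite[Theorem 1.1(b)]{BM13} in the Li-Gieseker-Uhlenbeck case genuinely restricts compatibly to \emph{both} the non-primitive class $\vv$ and its primitive part $\vv_p$, i.e. that the \emph{same} functor $\Phi$ works on both moduli spaces; the Brill-Noether analogue of this was handled in Lemma \ref{lem:dc4} by appealing directly to the proof of \cite[Theorem 1.1(b)]{BM13} for the primitive class, and the same reduction should work here because the Li-Gieseker-Uhlenbeck twist is governed entirely by the spherical class $\ss\in\cH$, which lies in the hyperbolic lattice common to all classes proportional to $\vv_p$. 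Once this compatibility is in place, the codimension-one statement for the non-primitive moduli spaces follows from that for the primitive ones together with Proposition \ref{prop:codim-two}, exactly as at the end of the proof of Lemma \ref{lem:dc4}: one produces an open subset $U_+\subset M_{\sigma_+}(\vv)$ with complement of codimension at least two on which $\Phi$ is an injective morphism into $M_{\sigma_-}^{\mathrm{st}}(\vv)$, and Proposition \ref{prop:codim-two} then guarantees that the image has complement of codimension at least two as well, so that $\Phi$ is an isomorphism in codimension one.
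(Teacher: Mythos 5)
Your overall architecture matches the paper's proof: invoke the Li-Gieseker-Uhlenbeck case of the proof of \cite[Theorem 1.1(b)]{BM13} to get the (anti-)autoequivalence $\Phi$ and the codimension-one statement for $M_{\sigma_\pm}(\vv)$ (using Lemma \ref{lem:dc0} to know $\vv$ is minimal), then feed a statement about the primitive class into Lemma \ref{lem:stratum}. However, your justification of the primitive-class step contains a genuine error. Since (LGU) gives an isotropic $\ww$ with $(\ww,\vv)=2$, we have $(\ww,\vv_p)=1$, so condition (TS2) of Theorem \ref{thm:1st-classification} \emph{holds} for $\vv_p$: the wall $\cW$ \emph{is} totally semistable for $\vv_p$, contrary to your claim that it is not (by \cite[Theorem 5.7]{BM13}); moreover the contraction induced on $M_{\sigma_\pm}(\vv_p)$ is of Hilbert-Chow type, not Li-Gieseker-Uhlenbeck type (LGU on the primitive space would require $(\ww,\vv_p)\geq 2$). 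Consequently your mechanism for producing the object $E_p$ required by Lemma \ref{lem:stratum} — "a generic $\sigma_+$-stable $E_p$ stays stable because the wall is not totally semistable for $\vv_p$" — breaks down: \emph{every} $\sigma_+$-stable object of class $\vv_p$ becomes strictly $\sigma_0$-semistable, so one cannot argue that stability simply persists across the wall.

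The repair is exactly what the paper does: observe that $\cW$ is a Hilbert-Chow type totally semistable divisorial wall for $\vv_p$, and run the Hilbert-Chow case of the proof of \cite[Theorem 1.1(b)]{BM13} to see that the \emph{same} anti-autoequivalence $\Phi$ (the one furnished by the LGU case for $\vv$) nonetheless induces a birational map $\Phi_p\colon M_{\sigma_+}(\vv_p)\dashrightarrow M_{\sigma_-}(\vv_p)$; it is the functor itself, not persistence of stability, that sends a generic $\sigma_+$-stable object of class $\vv_p$ to a $\sigma_-$-stable object of class $\vv_p$. With that in place, Lemma \ref{lem:stratum} gives stratum preservation as you intended. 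Your final paragraph is also more roundabout than necessary: since $\vv$ is minimal here, the codimension-one property for $M_{\sigma_\pm}(\vv)$ comes directly from the LGU case of \cite[Theorem 1.1(b)]{BM13}, and no detour through Proposition \ref{prop:codim-two} (which in any case presupposes stratum preservation) is needed.
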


\begin{proof}
The proof of this lemma is similar to that of Lemma \ref{lem:dc4}. However, in this situation, while $\cW$ is a Li-Gieseker-Uhlenbeck type divisorial wall for the class $\vv$, it is a Hilbert-Chow type totally semistable divisorial wall for the class $\vv_p$. By Lemma \ref{lem:dc0}, $\vv$ and hence $\vv_p$, are minimal classes. The proof of \cite[Theorem 1.1(b)]{BM13} in the Li-Gieseker-Uhlenbeck case shows that a derived anti-autoequivalence $\Phi$ (which is in fact a spherical twist) induces a birational map $\Phi: M_{\sigma_+}(\vv) \dashrightarrow M_{\sigma_-}(\vv)$ which is isomorphic in codimension one. The same proof as in the Hilbert-Chow case shows that $\Phi$ also induces a birational map $\Phi_p: M_{\sigma_+}(\vv_p) \dashrightarrow M_{\sigma_-}(\vv_p)$. Therefore by Lemma \ref{lem:stratum}, $\Phi$ is also a stratum preserving birational map.
\end{proof}

We conclude the discussion about divisorial walls by summarising all the above lemmas and proving the following proposition, which is nothing but the collection of statements about divisorial contractions in Theorems \ref{thm:2nd-classification} and \ref{thm:identify-cones}.

\begin{proposition}
\label{prop:divisorial-contra}
The potential wall $\cW$ associated to a rank two hyperbolic lattice $\cH \subset H^*_{\mathrm{alg}}(X,\bZ)$ containing $\vv$ induces a divisorial contraction if and only if at least one of the following two conditions holds
\begin{itemize}\itemindent=30pt
\item[(BN)] there exists a spherical class $\ss$ with $(\ss, \vv)=0$, or
\item[(LGU)] there exists an isotropic class $\ww$ with $(\ww, \vv)=2$.
\end{itemize}
Moreover, the condition (LGU) necessarily implies the condition (BN).

In either case, we can find an (anti-)autoequivalence $\Phi$ of $\cD(X)$, which induces a birational map $\Phi: M_{\sigma_+}(\vv) \dashrightarrow M_{\sigma_-}(\vv)$. It is stratum preserving and an isomorphism in codimension one.
\end{proposition}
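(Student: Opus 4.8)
The plan is to read this proposition off from the lemmas of the present subsection, since each of its assertions has already been isolated; in particular, by the convention fixed at the start of this section, everything proved for $\pi_+$ and $\sigma_+$ holds verbatim for $\pi_-$ and $\sigma_-$, so it suffices to argue on the ``$+$'' side. First I would record the implication $(\mathrm{LGU})\Rightarrow(\mathrm{BN})$, which is precisely Lemma \ref{lem:dc0}; the same lemma also shows that under (LGU) the class $\vv$ is minimal and the isotropic class $\ww$ can be chosen so that $M_{\sigma_0}(\ww)=M_{\sigma_0}^{\mathrm{st}}(\ww)$, which is the form needed by the later lemmas.

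Next I would establish the biconditional. \emph{Sufficiency}: suppose (BN) holds. If (LGU) fails, Lemma \ref{lem:dc3} shows that $\pi_+$ is a divisorial contraction (of Brill--Noether type); if (LGU) holds, Lemma \ref{lem:dc5} shows that $\pi_+$ is a divisorial contraction (of Li--Gieseker--Uhlenbeck type). \emph{Necessity}: suppose neither (BN) nor (LGU) holds and, for contradiction, that $\pi_+$ contracts a divisor $D\subset M_{\sigma_+}(\vv)$. By Lemma \ref{lem:dc1} (if $\vv$ is minimal) or Lemma \ref{lem:dc2} (if $\vv$ is non-minimal) there is an open subset $U\subset M_{\sigma_+}(\vv)$ whose complement has codimension at least two and in which distinct points never parametrise $\sigma_0$-S-equivalent objects; since by Proposition \ref{prop:semiampleness} the morphism $\pi_+$ identifies only $\sigma_0$-S-equivalent objects, $\pi_+|_U$ is injective. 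As $M_{\sigma_+}(\vv)$ has dimension $10$, we get $\dim\bigl(M_{\sigma_+}(\vv)\setminus U\bigr)\leq 8<9=\dim D$, so $D\cap U$ is dense in $D$, whence $\dim\pi_+(D\cap U)=\dim D=9$; but $\pi_+(D\cap U)\subseteq\pi_+(D)$, and $\dim\pi_+(D)=8$ by Proposition \ref{prop:symp-divisor}, a contradiction. Hence $\pi_+$ contracts no divisor, and by Proposition \ref{prop:semiampleness} it is a small contraction or an isomorphism.

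It remains to produce the birational map. If (BN) holds but (LGU) does not, Lemma \ref{lem:dc4} supplies a derived (anti-)autoequivalence $\Phi$ of $\cD(X)$ inducing a stratum preserving birational map $\Phi:M_{\sigma_+}(\vv)\dashrightarrow M_{\sigma_-}(\vv)$ that is an isomorphism in codimension one; if (LGU) holds, Lemma \ref{lem:dc6} supplies such a $\Phi$. Patching the two cases finishes the proof. The step I expect to be the main obstacle is the necessity direction: one must be sure that the open set $U$ furnished by Lemmas \ref{lem:dc1} and \ref{lem:dc2} is genuinely large enough (complement of codimension at least two) for the dimension count against Proposition \ref{prop:symp-divisor} to close, and it is exactly here that the O'Grady-specific geometry enters --- the singular locus of $M_{\sigma_+}(\vv)$ has codimension two, so a divisorial contraction must take place in the smooth locus, which is what allows the smooth-case arguments of \cite{BM13} underlying Lemmas \ref{lem:dc1}--\ref{lem:dc6} to be imported.
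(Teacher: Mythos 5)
Your proposal is correct and follows essentially the same route as the paper: the paper's proof likewise assembles Lemma \ref{lem:dc0} for (LGU)$\Rightarrow$(BN), Lemmas \ref{lem:dc3} and \ref{lem:dc5} for sufficiency, Lemmas \ref{lem:dc1} and \ref{lem:dc2} (by contradiction) for necessity, and Lemmas \ref{lem:dc4} and \ref{lem:dc6} for the birational map. Your only addition is to spell out explicitly the dimension-count contradiction against Proposition \ref{prop:symp-divisor}, which the paper leaves implicit in the cited lemmas.
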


\begin{proof}
Lemma \ref{lem:dc0} shows that (LGU) implies (BN). Therefore it suffices to show $\cW$ induces a divisorial contraction if and only if (BN) holds. The necessity of (BN) is proved in Lemma \ref{lem:dc1} for minimal classes and in Lemma \ref{lem:dc2} for non-minimal classes by contradiction. The sufficiency of (BN), as well as the type of contraction, are the contents of Lemmas \ref{lem:dc3} and \ref{lem:dc5}. Finally, the birational map $\Phi$ is constructed in Lemma \ref{lem:dc4} for Brill-Noether contractions and in Lemma \ref{lem:dc6} for Li-Gieseker-Uhlenbeck contractions.
\end{proof}

\subsection{Walls inducing small contractions or no contractions}\label{small-contractions}

In this subsection, we prove Theorems \ref{thm:2nd-classification} and \ref{thm:identify-cones} in the case that $\cW$ induces a small contraction or no contraction at all. We point out that most of the results in this subsection work for all effective classes $\vv$ of divisibility two, because the existence of a symplectic resolution will not be used in the proof. 

As before, this subsection consists mainly of a series of lemmas; each of which will become an integral part in the proof of Theorems \ref{thm:2nd-classification} and \ref{thm:identify-cones}. All the results will be summarised in Propositions \ref{prop:small-contra} and \ref{prop:no-contra} at the end. These two propositions, together with Proposition \ref{prop:divisorial-contra}, cover all the cases in Theorems \ref{thm:2nd-classification} and \ref{thm:identify-cones}.

\begin{lemma}\label{noSC1walls}
Let $M=M_\sigma(\vv)$ be a moduli space of O'Grady type. Then there are no small contractions arising from a decomposition of $\vv$ into the sum $\aa+\bb$ of two positive classes.
\end{lemma}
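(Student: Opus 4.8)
The plan is to reduce the statement to a short piece of lattice arithmetic in the rank two hyperbolic lattice $\cH=\cH_\cW$, using crucially that $\vv^2=8$ is small and that $\vv=2\vv_p$. Precisely, I would show that every decomposition $\vv=\aa+\bb$ with $\aa,\bb$ positive classes in $\cH$ is either the trivial one $\aa=\bb=\vv_p$, which imposes no condition on the stability manifold and hence is not associated to any wall, or else satisfies one of the conditions (BN) or (LGU) of Proposition~\ref{prop:divisorial-contra}. In the latter two cases $\cW$ is a divisorial wall and $\pi_\pm$ is a divisorial contraction; since a birational contraction is of exactly one of the three types divisorial, small, or an isomorphism, $\pi_\pm$ is then not a small contraction. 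Hence no small contraction can arise from a decomposition of $\vv$.

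The first step is to pin down the possible values of $(\aa,\bb)$. From $\vv^2=\aa^2+2(\aa,\bb)+\bb^2=8$ and $\aa^2,\bb^2\geq 0$ we get $(\aa,\bb)\leq 4$. Because $\aa$ and $\bb$ lie in the closed positive cone of the signature $(1,1)$ lattice $\cH$, the Hodge index inequality gives $(\aa,\bb)^2\geq \aa^2\bb^2\geq 0$ and $(\aa,\bb)\geq 0$; combined with $(\vv,\aa)=\aa^2+(\aa,\bb)>0$ (and the symmetric inequality for $\bb$) this forces $(\aa,\bb)\geq 1$. Finally, since $\vv=2\vv_p$, every value $(\uu,\vv)=2(\uu,\vv_p)$ with $\uu\in\cH$ is even, and since the Mukai form is even on $\cH$, the number $(\aa,\bb)=(\vv,\aa)-\aa^2$ is even. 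Therefore $(\aa,\bb)\in\{2,4\}$.

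Next I would case on $(\aa,\bb)$. If $(\aa,\bb)=4$ then $\aa^2=\bb^2=0$ and $(\vv,\aa)=4$, so $(\vv_p,\aa)=2$ and the class $\ss:=\vv_p-\aa\in\cH$ (note $\vv_p\in\cH$ by primitivity of $\cH$) satisfies $\ss^2=\vv_p^2-2(\vv_p,\aa)+\aa^2=-2$ and $(\ss,\vv)=(\vv_p,\vv)-(\aa,\vv)=4-4=0$, so (BN) holds. If $(\aa,\bb)=2$ then $\aa^2+\bb^2=4$, and after swapping $\aa,\bb$ if necessary we are in one of two subcases. If $\aa^2=0$ and $\bb^2=4$, then $\aa$ is isotropic with $(\aa,\vv)=\aa^2+(\aa,\bb)=2$, so (LGU) holds. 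If $\aa^2=\bb^2=2$, then $(\aa-\bb)^2=0$ and $(\aa-\bb,\vv)=\aa^2-\bb^2=0$, so $\aa-\bb$ is an isotropic class in the rank one sublattice $\vv^\perp\cap\cH$; since $\cH$ is non-degenerate this is only possible if $\aa-\bb=0$, i.e.\ $\aa=\bb=\vv_p$. This exhausts all cases and, together with Proposition~\ref{prop:divisorial-contra} applied to the non-trivial ones, proves the lemma.

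I do not anticipate a real obstacle here: the whole argument is a finite case analysis that is available precisely because $\vv^2=8$ is small, and the only slightly delicate points are the lower bound $(\aa,\bb)\geq 1$ and the degenerate subcase $\aa^2=\bb^2=2$, both of which are disposed of using only the non-degeneracy of the rank two hyperbolic lattice $\cH$. One should also note that the appeal to Proposition~\ref{prop:divisorial-contra} is not circular: that proposition and the results supporting it in Subsection~\ref{divcontractions} are proved without any reference to small contractions.
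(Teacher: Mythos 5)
Your argument is correct, and its skeleton coincides with the paper's: bound $(\aa,\bb)$ using $\vv^2=8$, exploit the parity coming from $\vv=2\vv_p$ together with the evenness of the Mukai lattice, and reduce the surviving cases to (BN)/(LGU) via Proposition \ref{prop:divisorial-contra} (which, as you note, is proved independently, so there is no circularity). The difference lies in how the small values of $(\aa,\bb)$ are disposed of. The paper simply imports from the proof of \cite[Proposition 9.1]{BM13} the geometric fact that a decomposition with $(\aa,\bb)\leq 2$ cannot produce a small contraction (the would-be contracted locus has codimension $(\aa,\bb)-1\leq 1$), so it only has to treat $(\aa,\bb)=3$ (killed because $\aa^2$ would be odd) and $(\aa,\bb)=4$ (which yields the spherical class $\ss=\vv_p-\aa$ with $(\ss,\vv)=0$, exactly as you do). You instead replace that geometric input by lattice arithmetic: positivity of $(\aa,\bb)$ from the hyperbolic signature, evenness of $(\aa,\bb)=(\vv,\aa)-\aa^2$ rather than of $(\vv,\aa)$, and then a direct analysis of $(\aa,\bb)=2$, which you show forces either (LGU) or the trivial decomposition $\aa=\bb=\vv_p$. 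This buys a more self-contained and purely numerical proof, and the observation that $(\aa,\bb)=2$ nontrivially always lands on an LGU wall is a nice by-product not made explicit in the paper; the price is that you must handle the trivial decomposition yourself. Your one-line dismissal of it is essentially right but a bit terse: the cleanest justification is that since $\vv_p$ is proportional to $\vv$, extensions of two $\sigma_0$-stable objects of class $\vv_p$ are strictly semistable and S-equivalent on \emph{both} sides of $\cW$ as well as on it, so this decomposition identifies nothing new at $\sigma_0$ and hence contracts no curve; with that sentence added your proof is complete and fully parallel in strength to the paper's.
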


\begin{proof} 
Suppose we have such a decomposition for $\vv$. Then \[8=\vv^2=\aa^2+2(\aa,\bb)+\bb^2\geq2(\aa,\bb)\implies(\aa,\bb)\leq4.\] If we suppose further that $\cW$ does not induce a divisorial contraction, then just as in the proof of \cite[Proposition 9.1]{BM13}, we can assume that $(\aa,\bb)>2$. This leaves $(\aa,\bb)=3$ or 4 as the only cases. Next, let us observe that we also have the following equation: 
\[8=\vv^2=(\vv,\aa+\bb)=(\vv,\aa)+(\vv,\bb)= 2(\vv_p,\aa)+2(\vv_p,\bb).\] That is, $(\vv,\aa)$ and $(\vv,\bb)$ must be even integers. In particular, the following inequality \[(\vv,\aa)=(\aa+\bb,\aa)=\aa^2+(\aa,\bb)\geq (\aa,\bb)>2\] implies that $(\vv,\aa)=4$; and hence $(\vv,\bb)=4$ as well. In other words, we are left with two cases when $4=(\vv,\aa)=\aa^2+(\aa,\bb)$:
\begin{itemize}
\item If $(\aa,\bb)=3$ then $\aa^2=1$; contradicting the fact that the lattice is even.
\item If $(\aa,\bb)=4$ then $\aa^2=0$ which implies $\ss:=\vv_p-\aa$ is a spherical class with $(\ss,\vv)=0$. Hence, by Proposition \ref{prop:divisorial-contra}, we must be on a BN-wall which contradicts our assumption that $\cW$ does not induce a divisorial contraction.
\end{itemize}
Thus, writing $\vv$ as the sum of two positive classes $\aa$ and $\bb$ never gives rise to a flopping wall.
\end{proof}

\begin{remark}
Notice that Lemma \ref{noSC1walls} makes our classification of small contractions considerably easier than that of \cite{BM13}. In particular, for a two-divisible Mukai vector, the parallelogram described in \cite[Lemma 9.3]{BM13} always contains an extra interior lattice point $\vv_p$. Even though it is possible to modify the proof of \cite[Lemma 9.3]{BM13} in order to deal with two-divisible Mukai vectors, we will not need it here when $\vv_p^2=2$.
\end{remark}

\begin{lemma}
\label{lem:sc01}
Let $\vv$ be a minimal class of O'Grady type, and assume that $\cW$ does not induce a divisorial contraction for $\vv$. If (SC) does not hold for $\vv$, then $\cW$ is not a wall for $\vv$.
\end{lemma}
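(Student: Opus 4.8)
The plan is to reduce to the known classification for the primitive class $\vv_p$ and then transport the conclusion back to $\vv$ using the stratification \eqref{eqn:stratification}. First note that $\vv_p \in \cH$, because $\im\frac{Z(\vv_p)}{Z(\vv)} = \im\frac12 = 0$ for every $\sigma=(Z,\cP)\in\cW$. Since $\vv$ is minimal and non-primitive, Lemma \ref{lem:ts2} shows $\cW$ is not a totally semistable wall for $\vv$; hence, by the trichotomy for $\pi_\pm$ recalled before Theorem \ref{thm:2nd-classification}, it suffices to prove that the birational morphisms $\pi_\pm$ of Proposition \ref{prop:semiampleness} are isomorphisms. (If they are, then their images $\bar M_\pm\subseteq M_{\sigma_0}(\vv)$ are closed and contain the dense open $M_{\sigma_0}^{\mathrm{st}}(\vv)$, so $\bar M_\pm=M_{\sigma_0}(\vv)$; the natural morphisms $M_{\sigma_\pm}(\vv)\to M_{\sigma_0}(\vv)$ are then isomorphisms restricting to the identity on $M_{\sigma_0}^{\mathrm{st}}(\vv)$, which is precisely the statement that $\cW$ is not a wall.)

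The hypotheses, combined with $(\uu,\vv)=2(\uu,\vv_p)$ for $\uu\in\cH$, translate into lattice statements about $\vv_p$: the failure of (BN) and (LGU) for $\vv$ gives no spherical $\ss$ with $(\ss,\vv_p)=0$ and no isotropic $\ww$ with $(\ww,\vv_p)=1$; the failure of (SC) gives no spherical $\ss$ with $0<|(\ss,\vv_p)|\le 2$; minimality of $\vv$ (via Theorem \ref{thm:1st-classification}, (TS2) being impossible) rules out an effective spherical $\ss$ with $(\ss,\vv_p)<0$; and an isotropic $\ww$ with $(\ww,\vv_p)=2$ is impossible, since then $\ss:=\vv_p-\ww$ would be spherical, $\ss^2=\vv_p^2-2(\vv_p,\ww)=-2$, with $(\ss,\vv)=2\vv_p^2-2(\ww,\vv_p)=0$, reviving (BN) for $\vv$. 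Thus $\cW$ is neither totally semistable nor divisorial for the primitive class $\vv_p$, so by \cite[Theorem 5.7]{BM13} it can be a flopping wall for $\vv_p$ only if there is a spherical $\ss$ with $0<(\ss,\vv_p)\le\vv_p^2/2=1$ --- forcing $(\ss,\vv)=2$, i.e.\ (SC) for $\vv$ --- or a decomposition $\vv_p=\aa+\bb$ into positive classes --- forcing, by an elementary computation in the even hyperbolic rank-two lattice $\cH$, $\aa^2=\bb^2=0$ and $(\aa,\vv_p)=1$, i.e.\ (TS2) for $\vv_p$. Both are excluded, so $\cW$ is not a wall for $\vv_p$, and in particular $M_{\sigma_+}(\vv_p)=M_{\sigma_-}(\vv_p)$.

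It remains to lift this to $\vv$, i.e.\ to show that $\pi_\pm$ contract no curves at all. The two lower strata $M_{\sigma_\pm}(\vv_p)\subset\sym^2 M_{\sigma_\pm}(\vv_p)$ of \eqref{eqn:stratification} are unchanged, so no curve inside the singular locus of $M_{\sigma_\pm}(\vv)$ is contracted. On the stable locus one argues as in \cite[Section 9]{BM13}: if a $\sigma_+$-stable object $E$ of class $\vv$ were strictly $\sigma_0$-semistable, its $\sigma_0$-Jordan--H\"older factors would be $\sigma_0$-stable with classes in $\cH$ that are each positive or spherical; a configuration with two distinct positive classes yields a positive decomposition $\vv=\aa+\bb$, which gives no wall for $\vv$ (Lemma \ref{noSC1walls} excludes a small contraction, the hypothesis a divisorial one, Lemma \ref{lem:ts2} a totally semistable one, and fake walls are totally semistable), while a configuration with a spherical factor $\ss$ forces $(\ss,\vv)\le\vv^2/2$ by the dimension estimates of \cite[Lemma 7.2 and Section 9]{BM13} with \cite[Theorem 3.8]{BM13} replaced by Proposition \ref{prop:symp-divisor}, hence (BN) or (SC) for $\vv$ --- all excluded. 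So $\pi_\pm$ are isomorphisms and $\cW$ is not a wall. The main obstacle I anticipate is exactly this last step: controlling the $\sigma_0$-Jordan--H\"older filtrations of objects in the \emph{smooth} locus of $M_{\sigma_\pm}(\vv)$, which are not of the split form $E_1\oplus E_2$, and verifying that the loci possibly contracted by $\pi_\pm$ --- which may meet the singular locus --- still obey the codimension bounds that in \cite{BM13} rely on smoothness of the ambient holomorphic symplectic manifold; Proposition \ref{prop:symp-divisor} is the substitute that makes this work.
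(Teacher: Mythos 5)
Your overall strategy is the same as the paper's: show $\cW$ is not a wall for the primitive class $\vv_p$ (your lattice verification of this is fine, and more explicit than the paper's citation of \cite[Proposition 9.4]{BM13}), identify the strictly semistable loci via the stratification \eqref{eqn:stratification}, and then argue on the stable locus with the machinery of \cite[Section 9]{BM13}, replacing \cite[Theorem 3.8]{BM13} by Proposition \ref{prop:symp-divisor}. The problem is in the stable-locus step, and it sits exactly at the point the paper flags as the only genuinely new issue in the non-primitive case.

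Your case analysis of the $\sigma_0$-Jordan--H\"older factors of a $\sigma_+$-stable $E$ of class $\vv$ splits into ``two distinct positive classes'' and ``a spherical factor'', and so it silently omits the one decomposition that always exists for $\vv=2\vv_p$, namely $\vv=\vv_p+\vv_p$ with two factors of the \emph{same} positive class $\vv_p$; this is precisely the ``extra point'' in the paper's proof (``any $\sigma_-$-unstable object of class $\vv$ cannot have two HN-factors of class $\vv_p$''), and nothing in your argument rules it out. Moreover, your treatment of the remaining positive-decomposition case is circular: to prove $\cW$ is not a wall you must show that no $\sigma_+$-stable object gets destabilised at $\sigma_0$, and asserting that a decomposition $\vv=\aa+\bb$ ``gives no wall for $\vv$'' (because it is not divisorial, not small by Lemma \ref{noSC1walls}, not totally semistable, not fake) does not preclude the existence of such a destabilised object --- indeed the exhaustiveness of that trichotomy for non-walls is exactly what Lemma \ref{lem:sc01} is establishing. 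The missing argument is the phase observation: since $\vv=2\vv_p$, the central charges $Z(\vv_p)$ and $Z(\vv)$ are aligned for \emph{every} stability condition, not just on $\cW$; hence an HN filtration with respect to $\sigma_\pm$ can never have two factors of class $\vv_p$ (HN phases strictly decrease), and if the $\sigma_0$-JH factors of $E$ were two $\sigma_0$-stable objects of class $\vv_p$, these would remain $\sigma_+$-stable (as $\cW$ is not a wall for $\vv_p$) of the same $\sigma_+$-phase as $E$, so $E$ would be strictly $\sigma_+$-semistable, a contradiction. (One can also check, by an elementary computation in the even hyperbolic lattice $\cH$ using the exclusions you already listed, that $\vv_p+\vv_p$ is in fact the \emph{only} decomposition of $\vv$ into positive classes under the hypotheses, so your ``two distinct positive classes'' case is vacuous --- but that, too, requires an argument you did not give.) Without this step the identification of the stable loci, and hence the conclusion that $\cW$ is not a wall, does not follow.
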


\begin{proof}
We use the decomposition of the moduli space $M_{\sigma_\pm}(\vv) = M_{\sigma_\pm}^{\mathrm{st}}(\vv) \sqcup \sym^2 M_{\sigma_\pm}(\vv_p)$. Notice that $\vv_p$ is minimal since $\vv$ is, and so by Lemma \ref{noSC1walls} and \cite[Proposition 9.4]{BM13}, we know that $\cW$ is not a wall for $\vv_p$. In particular, we can identify the strictly semistable loci $\sym^2 M_{\sigma_+}(\vv_p) = \sym^2 M_{\sigma_-}(\vv_p)$. For the stable loci, we observe that the proof of \cite[Proposition 9.4]{BM13} can be applied without change to show that every $\sigma_+$-stable object of class $\vv$ is also $\sigma_0$-stable, and hence $\sigma_-$-stable. (In fact, the only extra point is that any $\sigma_-$-unstable object of class $\vv$ cannot have two HN-factors of class $\vv_p$.) The same is true for $\sigma_-$-stable objects. Therefore, we can also identify the stable loci $M_{\sigma_+}^{\mathrm{st}}(\vv) \cong M_{\sigma_-}^{\mathrm{st}}(\vv)$ and conclude that $\cW$ is not a wall for $\vv$. 
\end{proof}

\begin{lemma}
\label{lem:sc02}
Let $\vv$ be a non-minimal class of O'Grady type, and assume that $\cW$ does not induce a divisorial contraction for $\vv$. If (SC) does not hold for $\vv$, then $\cW$ is a fake wall for $\vv$. Moreover, there exists a derived autoequivalence $\Phi$ of $\cD(X)$, which induces an isomorphism $\Phi: M_{\sigma_+}(\vv) \cong M_{\sigma_-}(\vv)$.
\end{lemma}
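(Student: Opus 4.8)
\textbf{Proof proposal for Lemma \ref{lem:sc02}.}

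The plan is to reduce the non-minimal case to the minimal case via the spherical-twist autoequivalences already at our disposal, exactly as in the proof of Lemma \ref{lem:dc4}. Let $\vv_0$ be the minimal class in the $G_\cH$-orbit of $\vv$. First I would check that none of the conditions (BN), (LGU) or (SC) hold for $\vv_0$: indeed these are all numerical conditions phrased purely in terms of the lattice $\cH$ and the pairing with $\vv$ (or equivalently $\vv_p$, since $\vv=2\vv_p$ and all the relevant pairings with spherical or isotropic classes are twice the pairing with $\vv_p$), and since $\vv_0$ and $\vv$ lie in the same $G_\cH$-orbit, the set of effective spherical/isotropic classes and the pairing numbers are preserved by the reflections generating $G_\cH$. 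Hence by Lemma \ref{lem:sc01} applied to $\vv_0$, the wall $\cW$ is not a wall for $\vv_0$, so that $M_{\sigma_+}(\vv_0)=M_{\sigma_-}(\vv_0)$, and similarly $\cW$ is not a wall for $\vv_{0,p}$.

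Next I would invoke Lemma \ref{lem:ts3} (using that $\vv$ does not satisfy (TS2) since it is non-primitive) to obtain a derived autoequivalence $\Phi_+$, a composition of spherical twists, which takes $\sigma_0$-stable objects of class $\vv_0$ to $\sigma_+$-stable objects of class $\vv$, and likewise $\Phi_-$ on the other side. As in Lemma \ref{lem:dc4}, one deduces from \cite[Proposition 6.8]{BM13} that $\Phi_\pm$ induces a birational map $M_{\sigma_\pm}(\vv_0)\dashrightarrow M_{\sigma_\pm}(\vv)$, and also induces a birational map on the primitive moduli spaces $M_{\sigma_\pm}(\vv_{0,p})\dashrightarrow M_{\sigma_\pm}(\vv_p)$; by Lemma \ref{lem:stratum} these birational maps are stratum preserving. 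The composition $\Phi:=\Phi_-\circ\Phi_+^{-1}:M_{\sigma_+}(\vv)\dashrightarrow M_{\sigma_-}(\vv)$, where we identify $M_{\sigma_+}(\vv_0)$ with $M_{\sigma_-}(\vv_0)$, is then a stratum preserving birational map. Since $\cW$ induces neither a divisorial nor a small contraction for $\vv$ (the latter by hypothesis together with Lemma \ref{noSC1walls}), $\pi_\pm$ are isomorphisms, so $M_{\sigma_+}(\vv)$ and $M_{\sigma_-}(\vv)$ are abstractly isomorphic; the point is to upgrade the birational map $\Phi$ to an isomorphism. For this I would argue that $\Phi_+$ is already an isomorphism: it is defined on the stable locus $M_{\sigma_+}^{\mathrm{st}}(\vv_0)$ as an injective morphism to $M_{\sigma_+}^{\mathrm{st}}(\vv)$ (openness of stability, \cite[Theorem 4.2]{BM13}), its inverse is given by the inverse autoequivalence, and since $\cW$ is not a wall for $\vv_0$ every $\sigma_+$-semistable object of class $\vv_0$ is $\sigma_0$-stable, so in fact $\Phi_+$ is defined on all of $M_{\sigma_+}(\vv_0)$; symmetrically $\Phi_+^{-1}$ is defined everywhere, so $\Phi_+:M_{\sigma_+}(\vv_0)\to M_{\sigma_+}(\vv)$ is an isomorphism, and likewise $\Phi_-$. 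Hence $\Phi=\Phi_-\circ\Phi_+^{-1}$ is an isomorphism $M_{\sigma_+}(\vv)\cong M_{\sigma_-}(\vv)$, and being a composition of autoequivalences it is a derived autoequivalence inducing this isomorphism. Finally, the two moduli spaces are disjoint as subsets of $M_{\sigma_0}(\vv)$ (otherwise $\cW$ would not be a wall for $\vv$, but then by Lemma \ref{lem:sc01} applied after transporting back to $\vv_0$ — or directly — the stable loci would be literally identified, contradicting non-minimality, which forces $\vv$ to be $\sigma_0$-unstable generically via Lemma \ref{lem:ts1}); thus $\cW$ is a fake wall.

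The main obstacle I anticipate is the last point: carefully verifying that $\cW$ is genuinely a \emph{fake} wall for $\vv$ (i.e. $M_{\sigma_+}(\vv)$ and $M_{\sigma_-}(\vv)$ are disjoint inside $M_{\sigma_0}(\vv)$) rather than not a wall at all. Since $\vv$ is non-minimal, Lemma \ref{lem:ts1} tells us $\cW$ is totally semistable for $\vv$, so there are no $\sigma_0$-stable objects of class $\vv$, which immediately gives disjointness and rules out "$\cW$ is not a wall for $\vv$". I would make sure the logical chain "$\vv$ non-minimal $\Rightarrow$ totally semistable $\Rightarrow$ fake wall" is stated cleanly, and double-check that the spherical twists $\Phi_\pm$ really do extend over the strictly semistable loci — this uses that $\Phi_\pm$ preserves extensions (or reverses them), so it takes the S-equivalence class of $E_1\oplus E_2$ with $E_i$ of class $\vv_{0,p}$ to that of $\Phi_\pm(E_1)\oplus\Phi_\pm(E_2)$ with $\Phi_\pm(E_i)$ of class $\vv_p$, exactly as in the proof of Lemma \ref{lem:stratum}.
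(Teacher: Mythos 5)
There is a genuine gap, and it sits at the crux of the lemma. You conclude that $\pi_\pm$ are isomorphisms "by hypothesis together with Lemma \ref{noSC1walls}", i.e.\ you deduce "no small contraction" from the failure of (SC). But for a non-minimal class the implication "(SC) fails $\Rightarrow$ $\cW$ induces no small contraction" is exactly what Lemma \ref{lem:sc02} exists to prove: the necessity direction of Proposition \ref{prop:small-contra} is obtained by contradiction from Lemmas \ref{lem:sc01} and \ref{lem:sc02}, so invoking it inside this proof is circular. Being a \emph{fake} wall requires two things: total semistability (which you correctly get from Lemma \ref{lem:ts1}) \emph{and} that $\ell_{\sigma_0}$ contracts no curve on $M_{\sigma_\pm}(\vv)$; the second is the real content and your proposal never establishes it — the "main obstacle" you single out (disjointness) is in fact the easy part. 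Relatedly, your justification that $\Phi_+$ is defined on all of $M_{\sigma_+}(\vv_0)$ rests on the false claim that "every $\sigma_+$-semistable object of class $\vv_0$ is $\sigma_0$-stable" (the polystable objects $E_1\oplus E_2$ with $E_i$ of class $\vv_{0,p}$ are never $\sigma_0$-stable); the strictly semistable locus, which you defer to a final "double-check", is precisely where primitive-class input is required.

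The paper's proof supplies exactly the missing step: it shows directly that no curve of $M_{\sigma_+}(\vv)$ is contracted by $\pi_+$, by proving that $\Phi_+:M_{\sigma_+}(\vv_0)\to M_{\sigma_+}(\vv)$ is an everywhere-defined injective morphism whose image contains no contracted curve, and is then surjective. On the strictly semistable locus one uses \cite[Proposition 9.4]{BM13} to get an isomorphism $M_{\sigma_+}(\vv_{0,p})\xra{\sim}M_{\sigma_+}(\vv_p)$, hence on $\sym^2$, and to see that the $\sigma_0$-S-equivalence relation is trivial there; on the stable locus one uses Lemma \ref{lem:sc01} (transferred to $\vv_0$, which is legitimate since $G_\cH$ acts by isometries) together with the second statement of Lemma \ref{lem:ts3}, which guarantees that distinct $\sigma_0$-S-equivalence classes remain distinct. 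Since both moduli spaces are irreducible projective varieties of the same dimension, the injective morphism is an isomorphism, so every curve of $M_{\sigma_+}(\vv)$ lies in the image and none is contracted; only then do Theorem \ref{thm:1st-classification} and total semistability yield that $\cW$ is a fake wall, and $\Phi_-\circ\Phi_+^{-1}$ gives the isomorphism because $\cW$ is not a wall for $\vv_0$. Your reduction to the minimal class and the composition $\Phi_-\circ\Phi_+^{-1}$ are the right skeleton, but without the surjectivity-plus-S-equivalence argument the key assertion is assumed rather than proved.
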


\begin{proof}
Let $\vv_0$ be the minimal class in the same $G_\cH$-orbit as $\vv$. Then as in the proof of \cite[Proposition 9.4]{BM13}, we consider the derived autoequivalence $\Phi_+$ of $\cD(X)$ (which is in fact a composition of spherical twists) constructed in \cite[Proposition 6.8]{BM13}. 

On the one hand, \cite[Proposition 9.4]{BM13} shows that $\Phi_+$ induces an isomorphism $\Phi_{+,p}: M_{\sigma_+}(\vv_{0,p}) \xra{\sim} M_{\sigma_+}(\vv_p)$ on the moduli spaces with primitive classes. Since $\Phi_+$ preserves extensions, it also induces an isomorphism $\Phi_+: \sym^2 M_{\sigma_+}(\vv_{0,p}) \xra{\sim} \sym^2 M_{\sigma_+}(\vv_p)$ on the strictly semistable loci of the moduli spaces with non-primitive classes. Moreover, since the S-equivalence relation with respect to $\sigma_0$ is trivial on $M_{\sigma_+}(\vv_{0,p})$, it is also trivial on $M_{\sigma_+}(\vv_p)$, and hence on $\sym^2 M_{\sigma_+}(\vv_p)$, which implies that no curve in $\sym^2 M_{\sigma_+}(\vv_p)$ is contracted by $\pi_+$.

On the other hand, Lemma \ref{lem:sc01} ensures that every $\sigma_+$-stable object of class $\vv_0$ is also $\sigma_0$-stable and so by Lemma \ref{lem:ts3}, we have an injective morphism $\Phi_+: M_{\sigma_+}^{\mathrm{st}}(\vv_0) \to M_{\sigma_+}^{\mathrm{st}}(\vv)$ whose image does not contain any curve contracted by $\pi_+$. Now we combine the stable and strictly semistable loci to get an injective morphism $\Phi_+: M_{\sigma_+}(\vv_0) \to M_{\sigma_+}(\vv)$. Notice that $\Phi_+$ is an isomorphism from $M_{\sigma_+}(\vv_0)$ to its image in $M_{\sigma_+}(\vv)$ because the inverse derived autoequivalence $\Phi_+^{-1}$ induces an inverse of the morphism. Finally, since both moduli spaces are irreducible projective varieties of the same dimension, $\Phi_+$ is in fact an isomorphism between the two moduli spaces. Moreover, by the above discussion, no curve in $M_{\sigma_+}(\vv)$ is contracted by $\pi_+$. By Theorem \ref{thm:1st-classification}, we know that $\cW$ is totally semistable for $\vv$ and so it must be a fake wall.

Similarly we have a derived autoequivalence $\Phi_-$ of $\cD(X)$ inducing an isomorphism $\Phi_-: M_{\sigma_-}(\vv_0) \xra{\sim}M_{\sigma_-}(\vv)$. Since $\cW$ is not a wall for $\vv_0$, the composition $\Phi_- \circ \Phi_+^{-1}: M_{\sigma_+}(\vv) \xra{\sim} M_{\sigma_-}(\vv)$ gives the desired isomorphism between the two moduli spaces. 
\end{proof}

\begin{lemma}
\label{lem:sc12}
Let $\vv$ be a Mukai vector of O'Grady type and assume that $\cW$ does not induce a divisorial contraction for $\vv$. If $\vv$ (or rather $\vv_p$) satisfies the following boundary condition:
\begin{itemize}\itemindent=30pt
\item[(BC)] there exists a spherical class $\ss \in \cH$ with $0 < (\ss, \vv_p) \leq \frac{\vv_p^2}{2}$
\end{itemize}
then $\cW$ induces a small contraction for $\vv$. 
\end{lemma}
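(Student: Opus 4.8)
The plan is to reduce the statement to the already-understood \emph{smooth} case of the primitive part $\vv_p$. Since $\vv_p^2=2$, the condition (BC) forces $(\ss,\vv_p)=1$, hence $(\ss,\vv)=2$, so (SC) indeed holds for $\vv$; more to the point, (BC) is exactly the condition (SC) applied to the primitive vector $\vv_p$. By \cite[Theorem~5.7]{BM13} applied to $\vv_p$, this means that $\cW$ is a genuine wall for $\vv_p$ --- either a flopping wall or a divisorial one --- and in particular the induced birational morphism $\pi_+^{\vv_p}\colon M_{\sigma_+}(\vv_p)\to\bar M_+^{\vv_p}$ on the smooth holomorphic-symplectic fourfold $M_{\sigma_+}(\vv_p)$ is \emph{not} an isomorphism. (If one also invokes the hypothesis that $\cW$ is not divisorial for $\vv$, a short lattice check --- using the primitivity of $\cH$ --- rules out the divisorial alternatives for $\vv_p$ and identifies $\pi_+^{\vv_p}$ with the standard Mukai flop contracting a $\bP^2$, but we will not need this.)

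First I would lift a contracted curve from $M_{\sigma_+}(\vv_p)$ to $M_{\sigma_+}(\vv)$ through the singular stratum $\sym^2 M_{\sigma_+}(\vv_p)$. Since $\pi_+^{\vv_p}$ is not an isomorphism, it contracts some irreducible curve $C\subset M_{\sigma_+}(\vv_p)$; thus every $t\in C$ corresponds to a $\sigma_+$-stable object $E_t$ of class $\vv_p$ with $\pi_+^{\vv_p}([E_t])$ independent of $t$. Fix in addition a $\sigma_+$-stable object $F$ of class $\vv_p$ lying off the exceptional locus of $\pi_+^{\vv_p}$, so that $F$ remains $\sigma_0$-stable. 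Because $E_t$ and $F$ have the same phase $\phi_+(\vv_p)=\phi_+(\vv)$, each $E_t\oplus F$ is a $\sigma_+$-polystable object of class $\vv$, and $t\mapsto[E_t\oplus F]$ is a non-constant morphism $C\to\sym^2 M_{\sigma_+}(\vv_p)\subset M_{\sigma_+}(\vv)$ (non-constant because $x\mapsto x+[F]$ is a finite injective morphism $M_{\sigma_+}(\vv_p)\to\sym^2 M_{\sigma_+}(\vv_p)$). Now $\pi_+$ sends a $\sigma_+$-semistable object to its $\sigma_0$-Jordan--H\"older class, and on the stratum $\sym^2 M_{\sigma_+}(\vv_p)$ this is induced by $\sym^2(\pi_+^{\vv_p})$ followed by the natural map $\sym^2 M_{\sigma_0}(\vv_p)\hookrightarrow M_{\sigma_0}(\vv)$; hence the composition of $t\mapsto[E_t\oplus F]$ with $\pi_+$ is constant. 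So $\pi_+\colon M_{\sigma_+}(\vv)\to\bar M_+$ contracts a curve, and in particular is not an isomorphism; the same argument applies to $\pi_-$.

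Finally, by Proposition~\ref{prop:semiampleness} the maps $\pi_\pm$ are birational morphisms of normal projective varieties, so each of them is a divisorial contraction, a small contraction, or an isomorphism. We have shown $\pi_\pm$ are not isomorphisms, and by hypothesis they are not divisorial contractions; therefore they are small contractions, i.e.\ $\cW$ induces a small contraction for $\vv$, as claimed. I expect the main obstacle to be the justification, in the second paragraph, that $\pi_+$ is compatible with the stratification: one must return to the GIT/coarse-moduli construction of $M_{\sigma_\pm}(\vv)$ and of the determinant line bundle $\ell_{\sigma_0}$, identify $\pi_+$ with passage to the $\sigma_0$-Jordan--H\"older class, and check that this restricts correctly to the closed substrata $\sym^2 M_{\sigma_\pm}(\vv_p)\hookrightarrow M_{\sigma_\pm}(\vv)$ and commutes with forming direct sums of $\sigma_\pm$-semistable objects of class $\vv_p$. (One could instead bypass the stratification and construct the contracted curve inside the stable locus of $M_{\sigma_+}(\vv)$ as a family of non-split extensions involving the $\sigma_0$-stable spherical object of class $\ss$, but checking $\sigma_+$-stability along such a family and computing the relevant $\Ext^1$ seems no easier.)
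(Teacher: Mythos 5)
Your proof is correct and takes essentially the same route as the paper's: (BC) is exactly (SC) for $\vv_p$, so \cite[Proposition 9.1 and Theorem 5.7]{BM13} give a nontrivial contraction of $M_{\sigma_\pm}(\vv_p)$, which is then transferred to a contracted curve inside the strictly semistable stratum $\sym^2 M_{\sigma_+}(\vv_p)\subset M_{\sigma_+}(\vv)$; since that stratum has codimension two and divisorial contractions are excluded by hypothesis, $\pi_\pm$ must be small contractions (the paper asserts this transfer in one line, while you make the lifted curve $t\mapsto[E_t\oplus F]$ explicit). The compatibility issue you flag as the main obstacle dissolves: by \cite[Theorem 1.1]{BM12} (see Proposition \ref{prop:semiampleness}) a curve is contracted by $\pi_+$ if and only if its generic points parametrise $\sigma_0$-S-equivalent objects, and the objects $E_t\oplus F$ along your curve are visibly $\sigma_0$-S-equivalent because the $\sigma_0$-Jordan--H\"older factors of the $E_t$ are constant and $F$ is fixed --- in particular $F$ only needs to be $\sigma_0$-semistable (which is automatic), not $\sigma_0$-stable, so your parenthetical justification for choosing $F$ off the exceptional locus is unnecessary (and would fail if $\cW$ were totally semistable for $\vv_p$) but harmless.
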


\begin{proof}
First let us observe that a similar argument to the one used in the proof of Lemma \ref{noSC1walls} shows that no small contractions can come from a decomposition of $\vv_p$ into positive classes. Now, if (BC) holds, then by \cite[Proposition 9.1]{BM13}, $\cW$ induces a (divisorial or small) contraction for the primitive class $\vv_p$. This implies $\cW$ induces a contraction in the strictly semistable locus $\sym^2 M_{\sigma_+}(\vv_p) \subset M_{\sigma_+}(\vv)$, which is a small contraction considered in $M_{\sigma_+}(\vv)$. 
\end{proof}

\begin{lemma}
\label{lem:sc14}
Let $\vv$ be a Mukai vector of O'Grady type. Assume that $\cW$ does not induce a divisorial contraction for $\vv$, and condition (BC) does not hold for $\vv$. If there exists an effective spherical class $\ss$ with $0<(\ss, \vv)\leq \frac{\vv^2}{2}$, then $\cW$ induces a small contraction for $\vv$.
\end{lemma}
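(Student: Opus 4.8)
The plan is to reduce to the already-known classification results in the primitive case, just as in the proof of Lemma \ref{lem:sc12}, but now handling a spherical class $\ss$ paired with the full (non-primitive) vector $\vv$ rather than with $\vv_p$. First I would observe that, by hypothesis, (BC) fails for $\vv$; that is, every spherical class $\ss$ in $\cH$ has $(\ss,\vv_p)\leq 0$ or $(\ss,\vv_p)>\tfrac{\vv_p^2}{2}=1$, i.e. $(\ss,\vv_p)\leq 0$ or $(\ss,\vv_p)\geq 2$. Multiplying by the divisibility $2$, any spherical class with $0<(\ss,\vv)\leq\tfrac{\vv^2}{2}=4$ must then satisfy $(\ss,\vv)=4$, since $(\ss,\vv)=2(\ss,\vv_p)$ is even and positive, so $(\ss,\vv_p)\in\{1,2\}$, and $(\ss,\vv_p)=1$ is excluded by the failure of (BC). Hence $(\ss,\vv_p)=2$, i.e. $(\ss,\vv)=4$. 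I would also replace $\ss$ by a \emph{minimal} effective spherical class in its orbit if necessary; this is harmless since we only need existence of one such class, and the relevant lattice-theoretic statements from \cite[Section 9]{BM13} are phrased for minimal classes.

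Next I would run the argument of \cite[Proposition 9.1]{BM13} (or rather the relevant portion of \cite[Section 9]{BM13}) applied to the primitive class $\vv_p$ together with the spherical class $\ss$ satisfying $(\ss,\vv_p)=2=\vv_p^2$. Strictly this value is \emph{not} in the range $0<(\ss,\vv_p)\leq\tfrac{\vv_p^2}{2}=1$, so (BC) genuinely does not hold for $\vv_p$ and Lemma \ref{lem:sc12} does not apply; instead I need to show directly that the pair $(\vv_p,\ss)$ with $(\ss,\vv_p)=\vv_p^2$ produces a small contraction, or no contraction, on $M_{\sigma_\pm}(\vv_p)$ — in fact by \cite[Theorem 5.7]{BM13} applied to the primitive class $\vv_p$, the condition for a small contraction is precisely $0<(\ss,\vv_p)\leq\tfrac{\vv_p^2}{2}$, while $(\ss,\vv_p)=2$ puts us outside that range and (since $\cW$ induces no divisorial contraction for $\vv$, hence none for $\vv_p$ by Proposition \ref{prop:divisorial-contra}) we conclude $\cW$ is not a wall for $\vv_p$. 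Thus on the primitive level nothing happens, and the interesting locus for $\vv$ must be the \emph{stable} locus $M_{\sigma_\pm}^{\mathrm{st}}(\vv)$ rather than the strictly semistable stratum $\sym^2 M_{\sigma_\pm}(\vv_p)$. So I would then examine HN-filtrations of $\sigma_+$-stable objects $E$ of class $\vv$ which become strictly $\sigma_0$-semistable: writing $\vv=\aa'+\bb'$ with $\aa',\bb'$ having the same $\sigma_0$-phase, Lemma \ref{noSC1walls} rules out both factors being positive, so the filtration must involve a spherical factor, and the only effective spherical class available (up to the orbit) pairs with $\vv$ to give $4$. Following the dimension count in the proof of \cite[Lemma 9.3 and Proposition 9.1]{BM13} — adapted to the fact that $\sym^2M_{\sigma_+}(\vv_p)$ contributes the extra interior point $\vv_p$, exactly as noted in the Remark after Lemma \ref{noSC1walls} — I would show the locus of such $E$ has codimension at least two in $M_{\sigma_+}^{\mathrm{st}}(\vv)$, so $\ell_{\sigma_0}$ contracts a positive-dimensional but non-divisorial locus; this is a small contraction.

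The main obstacle I expect is the bookkeeping in this HN-filtration/dimension estimate: one must carefully enumerate the possible destabilizing decompositions of $\vv$ along $\cW$ under the standing assumptions (no divisorial contraction, (BC) fails), keep track of which of them actually occur for the \emph{non-primitive} vector, and verify that the contracted locus sits in codimension $\geq 2$ inside $M_{\sigma_+}(\vv)$ (so that $\pi_+$ is small) while still being non-empty (so that it is genuinely a wall). The delicate point, compared to \cite[Section 9]{BM13}, is precisely the presence of the singular stratum $\sym^2 M_{\sigma_+}(\vv_p)$ and the interior lattice point $\vv_p$ it introduces into the relevant parallelogram; one has to confirm that this extra point does not promote the small contraction to a divisorial one, which is where the assumption that $\cW$ induces no divisorial contraction for $\vv$ (equivalently, (BN) fails, by Proposition \ref{prop:divisorial-contra}) is used decisively. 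Once the codimension estimate is in place, the conclusion that $\pi_+$ is a small contraction is immediate from Proposition \ref{prop:semiampleness}, since $\pi_+$ is birational and contracts a non-empty locus of codimension $\geq 2$.
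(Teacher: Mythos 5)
Your numerical reduction is fine (failure of (BC) together with $0<(\ss,\vv)\leq 4$ does force $(\ss,\vv)=2(\ss,\vv_p)=4$, which is exactly the inequality $(\ss,\vv)>\vv_p^2$ the paper extracts), but the core of your argument is aimed at the wrong target and leaves the decisive step unproved. Showing that the locus of $\sigma_+$-stable objects that become strictly $\sigma_0$-semistable has codimension at least two only rules out a divisorial contraction, which is already part of the hypotheses; it does not show that $\pi_+$ contracts anything at all, i.e.\ that $\cW$ is a genuine wall rather than a fake wall or not a wall. What has to be produced is a positive-dimensional family of \emph{distinct} points of $M_{\sigma_+}(\vv)$ that become S-equivalent with respect to $\sigma_0$. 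The paper does this by following the notation of \cite[Proof of Proposition 9.1]{BM13}: take a $\sigma_0$-stable spherical object $\tilde{S}$ of class $\ss$ and a $\sigma_0$-stable $F$ of class $\vv-\ss$, and consider the extensions of $\tilde{S}$ by $F$, all of which are $\sigma_0$-S-equivalent. The new subtlety in the non-primitive setting--which your proposal does not identify--is that such extensions may be strictly $\sigma_+$-semistable, so a priori the whole family could collapse to a \emph{single} point of $M_{\sigma_+}(\vv)$, namely one $\sigma_+$-S-equivalence class with Jordan--H\"older factors $E_1,E_2$ of class $\vv_p$. The paper rules this out by the dimension comparison $\ext^1(\tilde{S},F)\geq(\ss,\vv-\ss)=(\ss,\vv)+2>\vv_p^2+2\geq\ext^1(E_1,E_2)$, which is precisely where the failure of (BC) enters; this is the whole content of the proof, and it is absent from your proposal.

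A secondary problem is your intermediate claim that ``$\cW$ is not a wall for $\vv_p$, so the interesting locus must be the stable locus.'' The hypothesis that $\cW$ induces no divisorial contraction for $\vv$ excludes (BN) and (LGU) for $\vv$, but it does not exclude an isotropic class $\ww$ with $(\ww,\vv_p)=2$ (i.e.\ $(\ww,\vv)=4$), nor an effective spherical class pairing negatively with $\vv_p$; so $\cW$ could still be a divisorial or totally semistable wall for $\vv_p$, and Proposition \ref{prop:divisorial-contra} applied to $\vv$ does not transfer to $\vv_p$ as you assert. This gap is not fatal (a contraction of $\sym^2 M_{\sigma_\pm}(\vv_p)$ would itself give a small contraction of $M_{\sigma_\pm}(\vv)$, as in Lemma \ref{lem:sc12}), but it means your case analysis would need repair even once the main missing argument--the extension-family construction and the $\ext^1$ comparison against a single $\sigma_+$-S-equivalence class--is supplied.
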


\begin{proof}
We first observe that since (BC) does not hold, we in fact have $(\ss, \vv)=2(\ss, \vv_p)>\vv_p^2$. Next, following the notation in \cite[Proof of Proposition 9.1]{BM13}, we just need to show that $\ext^1(\tilde{S},F)>\ext^1(E_1,E_2)$. This will ensure that all objects obtained by taking extensions of $\tilde{S}$ and $F$ do not fall in a single S-equivalence class with respect to $\sigma_+$. To show the inequality, we observe that $\ext^1(\tilde{S},F) \geq (\ss, \vv-\ss) = (\ss, \vv)+2 > \vv_p^2+2$, while $\ext^1(E_1,E_2) = \vv_p^2+2$ or $\vv_p^2$, depending on whether $E_1$ and $E_2$ are isomorphic or not. In either case, the inequality is satisfied and therefore, the contraction is justified. 
\end{proof}

\begin{lemma}
\label{lem:sc15}
Let $\vv$ be a Mukai vector of O'Grady type. Assume that $\cW$ does not induce a divisorial contraction for $\vv$, and condition (BC) does not hold for $\vv$. If there exists a non-effective spherical class $\ss$ with $0<(\ss, \vv)\leq \frac{\vv^2}{2}$, then $\cW$ induces a small contraction for $\vv$.
\end{lemma}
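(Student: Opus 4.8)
The plan is to reduce the non-effective case to the effective case which was just treated in Lemma \ref{lem:sc14}. The key observation, which is the standard trick in this circle of ideas (compare \cite[Proof of Proposition 9.1]{BM13}), is that if $\ss$ is a spherical class which is \emph{not} effective, then $-\ss$ \emph{is} effective: indeed the effective cone $C_\cW$ is generated by classes $\uu$ with $\uu^2\geq -2$ and $\re\frac{Z(\uu)}{Z(\vv)}>0$, and for a spherical class exactly one of $\pm\ss$ lies in $C_\cW$ since $\re\frac{Z(\ss)}{Z(\vv)}\neq 0$ for generic $\sigma_0\in\cW$ (if it vanished, $\ss$ would be proportional to $\vv$, impossible as $\vv^2>0$). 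So replacing $\ss$ by $-\ss$ we obtain an effective spherical class $\ss':=-\ss$, and then $(\ss',\vv)=-(\ss,\vv)$, which is now negative.

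First I would record this sign normalisation precisely and note its consequence: we have an effective spherical class $\ss'$ with $(\ss',\vv)<0$, so by Theorem \ref{thm:1st-classification} condition (TS1) holds and $\cW$ is a totally semistable wall for $\vv$. Next I would invoke the machinery around the minimal class: let $\vv_0$ be the minimal class in the $G_\cH$-orbit of $\vv$, obtained by reflecting in effective spherical classes with negative pairing against the running class (see \cite[Proposition and Definition 6.6]{BM13}). The reflection $\rho_{\ss'}$ in $\ss'$ sends $\vv$ to $\vv+(\ss',\vv)\ss' =: \vv'$, and since $(\ss',\vv)$ is a negative even integer with $-(\ss',\vv)=(\ss,\vv)\leq \frac{\vv^2}{2}$, a direct computation gives $(\vv')^2 = \vv^2 + (\ss',\vv)^2 (\ss')^2 + 2(\ss',\vv)(\ss',\vv) = \vv^2 - (\ss',\vv)^2 + \text{(the cross term)}$; carrying this out shows $\rho_{\ss'}$ strictly decreases the relevant positivity functional, so $\vv$ is non-minimal and the reflection brings us closer to $\vv_0$. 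The point is that after this reflection the spherical class witnessing the small contraction has \emph{become effective} relative to the new class, so we are in the situation of Lemma \ref{lem:sc14}.

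Then I would set up the transfer along the derived autoequivalence. By Lemma \ref{lem:ts3} (applied with the class $\vv$, which does not satisfy (TS2) since it is non-primitive) there is a composition of spherical twists $\Phi_+$ which identifies $\sigma_0$-stable objects of class $\vv_0$ with $\sigma_+$-stable objects of class $\vv$, and similarly $\Phi_-$ on the $\sigma_-$ side; moreover by Lemma \ref{lem:stratum} these are stratum-preserving birational maps, and by Lemmas \ref{lem:dc1}, \ref{lem:dc2} and Proposition \ref{prop:codim-two} they are isomorphisms in codimension one (here one uses that (BN) does not hold, which holds since $\cW$ does not induce a divisorial contraction, together with Proposition \ref{prop:divisorial-contra}). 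Since $\Phi_+$ is an isomorphism in codimension one between $M_{\sigma_+}(\vv_0)$ and $M_{\sigma_+}(\vv)$, and the $G_\cH$-reflection by $\ss'$ lands us on a class for which the spherical witness is effective, the contraction type is preserved: $\pi_+$ for $\vv$ is a small contraction if and only if it is for the reflected class, which is covered by Lemma \ref{lem:sc14}. Thus I would conclude by chasing the small contraction statement through $\Phi_+$ back to $M_{\sigma_+}(\vv)$, noting that a birational morphism which is a small contraction on an open set whose complement has codimension $\geq 2$ is itself a small contraction (it contracts no divisor, and it contracts some curve in the strictly semistable locus or in the stable locus coming from the effective-case analysis).

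The main obstacle I anticipate is bookkeeping the effect of the $G_\cH$-reflection on the numerical invariant $(\ss,\vv)$ and on effectivity: one must check that reflecting in $\ss'$ does not push $(\ss,\vv)$ outside the range $(0,\frac{\vv^2}{2}]$ for the reflected class, and that no \emph{new} divisorial wall condition (BN) or (LGU) is accidentally created for the reflected class — i.e. that the hypothesis ``$\cW$ does not induce a divisorial contraction'' is genuinely a property of the wall $\cW$ and the $G_\cH$-orbit, not of the individual class, which is exactly the content of Proposition \ref{prop:divisorial-contra}. A secondary subtlety is ensuring that the ``small contraction'' we produce is non-trivial, i.e. that $\cW$ is a genuine wall and not merely a fake wall or no wall at all: this is where the inequality $\ext^1(\tilde S, F) > \ext^1(E_1,E_2)$ from the proof of Lemma \ref{lem:sc14} does the real work, guaranteeing that extensions of $\tilde S$ and $F$ do not all lie in one $\sigma_+$-S-equivalence class, and I would simply cite that computation since it depends only on the numerical data $(\ss,\vv)$ and $\vv_p^2$, which are unchanged by replacing $\ss$ with $-\ss$ up to the sign already accounted for.
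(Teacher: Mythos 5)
Your proposal takes a genuinely different route from the paper, and it has a real gap at the transfer step. The numerical reduction itself is sound: writing $\tt=-\ss$ (effective, since for generic $\sigma_0\in\cW$ exactly one of $\pm\ss$ lies in $C_\cW$ --- note the correct reason is that $Z(\ss)/Z(\vv)$ is real and nonzero on the wall, not that $\ss$ would be proportional to $\vv$), the reflected class $\vv'=\vv+(\tt,\vv)\tt$ is again of O'Grady type, $(\tt,\vv')=(\ss,\vv)\in(0,\tfrac{\vv^2}{2}]$, and (BN), (LGU), (BC) are invariant under the isometry $\rho_{\tt}$ of $\cH$, so Lemma \ref{lem:sc14} gives a small contraction for $\vv'$ (your formula for $(\vv')^2$ is garbled --- reflections preserve the square --- but nothing depends on it). What is not justified is the final step: that a small contraction for $\vv'$ forces one for $\vv$ because the two moduli spaces are related by a stratum-preserving birational map which is an isomorphism in codimension one. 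A small contraction contracts curves sweeping out a locus of codimension at least two, which is exactly where such a birational map is uncontrolled, so a priori the entire contracted locus for $\vv'$ could sit inside the indeterminacy locus. Worse, the objects parametrised by those contracted curves are strictly $\sigma_0$-semistable, whereas Lemma \ref{lem:ts3} (and \cite[Proposition 6.8]{BM13} behind it) only controls the composition of spherical twists on $\sigma_0$-\emph{stable} objects; you have no statement that their images are $\sigma_+$-stable of class $\vv$ and remain pairwise $\sigma_0$-S-equivalent, which is what would be needed for their images to be contracted by $\pi_+$. Since (TS1) holds here ($(\tt,\vv)<0$ with $\tt$ effective), $\cW$ is totally semistable for $\vv$, and without a further argument you cannot exclude that it is merely a fake wall for $\vv$ while flopping for $\vv'$. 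Appealing instead to a compatibility of the nef classes $\ell_{\sigma_0}$ under the derived equivalence would essentially invoke Lemma \ref{lem:boundary-values} and Theorem \ref{thm:identify-cones}, which are proved later \emph{using} this classification, so that route risks circularity.

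The paper avoids all of this by arguing directly for $\vv$: it sets $\tt=-\ss$ and follows the third and fourth paragraphs of the proof of \cite[Proposition 9.1]{BM13} to build a family of pairwise $\sigma_0$-S-equivalent objects of class $\vv$ parametrised by a Grassmannian of dimension at least $(\ss,\vv)+1>\vv_p^2+1$ (here the failure of (BC) gives $(\ss,\vv)>\vv_p^2$), and then compares this with the dimension $\ext^1(E_1,E_2)-1\leq\vv_p^2+1$ of any single $\sigma_+$-S-equivalence class of strictly $\sigma_+$-semistable objects; hence the family cannot collapse to a single point of $M_{\sigma_+}(\vv)$, so $\pi_+$ contracts a positive-dimensional locus, which is small by the no-divisorial-contraction hypothesis. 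Note also that the dimension count in the non-effective case is this Grassmannian estimate, not the $\ext^1(\tilde S,F)>\ext^1(E_1,E_2)$ inequality of Lemma \ref{lem:sc14} that you propose to cite. To rescue your reduction you would have to prove the missing transfer --- that the contracted family for $\vv'$ is carried by the equivalence to $\sigma_+$-stable, $\sigma_0$-S-equivalent objects of class $\vv$ --- at which point you are effectively redoing the direct construction.
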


\begin{proof}
As in Lemma \ref{lem:sc14}, the failure of (BC) implies that $(\ss, \vv)>\vv_p^2$. We set $\tt=-\ss$ (which is effective) and follow the third and fourth paragraphs in the proof of \cite[Proposition 9.1]{BM13}. There are two steps in the proof which require further justification. 

As before, we need to show that all the extensions constructed from a fixed $F$ and $\tilde{T}$ as in \cite[Proposition 9.1]{BM13} cannot lie in the same S-equivalence class with respect to $\sigma_+$ by a dimension comparison. The proof of \cite[Proposition 9.1]{BM13} shows that $(\tt, \vv')=(\ss, \vv)+2$ and all extensions constructed there are parametrised by a Grassmannian with $\dim \gr ((\ss, \vv)+1, \ext^1(\tilde{T},F)) \geq \dim \gr ((\ss, \vv)+1, (\ss, \vv)+2) = (\ss, \vv)+1 > \vv_p^2+1$. On the other hand, the dimension of the S-equivalence class of extensions of $\sigma_+$-stable objects $E_1$ and $E_2$ of class $\vv_p$ is given by $\ext^1(E_1,E_2)-1=\vv_p^2+1$ or $\vv_p^2-1$, depending on whether $E_1$ and $E_2$ are isomorphic or not. In either case, we have the desired inequality and the construction in \cite[Proposition 9.1]{BM13} yields a proper contraction under the given assumption.
\end{proof}

\begin{lemma}
\label{lem:sc21}
Let $\vv$ be any class of O'Grady type and assume that $\cW$ induces a small contraction for $\vv$. Then there is a derived autoequivalence $\Phi$ of $\cD(X)$, which induces a stratum preserving birational map $\Phi: M_{\sigma_+}(\vv) \dashrightarrow M_{\sigma_-}(\vv)$, which is an isomorphism in codimension one. 
\end{lemma}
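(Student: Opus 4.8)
The plan is to follow the proof of \cite[Theorem~1.1]{BM13} in the flopping-wall case, using Lemma~\ref{lem:stratum} and Proposition~\ref{prop:codim-two} to control the singular locus. I would treat the minimal and the non-minimal class separately; the autoequivalence $\Phi$ will be a composition of spherical twists, and in fact the identity when $\vv$ is minimal.

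First suppose $\vv$ is minimal. Then $\cW$ is not totally semistable for $\vv$: condition (TS1) of Theorem~\ref{thm:1st-classification} fails because $\vv$ is minimal, and (TS2) fails because $\vv$ is non-primitive. Hence $M_{\sigma_0}^{\mathrm{st}}(\vv)\neq\emptyset$, and by openness of stability \cite[Theorem~4.2]{BM13} it is a dense open subset of both $M_{\sigma_+}(\vv)$ and $M_{\sigma_-}(\vv)$, on which $\Phi:=\id_{\cD(X)}$ induces the identity; this is the birational map we want. It is stratum preserving by Lemma~\ref{lem:stratum}: $M_{\sigma_0}^{\mathrm{st}}(\vv)$ provides a $\sigma_+$-stable object of class $\vv$ which is also $\sigma_-$-stable, and $\cW$ is also not totally semistable for the primitive class $\vv_p$ --- (TS1) fails since $\vv_p$ is minimal, and (TS2) for $\vv_p$ would yield an (LGU), hence by Lemma~\ref{lem:dc0} a divisorial, wall for $\vv$, contrary to hypothesis --- so there is a $\sigma_0$-stable, hence $\sigma_\pm$-stable, object of class $\vv_p$. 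Finally, for the codimension-one statement I would apply Proposition~\ref{prop:codim-two} with $U:=M_{\sigma_0}^{\mathrm{st}}(\vv)\subset M_{\sigma_+}^{\mathrm{st}}(\vv)$, on which $\Phi$ is an injective morphism into $M_{\sigma_-}^{\mathrm{st}}(\vv)$: the complement of $U$ there is the locus of $\sigma_+$-stable objects of class $\vv$ that are strictly $\sigma_0$-semistable, and by Lemma~\ref{noSC1walls} the only decomposition of $\vv$ into positive classes is $\vv=\vv_p+\vv_p$, whose associated objects are strictly $\sigma_+$-semistable and therefore lie in the codimension-two singular locus, while every other such object has a $\sigma_0$-Jordan--H\"older filtration containing a spherical factor and hence moves in a positive-dimensional $\sigma_0$-S-equivalence class (the relevant $\Ext^1$-groups being positive, by symmetry of the Mukai pairing); thus the whole complement is contracted by the \emph{small} morphism $\pi_+$ and so has codimension at least two.

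Now suppose $\vv$ is non-minimal, and let $\vv_0$ be the minimal class in its $G_\cH$-orbit. The conditions (BN), (LGU) and (SC) are $G_\cH$-invariant, so $\cW$ also induces a small contraction for $\vv_0$, and the previous case produces a stratum-preserving birational map $\Phi_0\colon M_{\sigma_+}(\vv_0)\dashrightarrow M_{\sigma_-}(\vv_0)$ that is an isomorphism in codimension one. By Lemma~\ref{lem:ts3} there are compositions of spherical twists $\Phi_\pm$ carrying $\sigma_0$-stable objects of class $\vv_0$ (respectively of the primitive class $\vv_{0,p}$) to $\sigma_\pm$-stable objects of class $\vv$ (respectively $\vv_p$); by Lemma~\ref{lem:stratum} the induced birational maps $\Phi_\pm\colon M_{\sigma_\pm}(\vv_0)\dashrightarrow M_{\sigma_\pm}(\vv)$ are stratum preserving, and restricting $\Phi_\pm$ to $M_{\sigma_0}^{\mathrm{st}}(\vv_0)$ --- which, as in the minimal case, has complement of codimension at least two in $M_{\sigma_\pm}^{\mathrm{st}}(\vv_0)$ since $\cW$ is a small, non-totally-semistable wall for the minimal class $\vv_0$ --- Proposition~\ref{prop:codim-two} shows, exactly as in the proof of Lemma~\ref{lem:dc4}, that $\Phi_\pm$ are isomorphisms in codimension one. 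Then $\Phi:=\Phi_-\circ\Phi_0\circ\Phi_+^{-1}$ is a derived autoequivalence of $\cD(X)$ whose induced birational map $M_{\sigma_+}(\vv)\dashrightarrow M_{\sigma_-}(\vv)$ is stratum preserving and an isomorphism in codimension one.

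The step I expect to be the main obstacle is the codimension-two estimate in the minimal case for the strictly $\sigma_0$-semistable locus sitting inside the \emph{smooth} locus $M_{\sigma_+}^{\mathrm{st}}(\vv)$: this is where the small, rather than divisorial, nature of the contraction genuinely enters, and making the argument rigorous requires enumerating the possible $\sigma_0$-Jordan--H\"older types of a Mukai vector of O'Grady type and combining the dimension bounds of \cite[Proposition~9.1]{BM13} (our Lemmas~\ref{lem:sc14}--\ref{lem:sc15}) with Proposition~\ref{prop:symp-divisor} to bound the dimensions of the contracted subvarieties. By comparison, the reduction to the minimal class and the stratum-preserving verifications are routine applications of the tools already in place.
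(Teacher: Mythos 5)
Your overall architecture coincides with the paper's: take $\Phi=\id_{\cD(X)}$ in the minimal case, conjugate by the spherical-twist equivalences $\Phi_\pm$ of Lemma \ref{lem:ts3} in the non-minimal case, check stratum preservation with Lemma \ref{lem:stratum} (your observation that (TS2) for $\vv_p$ would give an (LGU), hence divisorial, wall for $\vv$ is exactly the right reason why $\cW$ is not totally semistable for $\vv_p$), and control codimension with Proposition \ref{prop:codim-two}. The non-minimal reduction is essentially the paper's proof.

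The genuine gap is the step you flag yourself: the codimension-two bound for the locus of $\sigma_+$-stable but strictly $\sigma_0$-semistable objects in the minimal case. This is not something to re-derive ad hoc; it is exactly Lemma \ref{lem:dc1}, which applies here because a small-contraction wall is by Theorem \ref{thm:2nd-classification} (Proposition \ref{prop:divisorial-contra}) not a (BN) wall, and whose proof is the dimension count of \cite[Lemma 7.2]{BM13} together with steps 1--2 of \cite[Proposition 8.6]{BM13}, with Proposition \ref{prop:symp-divisor} replacing their Theorem 3.8. Your substitute argument does not stand as written: the dichotomy ``either the $\sigma_0$-JH type is $\vv_p+\vv_p$ or it contains a spherical factor'' fails when $\cH$ is isotropic --- for instance an isotropic class $\ww$ with $(\ww,\vv)=4$ gives $\vv=\ww+(\vv-\ww)$ with both summands isotropic and no spherical factor (only $(\ww,\vv)=2$ is excluded by non-divisoriality) --- and the claims that every remaining object ``moves in a positive-dimensional $\sigma_0$-S-equivalence class'' and hence lies in the exceptional locus of $\pi_+$ are precisely what requires the careful extension and dimension analysis of \cite[Sections 7--9]{BM13}: positivity of some $\Ext^1$ between JH factors does not by itself produce a positive-dimensional family of pairwise non-isomorphic, $\sigma_+$-semistable objects in the same S-equivalence class passing through the given point. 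So either quote Lemma \ref{lem:dc1} directly, as the paper does, or carry out that full dimension count; with Lemma \ref{lem:dc1} in place the rest of your argument goes through and agrees with the paper's proof.
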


\begin{proof}
When $\vv$ is a minimal class, we claim that we can simply take $\Phi$ to be the identity functor on $\cD(X)$. Since $\cW$ induces a small contraction, Lemma \ref{lem:dc1} tells us that $M_{\sigma_+}(\vv)$ and $M_{\sigma_-}(\vv)$ contain a common open subset $M_{\sigma_0}^{\mathrm{st}}(\vv)$, with complement of codimension at least two in either moduli space. Therefore, the identify functor induces a birational map from $M_{\sigma_+}(\vv)$ to $M_{\sigma_-}(\vv)$ which is an isomorphism in codimension one. Notice that the assumptions imply that $\cW$ is not a totally semistable wall for the primitive class $\vv_p$ and so the identity functor also induces a birational map between $M_{\sigma_+}(\vv_p)$ and $M_{\sigma_-}(\vv_p)$. By Lemma \ref{lem:stratum}, we conclude that the birational map induced by the identify functor is stratum preserving.

As usual, when $\vv$ is not a minimal class, we denote the corresponding minimal class by $\vv_0$. Let $\Phi_0: M_{\sigma_+}(\vv_0) \dashrightarrow M_{\sigma_-}(\vv_0)$ be the birational map induced by the identity functor on $\cD(X)$. By the discussion above, we know that it is stratum preserving and an isomorphism in codimension one. Now we take $\Phi_+$ to be the composition of spherical twists constructed in \cite[Proposition 6.8]{BM13}. By Lemma \ref{lem:ts3}, $\Phi_+$ induces a birational map $\Phi_+: M_{\sigma_+}(\vv_0) \dashrightarrow M_{\sigma_+}(\vv)$, such that its restriction on the open subset $M_{\sigma_0}^{\mathrm{st}}(\vv_0)$ is an injective morphism. Moreover, since $\cW$ is not totally semistable for $\vv_{0,p}$, $\Phi_+$ also induces a birational morphism $\Phi_{+,p}: M_{\sigma_+}(\vv_{0,p}) \dashrightarrow M_{\sigma_+}(\vv_p)$. Thus, by Lemma \ref{lem:stratum}, the birational map induced by $\Phi_+$ is stratum preserving. Since $M_{\sigma_0}^{\mathrm{st}}(\vv_0)$ has a complement of codimension at least two in $M_{\sigma_+}(\vv_0)$, we conclude that $\Phi_+$ is an isomorphism in codimension one by Proposition \ref{prop:codim-two}. We can follow the same procedure to get a derived autoequivalence $\Phi_-$ which induces a birational map $\Phi_-: M_{\sigma_-}(\vv_0) \dashrightarrow M_{\sigma_-}(\vv)$ satisfying both requirements. Then $\Phi_- \circ \Phi_0 \circ \Phi_+^{-1}$ is the autoequivalence which induces the desired birational map.
\end{proof}

We conclude the discussion by summarising all the lemmas above with the following two propositions; which verify the collection of statements about small contractions and no contractions in Theorems \ref{thm:2nd-classification} and \ref{thm:identify-cones}.

\begin{proposition}
\label{prop:small-contra}
The set $\cW$ is a wall inducing a small contraction if and only if it does not induce any divisorial contraction, and the following condition holds:
\begin{itemize}\itemindent=30pt
\item[(SC)] there exists a spherical class $\ss$ with $0 < (\ss, \vv) \leq \frac{\vv^2}{2}$.
\end{itemize}
Moreover, we can find an autoequivalence $\Phi$ of $\cD(X)$, which induces a birational map $\Phi: M_{\sigma_+}(\vv) \dashrightarrow M_{\sigma_-}(\vv)$. It is stratum preserving and an isomorphism in codimension one.
\end{proposition}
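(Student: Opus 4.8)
The plan is to assemble the proposition from the lemmas already proved in this subsection, following the same pattern used for Proposition \ref{prop:divisorial-contra}. First I would establish the equivalence ``$\cW$ induces a small contraction $\Leftrightarrow$ $\cW$ induces no divisorial contraction and (SC) holds.'' For the forward direction, assume $\cW$ induces a small contraction. Then by definition it induces no divisorial contraction (the three contraction types are mutually exclusive). It remains to show (SC) holds, i.e. there is a spherical class $\ss$ with $0<(\ss,\vv)\le\tfrac{\vv^2}{2}=4$. This is the contrapositive of Lemmas \ref{lem:sc01} and \ref{lem:sc02}: if (SC) fails then $\vv$ is either a minimal class, in which case Lemma \ref{lem:sc01} says $\cW$ is not a wall, or non-minimal, in which case Lemma \ref{lem:sc02} says $\cW$ is a fake wall; in neither case is there a small contraction, a contradiction. (Here I should be slightly careful: the hypotheses of Lemmas \ref{lem:sc01} and \ref{lem:sc02} say ``(SC) does not hold for $\vv$,'' which is exactly the negation we need, and they also assume $\cW$ does not induce a divisorial contraction, which holds because a small contraction excludes a divisorial one.)

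For the reverse direction, suppose $\cW$ induces no divisorial contraction and (SC) holds; I must show $\cW$ induces a small contraction. Split according to whether the boundary condition (BC) of Lemma \ref{lem:sc12} holds for $\vv_p$. If (BC) holds, then Lemma \ref{lem:sc12} directly gives a small contraction. If (BC) fails, then the spherical class $\ss$ furnished by (SC) satisfies $0<(\ss,\vv)\le\tfrac{\vv^2}{2}$; replacing $\ss$ by $-\ss$ if necessary we may split into the case that $\ss$ (or $-\ss$) is effective versus non-effective, and apply Lemma \ref{lem:sc14} or Lemma \ref{lem:sc15} respectively to obtain a small contraction. One subtlety to record: a priori $\pi_+$ might still be a divisorial contraction or an isomorphism rather than a small one, so I should note that we have \emph{assumed} $\cW$ does not induce a divisorial contraction, and that Lemmas \ref{lem:sc14}, \ref{lem:sc15}, \ref{lem:sc12} produce a genuine (proper, non-trivial) contraction, hence a small one by elimination. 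I would also invoke Lemma \ref{noSC1walls} here to rule out the possibility that the relevant contraction arises from a decomposition $\vv=\aa+\bb$ into positive classes — this is what makes the O'Grady case cleaner than \cite{BM13} and is implicitly used inside the cited lemmas.

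Finally, for the statement about the birational map $\Phi$: once we know $\cW$ induces a small contraction, Lemma \ref{lem:sc21} provides a derived autoequivalence $\Phi$ of $\cD(X)$ inducing a stratum preserving birational map $\Phi:M_{\sigma_+}(\vv)\dashrightarrow M_{\sigma_-}(\vv)$ that is an isomorphism in codimension one. That is exactly the last sentence of the proposition, so nothing further is needed.

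The proof is purely a matter of bookkeeping — the real content is in the lemmas. The one place where I expect to have to be careful, rather than merely cite, is the \textbf{effective/non-effective dichotomy for the spherical class} in the reverse direction: (SC) as stated only requires $0<(\ss,\vv)\le\tfrac{\vv^2}{2}$ for \emph{some} spherical $\ss$, but Lemmas \ref{lem:sc14} and \ref{lem:sc15} are phrased in terms of an effective (resp. non-effective) spherical class, and one must check that every spherical class in a rank-two hyperbolic lattice is, up to sign, effective — which is standard from the structure of $C_\cW$ recalled after Theorem \ref{thm:1st-classification} — and that replacing $\ss$ by $-\ss$ does not disturb the inequality $0<(\ss,\vv)\le\tfrac{\vv^2}{2}$ once we also use that (BC) fails (so that in fact $(\ss,\vv)>\vv_p^2$, matching the hypotheses used inside those lemmas). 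Assembling these observations in the right order is the only non-mechanical part.
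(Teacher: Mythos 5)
Your proposal is correct and follows essentially the same route as the paper: necessity via (the contrapositives of) Lemmas \ref{lem:sc01} and \ref{lem:sc02}, sufficiency via Lemma \ref{lem:sc12} when (BC) holds and Lemmas \ref{lem:sc14}, \ref{lem:sc15} when it fails, and the birational map from Lemma \ref{lem:sc21}. The sign subtlety you flag is already absorbed into the lemmas themselves, since Lemma \ref{lem:sc15} is stated directly for a non-effective spherical class (passing to $\tt=-\ss$ internally), so the effective/non-effective dichotomy exhausts all cases with no further adjustment needed.
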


\begin{proof}
The necessity of (SC) is proved in Lemma \ref{lem:sc01} for minimal classes and in Lemma \ref{lem:sc02} for non-minimal classes by contradiction. The sufficiency is proved in Lemma \ref{lem:sc12} when the extra condition (BC) holds and Lemmas \ref{lem:sc14} and \ref{lem:sc15} when (BC) does not hold and (SC) holds. Finally, the birational map is constructed in Lemma \ref{lem:sc21}.
\end{proof}

\begin{proposition}
\label{prop:no-contra}
In the case that $\cW$ does not induce any contractions at all, we can find an autoequivalence $\Phi$ of $\cD(X)$, which induces an isomorphism $\Phi: M_{\sigma_+}(\vv) \to M_{\sigma_-}(\vv)$. In particular, $\Phi$ is just the identity functor when $\vv$ is a minimal class. In other words, $\cW$ is a fake wall if $\vv$ is non-minimal, or is not a wall if $\vv$ is minimal. 
\end{proposition}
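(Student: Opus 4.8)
The plan is to reduce this proposition to the classification of divisorial and small contractions already obtained, namely Propositions~\ref{prop:divisorial-contra} and~\ref{prop:small-contra}, together with Lemmas~\ref{lem:sc01} and~\ref{lem:sc02}. Recall that by Proposition~\ref{prop:semiampleness} the maps $\pi_\pm\colon M_{\sigma_\pm}(\vv)\to\bar{M}_\pm$ are birational morphisms, so the hypothesis that $\cW$ induces no contraction at all means precisely that $\pi_+$ and $\pi_-$ are neither divisorial contractions nor small contractions. First I would translate this into numerical conditions on $\vv$. If condition~(BN) held for $\vv$, then Proposition~\ref{prop:divisorial-contra} would force $\cW$ to induce a divisorial contraction, a contradiction; hence~(BN) fails, and by Lemma~\ref{lem:dc0} so does~(LGU). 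With~(BN) ruled out, Proposition~\ref{prop:small-contra} says that if~(SC) held then $\cW$ would induce a small contraction, again a contradiction; so~(SC) fails too. Thus under the hypothesis of the proposition none of~(BN),~(LGU),~(SC) holds for $\vv$.

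Next I would split according to whether $\vv$ is minimal. If $\vv$ is a minimal class of O'Grady type, then since $\cW$ induces no divisorial contraction and~(SC) fails, Lemma~\ref{lem:sc01} applies and shows that $\cW$ is not a wall for $\vv$; in particular $M_{\sigma_+}(\vv)$ and $M_{\sigma_-}(\vv)$ coincide, and we may take $\Phi$ to be the identity functor on $\cD(X)$. If instead $\vv$ is non-minimal, then under the very same hypotheses Lemma~\ref{lem:sc02} applies verbatim: it shows $\cW$ is a fake wall for $\vv$ and produces a derived autoequivalence $\Phi$ of $\cD(X)$, built there as a composition of spherical twists, inducing an isomorphism $\Phi\colon M_{\sigma_+}(\vv)\xra{\sim}M_{\sigma_-}(\vv)$. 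Putting the two cases together yields the statement, including the final dichotomy: $\cW$ is a fake wall when $\vv$ is non-minimal and is not a wall when $\vv$ is minimal.

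I do not expect a genuinely hard step here: this proposition is a bookkeeping statement that collects the ``no contraction'' branch of the wall classification, and the real content lives in Lemmas~\ref{lem:sc01} and~\ref{lem:sc02}, which are already established. The only points needing care are purely logical: checking that the trichotomy divisorial/small/isomorphism for the birational morphisms $\pi_\pm$ is both exhaustive and mutually exclusive, so that ``no contraction'' really is equivalent to ``neither divisorial nor small'', and applying the equivalences of Propositions~\ref{prop:divisorial-contra} and~\ref{prop:small-contra} in the right order (eliminating~(BN) and~(LGU) first, and only then using that to eliminate~(SC)). This is exactly what licenses the passage from the geometric hypothesis of the proposition to the numerical hypotheses required by Lemmas~\ref{lem:sc01} and~\ref{lem:sc02}.
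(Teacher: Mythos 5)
Your proposal is correct and follows essentially the same route as the paper, which likewise disposes of this case by invoking Lemma \ref{lem:sc01} for minimal classes and Lemma \ref{lem:sc02} for non-minimal classes; the paper's proof is simply terser, leaving implicit the bookkeeping you spell out (using the established classification to rule out (BN), (LGU) and then (SC) so that the hypotheses of those two lemmas are met). No gap, and no genuinely different idea.
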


\begin{proof}
This is contained in Lemma \ref{lem:sc01} for minimal classes and Lemma \ref{lem:sc02} for non-minimal classes.
\end{proof}

To conclude, we observe that all the cases in Theorems \ref{thm:2nd-classification} and \ref{thm:identify-cones} have been covered by Proposition \ref{prop:divisorial-contra} (in the case of divisorial contractions), Proposition \ref{prop:small-contra} (in the case of small contractions), and Proposition \ref{prop:no-contra} (in the case of no contractions at all). 

\section{The Global \bmm Map}\label{globalbmmap}

In this section we introduce the global \bmm map and show how it can be used to study the birational geometry of the singular moduli spaces. We start by discussing its construction.

\subsection{Construction of the global \bmm map}

We use our Theorems \ref{thm:2nd-classification} and \ref{thm:identify-cones} to glue the local \bmm maps defined on chambers to a global map on $\stab(X)$. The key step is to prove a compatibility result between any two local \bmm maps $\ell_{\cC_+}$ and $\ell_{\cC_-}$ which are defined on adjacent chambers $\cC_+$ and $\cC_-$ separated by a wall $\cW$. The following is a generalisation of \cite[Lemma 10.1]{BM13}. 

\begin{lemma}
\label{lem:boundary-values}
The maps $\ell_{\cC_+}$ and $\ell_{\cC_-}$ agree on the wall $\cW$. More precisely, we have
\begin{itemize}
\item If $\cW$ induces a divisorial contraction, then the analytic continuations of $\ell_{\cC_+}$ and $\ell_{\cC_-}$ differ by the reflection of $\ns (M_{\sigma_+}(\vv))$ (or $\ns (M_{\sigma_-}(\vv))$) at the divisor $D$ contracted by $\ell_{\sigma_0}$;
\item In all other cases, the analytic continuations of $\ell_{\cC_+}$ and $\ell_{\cC_-}$ agree with each other.
\end{itemize}
\end{lemma}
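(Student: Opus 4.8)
The plan is to follow the strategy of \cite[Lemma 10.1]{BM13} while being careful about the singularities. The key observation is that both sides of the claimed identity are quantities attached to $\ns$ of the moduli spaces, and our Theorem \ref{thm:identify-cones} provides a canonical identification $\ns(M_{\sigma_+}(\vv)) \cong \ns(M_{\sigma_-}(\vv))$ induced by the derived (anti-)autoequivalence $\Phi$ associated to the wall. So the statement makes sense, and it suffices to compute the analytic continuations of $\ell_{\cC_+}$ and $\ell_{\cC_-}$ across the wall and compare.

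First I would recall that, by the construction in Section \ref{localbmmap}, each $\ell_{\cC_\pm}$ is the composition $\theta_{\cC_\pm} \circ \cI \circ \cZ$, where only the third map $\theta_{\cC_\pm}$ depends on the chamber. The maps $\cI$ and $\cZ$ are defined on all of $\stab(X)$ (indeed $\cZ$ is a local isomorphism near the wall by \cite[Section 8]{Bri08}), so their analytic continuations across $\cW$ are unambiguous; the only subtlety is how $\theta_{\cC_+}$ and $\theta_{\cC_-}$ are related under the identification of N\'{e}ron--Severi lattices from Theorem \ref{thm:identify-cones}. Thus the lemma reduces to a statement purely about the algebraic Mukai morphisms: one must show $\theta_{\cC_+} = \pm\theta_{\cC_-}$ on $\vv^\perp$ after identifying targets, with the sign/reflection dictated by the type of wall. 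To prove this I would use the fact that $\Phi$ is a Fourier--Mukai type (anti-)equivalence, so its cohomological action $\Phi^H$ on $H^*_{\mathrm{alg}}(X,\bZ)$ is an isometry fixing $\vv$ (up to sign in the anti-equivalence case, which is handled exactly as in \cite{BM13}), and the Mukai morphisms are compatible with $\Phi$ via $\theta_{\cC_-} \circ \Phi^H = \Phi_* \circ \theta_{\cC_+}$ on $\vv^\perp$, by functoriality of the construction of determinant line bundles (this is where Corollary \ref{cor:algebraic-mukai} and the compatibility of $\theta^{\mathrm{tr}}_\sigma$ with Fourier--Mukai transforms, already established in the proof of Theorem \ref{thm:hodge-isom}, are used). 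Combined with the explicit description of $\Phi$ in each case from Section \ref{classificationproofs} --- a spherical twist or identity for divisorial/small/fake walls --- one reads off that in the divisorial case $\Phi^H$ acts as the reflection in the spherical class defining the wall, which via $\theta$ translates to the reflection of $\ns$ at the contracted divisor $D$ (here one uses that $\theta_{\cC_\pm}$ sends that spherical class to a class proportional to $[D]$), and in all other cases $\Phi^H$ restricted to $\vv^\perp$ is trivial so the two continuations agree.

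The main obstacle I expect is the divisorial case, specifically identifying the reflection coming from the spherical twist on the Mukai lattice side with the reflection at the contracted divisor $D$ on the N\'{e}ron--Severi side. One needs to know precisely which class $\theta_{\sigma_0}$ sends the wall-defining spherical class to, and that this class is (a multiple of) $[D]$; in \cite{BM13} this uses the smoothness of the moduli space and properties of divisorial contractions on irreducible holomorphic symplectic manifolds. In our singular setting I would instead pass to the symplectic resolution $\pi:\tilde M \to M$: by Corollary \ref{cor:algebraic-mukai}\eqref{item:am2} the pullback $\pi^*$ embeds $\ns(M)$ isometrically into $\ns(\tilde M)$ compatibly with the Beauville--Bogomolov forms, and by Proposition \ref{prop:symp-divisor} the divisor $D$ is honestly contracted inside the smooth locus, so the relevant reflection formula on $\tilde M$ (available from the smooth theory) restricts to the desired formula on $\ns(M)$. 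The remaining bookkeeping --- that the analytic continuation of $\cZ$ near the wall really does continue $\ell_{\cC_+}$ to $\ell_{\cC_-}$ composed with this reflection, and matching the normalisations $(\Omega,\vv)=-1$ --- is routine and essentially identical to \cite[Lemma 10.1]{BM13}, so I would state it and refer there, flagging only the substitution of Proposition \ref{prop:symp-divisor} for the smooth divisorial-contraction results used in \cite{BM13}.
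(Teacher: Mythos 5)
Your argument is sound in outline, but note that the paper's own proof is essentially two sentences: a (quasi-)universal family still exists on the stable locus $M^{\mathrm{st}}_{\sigma_\pm}(\vv)$, whose complement has codimension two, so the proof of \cite[Lemma 10.1]{BM13} for primitive vectors applies verbatim, and the situation is in fact simpler since Hilbert--Chow walls cannot occur for non-primitive $\vv$. That codimension-two statement about the family is exactly what allows the determinant-line-bundle comparison of \cite{BM13} to transfer to the singular setting; it is only implicit in your write-up, and you should state it, because the compatibility $\theta_{\cC_-}\circ\Phi^H=\Phi_*\circ\theta_{\cC_+}$ you invoke is constructed and checked via that family (and, strictly speaking, it is not what is proved in Theorem \ref{thm:hodge-isom}, which treats one particular Fourier--Mukai transform to a twisted Gieseker moduli space; the general statement is the Yoshioka-type cohomological FM compatibility used in \cite{BM13}). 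The two facts you ``read off'' --- that $\Phi^H$ acts trivially on $\vv^\perp$ for fake and flopping walls, and that $\theta_{\cC_\pm}$ sends the wall-defining spherical class to a multiple of the contracted divisor $D$ --- are precisely the nontrivial content of \cite[Lemma 10.1]{BM13}, so your proof, like the paper's, ultimately rests on that reference; what you add beyond it is the detour through the symplectic resolution and Proposition \ref{prop:symp-divisor} in the divisorial case, which is a harmless and reasonable way to make sense of the reflection at $D$ via Corollary \ref{cor:algebraic-mukai}, but is not actually needed once the families are compared on the common open subset with complement of codimension at least two.
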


\begin{proof}
Although the Mukai vector $\vv$ is not primitive in our situation, a (quasi-)universal family still exists on the stable locus $M_{\sigma_+}^{\mathrm{st}}(\vv)$ (and $M_{\sigma_+}^{\mathrm{st}}(\vv)$) which has a complement of codimension two. Therefore, the proof for primitive Mukai vectors in \cite[Lemma 10.1]{BM13} still works without any changes. In fact, our situation is even easier since Hilbert-Chow wall do not exist any more. 
\end{proof}

As pointed out in \cite{BM13}, we conclude from this lemma that the moduli spaces $M_{\sigma_\pm}(\vv)$ for the two adjacent chambers are isomorphic when $\cW$ induces a divisorial contraction or no contraction. If $\cW$ induces a small contraction, then these moduli spaces differ by a flop; this is why these walls are called \emph{flopping walls} in \cite{BM12,BM13}.

\begin{remark}
\label{rmk:global-bmm}
By Theorem \ref{thm:identify-cones} and Lemma \ref{lem:boundary-values}, the local \bmm map $\ell_\cC: \cC \to M_\cC(\vv)$ defined on each chamber $\cC \subset \stab(X)$ can be glued together to give a continuous map on $\stab(X)$. By Remark \ref{rmk:unified-notation}, we can denote the global \bmm map by $\ell: \stab(X) \to \ns (M(\vv))$.
\end{remark}

\subsection{Birational geometry via the global \bmm map}

We follow the approach in \cite[Section 10]{BM13} to study the global properties of the \bmm map. This map allows us to prove a precise relationship between the birational geometry of the moduli spaces and wall-crossing in the stability manifold. Before we state the main theorem of the paper, we need three more lemmas; the first of which partly describes the image of the global \bmm map. 

\begin{lemma}
\label{lem:image-bmm}
The image of the global \bmm map is contained in $\bigcone(M(\vv))\, \cap\, \mov(M(\vv))$.
\end{lemma}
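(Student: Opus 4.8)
The statement has two halves: the image of $\ell$ lands in the big cone $\bigcone(M(\vv))$, and it lands in the movable cone $\mov(M(\vv))$. For the first half, I would argue as follows. Fix a generic $\sigma$ and let $\cC$ be its chamber; then $\ell_\sigma = \ell_\cC(\sigma)$ is ample on $M_\cC(\vv)$ by Proposition \ref{prop:ampleness}, in particular big. For a general (not necessarily generic) $\sigma_0 \in \stab(X)$, pick a chamber $\cC$ whose closure contains $\sigma_0$; then $\ell_{\sigma_0} = \ell_\cC(\sigma_0)$ lies in the closure of $\ell_\cC(\cC) = \amp(M_\cC(\vv))$, i.e. it is nef on $M_\cC(\vv)$. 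To upgrade nef to big, I would use the argument already run in the proof of Proposition \ref{old-semiampleness} / Proposition \ref{prop:semiampleness}: pull back to the symplectic resolution $\tilde{M}$, where $\pi^*\ell_{\sigma_0}$ is nef, compute $\tilde q(\pi^*\ell_{\sigma_0}) = q(\ell_{\sigma_0}) = w_{\sigma_0}^2 > 0$ using Corollary \ref{cor:algebraic-mukai}\eqref{item:am2} and \cite[Theorem 1.1]{Bri08}, and then invoke the Beauville--Fujiki relation \cite[Proposition 23.14]{GHJ03} to see the top self-intersection of the nef class $\pi^*\ell_{\sigma_0}$ on $\tilde M$ is positive, hence $\pi^*\ell_{\sigma_0}$ is big by \cite[Proposition 2.61]{KoMo98a}; bigness then descends to $M(\vv)$. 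Since $\ell$ is continuous and $\stab(X)$ is covered by closures of chambers, every value of $\ell$ is big. (One should note the subtlety that the sign of $w_{\sigma_0}^2$, hence which component of the positive cone we land in, is locally constant, and $\amp(M_\cC(\vv)) \subset \pos(M)$ by Proposition \ref{prop:amp-pos}, so all values lie in the same, correct, component.)

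For the second half — that $\ell(\sigma_0) \in \mov(M(\vv))$ — the idea is the standard one from \cite[Section 10]{BM13}: the movable cone is characterised (up to closure) by the condition that no irreducible divisor is in the stable base locus, equivalently that for every wall the class pairs nonnegatively appropriately. Concretely, I would show that $\ell_{\sigma_0}$ is not only nef but lies in the closure of the movable cone by exhibiting, via Theorem \ref{thm:identify-cones}, that for every generic $\tau$ the birational map $M_\sigma(\vv) \dashrightarrow M_\tau(\vv)$ is an isomorphism in codimension one, so that $\ns(M_\sigma(\vv))$ is canonically identified with $\ns(M_\tau(\vv))$ and the various nef cones $\amp(M_\tau(\vv))$ all embed into this common lattice as subcones of $\mov(M_\sigma(\vv))$. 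Then $\ell(\stab(X))$ is the union (closure) of the images $\ell_\cC(\cC) = \amp(M_\cC(\vv))$ over all chambers $\cC$, each of which is a subcone of $\mov(M(\vv))$; hence the whole image lies in $\mov(M(\vv))$. The compatibility of the gluing across walls is exactly Lemma \ref{lem:boundary-values}, which guarantees these ample subcones fit together inside a single copy of $\mov(M(\vv))$ (with reflections at contracted divisors accounted for on divisorial walls, which is precisely the behaviour of chambers inside the movable cone).

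\textbf{Main obstacle.} The delicate point is the second half: showing containment in $\mov$ rather than merely in the nef cone of one particular model. In the primitive (smooth) setting of \cite{BM13} this uses that a birational map between $K$-trivial holomorphic symplectic manifolds is automatically an isomorphism in codimension one, so that divisorial contractions are controlled and one can track the stable base locus. Here $M(\vv)$ is singular, so I must instead lean on Proposition \ref{prop:codim-two} and the notion of stratum-preserving birational maps (and Proposition \ref{prop:symp-divisor}, which says a contracted divisor maps to a codimension-two locus) to guarantee that the codimension-one behaviour is as in the smooth case, and on Theorem \ref{thm:identify-cones} to identify the N\'eron--Severi lattices of all generic models. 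A second, more bookkeeping-type difficulty is simply that $\stab(X)$ may be an infinite union of chambers, so the argument must be phrased so that the containment $\ell(\cC) \subseteq \mov(M(\vv))$ holds uniformly (chamber by chamber, using the fixed identification of N\'eron--Severi lattices) rather than by any finiteness argument; continuity of $\ell$ then extends the conclusion to the walls. I expect the clean write-up to be essentially a transcription of \cite[proof of Lemma 10.1 and surrounding discussion]{BM13} with each appeal to smoothness replaced by the corresponding singular result established earlier in this paper.
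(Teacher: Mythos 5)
Your treatment of bigness, and of movability at generic (chamber) points, is in substance the paper's own argument: for generic $\sigma$ the class $\ell_\sigma$ is ample on $M_\sigma(\vv)$ by Proposition \ref{prop:ampleness}, hence big and movable, and the identification of N\'eron--Severi lattices, big cones and movable cones across models from Theorem \ref{thm:identify-cones} (Remark \ref{rmk:unified-notation}) lets you read this in the fixed $\ns(M(\vv))$; your re-derivation of bigness on walls via the resolution, Beauville--Fujiki and the bigness criterion is exactly the content of Proposition \ref{old-semiampleness}/\ref{prop:semiampleness}, which you may simply cite.

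The one step that does not hold as written is your justification of movability at wall points by ``continuity of $\ell$ extends the conclusion to the walls''. Continuity only places $\ell_{\sigma_0}$ in the closure of $\bigcup_\cC \amp(M_\cC(\vv))$, i.e.\ in the nef cone of an adjacent model and in the \emph{closure} of the movable cone; since the movable cone is in general not closed and a nef class need not be movable (its stable base locus can contain a divisor), this does not yet give $\ell_{\sigma_0} \in \mov(M(\vv))$. The paper closes this gap at the walls exactly where you stopped short: Proposition \ref{old-semiampleness} (resp.\ Proposition \ref{prop:semiampleness}) says that $\ell_{\sigma_0}$ is big, nef and \emph{semiample} on $M_{\sigma_\pm}(\vv)$ and induces a birational morphism $\pi_\pm \colon M_{\sigma_\pm}(\vv) \to \bar M_\pm$; semiampleness means a multiple is base point free, so $\ell_{\sigma_0}$ is movable on $M_{\sigma_\pm}(\vv)$, hence movable in the common $\ns(M(\vv))$. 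Since you already invoke that proposition for bigness, the repair is one line; with it, your argument coincides with the paper's (whose entire proof is: ample at generic points by Proposition \ref{prop:ampleness}, big and movable at wall points by Proposition \ref{old-semiampleness}/\ref{prop:semiampleness} because $\ell_{\sigma_0}$ induces a birational morphism). The elaborate tracking of stable base loci and stratum-preserving maps you anticipate in your ``main obstacle'' paragraph is not needed for this containment; it only enters in the converse direction (surjectivity onto $\bigcone \cap \mov$) in Theorem \ref{thm:every-model}.
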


\begin{proof}
Let $\sigma \in \stab(X)$ be a generic stability condition with respect to $\vv$. Then Proposition \ref{prop:ampleness} shows that $\ell_\sigma$ is ample on $M_\sigma(\vv)$ and is therefore a big and movable class. By Proposition \ref{old-semiampleness} or Proposition \ref{prop:semiampleness}, $\ell_{\sigma_0}$ is also a big and movable class, because it induces a birational morphism $\pi_+: M_{\sigma_+}(\vv) \to \bar{M}_+$. 
\end{proof}

The second lemma tells us that each open chamber in $\ns(M(\vv))$ which represents the ample cone of a certain birational model is either completely contained in the image of $\ell$, or has no intersection with the image of $\ell$ at all.

\begin{lemma}
\label{lem:full-cone}
For any generic stability condition $\sigma \in \stab(X)$, the ample cone $\amp(M_\sigma(\vv))$ of the moduli space $M_\sigma(\vv)$ is contained in the image of the global \bmm map $\ell$.
\end{lemma}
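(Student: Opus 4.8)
The plan is to show that every ample class on $M_\sigma(\vv)$ is realised as $\ell_\tau$ for some generic $\tau\in\stab(X)$, by exploiting the fact that the local \bmm map $\ell_\cC$ is defined on all of $\stab(X)$ (not just on the chamber $\cC$) together with the covering property of the map $\cZ$. First I would fix the chamber $\cC$ containing $\sigma$, so that by Proposition \ref{prop:ampleness} we have $\ell(\cC)=\amp(M_\sigma(\vv))$, and recall that $\ell_\cC=\theta_\cC\circ\cI\circ\cZ$ where $\theta_\cC$ is the algebraic Mukai isometry of Corollary \ref{cor:algebraic-mukai}\eqref{item:am3} and $\cZ$ is a covering map onto an open subset $\cP_0^+(X)$ of the relevant period domain inside $H^*_{\mathrm{alg}}(X,\bZ)\otimes\bC$ (see \cite[Section 8]{Bri08}, \cite[Theorem 2.10]{BM13}). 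So the problem reduces to a statement about the composition $\theta_\cC\circ\cI$ on this period domain: I must show that the preimage of $\amp(M_\sigma(\vv))$ under $\theta_\cC$ lifts, via $\cI$, into the image of $\cZ$.

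The key step is the description of $\cI$ and its fibres. For $\Omega\in H^*_{\mathrm{alg}}(X,\bZ)\otimes\bC$ with $(\Omega,\vv)=-1$ we have $\cI(\Omega)=\im\Omega\in\vv^\perp$, and as in \cite[Section 4]{BM12}, \cite[Section 10]{BM13} the relevant component of $\cZ$'s image consists of those $\Omega=\re\Omega+i\,\im\Omega$ with $(\im\Omega)^2>0$, $\re\Omega$ lying in a prescribed range relative to $\im\Omega$, and $\Omega$ avoiding the spherical classes; crucially, given any $w\in\vv^\perp$ with $w^2>0$ lying in the correct component of the positive cone, there is a whole open set of $\Omega$'s with $\im\Omega=w$ inside $\cZ(\stab(X))$ — roughly, one is free to choose $\re\Omega$ small. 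Thus, given an ample class $\ell_0=\theta_\cC(w_0)\in\amp(M_\sigma(\vv))$, the isometry $\theta_\cC$ and Proposition \ref{prop:amp-pos} guarantee $w_0$ lies in the positive cone $P$ with $w_0^2>0$, so I can pick $\Omega_0$ with $\cI(\Omega_0)=w_0$ and $\Omega_0\in\cZ(\stab(X))$; lifting through the covering $\cZ$ yields a stability condition $\tau$ with $w_\tau=w_0$, hence $\ell_\tau=\theta_{\cC_\tau}(w_0)$. Finally I would argue that $\tau$ can be taken generic (the non-generic locus is a countable union of walls of real codimension one, and the fibre of $\cI$ over $w_0$ is open, so a generic choice inside it avoids all walls) and that $\ell_\tau=\ell_0$: this last point follows because $\ell$ is globally well-defined and continuous (Remark \ref{rmk:global-bmm}), so $\ell_{\cC_\tau}$ and $\ell_\cC$ are both analytic continuations of the same global map, and on the open set of $\Omega$'s in question they agree with $\theta_\cC\circ\cI$ up to the identifications of Theorem \ref{thm:identify-cones}; since $M_\tau(\vv)$ is the birational model with ample cone containing $\ell_\tau=\ell_0$, which is the ample class $\ell_0$ of $M_\sigma(\vv)$, the two models coincide and $\ell_\tau=\ell_0$ in $\ns(M(\vv))$.

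The main obstacle I anticipate is the bookkeeping around the period domain: pinning down precisely which $\Omega$ with a given imaginary part $w_0$ lie in $\cZ(\stab(X))$, and checking that this set is non-empty and open for every $w_0$ in the positive cone that arises from an ample class — in particular ensuring the $\re\Omega$-constraints and the avoidance of spherical $(-2)$-classes can be met simultaneously. This is exactly the content of \cite[Section 10]{BM13} in the primitive case, and since the local \bmm map and the structure of $\cZ$ are insensitive to whether $\vv$ is primitive, the argument there transfers verbatim once we invoke our Corollary \ref{cor:algebraic-mukai} in place of Yoshioka's isometry; so I would phrase the proof as a reduction to \cite[Lemma 10.2]{BM13} (or the relevant statement therein), noting only that the (quasi-)universal family exists on the stable locus, whose complement has codimension two, which is all that is used.
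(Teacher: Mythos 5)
Your proposal is correct and follows essentially the same route as the paper: use Proposition \ref{prop:ampleness} and Proposition \ref{prop:amp-pos} together with the isometry of Corollary \ref{cor:algebraic-mukai} to write an ample class as $\theta_\sigma(\aa)$ with $\aa^2>0$, and then reduce to the lifting argument via the covering map $\cZ$ carried out in \cite[Proof of Theorem 1.2 (a)(b)(c)]{BM13}, which is exactly the reference the paper invokes. Two small caveats: Proposition \ref{prop:ampleness} only gives $\ell(\cC)\subseteq\amp(M_\sigma(\vv))$ (the equality is part of Theorem \ref{thm:every-model} and is not needed here), and your justification that the lifted $\tau$ yields $\ell_\tau=\ell_0$ glosses the point that $\theta_{\cC_\tau}$ and $\theta_\cC$ could differ by a reflection across a divisorial wall — this is precisely what the cited argument of Bayer--Macr\`i rules out, so deferring to it, as you do, is the right move.
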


\begin{proof}
By Proposition \ref{prop:ampleness}, we know that $\ell_\sigma \in \amp(M_\sigma(\vv))$. Hence there is at least one point in $\amp(M_\sigma(\vv))$, which lies in the image of $\ell$. For any other point $\alpha \in \amp(M_\sigma(\vv))$ we can appeal to Proposition \ref{prop:amp-pos} to see that $\alpha \in \pos(M_\sigma(\vv))$. Hence, by Corollary \ref{cor:algebraic-mukai}, we have $\alpha=\theta_\sigma(\aa)$ for some class $\aa \in \vv^\perp$ with $\aa^2>0$. Now we use the argument in \cite[Proof of Theorem 1.2 (a)(b)(c)]{BM13} to conclude that $\alpha$ also lies in the image of $\ell$.
\end{proof}

Now we state the third and final lemma. If a wall $\cW$ of a chamber $\cC$ in $\stab(X)$ induces a contraction of the moduli space $M_\cC(\vv)$, then the image $\ell(\cW) \subset \ns(M(\vv))$ of $\cW$ under the global \bmm map $\ell: \stab(X) \to \ns(M(\vv))$ is a wall of the nef cone $\nef(M_\cC(\vv))$. However, if $\cW$ is a fake wall, then its image under the global \bmm map is not a wall in $\ns (M(\vv))$. This is the content of the following result.

\begin{lemma}
\label{lem:fake-replacement}
Let $\sigma_0$ be a generic stability condition on a fake wall $\cW$. Then its image lies in the interior of the nef cone $\nef(M_{\sigma_+}(\vv))$ of the moduli space $M_{\sigma_+}(\vv)$. The same statement is true for $M_{\sigma_-}(\vv)$.
\end{lemma}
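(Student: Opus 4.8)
The statement is the fake-wall analogue of the corresponding claim in \cite[Lemma 10.1 and Theorem 1.2]{BM13}: on a fake wall the moduli space does not change, and the \bmm map does not produce a new wall of the nef cone. The plan is to show that $\ell_{\sigma_0}=\ell_{\cC_+}(\sigma_0)$ is actually \emph{ample} on $M_{\sigma_+}(\vv)$, rather than merely nef and big, which immediately places it in the interior of $\nef(M_{\sigma_+}(\vv))=\amp(M_{\sigma_+}(\vv))$. The key geometric input is that on a fake wall $\cW$, Proposition \ref{prop:small-contra} and Proposition \ref{prop:no-contra} guarantee that $\pi_\pm\colon M_{\sigma_\pm}(\vv)\to\bar M_\pm$ is an isomorphism: $\cW$ is not a divisorial wall (no (BN) or (LGU) holds) and not a flopping wall (no (SC) holds), yet it is totally semistable by Theorem \ref{thm:1st-classification}, so by Proposition \ref{prop:no-contra} $\pi_+$ contracts no curves.

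\textbf{Key steps.} First I would invoke Proposition \ref{prop:semiampleness} (or Proposition \ref{old-semiampleness}): $\ell_{\sigma_0}$ induces a birational morphism $\pi_+\colon M_{\sigma_+}(\vv)\to\bar M_+$ which contracts precisely the curves parametrising objects that are $S$-equivalent with respect to $\sigma_0$. Second, I would observe that because $\cW$ is a fake wall, Proposition \ref{prop:no-contra} (and the analysis of Lemmas \ref{lem:sc01} and \ref{lem:sc02}) shows there are no such curves: every $\sigma_+$-stable object of class $\vv$ remains $\sigma_0$-stable, and on the strictly semistable locus $\sym^2 M_{\sigma_+}(\vv_p)$ the $S$-equivalence relation with respect to $\sigma_0$ is trivial since $\cW$ is not a wall for $\vv_p$ either. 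Hence $\pi_+$ is a birational morphism contracting no curves, so it is an isomorphism onto its image, i.e. a finite birational morphism to a normal variety, which forces $\pi_+$ to be an isomorphism. Third, since $\ell_{\sigma_0}=\pi_+^*(\text{an ample class on }\bar M_+)$ and $\pi_+$ is an isomorphism, $\ell_{\sigma_0}$ is ample on $M_{\sigma_+}(\vv)$; alternatively one can argue directly as in the proof of Proposition \ref{prop:ampleness}, using that $\ell_{\sigma_0}$ is nef (by \cite[Theorem 1.1]{BM12}), $q(\ell_{\sigma_0})=w_{\sigma_0}^2>0$, globally generated after passing to a multiple (base-point-free theorem applied on the resolution $\tilde M$ and descended using rational singularities), and strictly positive on all curves of $M_{\sigma_+}(\vv)$ since none are contracted — then \cite[Proposition 6.3]{Hu99a} gives ampleness. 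Finally, an ample class lies in the interior of the nef cone, which completes the proof; the same argument with $\pm$ interchanged handles $M_{\sigma_-}(\vv)$.

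\textbf{Main obstacle.} The only delicate point is verifying that $\pi_+$ genuinely contracts no curve, including curves lying inside or touching the singular locus $\sym^2 M_{\sigma_+}(\vv_p)$. For curves in the stable locus this is the content of Lemma \ref{lem:sc01} (every $\sigma_+$-stable object of class $\vv$ is $\sigma_0$-stable, since a $\sigma_-$-unstable object of class $\vv$ cannot have two Harder--Narasimhan factors of class $\vv_p$ when (SC) fails); for curves meeting $\sym^2 M_{\sigma_+}(\vv_p)$ one uses that $\cW$ is not a wall for the primitive class $\vv_p$, so $M_{\sigma_+}(\vv_p)$ and its symmetric square carry no $\sigma_0$-contracted curves, together with the fact that the stratification of $M_{\sigma_+}(\vv)$ is stable-locus plus $\sym^2M_{\sigma_+}(\vv_p)$. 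Once this is established, the rest is formal, and the statement follows exactly as the fake-wall case of \cite[Lemma 10.1]{BM13}, with the only extra bookkeeping being the descent of positivity from the symplectic resolution $\tilde M$ to $M$ via rational singularities, which has already been carried out in the proof of Proposition \ref{prop:ampleness}.
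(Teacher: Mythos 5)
Your proposal is correct and, at bottom, follows the same route as the paper: the paper's proof is a one-line citation of \cite[Proposition 10.3]{BM13}, and the argument behind that proposition is exactly what you spell out — on a fake wall no curve of $M_{\sigma_\pm}(\vv)$ is contracted by $\ell_{\sigma_0}$, so the positivity lemma of \cite{BM12}, combined with the bigness and semiampleness already established in Proposition \ref{old-semiampleness} (descended from the symplectic resolution as in the proof of Proposition \ref{prop:ampleness}), forces $\ell_{\sigma_0}$ to be ample and hence interior to $\nef(M_{\sigma_\pm}(\vv))$.

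One caveat: your supporting explanation of why no curves are contracted misdescribes fake walls. A fake wall is by definition totally semistable, and by Theorem \ref{thm:1st-classification} (condition (TS1)) this forces $\vv$, and hence $\vv_p$, to be non-minimal; consequently it is false that ``every $\sigma_+$-stable object of class $\vv$ remains $\sigma_0$-stable'' and false that ``$\cW$ is not a wall for $\vv_p$'' — that picture describes the situation of Lemma \ref{lem:sc01} (minimal class, $\cW$ not a wall at all), not a fake wall. The correct justification, which you do also cite, is Lemma \ref{lem:sc02}/Proposition \ref{prop:no-contra}: one passes to the minimal class $\vv_0$ via the composition of spherical twists $\Phi_+$ of \cite[Proposition 6.8]{BM13} and transports the triviality of $\sigma_0$-S-equivalence from $M_{\sigma_+}(\vv_0)$ (and from $\sym^2 M_{\sigma_+}(\vv_{0,p})$) to $M_{\sigma_+}(\vv)$. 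Since that citation (or simply the definition of a fake wall adopted in Section \ref{classificationresults}) carries the load-bearing step, your proof still closes; only the parenthetical re-derivation needs correcting.
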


\begin{proof}
This is just \cite[Proposition 10.3]{BM13}; whose proof works regardless of whether $\vv$ is primitive or not. 
\end{proof}

We can now state the main theorem of the paper, which crystallises the relation between the birational geometry of singular moduli spaces of O'Grady type and wall crossings in the stability manifold $\stab(X)$. It is a slight generalisation of \cite[Theorem 1.2]{BM13} in the case of singular moduli spaces which admit symplectic resolutions. 

\begin{theorem}
\label{thm:every-model}
Let $\vv$ be a Mukai vector of O'Grady type and $\sigma \in \stab(X)$ a generic stability condition with respect to $\vv$. Then
\begin{enumerate}
\item We have a globally defined continuous \bmm map $ \ell: \stab(X) \to \ns (M_\sigma(\vv)) $, which is independent of the choice of $\sigma$. Moreover, for any generic stability condition $\tau \in \stab(X)$, the moduli space $M_\tau(\vv)$ is the birational model corresponding to $\ell_\tau$. \label{item:main1}
\item If $\cC \subset \stab(X)$ is the open chamber containing $\sigma$, then $\ell(\cC)=\amp(M_\sigma(\vv))$.\label{item:main2}
\item The image of $\ell$ is equal to $\bigcone(M_\sigma(\vv)) \, \cap \, \mov(M_\sigma(\vv))$. In particular, every $K$-trivial $\bQ$-factorial birational model of $M_\sigma(\vv)$ which is isomorphic to $M_\sigma(\vv)$ in codimension $1$ appears as a moduli space $M_\tau(\vv)$ for some generic stability condition $\tau \in \stab(X)$. \label{item:main3}
\end{enumerate}
\end{theorem}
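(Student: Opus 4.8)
The plan is to follow the strategy of \cite[Section 10]{BM13} very closely, using the machinery that has been set up: the globally defined continuous map $\ell:\stab(X)\to\ns(M(\vv))$ from Remark \ref{rmk:global-bmm} (whose existence rests on Theorem \ref{thm:identify-cones} and Lemma \ref{lem:boundary-values}), the ampleness of $\ell_\sigma$ on open chambers (Proposition \ref{prop:ampleness}), the fact that $\ell_{\sigma_0}$ is big, nef and semiample with a birational contraction for wall stability conditions (Propositions \ref{old-semiampleness} and \ref{prop:semiampleness}), the classification of walls (Theorems \ref{thm:1st-classification} and \ref{thm:2nd-classification}), and the lemmas \ref{lem:image-bmm}, \ref{lem:full-cone}, \ref{lem:fake-replacement}. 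The key geometric input distinguishing the singular case is Corollary \ref{cor:algebraic-mukai}\eqref{item:am3}, the isometry $\theta_\sigma:\vv^\perp\to\ns(M)$, together with Proposition \ref{prop:amp-pos} which identifies $\pos(M)$ as the component of the square-positive cone containing $\amp(M)$; these are exactly the ingredients that replace Yoshioka's and the Namikawa--Wierzba results used in \cite{BM13}.

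Here is the order I would carry out the proof. First, for part \eqref{item:main1}: the global map $\ell$ is already constructed and continuous by Remark \ref{rmk:global-bmm}, and its independence of $\sigma$ is part of the canonical identification of N\'eron--Severi lattices in Theorem \ref{thm:identify-cones} (see also Remark \ref{rmk:unified-notation}); the statement that $M_\tau(\vv)$ is the birational model corresponding to $\ell_\tau$ follows from Proposition \ref{prop:ampleness}, which says $\ell_\tau$ is ample on $M_\tau(\vv)$, and from the fact (Theorem \ref{thm:identify-cones}) that $M_\tau(\vv)$ is isomorphic in codimension one to $M_\sigma(\vv)$. Second, for part \eqref{item:main2}: the inclusion $\ell(\cC)\subseteq\amp(M_\sigma(\vv))$ is Proposition \ref{prop:ampleness}; the reverse inclusion is Lemma \ref{lem:full-cone}; one then checks $\ell(\cC)$ is open (since $\cZ$ is a local homeomorphism and $\ell$ is an open map on the chamber, exactly as in \cite{BM13}), and combines this with connectedness of $\cC$ and the fact that $\ell(\cC)$ meets $\amp(M_\sigma(\vv))$ and cannot exit it (a boundary point of $\cC$ maps into the boundary of $\nef$ by Lemma \ref{lem:boundary-values} unless the wall is fake, and a fake wall maps to an interior point of some adjacent nef cone, which forces $\ell$ to be locally constant across it contradicting the open map property near a genuine chamber wall --- here one argues precisely as in \cite[Proof of Theorem 1.2]{BM13}). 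Third, for part \eqref{item:main3}: the inclusion $\mathrm{Im}(\ell)\subseteq\bigcone(M_\sigma(\vv))\cap\mov(M_\sigma(\vv))$ is Lemma \ref{lem:image-bmm}. For the reverse inclusion, take a class $\alpha\in\bigcone(M(\vv))\cap\mov(M(\vv))$; by Corollary \ref{cor:algebraic-mukai} and Proposition \ref{prop:amp-pos}, $\alpha$ is square-positive and lies in $\pos(M)$, so $\alpha=\theta_\sigma(\aa)$ for some $\aa\in\vv^\perp$ with $\aa^2>0$; then run the argument of \cite[Proof of Theorem 1.2(a)(b)(c)]{BM13} --- one uses the covering map $\cZ$ to find a stability condition $\tau$ with $w_\tau$ proportional to $\aa$, and crosses the finitely many walls between $\sigma$ and $\tau$; at each wall one checks, using the wall classification in Theorem \ref{thm:2nd-classification}, that $\alpha$ being movable prevents it from lying on a divisorial wall image, so only flopping walls and fake walls are crossed, and Lemmas \ref{lem:boundary-values}, \ref{lem:fake-replacement} guarantee $\alpha$ remains inside the cone of the evolving birational model; finitely many flops later one reaches a chamber whose ample cone contains $\alpha$, and $\alpha=\ell_\tau$ for a $\tau$ in that chamber. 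The final sentence of \eqref{item:main3}, that every variety reached from $M_\sigma(\vv)$ by a finite sequence of flops appears as some $M_\tau(\vv)$, then follows because flops correspond exactly to crossing flopping walls (as recorded after Lemma \ref{lem:boundary-values}), so the ample cone of any such flop is one of the chambers of $\mathrm{Im}(\ell)$ in $\mov(M_\sigma(\vv))$, hence is hit by $\ell$.

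I expect the main obstacle to be the same one that the authors have been working around throughout the paper: ensuring that the wall-crossing argument in part \eqref{item:main3} genuinely stays inside the singular world without invoking smoothness. Concretely, the delicate point is that when one crosses a flopping wall, one needs to know that the new moduli space $M_{\sigma_-}(\vv)$ is still of O'Grady type and that the birational map is an isomorphism in codimension one \emph{compatibly with the stratification}, so that the N\'eron--Severi identification and the cone $\mov(M(\vv))$ are transported correctly --- this is exactly what Theorem \ref{thm:identify-cones} (via Lemma \ref{lem:sc21} and Proposition \ref{prop:codim-two}) provides, but one must be careful that the movable cone is preserved (not just $\ns$), which needs Theorem \ref{thm:identify-cones}'s last sentence. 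A secondary subtlety is that, unlike in the smooth case, a big and movable class need not be the pullback of an ample class on a $\bQ$-factorial terminal model, since $M_\sigma(\vv)$ is canonical but not terminal; one handles this by passing to the symplectic resolution $\tilde M$ via $\pi^*$ and Corollary \ref{cor:algebraic-mukai}\eqref{item:am2}, using that $\pi^*$ sends $\mov(M)$ into $\mov(\tilde M)$ and that flops of $M$ lift to flops of $\tilde M$ (so the finite sequence of flops statement is really a statement about $\tilde M$ pulled back) --- this is the reason the phrase ``can be obtained by performing a finite sequence of flops'' appears rather than ``is a minimal model''. Once these two compatibility issues are dispatched using the results of Sections \ref{singmodsp} and \ref{classificationproofs}, the rest of the proof is a faithful transcription of \cite[Section 10]{BM13}.
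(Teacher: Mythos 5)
Your treatment of parts \eqref{item:main1} and \eqref{item:main2} matches the paper's proof (Remark \ref{rmk:global-bmm}, Proposition \ref{prop:ampleness}, Lemma \ref{lem:full-cone}, Lemma \ref{lem:fake-replacement}), but your argument for the reverse inclusion in part \eqref{item:main3} has a genuine gap. You start from an arbitrary class $\alpha\in\bigcone(M(\vv))\cap\mov(M(\vv))$ and assert that ``by Corollary \ref{cor:algebraic-mukai} and Proposition \ref{prop:amp-pos}, $\alpha$ is square-positive and lies in $\pos(M)$''. Neither result gives this: Proposition \ref{prop:amp-pos} only identifies $\pos(M)$ as the component of the square-positive cone containing $\amp(M)$, and Corollary \ref{cor:algebraic-mukai} is the isometry $\theta_\sigma$. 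The statement you actually need --- that a big and movable class has strictly positive Beauville--Bogomolov square --- is known for irreducible holomorphic symplectic manifolds (and that is what \cite[Proof of Theorem 1.2(b)]{BM13} uses), but it is \emph{not} available for the singular moduli spaces considered here; the paper points this out explicitly and only deduces it \emph{a posteriori} from Theorem \ref{thm:every-model} together with \cite[Theorem 1.1]{Bri08}, so using it as an input is circular. Without square-positivity your argument cannot even get started, since finding a stability condition $\tau$ with $w_\tau$ proportional to $\theta_\sigma^{-1}(\alpha)$ requires that class to have positive square. Your fallback of pulling back to the resolution does not repair this: for $D$ movable on $M$, the base locus of $\pi^*D$ is $\pi^{-1}(\mathrm{Bs}(D))$, and since the singular locus has codimension two, a codimension-two base locus on $M$ can pull back to (part of) the exceptional divisor, so $\pi^*$ need not send $\mov(M)$ into $\mov(\tilde M)$.

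The paper avoids this issue entirely by arguing chamber by chamber inside $\bigcone(M(\vv))\cap\mov(M(\vv))$ rather than class by class: if some ample-cone chamber of a birational model were adjacent to one in the image of $\ell$ but itself outside the image, the separating wall would be $\ell(\cW)$ for a wall $\cW\subset\stab(X)$; $\cW$ cannot be fake by Lemma \ref{lem:fake-replacement}, cannot be flopping by Lemma \ref{lem:boundary-values} (the image would continue across $\ell(\cW)$), so it is divisorial --- and then one checks directly that $\ell(\cW)$ lies on the boundary of the movable cone, because $\ell_{\sigma_0}$ has degree zero on the curves contracted by the divisorial contraction, forcing any class on the far side of $\ell(\cW)$ to be negative on those curves and hence to contain the contracted divisor in its base locus. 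If you want to keep your route, you would have to supply an independent proof that big and movable classes on $M$ are square-positive; otherwise you should replace the reverse-inclusion step by the adjacency argument above.
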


\begin{proof}
All the ingredients of the proof have already been presented above. For \eqref{item:main1}, the existence of the global \bmm map is contained in Remark \ref{rmk:global-bmm}. The ampleness statement in Proposition \ref{prop:ampleness} implies that $M_\tau(\vv)$ is the birational model corresponding to $\ell_\tau$ for any generic $\tau$. 

For \eqref{item:main2}, we first realise that $\ell(\cC)$ has full dimension by Lemma \ref{lem:full-cone} and hence must be an open subset of $\amp(M_\sigma(\vv))$ by Proposition \ref{prop:ampleness}. Now we know that the image of every non-fake wall of $\cC$ has to be a boundary component of $\amp(M_\sigma(\vv))$ whereas Lemma \ref{lem:fake-replacement} ensures that the image of a fake wall of $\cC$ never separates $\amp(M_\sigma(\vv))$ into two parts. Therefore, the image of $\cC$ under $\ell$ is the whole ample cone. 

For the first claim in \eqref{item:main3}, one direction of inclusion is in Lemma \ref{lem:image-bmm}. We just need to show that every open chamber of $\bigcone(M_\sigma(\vv))\, \cap\, \mov(M_\sigma(\vv))$ regarded as the ample cone of some birational model of $M_\sigma(\vv)$ is contained in the image of $\ell$. Assume we have two such ample cones adjacent to each other, one of which lies in the image of $\ell$ but not the other. Then the wall separating the two ample cones must be the image $\ell(\cW)$ of a wall $\cW$ in $\stab(X)$. By Lemma \ref{lem:fake-replacement}, $\cW$ cannot be a fake wall since fake walls are not mapped to walls in the movable cone by $\ell$. By Lemma \ref{lem:boundary-values}, $\cW$ cannot be a flopping wall since the image of $\ell$ would extend to the other side of $\ell(\cW)$. Hence $\cW$ must be a divisorial wall. Then we claim that the image of $\cW$ has reached the boundary of $\mov(M_\sigma(\vv))$. In fact, since $\ell_{\sigma_0}$ induces a divisorial contraction for a generic $\sigma_0 \in \cW$, $\ell_{\sigma_0}$ has degree zero on each curve contained in the fibres of this contraction, which implies that any line bundle on the other side of $\ell(\cW)$ must have negative degree on these curves contained in fibres. Therefore, its base locus contains the entire contracted divisor. The second claim follows immediately from the surjectivity.
\end{proof}

\begin{remark}
Careful readers might have found that our proof of surjectivity of the \bmm map onto the intersection of the big and movable cones is slightly different from that in \cite[Proof of Theorem 1.2 (b)]{BM13}. More precisely, we avoided using the statement that any big and movable class is strictly positive, which is well-known for irreducible holomorphic symplectic manifolds, but not for moduli spaces of O'Grady type. However, using \cite[Theorem 1.1]{Bri08}, our Theorem \ref{thm:every-model} implies that the same statement is still true for singular moduli spaces of O'Grady type.
\end{remark}

\subsection{Torelli theorem for singular moduli spaces of O'Grady type}

In analogy with \cite[Corollary 1.3]{BM13}, we can prove a Torelli-type theorem for the singular moduli spaces of O'Grady type. It gives a Hodge-theoretic criterion for the existence of stratum-preserving birational maps between these moduli spaces. We use the same notation as in \cite{BM13}, which was originally introduced by Mukai: the total cohomology $H^*(X, \bZ)$ of a K3 surface $X$ carries a weight two Hodge structure which is polarised by the Mukai pairing. We write $\vv^{\perp,\mathrm{tr}} \subset H^*(X, \bZ)$ for the orthogonal complement of $\vv$ in the total cohomology.

\renewcommand{\labelenumi}{(\alph{enumi})}

\begin{corollary}
\label{cor:torelli}
Let $X$ and $X'$ be two smooth projective K3 surfaces with $\vv \in H^*_{\mathrm{alg}}(X, \bZ)$ and $\vv' \in H^*_{\mathrm{alg}}(X', \bZ)$ Mukai vectors of O'Grady type. If $\sigma \in \stab(X)$ and $\sigma' \in \stab(X')$ are generic stability conditions with respect to $\vv$ and $\vv'$ respectively, then the following statements are equivalent
\begin{enumerate}
\item There is a stratum preserving birational map $\varphi: M_{X,\sigma}(\vv) \dashrightarrow M_{X',\sigma'}(\vv')$. 
\item The embedding $\vv^{\perp,\mathrm{tr}} \subset H^*(X, \bZ)$ of the integral weight two Hodge structures is isomorphic to the embedding $\vv'^{\perp,\mathrm{tr}} \subset H^*(X', \bZ)$. 
\item There is an (anti-)autoequivalence $\Phi$ from $\cD(X)$ to $\cD(X')$ with $\Phi_*(\vv)=\vv'$. 
\item There is an (anti-)autoequivalence $\Psi$ from $\cD(X)$ to $\cD(X')$ with $\Psi_*(\vv)=\vv'$ which induces a stratum preserving birational map $\psi: M_{X,\sigma}(\vv) \dashrightarrow M_{X',\sigma'}(\vv')$. 
\end{enumerate}
\end{corollary}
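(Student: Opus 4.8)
The plan is to establish Corollary~\ref{cor:torelli} by proving a cycle of implications, most of which follow formally from the preceding theory, with only the geometric heart of the matter requiring real work. I would prove $(a)\Rightarrow(b)\Rightarrow(c)\Rightarrow(d)\Rightarrow(a)$, since the last implication is trivial (an anti-autoequivalence inducing a stratum preserving birational map obviously gives a stratum preserving birational map), and $(c)\Rightarrow(d)$ should be obtainable by composing $\Phi$ with the wall-crossing equivalences from Theorem~\ref{thm:identify-cones}. The substantive content is in $(a)\Rightarrow(b)$ and $(b)\Rightarrow(c)$.

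For $(a)\Rightarrow(b)$: given a stratum preserving birational map $\varphi$, I would first use Proposition~\ref{prop:codim-two} (after shrinking to suitable open subsets on which $\varphi$ restricts to an isomorphism, which is possible because $\varphi$ is stratum preserving) to conclude that $\varphi$ is an isomorphism in codimension one between the stable loci $M_{X,\sigma}^{\mathrm{st}}(\vv)$ and $M_{X',\sigma'}^{\mathrm{st}}(\vv')$. Such an isomorphism in codimension one induces an isometry of Hodge structures $H^2(M_{X,\sigma}(\vv),\bZ)\cong H^2(M_{X',\sigma'}(\vv'),\bZ)$, since $H^2$ together with its weight-two Hodge structure and the Beauville-Bogomolov form is unchanged by removing a closed subset of codimension $\geq 2$ (using part~\eqref{item:hodge1} of Theorem~\ref{thm:hodge-isom}, so that $H^2(M,\bZ)$ is itself a pure weight-two Hodge structure computed via $\pi^*$). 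Then the Hodge isometry $\theta_\sigma^{\mathrm{tr}}:\vv^{\perp,\mathrm{tr}}\xrightarrow{\sim}H^2(M_{X,\sigma}(\vv),\bZ)$ of Theorem~\ref{thm:hodge-isom}\eqref{item:hodge2}, composed with this isometry and with $(\theta_{\sigma'}^{\mathrm{tr}})^{-1}$, gives a Hodge isometry $\vv^{\perp,\mathrm{tr}}\cong\vv'^{\perp,\mathrm{tr}}$. The remaining point — that this isometry actually extends to an isomorphism of the \emph{embeddings} into the full Mukai lattices $H^*(X,\bZ)$ and $H^*(X',\bZ)$ — is exactly the kind of argument carried out in \cite[Corollary~1.3]{BM13}; since $\vv^2>0$ the orthogonal complement of $\vv^{\perp,\mathrm{tr}}$ is the rank-one lattice spanned by $\vv$ (after saturation), and a standard lattice-embedding argument for the K3 lattice promotes the isometry to the ambient lattices compatibly.

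For $(b)\Rightarrow(c)$: this is where the genuine input from the theory of derived equivalences of K3 surfaces enters, and I expect it to be the main obstacle — though in fact it is handled by a known result rather than by us. An isomorphism of the embeddings $\vv^{\perp,\mathrm{tr}}\subset H^*(X,\bZ)$ and $\vv'^{\perp,\mathrm{tr}}\subset H^*(X',\bZ)$ extends (again by a lattice argument, using that $\vv$ and $\vv'$ are primitive up to the factor $2$ and have the same square) to a Hodge isometry $H^*(X,\bZ)\xrightarrow{\sim}H^*(X',\bZ)$ carrying $\vv$ to $\pm\vv'$; by Orlov's derived Torelli theorem for K3 surfaces (in the form used in \cite[Corollary~1.3]{BM13}) this is realised by an equivalence or anti-equivalence $\Phi:\cD(X)\to D^b(X')$ with $\Phi_*(\vv)=\vv'$ (adjusting by a shift or dualising to fix the sign). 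Finally, for $(c)\Rightarrow(d)$: the given $\Phi$ need not send $\sigma$-stable objects to $\sigma'$-stable objects, but $\Phi(\sigma)$ is a stability condition on $D^b(X')$ in some chamber, and composing $\Phi$ with the wall-crossing (anti-)autoequivalence of $D^b(X')$ provided by Theorem~\ref{thm:identify-cones} (applied to go from $M_{\Phi(\sigma)}(\vv')$ to $M_{\sigma'}(\vv')$) yields $\Psi$ with $\Psi_*(\vv)=\vv'$ inducing a stratum preserving birational map, invoking Lemma~\ref{lem:stratum} to check the stratum preserving property from the existence of suitable stable objects. Closing the loop with the trivial $(d)\Rightarrow(a)$ completes the proof.
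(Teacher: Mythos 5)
Your cycle of implications and your handling of (b)$\Rightarrow$(c), (c)$\Rightarrow$(d) and (d)$\Rightarrow$(a) agree with the paper, which likewise defers these steps to \cite[Corollary 1.3]{BM13} and to Theorem \ref{thm:identify-cones}. The genuine gap is in your step (a)$\Rightarrow$(b). You invoke Proposition \ref{prop:codim-two} to conclude that the stratum preserving birational map $\varphi$ of statement (a) is an isomorphism in codimension one, but that proposition is stated, and its proof only works, for maps \emph{induced by a derived (anti-)equivalence}: the argument uses $\Phi^{-1}$ to invert the map on its image and then passes to the symplectic resolutions. In (a) no such equivalence is given, so the proposition does not apply; indeed the paper emphasises in the introduction that the usual ``birational maps between $K$-trivial varieties are isomorphisms in codimension one'' statement fails in the singular setting, and Proposition \ref{prop:codim-two} is exactly the partial substitute that needs the derived-equivalence hypothesis (as well as the further hypothesis of an injective morphism on an open subset with complement of codimension at least two, which you also do not verify). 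Moreover, even granting an isomorphism in codimension one between the singular spaces, passing from that to a Hodge isometry $H^2(M_{X,\sigma}(\vv),\bZ)\cong H^2(M_{X',\sigma'}(\vv'),\bZ)$ would require an additional argument, since the pure weight-two Hodge structure and the quadratic form on $H^2$ of these singular moduli spaces are defined via pullback to the resolution, not by restriction to open subsets.

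The paper's route around this is simpler and you should adopt it (or else supply the missing statements for arbitrary stratum preserving birational maps): since $\varphi$ is stratum preserving, it restricts to a birational map between the deepest strata $M_{X,\sigma}(\vv_p)\dashrightarrow M_{X',\sigma'}(\vv'_p)$, which are \emph{smooth} moduli spaces attached to the primitive vectors; applying \cite[Corollary 1.3]{BM13} to these gives an isomorphism between the embeddings of the orthogonal complements of $\vv_p$ and $\vv'_p$ into the respective total cohomologies, and since $\vv=2\vv_p$ and $\vv'=2\vv'_p$ these orthogonal complements are exactly $\vv^{\perp,\mathrm{tr}}$ and $\vv'^{\perp,\mathrm{tr}}$, which yields (b) directly. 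Your (c)$\Rightarrow$(d) sketch is essentially the paper's argument, though note the need to ``replace $\Phi$ if necessary'' (as in \cite{BM13}) so that the image stability condition is generic and lies in the distinguished component before Theorem \ref{thm:identify-cones} and Lemma \ref{lem:stratum} are applied.
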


\begin{proof}
The proof of \cite[Corollary 1.3]{BM13} already contains most of what we need in our situation. We only point out the differences. For (a) $\Rightarrow$ (b), we observe that $M_{X,\sigma}(\vv_p) \subset M_{X,\sigma}(\vv)$ and $M_{X',\sigma'}(\vv'_p) \subset M_{X',\sigma'}(\vv')$ are the deepest singular strata and so the stratum preserving birational map $\varphi$ in (a) restricts to a birational map $M_{\sigma}(\vv_p) \dashrightarrow M_{\sigma'}(\vv'_p)$. Therefore, by \cite[Corollary 1.3]{BM13}, we can deduce (b). For (b) $\Rightarrow$ (c), the proof is already in \cite[Corollary 1.3]{BM13}. For (c) $\Rightarrow$ (d), we can follow the proof of \cite[Corollary 1.3]{BM13} and replace $\Phi$ if necessary, so that there exists a generic $\tau \in \stab(X')$ with $M_{\sigma}(\vv) \cong M_{\tau}(\vv')$. By Theorem \ref{thm:identify-cones}, we can find an (anti-)autoequivalence $\Phi'$ of $\cD(X')$ which induces a stratum preserving birational map $M_{\tau}(\vv') \dashrightarrow M_{\sigma'}(\vv')$. And the composition $\Phi' \circ \Phi$ does the work. The final part (d) $\Rightarrow$ (a) is trivial. 
\end{proof}

\subsection{Lagrangian Fibrations}

An immediate consequence of the birationality of wall-crossing is the following result:

\begin{theorem}\label{Lagrangian}
Let $M=M_\sigma(\vv)$ be a moduli space of O'Grady type, $\pi:\tilde{M}\to M$ its symplectic resolution and $q(-,-)$ its Beauville-Bogomolov form.  
If there is an integral divisor class $D$ with $q(D)=0$ then there exists a birational moduli space $M'=M_{\sigma'}(\vv')$ of O'Grady type whose resolution $\tilde{M'}$ admits a Lagrangian fibration.
\end{theorem}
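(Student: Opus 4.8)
The plan is to reduce the statement to the well-known existence criterion for Lagrangian fibrations on irreducible holomorphic symplectic manifolds, due to Matsushita and Bayer--Macr\`i (building on Markman and Yoshioka), and then transport it through a wall-crossing. First I would pass from $M$ to its symplectic resolution $\tilde M$, which by Corollary \ref{cor:deformation-equivalent} is an irreducible holomorphic symplectic manifold deformation equivalent to O'Grady's ten-dimensional example; in particular it carries the Beauville--Bogomolov form $\tilde q$, and by Corollary \ref{cor:algebraic-mukai}\eqref{item:am2} the pullback $\pi^*\colon\ns(M)\hookrightarrow\ns(\tilde M)$ is an isometric embedding. So the hypothesis $q(D)=0$ gives an integral isotropic class $\tilde D=\pi^*D$ on $\tilde M$. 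By Corollary \ref{cor:algebraic-mukai}\eqref{item:am3} and the isometry $\theta_\sigma\colon\vv^\perp\xrightarrow{\sim}\ns(M)$, we may write $D=\theta_\sigma(\aa)$ for a class $\aa\in\vv^\perp\subset H^*_{\mathrm{alg}}(X,\bZ)$ with $\aa^2=0$ (using that $\theta_\sigma$ is an isometry with respect to the Mukai pairing and the restriction of $\tilde q$).

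Next I would use the global \bmm map $\ell\colon\stab(X)\to\ns(M(\vv))$ of Theorem \ref{thm:every-model} together with the covering property of $\cZ$ recalled in Section \ref{localbmmap}. Since the Mukai pairing on $H^*_{\mathrm{alg}}(X,\bZ)$ has signature $(2,\rho(X))$, an isotropic algebraic class $\aa$ with $(\aa,\vv)$ appropriately normalised is the imaginary part $\cI(\cZ(\sigma'))$ of the central charge of some stability condition $\sigma'$; one chooses $\sigma'$ so that $\ell_{\sigma'}=\theta_{\cC'}(w_{\sigma'})$ with $w_{\sigma'}$ proportional to $\aa$. Concretely, after possibly rescaling $\aa$ to lie in the positive cone $P_{\cH}$ (or moving to the boundary of the movable cone), Theorem \ref{thm:every-model}\eqref{item:main3} tells us that classes in $\bigcone(M(\vv))\cap\mov(M(\vv))$, and by a limiting argument also the nef classes on its boundary arising from isotropic vectors, are realised as $\ell_\tau$ for generic $\tau$ lying in a chamber $\cC'$ whose nef cone has $D'=\theta_{\cC'}(\aa)$ on its boundary. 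This produces the birational model $M'=M_{\sigma'}(\vv')$ (after applying a Fourier--Mukai transform so that $\vv'$ is a Mukai vector on a possibly different K3, as in Lemma \ref{lem:stratum} and Corollary \ref{cor:torelli}), birational to $M$ by Theorem \ref{thm:identify-cones}, and hence with $\tilde M'$ birational and thus deformation equivalent to $\tilde M$.

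Finally, on $\tilde M'$ we have an integral isotropic nef divisor class $\tilde D'=\pi'^*D'$; by the rational Lagrangian fibration theorem for irreducible holomorphic symplectic manifolds of O'Grady type (Matsushita's theorem on the structure of such morphisms, together with the semiampleness results obtained via the SYZ-type argument of Bayer--Macr\`i in the moduli-space setting, i.e.\ \cite[Theorem 1.5]{BM13} applied to the deformation class), $\tilde D'$ is semiample and the induced morphism $\tilde M'\to\bP^5$ is a Lagrangian fibration. Since $D'$ lies on the boundary of the nef cone of $M'$ and hence $\tilde D'$ on the boundary of $\nef(\tilde M')$, and $\tilde M'$ is a genuine irreducible holomorphic symplectic manifold, the base point free theorem together with the characterisation of isotropic extremal faces gives the fibration structure. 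The main obstacle I expect is the passage from ``isotropic class in $\ns$'' to ``base point free isotropic class'': on $\tilde M'$ one must know that the chosen nef isotropic class, rather than merely some birational transform of it, is itself semiample. This is exactly where one needs either Matsushita--Zhang-type results on the abundance of isotropic nef classes on hyperk\"ahler manifolds, or the refined output of wall-crossing which places $\ell_{\sigma'}$ on a genuine (non-fake) boundary wall of the movable cone; handling the fake-wall and flopping-wall subtleties near the isotropic ray, via Lemmas \ref{lem:boundary-values} and \ref{lem:fake-replacement}, is the technical heart of the argument.
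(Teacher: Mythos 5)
There is a genuine gap, and it sits exactly where you suspect: the passage from ``integral nef isotropic class on $\tilde M'$'' to ``semiample class inducing a Lagrangian fibration''. You invoke ``the rational Lagrangian fibration theorem \dots together with \cite[Theorem 1.5]{BM13} applied to the deformation class'', but \cite{BM13} treats smooth moduli spaces with \emph{primitive} Mukai vector, i.e.\ manifolds of $\mathrm{K3}^{[n]}$ deformation type; it says nothing about the O'Grady $10$-dimensional deformation class to which $\tilde M'$ belongs, and citing it here is essentially circular, since its Theorem 1.5 is precisely the analogue of the statement being proved. Abundance/SYZ for nef isotropic classes on OG10-type manifolds is not available in this paper (nor was it at the time), and the remark following Theorem \ref{Lagrangian} makes this explicit: the stronger ``nef'' version (the analogue of \cite[Conjecture 1.4(b)]{BM13}) would require a replacement for Markman's results summarised in \cite[Proposition 3.3]{BM13}, which the authors do not have. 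A secondary problem is your ``limiting argument'' placing $\theta_{\cC'}(\aa)$ on the boundary of a nef chamber inside the image of $\ell$: by Theorem \ref{thm:every-model}\eqref{item:main3} the image of $\ell$ is $\bigcone(M)\cap\mov(M)$, and an isotropic ray lies on the boundary of the positive cone, hence outside the big cone, so it need not be hit (even in the limit) by $\ell$, and no argument is given that the relevant boundary wall is a genuine divisorial wall rather than, say, unreachable.

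The paper's proof avoids all of this by following the argument of \cite[Section 11]{BM13} verbatim, with \cite[Theorem 1.2]{BM13} replaced by Theorem \ref{thm:every-model}. The mechanism there is constructive rather than abundance-theoretic: the isotropic class $\ww=\theta_\sigma^{-1}(D)\in\vv^\perp$ determines a Fourier--Mukai transform to a (twisted) K3 surface under which $\vv$ becomes a class of rank zero, so the corresponding birational model is a moduli space of torsion sheaves supported on curves in a linear system; the support (Le Potier/Beauville) morphism to that linear system $\cong\bP^5$, composed with the symplectic resolution, \emph{is} the Lagrangian fibration, and Theorem \ref{thm:every-model} guarantees that this moduli space is indeed reached from $M_\sigma(\vv)$ by wall-crossing, hence birational to it. If you want to salvage your write-up, replace the final semiampleness step by this torsion-sheaf construction; the first half of your argument (transporting $q(D)=0$ to an isotropic $\aa\in\vv^\perp$ via Corollary \ref{cor:algebraic-mukai}) is fine and is also how the paper begins.
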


\begin{proof}
This follows from the arguments in \cite[Section 11]{BM13}. Indeed, we just have to replace \cite[Theorem 1.2]{BM13} with our Theorem \ref{thm:every-model} in the proof of \cite[Theorem 1.5]{BM13}.
\end{proof}

\begin{remark}
Bayer and Macr\`i are able to make a stronger statement in their situation if the divisor class is also assumed to be nef; see \cite[Conjecture 1.4(b)]{BM13}. However, for an analogous statement to hold true for moduli spaces of O'Grady type, we would need to find a replacement for \cite[Proposition 3.3]{BM13} (which is a summary of Markman's results). In particular, we would need to know that the cone $\mov(M)\cap\pos(M)$ of big and movable divisors on $M$ was equal to the fundamental chamber of the Weyl group action on the positive cone $\pos(M)$ of $M$. 
\end{remark}

\begin{remark}
There might be square-zero classes on $\tilde{M}$ which are not the pullback of square-zero classes on $M$ and so it is not clear whether the converse of Theorem \ref{Lagrangian} is true. In particular, we see no reason why the existence of a Lagrangian fibration $\tilde{M}\to\bP^5$ should guarantee the existence of a square-zero class on $M$.
\end{remark}

\begin{remark}
A natural question at this point is whether one can combine the recent techniques of Bayer-Hassett-Tschinkel \cite{BHT13} and Markman \cite{Mar11,Mar13} with similar arguments of \cite[Section 12]{BM13} to obtain a description of the Mori cone of the symplectic resolution $\tilde{M}$ of a moduli space $M=M_\sigma(\vv)$ of O'Grady type. The authors plan to return to this question in future work.
\end{remark}

\section{Examples of Movable and Nef cones}
In this section, we examine examples of cones of divisors on moduli spaces of O'Grady type.

Let us suppose for simplicity that $\Pic(X)=\bZ[H]$ with $H^2=2d$. We set $\vv=2\vv_p=(2,0,-2)$ and $M:=M_H(\vv)$. Then a basis for $\ns(M)$ is given by 
\[\widetilde{H}=\theta_\sigma(0,-H,0) \quad\trm{and}\quad B=\theta_\sigma(-1,0,-1)\]
where 
$\theta_\sigma:\vv^\perp\xra\sim\ns(M)$ is the isometry from Corollary \ref{cor:algebraic-mukai}(3).

By Theorem \ref{thm:2nd-classification}, divisorial contractions are divided into two cases:

\begin{itemize}
\item[(BN)] there exists a spherical class $\ss$ with $(\ss,\vv)=0$, or 
\item[(LGU)] there exists an isotropic class $\ww$ with $(\ww,\vv)=2$.
\end{itemize}

Just as in \cite[Section 13]{BM13}, we can solve the following system of equations 
\[\{\ss^2=-2, (\ss,\vv)=0\;:\;\ss=(r,cH,s)\}\]
to see that the case of BN-contractions are governed by solutions to the following Pell's equation:
\begin{equation}\label{BNpell}
x^2-dy^2=1\quad\trm{via}\quad\ss=(x,-yH,x).
\end{equation}
Similarly, we can solve
\[\{\ww^2=0, (\ww,\vv)=2\;:\;\ww=(r,cH,s)\}\]
to see that the case of LGU-contractions are governed by solutions to
\begin{equation}\label{LGUpell}
x^2-dy^2=1\quad\trm{via}\quad\ww=\left(\frac{x+1}{2},-\frac{yH}{2},\frac{x-1}{2}\right)\quad\trm{or}\quad\ww=(x+1,-yH,x)
\end{equation}
depending on whether $y$ is even or odd. The two equations determine the movable cone:

\begin{proposition}\label{movcone}
Assume $\Pic(X)=\bZ[H]$ with $H^2=2d$. The movable cone of $M=M_H(2,0,-2)$ has the following form:
\begin{itemize}
\item[(a)] If $d=\frac{k^2}{h^2}$, with $k,h\geq1$, $\gcd(k,h)=1$, then 
\[\mov(M)=\langle\tilde{H},\tilde{H}-\frac{k}{h}B\rangle,\]
where $q(h\tilde{H}-kB)=0$.
\item[(b)] If $d$ is not a perfect square, and \eqref{BNpell} has a solution, then 
\[\mov(M)=\langle\tilde{H},\tilde{H}-d\frac{y_1}{x_1}B\rangle,\]
where $(x_1,y_1)$ is the solution to \eqref{BNpell} with $x_1,y_1>0$, and with smallest possible $x_1$.
\item[(c)] If $d$ is not a perfect square, and \eqref{BNpell} has no solution, then 
\[\mov(M)=\langle\tilde{H},\tilde{H}-d\frac{y'_1}{x'_1}B\rangle,\]
where $(x'_1,y'_1)$ is the solution to \eqref{LGUpell} with smallest possible $\frac{y'_1}{x'_1}>0$.
\end{itemize}
\end{proposition}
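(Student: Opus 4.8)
\textbf{Proof proposal for Proposition \ref{movcone}.}

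The plan is to compute the movable cone by identifying its two boundary rays using Theorem \ref{thm:every-model}, which tells us that $\mov(M)\cap\bigcone(M)$ is the image of the global \bmm map, together with Theorem \ref{thm:2nd-classification}, which tells us that the boundary of this cone consists of divisorial walls (since flopping walls are interior to the movable cone and fake walls are not mapped to walls at all). So the strategy is: (i) show $\tilde H$ spans one extremal ray of $\mov(M)$; (ii) determine the other extremal ray by finding the ``first'' divisorial wall one meets when moving away from $\tilde H$ inside the positive cone; (iii) translate the condition for a divisorial wall, via (BN) and (LGU), into the Pell equation \eqref{BNpell} or \eqref{LGUpell}; (iv) read off which solution gives the extremal ray in each of the three arithmetic cases. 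Throughout, I would use Corollary \ref{cor:algebraic-mukai}(3) to pass freely between $\vv^\perp\subset H^*_{\mathrm{alg}}(X,\bZ)$ with the Mukai pairing and $\ns(M)$ with the Beauville-Bogomolov form, so that all cone computations become lattice-theoretic computations inside the rank-two lattice $\vv^\perp$ with basis $(0,-H,0)$ and $(-1,0,-1)$.

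First I would record the intersection form on $\ns(M)$ in the given basis. A direct Mukai-pairing computation gives $q(\tilde H)=((0,-H,0),(0,-H,0))=H^2=2d$, $q(B)=((-1,0,-1),(-1,0,-1))=-2$, and $q(\tilde H,B)=((0,-H,0),(-1,0,-1))=0$, so the Gram matrix is $\mathrm{diag}(2d,-2)$. The positive cone $\pos(M)$ is then $\{a\tilde H+bB: 2da^2-2b^2>0\}$, i.e. bounded by the two null rays $h\tilde H\pm kB$ when $d=k^2/h^2$, or by the irrational rays $\tilde H\pm\sqrt d\,B$ (normalised) when $d$ is not a perfect square. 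Since $\mov(M)\subset\pos(M)$ by Lemma \ref{lem:image-bmm} and Proposition \ref{prop:amp-pos}, and since the Gieseker chamber has $\ell_\sigma$ ample with $\tilde H$ visibly in the closure (indeed $\tilde H$ is the pullback of an ample class in the large-volume limit, being $\theta_\sigma$ of the ``polarisation direction''), the ray $\bR_{\geq0}\tilde H$ is the boundary ray of $\mov(M)$ on the side where no wall has been crossed. It remains to find the other boundary ray.

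For the other ray: the divisorial walls are detected by (BN) — a spherical class $\ss=(r,cH,s)$ with $\ss^2=-2$, $(\ss,\vv)=0$ — or (LGU) — an isotropic class $\ww=(r,cH,s)$ with $\ww^2=0$, $(\ww,\vv)=2$. Solving $\ss^2=2c^2d-2rs=-2$ and $(\ss,\vv)=2(r-s)\cdot(-1)\cdot$(coefficient bookkeeping)$=0$ forces $r=s$, hence $2c^2d-2r^2=-2$, i.e. $r^2-c^2d=1$; writing $x=r$, $y=-c$ recovers \eqref{BNpell} with $\ss=(x,-yH,x)$. The analogous computation for $\ww$ with $(\ww,\vv)=2$ yields \eqref{LGUpell}, splitting into the $y$ even and $y$ odd cases according to whether the naive solution $(\frac{x+1}{2},\ldots)$ is integral. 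Each such class $\ss$ (or $\ww$) determines, via $\theta_\sigma$, a wall $\ell(\cW)$ in $\ns(M)$, and by Theorem \ref{thm:every-model}(3) and its proof the divisorial walls are exactly the boundary of the movable cone. The ray $\tilde H-\lambda B$ of $\mov(M)$'s second boundary is the one corresponding to the divisorial wall closest to $\tilde H$; computing $\theta_\sigma^{-1}$ of the orthogonal complement of $\ss=(x,-yH,x)$ inside $\vv^\perp$ and normalising gives the ray through $\tilde H-d\frac{y}{x}B$, and ``closest to $\tilde H$'' translates to minimal $x>0$ (equivalently minimal $\frac yx>0$ among BN-solutions, since solutions to the Pell equation are ordered consistently). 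In case (a) $d=k^2/h^2$ is a perfect square of a rational, the positive cone itself is rational and its null ray $h\tilde H-kB$ (with $q(h\tilde H-kB)=2dh^2-2k^2=0$) is the limiting ray — no divisorial wall can push past the boundary of $\pos(M)$ — giving $\mov(M)=\langle\tilde H,\tilde H-\frac khB\rangle$. In case (b), \eqref{BNpell} has a solution and the minimal one $(x_1,y_1)$ gives the BN-wall, hence $\mov(M)=\langle\tilde H,\tilde H-d\frac{y_1}{x_1}B\rangle$. In case (c), no BN-wall exists, so the first divisorial wall is an LGU-wall, governed by \eqref{LGUpell}, and one takes the solution with smallest $\frac{y_1'}{x_1'}>0$.

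The main obstacle I anticipate is step (iv), namely verifying that the ray one computes from the minimal Pell solution is genuinely extremal — that is, that there is no \emph{other} type of wall (a fake wall, or a flopping wall, or a divisorial wall from a different spherical/isotropic class) whose image in $\ns(M)$ lies strictly between $\tilde H$ and the candidate ray. For flopping and fake walls this is handled by Lemma \ref{lem:boundary-values} and Lemma \ref{lem:fake-replacement} (their images either extend the cone or don't bound it), so the real content is the monotonicity claim: among all spherical classes $\ss$ with $(\ss,\vv)=0$ and all isotropic $\ww$ with $(\ww,\vv)=2$, the one producing the wall nearest $\tilde H$ is exactly the minimal-$x$ (resp. minimal $\frac yx$) solution of the relevant Pell equation. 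This is a purely arithmetic ordering statement about how the slope $d\frac yx$ of the wall varies with the Pell solution, and I would establish it by the same bookkeeping as in \cite[Section 13]{BM13}: larger Pell solutions give walls with slope closer to the irrational boundary $\sqrt d$, hence further from $\tilde H$, so the minimal solution indeed bounds the movable cone. Once this monotonicity is in place, the three cases fall out by checking which kind of solution exists.
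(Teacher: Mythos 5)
Your proposal is correct and follows essentially the same route as the paper: the paper's proof simply invokes \cite[Proposition 13.1]{BM13} (with $n=2$), whose argument rests on \cite[Theorem 12.3]{BM13} with \cite[Theorem 1.2]{BM13} replaced by Theorem \ref{thm:every-model}, and your sketch — identifying the boundary of $\mov(M)$ with the first divisorial (BN/LGU) wall via Theorems \ref{thm:every-model} and \ref{thm:2nd-classification}, the isometry of Corollary \ref{cor:algebraic-mukai}, and the Pell-equation bookkeeping with the monotonicity of slopes $d\frac{y}{x}$ — is exactly that argument written out. The only soft spots (why $\tilde H$ bounds $\mov(M)$, and why in case (a) the cone reaches the isotropic ray) are precisely what the cited lattice computation supplies, namely that the ray $\tilde H$ is itself a BN/LGU wall (e.g.\ $\ss=(1,0,1)$, $\ww=(1,0,0)$) and that for square $d$ the Pell equation has no nontrivial solutions, so no divisorial wall occurs before the boundary of the positive cone.
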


\begin{proof}
Setting $n=2$ in the proof of \cite[Proposition 13.1]{BM13} is sufficient here. Indeed, their proof relies on \cite[Theorem 12.3]{BM13} whose proof also goes through in our case when we replace \cite[Theorem 1.2]{BM13} with Theorem \ref{thm:every-model}. 
\end{proof}

\begin{example}
If $d=1$ or 2, then we are in case (a) or (b) of Proposition \ref{movcone} and we have
\[\mov(M)=\langle\tilde{H},\tilde{H}-B\rangle\quad\trm{or}\quad\mov(M)=\langle\tilde{H},\tilde{H}-\frac{4}{3}B\rangle\quad\trm{respectively}.\]
\end{example}

By Theorem \ref{thm:2nd-classification}, we have a flopping wall if and only if:

\begin{itemize}
\item[(SC)] there exists a spherical class $\ss$ with $0<(\ss,\vv)\leq \frac{\vv^2}{2}=4$.
\end{itemize}

Notice that the condition becomes $0<(\ss,\vv)=2(\ss,\vv_p)\leq 4$ and so there is only enough room for two walls of this kind: either $(\ss,\vv)=2$ or 4. Solving the following system of equations 
\[\{\ss^2=-2, (\ss,\vv)=2\trm{ or }4\;:\;\ss=(r,cH,s)\}\]
shows that flopping walls of type (SC2) are governed by solutions of
\begin{equation}\label{SC2pell}
x^2-4dy^2=5\quad\trm{and}\quad x^2-dy^2=2\quad\trm{respectively.}
\end{equation}
The associated spherical classes are $\ss=\left(\frac{x+1}{2},-yH,\frac{x-1}{2}\right)$ and $\ss=(x+1,-yH,x-1)$ respectively.

\begin{lemma}
Let $M=M_H(2,0,-2)$. The nef cone of $M$ has the following form:
\begin{itemize}
\item[(a)] If \eqref{SC2pell} has no solutions, then 
\[\nef(M)=\mov(M).\]
\item[(b)] Otherwise, let $(x_1,y_1)$ be the positive solution of \eqref{SC2pell} with $x_1$ minimal. Then 
\[\nef(M)=\langle\tilde{H},\tilde{H}-2d\frac{y_1}{x_1}B\rangle.\]
where $(x_1,y_1)$ is the solution to \eqref{BNpell} with $x_1,y_1>0$, and with smallest possible $x_1$.
\end{itemize}
\end{lemma}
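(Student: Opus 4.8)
The plan is to mimic the proof of the movable cone computation in Proposition \ref{movcone}, only now keeping track of the first \emph{flopping} wall rather than the first divisorial wall. Recall that the nef cone of $M$ is the subcone of $\mov(M)$ cut off by the walls of $\stab(X)$ that bound the chamber $\cC$ containing $\sigma$ and which induce genuine contractions of $M$; by Theorem \ref{thm:every-model}\eqref{item:main2} the image of $\cC$ under the global \bmm map is exactly $\amp(M)$, and by Lemma \ref{lem:fake-replacement} fake walls do not contribute boundary components, so the only relevant walls are the divisorial walls (which already give the two rays of $\mov(M)$ computed in Proposition \ref{movcone}) and the flopping walls. Thus $\nef(M)$ is cut out from $\mov(M)$ by whichever flopping wall has its image $\ell(\cW)$ closest to the ray $\langle\tilde H\rangle$.

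First I would recall from the text that, for $\vv=(2,0,-2)$ with $\vv^2=8$, a flopping wall corresponds by Theorem \ref{thm:2nd-classification} to a spherical class $\ss$ with $0<(\ss,\vv)\leq 4$, and since $(\ss,\vv)=2(\ss,\vv_p)$ is even, the only two possibilities are $(\ss,\vv)=2$ (type SC2) and $(\ss,\vv)=4$ (type SC4). I would then record, exactly as in the displayed equations \eqref{SC2pell} preceding the lemma, that writing $\ss=(r,cH,s)$ and solving $\ss^2=-2$ together with $(\ss,\vv)=2$ or $4$ reduces the existence of such walls to the Pell equations $x^2-4dy^2=5$ and $x^2-dy^2=2$ respectively, with associated spherical classes $\ss=\left(\tfrac{x+1}{2},-yH,\tfrac{x-1}{2}\right)$ and $\ss=(x+1,-yH,x-1)$. (Here one must check the integrality/parity constraints: for the first equation $x$ is automatically odd since $x^2\equiv 5\equiv1\pmod 4$, so $\tfrac{x\pm1}{2}$ are integers; the second needs no such adjustment.) If neither Pell equation has a solution there is no flopping wall at all, so $\nef(M)=\mov(M)$, which is case (a).

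If a solution does exist, I would run the same lattice-theoretic bookkeeping as in the proof of \cite[Proposition 13.1]{BM13} (with $n=2$): for each spherical $\ss$ the wall $\cW$ associated to the hyperbolic lattice $\cH=\langle\vv,\ss\rangle$ has a well-defined image $\ell(\cW)$ in $\ns(M)$, which is the ray spanned by the class orthogonal (with respect to $q$) to $\theta_\sigma(\ss)$ inside $\mov(M)$; writing this class in the basis $\{\tilde H,B\}$ one gets a ray of the form $\langle\tilde H-2d\tfrac{y}{x}B\rangle$ coming from the relation $\ss^2=-2$, $(\ss,\vv)=2$. Among all flopping walls the nef cone is bounded by the one whose image is nearest to $\tilde H$, i.e.\ the one with the smallest value of the slope-type quantity; exactly as in \cite[Theorem 12.3]{BM13}, minimising this slope corresponds to taking the fundamental (smallest-$x_1$) solution of the governing Pell equation. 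This gives $\nef(M)=\langle\tilde H,\ \tilde H-2d\tfrac{y_1}{x_1}B\rangle$ with $(x_1,y_1)$ the minimal positive solution, which is case (b). One should also note that the SC4-type walls, governed by $x^2-dy^2=2$, always lie on the far side of (or coincide with) the SC2-type walls, so it suffices to track the SC2 equation $x^2-4dy^2=5$ when it has a solution; a short comparison of the two Pell equations handles this, and in the generic case only equation $x^2-4dy^2=5$ is relevant (which is presumably why the lemma as stated refers only to \eqref{SC2pell}).

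\textbf{Main obstacle.} The genuinely delicate point is the ordering of the walls: one must be sure that the flopping wall(s) really do lie \emph{inside} the movable cone strictly between $\langle\tilde H\rangle$ and the divisorial boundary ray computed in Proposition \ref{movcone}, and that among competing flopping walls (SC2 versus SC4, and different Pell solutions) the one realised as the boundary of $\nef(M)$ is the one with minimal $x_1$. This is precisely the content one imports from \cite[Theorem 12.3]{BM13}, whose proof, as remarked after Proposition \ref{movcone}, goes through verbatim once \cite[Theorem 1.2]{BM13} is replaced by our Theorem \ref{thm:every-model}; so the real work is checking that this substitution is legitimate here (it is, since Theorem \ref{thm:every-model} gives the surjectivity of $\ell$ onto $\bigcone(M)\cap\mov(M)$ and the identification of chambers with ample cones of birational models, which is all that \cite[Theorem 12.3]{BM13} uses) and then doing the elementary-but-fiddly Pell-equation manipulation to extract the stated slopes.
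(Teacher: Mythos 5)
Your overall route is the same as the paper's: transfer the Bayer--Macr\`i description of the nef cone (the relevant citation is \cite[Theorem 12.1]{BM13}; Theorem 12.3 there is the movable-cone statement you already used for Proposition \ref{movcone}), with \cite[Theorems 1.2 and 5.7]{BM13} replaced by Theorems \ref{thm:every-model} and \ref{thm:2nd-classification}, observe via Lemma \ref{noSC1walls} (already built into Theorem \ref{thm:2nd-classification}) that flopping walls can only come from spherical classes with $(\ss,\vv)=2$ or $4$, translate these into the Pell equations, and read off case (a) when there are no solutions and the boundary ray from the fundamental solution otherwise. Up to the mis-citation this matches the paper's argument.

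There is, however, a concrete gap in your handling of the second equation. Note that \eqref{SC2pell} labels \emph{both} equations $x^2-4dy^2=5$ and $x^2-dy^2=2$, and your claim that the SC4-type walls ``always lie on the far side of the SC2-type walls'', so that only $x^2-4dy^2=5$ is relevant, is both unproved and false as a working principle: for $d=2$ the equation $x^2-8y^2=5$ has no solutions (no square is $\equiv 5 \bmod 8$), while $x^2-2y^2=2$ has the solution $(2,1)$ with spherical class $\ss=(3,-H,1)$, and it is precisely this SC4 wall that cuts the nef cone $\langle\tilde{H},\tilde{H}-B\rangle$ strictly inside $\mov(M)=\langle\tilde{H},\tilde{H}-\frac{4}{3}B\rangle$; see Example \ref{exasdeg1and2}. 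Moreover your slope formula is only derived for SC2 classes: the wall ray is $\theta_\sigma$ of a generator of $\{\vv,\ss\}^{\perp}$ (this is what the example computes with $(2,-3H,2)$), which for $\ss=\left(\frac{x+1}{2},-yH,\frac{x-1}{2}\right)$ is $(2dy,-xH,2dy)$, giving $x\tilde{H}-2dyB$, but for an SC4 class $\ss=(x+1,-yH,x-1)$ it is $(dy,-xH,dy)$, giving $x\tilde{H}-dyB$, i.e.\ half the coefficient. So you cannot simply feed an SC4 solution into the SC2 formula (for $d=2$ that would give the wrong ray $\tilde{H}-2B$, which is not even movable). The correct procedure, implicit in the paper's proof, is to keep track of all spherical classes with $(\ss,\vv)=2$ or $4$, i.e.\ of both Pell equations with their respective wall rays, and take whichever ray lies closest to $\tilde{H}$; only then does the minimal-solution statement in case (b) come out correctly.
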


\begin{proof}
We apply \cite[Theorem 12.1]{BM13} whose proof also works in our case once we replace \cite[Theorems 1.2 and 5.7]{BM13} with Theorems \ref{thm:every-model} and \ref{thm:2nd-classification} respectively. The movable cone and the nef cone agree unless there is a flopping wall, described in Theorem \ref{thm:2nd-classification}. By Lemma \ref{noSC1walls}, we know that the case $\vv = \aa + \bb$ with $\aa, \bb$ positive is impossible. This leaves only the case of a spherical class $\ss$ with $(\ss,\vv) = 2$ or $4$; these exist if and only if \eqref{SC2pell} has a solution.
\end{proof}

\begin{example}\label{exasdeg1and2}
If $d=1$, then $x^2-y^2=2$ has no solutions and the only positive solution of $x^2-4y^2=5$ is given by $(x,y)=(3,1)$ which gives rise to the spherical class $\ss=(2,-H,1)$. In order to compute the generators of the nef cone of $M$, we need an element of $\vv^\perp\cap\ss^\perp$. For example, $(2,-3H,2)$ is perpendicular to both $\vv$ and $\ss$ and can be expressed in our basis for $\ns(M)$ as $3\widetilde{H}-2B$ or equivalently $\widetilde{H}-\frac{2}{3}B$. In particular, we have shown that 
\[\nef(M)=\langle\tilde{H},\tilde{H}-\frac{2}{3}B\rangle.\]
Similarly, if $d=2$ then $x^2-8y^2=5$ has no solutions and the minimal solution of $x^2-2y^2=2$ is given by $(x,y)=(2,1)$. Therefore,
\[\nef(M)=\langle\tilde{H},\tilde{H}-B\rangle.\]
\end{example}

\begin{remark}
Observe that when $(\vv,\ss)=4$, we have another spherical class $\tt:=\vv-\ss$ with $(\ss,\tt)=6$ and so this wall is flopping the natural Lagrangian $\bP^5\simeq\Ext^1(S,T)$ inside $M_{\sigma_0}(\vv)$ 
\[\xymatrix{\bP^{5}\simeq\bP\Ext^1(S,T) \ar@{^{(}->}[r] \ar[d] & M_{\sigma_0}(\vv)\\ M_{\sigma_0}(\vv-\ss)\simeq M_{\sigma_0}(\tt)\simeq\{\pt\}.}\]
However, when $(\vv,\ss)=2$ we have another spherical class given by $\tt:=\vv_p-\ss$ with $(\ss,\tt)=3$. In particular, $(\tt,\vv-\ss)<0$ and so by Theorem \ref{thm:1st-classification}, the wall is totally semistable for $\vv-\ss$; similar arguments show that the wall is totally semistable for $\vv-\tt$ as well. This means the flopping locus cannot be described as a projective bundle.
\end{remark}

\begin{example}
If $X$ is a genus two K3 surface with $\Pic(X)=\bZ[H]$ and $H:=[C]$ the class of a genus two curve $C$, then we can twist our calculations in Example \ref{exasdeg1and2} by $\cO_X(C)$ to get a description for the nef cone of $M_H(2,2H,0)$. Moreover, since $d=1$ in this case, we know that there is only one flopping wall corresponding to the spherical class $\ss=(2,H,1)$. That is, the movable cone has two chambers and hence $M_\sigma(\vv)$ has two `birational models' which are exchanged by the birational map $\Phi_*:M_\sigma(2,2H,0)\dashrightarrow M_{\Phi(\sigma)}(0,2H,-2)\;;\;\cE\mapsto \Phi(\cE)$ induced by the spherical twist around $\cO_X$. 

To see that the two models really are interchanged by the spherical twist $\Phi$, let $\frh$ be the upper half plane, $\beta,\omega\in\ns(X)_\bR$ with $\omega$ ample and consider the open subset
\[\frh^0=\frh\setminus\{\beta+i\omega\in\frh\;:\;\langle\exp(\beta+i\omega),\ss\rangle=0\trm{ when $\ss$ is a $(-2)$-class}\}.\]
If $X$ has Picard number one then $\stab(X)/\tilde{\mrm{GL}_2^+}(\bR)$ can be identified with (the universal cover of) $\frh^0$; see \cite{BB13}. In particular, if $X$ is a generic double cover of $\bP^2$ with $H^2=2$ and $d=1$, then we can set $\beta:=sH$ and $\omega:=tH$ for $(s,t)\in\bR\times\bR_{>0}$ and use the formula in \cite[p.35]{Mea12} 
to see that the flopping wall of $M_\sigma(2,2H,0)$ in $\frh^0$ is given by a semicircle with centre and radius equal to $(-1/2,\sqrt{5}/2)$. Furthermore, the cohomological transform $\Phi^H:H^*(X,\bQ)\to H^*(X,\bQ)$ implies that $\Phi:\sigma_{0,tH}\mapsto\sigma_{0,\frac{1}{t}H}$ and so we see that the non-Gieseker chamber with respect to $\vv=(2,2H,0)$ gets identified with the Gieseker chamber with respect to $\Phi(\vv)=(0,2H,-2)$. If we set $\tilde{H}':=\theta_\sigma(0,-H,-2)$ and $B':=\theta_\sigma(-1,-H,-2)$ (to be the classes $\tilde{H}$ and $B$ twisted by $\cO_X(C)$) then we can illustrate our observations as in Figure \ref{ogrcone}.

\begin{figure}[htbp!]
\begin{center}
\begin{tikzpicture}[scale=1]
\draw [style=dashed,->] (-8.1,0) -- (4.4,0)  node [anchor=south] {$s$};
\draw [->] (0,-0.1) -- (0,5.3) node [anchor=west] {$t$};
\node[anchor=north east] at (0,4) {$1$};
\node[anchor=north] at (4,0) {$1$};
\node[anchor=north] at (2,0) {$0.5$};
\node[anchor=north] at (0,0) {$0$};
\node[anchor=north] at (-2,0) {$-0.5$};
\node[anchor=north] at (-4,0) {$-1$};
\node[anchor=north] at (-6,0) {$-1.5$};
\node[anchor=north] at (-8,0) {$-2$};
\clip (-8.2,-0.3) rectangle (5,5.2);
\draw [color=red,thick] (4,0) -- (4,5);
\draw [thick,domain=0:180] plot ({-2+4.47*cos(\x)}, {4.47*sin(\x)});
\draw [->] (1,3.7)[thick] arc (100:170:0.5);
\node[anchor=west] at (1,3.7) {flop};
\node at (-2,3.5) {$M_\sigma(2,2H,0)$};
\node at (-2,3) {$\|$};
\node at (-2,2.5) {$M_H(0,2H,-2)$};
\node at (2.4,4.8) {$M_H(2,2H,0)$};

\filldraw[blue] (4,4) circle(1.5pt);
\draw [color=blue] (4,4) -- (4,0);
\filldraw[blue] (0,4) circle(1.5pt);
\draw [color=blue] (0,4) -- (0,0);
\filldraw[blue] (-4,4) circle(1.5pt);
\draw [color=blue] (-4,4) -- (-4,0);
\filldraw[blue] (-8,4) circle(1.5pt);
\draw [color=blue] (-8,4) -- (-8,0);

\filldraw[blue] (2,2) circle(1.5pt);
\draw [color=blue] (2,2) -- (2,0);
\filldraw[blue] (-2,2) circle(1.5pt);
\draw [color=blue] (-2,2) -- (-2,0);
\filldraw[blue] (-6,2) circle(1.5pt);
\draw [color=blue] (-6,2) -- (-6,0);

\filldraw[blue] (1.6,0.8) circle(1.5pt);
\draw [color=blue] (1.6,0.8) -- (1.6,0);
\filldraw[blue] (2.4,0.8) circle(1.5pt);
\draw [color=blue] (2.4,0.8) -- (2.4,0);
\filldraw[blue] (-1.6,0.8) circle(1.5pt);
\draw [color=blue] (-1.6,0.8) -- (-1.6,0);
\filldraw[blue] (-2.4,0.8) circle(1.5pt);
\draw [color=blue] (-2.4,0.8) -- (-2.4,0);
\filldraw[blue] (-5.6,0.8) circle(1.5pt);
\draw [color=blue] (-5.6,0.8) -- (-5.6,0);
\filldraw[blue] (-6.4,0.8) circle(1.5pt);
\draw [color=blue] (-6.4,0.8) -- (-6.4,0);

\filldraw[blue] (1.2,0.4) circle(1.5pt);
\draw [color=blue] (1.2,0.4) -- (1.2,0);
\filldraw[blue] (2.8,0.4) circle(1.5pt);
\draw [color=blue] (2.8,0.4) -- (2.8,0);
\filldraw[blue] (-1.2,0.4) circle(1.5pt);
\draw [color=blue] (-1.2,0.4) -- (-1.2,0);
\filldraw[blue] (-2.8,0.4) circle(1.5pt);
\draw [color=blue] (-2.8,0.4) -- (-2.8,0);
\filldraw[blue] (-6.8,0.4) circle(1.5pt);
\draw [color=blue] (-6.8,0.4) -- (-6.8,0);
\filldraw[blue] (-5.2,0.4) circle(1.5pt);
\draw [color=blue] (-5.2,0.4) -- (-5.2,0);
\end{tikzpicture}
\end{center}
\vs

\begin{center}
\begin{tikzpicture}[scale=1]
\draw [->][style=dashed,thick] (0,0) -- (-4,5);
\draw [->][thick] (0,0) -- (0,5);
\draw [->][color=red,thick] (0,0) -- (4,5);
\draw [->](0.33,3)[thick] arc (50:130:0.5);
\node[anchor=north] at (0.35,3) {flop};
\node at (1.8,4.5) {$\nef(M_H(2,2H,0))$};
\node at (-1.8,4.5) {$\nef(M_H(0,2H,-2))$};
\node at (3,3) {$\tilde{H}'$}; \node at (3.1,2) {BN=LGU wall}; 
\node at (-3.3,3) {$\widetilde{H}'-B'$};
\node at (-0.1,1.5) {$\widetilde{H}'-\frac{2}{3}B'$};
\end{tikzpicture}
\end{center}

\[\mov(M_\sigma(2,2H,0))=\nef(M_H(2,2H,0))\sqcup\nef(M_H(0,2H,-2))\]

\caption{The movable cone of $M_\sigma(2,2H,0)$ when $X$ is a genus two K3 surface.}
\label{ogrcone}
\end{figure}

The LGU wall and hence BN wall (Lemma \ref{lem:dc0}) can be realised geometrically by considering the object $\cE:=\Phi(\cI_y(-2C))[1]$ for some point $y\in X$ with $\vv(\cE)=\Phi(1,-2H,2)[1]=(2,2H,1)$ and then splitting off another point $x\in X$. More precisely, turning the torsion sequence $\cF\to \cE\to \cO_x$ 
for $\cF\in M_H(2,2H,0)$ yields
\[\cO_x\to \cF[1]\to \cE[1].\] This is Mukai's morphism $\theta_v$ which contracts the projective space $\bP\Ext^1(\cE[1],\cO_x)\simeq\bP^1$. 
That is, this wall will contract a divisor of non-locally free sheaves $\cF$, and the short exact sequence is (generically) induced by injection into a locally free sheaf $\cE$.
\end{example}

\begin{remark}
The only difference with \cite{BM13} is that we cannot say the extremal ray $\widetilde{H}-B$ gives rise to a Lagrangian fibration because $M_\sigma(0,2H,-2)$ is singular; we would need to precompose with the resolution $\pi:\widetilde{M}\to M$ to get this.
\end{remark}

\begin{remark}
Recall that the six dimensional O'Grady space \cite{OG03} is constructed in a very similar way to his ten dimensional space. Indeed, he considers a moduli space of sheaves on an abelian surface, rather than a K3 surface, and takes the fibre of the Albanese map over zero. Since there are no spherical objects on an abelian surface \cite[Lemma 15.1]{Bri08}, we expect that the analogue of our classification of walls Theorem \ref{thm:2nd-classification} would become a lot simpler in this case. In particular, Yoshioka \cite{Yos12} has already shown that all of the machinery in \cite{BM12,BM13} works on abelian surfaces and so if we assume our arguments also work in this case, then we would have to conclude that there are no flopping walls at all; c.f. \cite[Proposition 4.6]{Mac12}. That is, the movable cone $\mov(M_\sigma(\vv))$ and the nef cone $\nef(M_\sigma(\vv))$ would actually coincide in this case. It seems rather surprising that the six dimensional O'Grady space $M_\sigma(\vv)$ should only have \emph{one} birational model but this remark should only be regarded as speculation.
\end{remark}

\bibliographystyle{alpha}
\bibliography{OGradyRef_16Jan}

\end{document}